\title[Quantum Chevalley formula for partial flag manifolds]
{Chevalley formula for anti-dominant minuscule fundamental weights 
in the equivariant quantum $K$-group of partial flag manifolds}
\author{Takafumi Kouno, Satoshi Naito, and Daisuke Sagaki}
\address[Takafumi Kouno]{Department of Mathematics, Tokyo Institute of Technology, 
2-12-1 Oh-okayama, Meguro-ku, Tokyo 152-8551, Japan.}
\email{kouno.t.ab@m.titech.ac.jp}
\address[Satoshi Naito]{Department of Mathematics, Tokyo Institute of Technology, 
2-12-1 Oh-okayama, Meguro-ku, Tokyo 152-8551, Japan.}
\email{naito@math.titech.ac.jp}
\address[Daisuke Sagaki]{Department of Mathematics, 
Faculty of Pure and Applied Sciences, University of Tsukuba, 
1-1-1 Tennodai, Tsukuba, Ibaraki 305-8571, Japan.}
\email{sagaki@math.tsukuba.ac.jp}
\date{}
\subjclass[2010]{Primary 17B37; Secondary 14N15, 14M15, 33D52, 81R10}
\keywords{quantum Chevalley formula, quantum LS path, semi-infinite 
flag manifold, Grassmannian, (quantum) Schubert calculus.}
\numberwithin{equation}{section}
\newcommand{\BZ}{\mathbb{Z}}
\newcommand{\BC}{\mathbb{C}}
\newcommand{\SD}{\mathsf{D}}
\newcommand{\SB}{\mathsf{B}}
\newcommand{\SQ}{\mathsf{Q}}
\newcommand{\SE}{\mathsf{E}}
\newcommand{\be}{\mathbf{e}}
\newcommand{\bp}{\mathbf{p}}
\newcommand{\bq}{\mathbf{q}}
\newcommand{\bx}{\mathbf{x}}
\newcommand{\bsig}{\bm{\sigma}}
\newcommand{\q}{\mathsf{v}}
\newcommand{\bA}{\mathbf{A}}
\newcommand{\bB}{\mathbf{B}}
\newcommand{\bG}{\mathbf{G}}
\newcommand{\bH}{\mathbf{H}}
\newcommand{\bX}{\mathbf{X}}
\newcommand{\bGx}[2]{ (\bG_{#1}^{\lhd})_{ #2 } }
\newcommand{\bHx}[2]{ (\bH_{#1}^{\lhd})_{ #2 } }
\newcommand{\bXx}[2]{ (\bX_{#1}^{\lhd})_{ #2 } }
\newcommand{\gq}{\gamma_{\SQ}}
\DeclareMathOperator{\gch}{gch}
\DeclareMathOperator{\wt}{wt}
\DeclareMathOperator{\Deg}{deg}
\DeclareMathOperator{\Hom}{Hom}
\DeclareMathOperator{\Inv}{Inv}
\DeclareMathOperator{\ed}{end}
\DeclareMathOperator{\bgn}{start}
\newcommand{\QBG}{\mathrm{QBG}}
\newcommand{\BG}{\mathrm{BG}}
\newcommand{\QLS}{\mathrm{QLS}}
\newcommand{\LS}{\mathrm{LS}}
\newcommand{\Fg}{\mathfrak{g}}
\newcommand{\Fh}{\mathfrak{h}}
\newcommand{\vpi}{\varpi}
\newcommand{\lng}{w_{\circ}}
\newcommand{\af}{\mathrm{af}}
\newcommand{\lo}{\mathrm{long}}
\newcommand{\sh}{\mathrm{short}}
\newcommand{\qtm}{\mathrm{quantum}}
\newcommand{\edge}[1]{ \xrightarrow{\hspace{2pt}#1\hspace{2pt}} }
\newcommand{\mcr}[1]{\lfloor #1 \rfloor}
\newcommand{\ti}[1]{\widetilde{#1}}
\newcommand{\ha}[1]{\widehat{#1}}
\newcommand{\dtb}[1]{\le_{#1}^{\ast}}
\newcommand{\tbmax}[3]{\max(#1W_{#2},\le_{#3}^{\ast}\nobreak)}
\newcommand{\kap}[2]{\kappa(#1,#2)}
\newcommand{\zet}[2]{\zeta(#1,#2)}
\newcommand{\rhdeq}{\trianglerighteq}
\newcommand{\Bpair}[2]{\Bigl\langle #1,\,#2 \Bigr\rangle}
\newcommand{\pair}[2]{\langle #1,\,#2 \rangle}
\newcommand{\J}{J}
\newcommand{\WJs}{W_{\J}}
\newcommand{\WJ}{W^{\J}}
\newcommand{\DJp}{\Delta^{+} \setminus \Delta_{\J}^{+}}
\newcommand{\DJs}{\Delta_{\J}^{+}}
\newcommand{\yJ}[1]{\Phi^{\J}(#1)}
\newcommand{\yJs}[1]{\Phi_{\J}(#1)}
\newcommand{\xJ}[1]{\Psi^{\J}(#1)}
\newcommand{\xJs}[1]{\Psi_{\J}(#1)}
\newcommand{\WJx}{\WJ_{\ge s_{n}s_{n-1}s_{n}}}
\newcommand{\WJe}{\WJ \setminus \{e\}}
\newcommand{\ls}{w_{\J,\circ}}
\newcommand{\QBa}{\mathrm{QBG}_{\sigma\mu}(\WJ)}
\newcommand{\QBb}[1]{\mathrm{QBG}_{#1\mu}(\WJ)}
\newcommand{\Ba}{\mathrm{BG}_{\sigma\mu}(\WJ)}
\newcommand{\Bb}[1]{\mathrm{BG}_{#1\mu}(\WJ)}
\newcommand{\bBG}[1]{\mathbf{BG}_{#1}^{\lhd}}
\newcommand{\bQBG}[1]{\mathbf{QBG}_{#1}^{\lhd}}
\newcommand{\Lie}{\mathrm{Lie}}
\newcommand{\Fun}{\mathrm{Fun}}
\newcommand{\ngt}{\mathrm{neg}}
\newcommand{\ess}{\mathrm{ess}}
\newcommand{\CO}{\mathcal{O}}
\newcommand{\CB}{\mathcal{B}}
\newcommand{\sdot}{\,\cdot\,}
\newcommand{\Gr}{\mathop{\rm Gr}\nolimits}
\newcommand{\PrJ}{P_{\J}}
\newcommand{\bQG}{\mathbf{Q}}
\newcommand{\bQGr}{\mathbf{Q}^{\mathrm{rat}}}
\newcommand{\bQGJ}{\mathbf{Q}_{\J}}
\newcommand{\bQGJr}{\mathbf{Q}_{\J}^{\mathrm{rat}}}
\newcommand{\Bv}[1]{\CB_{\J}^{#1}}
\newcommand{\pra}[1]{(\hspace{-1pt}(#1)\hspace{-1pt})}
\theoremstyle{definition}
\newtheorem{thm}{Theorem}[section]
\newtheorem{dfn}[thm]{Definition}
\newtheorem{lem}[thm]{Lemma}
\newtheorem{prop}[thm]{Proposition}
\newtheorem{thmintro}{Theorem}
\theoremstyle{remark}
\newtheorem{rem}[thm]{Remark}
\newtheorem*{rem*}{Remark}
\newenvironment{enu}{%
 \begin{enumerate}%
}{\end{enumerate}}
\begin{document}

%
\begin{abstract}
In this paper, we give an explicit formula of Chevalley type, 
in terms of the Bruhat graph, 
for the quantum multiplication with the class of 
the line bundle associated to the anti-dominant minuscule fundamental weight $- \vpi_{k}$ 
in the torus-equivariant quantum $K$-group of the partial flag manifold $G/\PrJ$ 
(where $\J=I \setminus \{k\}$) corresponding to the maximal (standard) parabolic subgroup $\PrJ$ 
of minuscule type in type $A$, $D$, $E$, or $B$.
This result is obtained by proving a similar formula in 
a torus-equivariant $K$-group of 
the semi-infinite partial flag manifold $\bQGJ$ 
of minuscule type, and then 
by making use of the isomorphism between 
the torus-equivariant quantum $K$-group of $G/\PrJ$ and 
the torus-equivariant $K$-group of $\bQGJ$, 
recently established by Kato.
\end{abstract}

\maketitle

%
\section{Introduction.} 
\label{sec:intro}
Let $\bQGr$ denote the (whole) semi-infinite flag manifold, 
which is the reduced ind-scheme whose set of 
$\BC$-valued points is $G(\BC\pra{z})/(T \cdot N(\BC\pra{z}))$ (see \cite{K2} for details), 
where $G$ is a simply-connected simple algebraic group over $\BC$ 
with Borel subgroup $B = T N$, $T$ maximal torus and $N$ unipotent radical.
In this paper, we concentrate on the semi-infinite Schubert (sub)variety 
$\bQG := \bQG(e) \subset \bQGr$ associated to the identity element $e$ of 
the affine Weyl group $W_{\af} = W \ltimes Q^{\vee}$, 
with $W = \langle s_{i} \mid i \in I \rangle$ the Weyl group and 
$Q^{\vee} = \sum_{i \in I} \BZ \alpha_{i}^{\vee}$ the coroot lattice of $G$;
we also call $\bQG$ the semi-infinite flag manifold.
The study of an equivariant $K$-group of $\bQG$ was started in \cite{KNS}, 
in which a Chevalley formula for dominant weights was obtained.
Shortly afterward, in \cite{NOS}, we proved a Chevalley formula 
for anti-dominant weights in a $T$-equivariant $K$-group $K_{T}^{\prime}(\bQG)$ of $\bQG$.

A breakthrough in the study of the equivariant $K$-group of $\bQG$ was 
achieved in \cite{K1} (see also \cite{K3}), in which Kato established a $\BC[P]$-module isomorphism 
from the (small) $T$-equivariant quantum $K$-group $QK_{T}(G/B)$ of 
the finite-dimensional flag manifold $G/B$ 
onto the $T$-equivariant $K$-group $K_{T}^{\prime}(\bQG)$ of $\bQG$, 
where $P = \sum_{i \in I} \BZ \vpi_{i}$ is 
the weight lattice of $G$ and $\BZ[P] (\subset \BC[P])$ is identified with the representation ring of $T$.
This $\BC[P]$-module isomorphism sends each (opposite) Schubert class in $QK_{T}(G/B)$ 
to the corresponding semi-infinite Schubert class in $K_{T}^{\prime}(\bQG)$.
Moreover, it respects the quantum multiplication $\star$ in $QK_{T}(G/B)$ and 
the tensor product in $K_{T}^{\prime}(\bQG)$; to be more precise, 
it respects the quantum multiplication $\star$ with the class of 
the line bundle $[\CO_{G/B}(- \vpi_{k})]$ and the tensor product 
with the class of the line bundle $[\CO_{\bQG}(- \vpi_{k})]$ for each $k \in I$.
In view of this result, the formula for the quantum multiplication 
with $[\CO_{G/B}(- \vpi_{k})]$, $k \in I$, in $QK_{T}(G/B)$ is 
immediately obtained from a Chevalley formula 
in $K_{T}^{\prime}(\bQG)$ obtained in \cite{NOS}; see \cite{LNS} for details. 

Let $k \in I$ be such that the fundamental weight $\vpi_{k}$ is minuscule, 
and set $\J:=I \setminus \{k\}$. 
The purpose of this paper is to give an explicit formula of Chevalley type, 
in terms of the Bruhat graph, for the quantum multiplication $\star$ 
with the class of the line bundle $[\CO_{G/\PrJ}(- \vpi_{k})]$ 
in the (small) $T$-equivariant quantum $K$-group 
$QK_{T}(G/\PrJ) = K_{T}(G/\PrJ) \otimes \BC[Q_{k}]$, 
where $K_{T}(G/\PrJ)$ is the $T$-equivariant $K$-group of 
the (finite-dimensional) partial flag manifold $G/\PrJ$, with $\PrJ \supset B$ 
the maximal (standard) parabolic subgroup of $G$ associated 
to the subset $\J=I \setminus \{k\}$, and $\BC[Q_{k}]$ is the polynomial ring 
in the (Novikov) variable $Q_{k}$ corresponding to the simple coroot $\alpha_{k}^{\vee}$. 
In this paper, we deal with the cases that $G$ 
(or its Lie algebra $\Fg := \Lie(G)$) is of types $A$, $D$, $E$, and $B$; 
in our forthcoming paper \cite{KoNS2}, 
we deal with the case that $G$ is of type $C$
but $\vpi_{k}$ is an arbitrary fundamental weight, and also
the case that $G$ is of type $B$ and $\vpi_{k}$ is a cominuscule weight.
Let us state the main result (Theorem~\ref{thm:introThm1} below) of this paper.
Let $\WJ = W^{I \setminus \{k\}}$ denote 
the set of minimal(-length) representatives for $W/\WJs$, 
with $\WJs = \langle s_{i} \mid i \in \J = I \setminus \{k\} \rangle$ 
the stabilizer of $\vpi_{k}$ in $W$; 
for $w \in W$, we denote by $\mcr{w} \in \WJ$ 
the representative of the coset $w \WJs$.
For $x \in \WJ$, we denote by $\bBG{x}$ 
the set of all directed paths 
$\bp:y_{0} \edge{\gamma_{1}} \cdots \edge{\gamma_{s}} y_{s}$
in the Bruhat graph $\BG(W)$ such that 
$y_{0}=x$, and $\gamma_{1},\,\dots,\,\gamma_{s} \in \DJp$ with 
$\gamma_{1} \lhd \cdots \lhd \gamma_{s}$, where 
$\Delta^{+}$ is the set of positive roots, 
$\DJs:= \Delta^{+} \cap \sum_{i \in \J} \BZ \alpha_{i}$, and 
$\lhd$ is a reflection (convex) order on $\Delta^{+}$ 
satisfying the condition that $\beta \lhd \gamma$ for all 
$\beta \in \DJs$ and $\gamma \in \DJp$. 
Also, we set $\ed(\bp):=y_{s} \in \WJ$ 
if $\bp \in \bBG{x}$ is of the form above, and 
$\ed(\bBG{x}):=\bigl\{ \ed(\bp) \mid 
\bp \in \bBG{x} \bigr\}$;
note that the set $\ed(\bBG{x})$ does not depend on the choice of a reflection order $\lhd$ above (see Lemma~\ref{lem:bBGy}).
We denote by $\theta \in \Delta^{+}$ the highest root.
%
%
\begin{thmintro} \label{thm:introThm1}
Assume that $\Fg = \Lie(G)$ is a simple Lie algebra 
of type $A$, $D$, $E$, or $B$. Let $k \in I$ be such that 
$\vpi_{k}$ is a minuscule fundamental weight, 
and set $\J=I \setminus \{k\}$. Let $x \in \WJ$.
Then the following hold in $QK_{T}(G/\PrJ)$:
\begin{enu}
\item If $x \ge \mcr{s_{\theta}}$, then 
\begin{equation} \label{eq:introThm_Chevalley1}
\begin{split}
& [\CO_{\Bv{x}}] \star [\CO_{G/\PrJ}(- \vpi_{k})] = \\
& \be^{x\vpi_{k}}
  \sum_{ y \in \ed(\bBG{x}) } 
  (-1)^{\ell(y) - \ell(x)}[ \CO_{\Bv{y}} ] +
  \be^{x\vpi_{k}}
  \sum_{ y \in \ed(\bBG{x}) }
  (-1)^{\ell(y) - \ell(x) + 1} 
  [ \CO_{\Bv{ \mcr{ ys_{\gq} } }} ]
  Q_{k}.
\end{split}
\end{equation}
Moreover, in the second sum on the right-hand side of \eqref{eq:introThm_Chevalley1}, no cancellations occur.

\item If $x \not \ge \mcr{s_{\theta}}$, then 
\begin{equation} \label{eq:introThm_Chevalley2}
[\CO_{\Bv{x}}] \star [\CO_{G/\PrJ}(- \vpi_{k})] 
= \be^{x\vpi_{k}}
  \sum_{ y \in \ed(\bBG{x}) } 
  (-1)^{\ell(y) - \ell(x)}[ \CO_{\Bv{y}} ]. 
\end{equation}
\end{enu}
Here, for $y \in \WJ$, $[\CO_{ \Bv{y} }]$ denotes 
the opposite Schubert class in $K_{T}(G/\PrJ)$ associated to $y$,
with $\CB_{\J}^{e} = \CB_{\J}:=G/\PrJ$ the partial flag manifold, and 
\begin{equation*}
\gq :=
\begin{cases}
\alpha_{k} & \text{if $\Fg$ is of type $A$, $D$, or $E$}, \\
s_{n} \alpha_{n-1} & \text{if $\Fg$ is of type $B_{n}$ and $k =n$}. 
\end{cases}
\end{equation*}
\end{thmintro}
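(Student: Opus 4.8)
The plan is to deduce Theorem~\ref{thm:introThm1} from the corresponding Chevalley-type formula in the $T$-equivariant $K$-group $K_{T}^{\prime}(\bQGJ)$ of the semi-infinite partial flag manifold $\bQGJ$ of minuscule type, transported via Kato's isomorphism $QK_{T}(G/\PrJ) \xrightarrow{\sim} K_{T}^{\prime}(\bQGJ)$. The first step is to establish, in the semi-infinite setting, the tensor-product formula
\begin{equation*}
[\CO_{\Bv{x}}] \cdot [\CO_{\bQGJ}(-\vpi_{k})] = \be^{x\vpi_{k}} \sum_{\bp \in \bBG{x}} (-1)^{\ell(\ed(\bp)) - \ell(x)} [\CO_{\bQGJ^{\ed(\bp)}}] \, \mathbf{q}^{-\langle \mathrm{wt}(\bp), \dots\rangle},
\end{equation*}
where the quantum-degree grading on the semi-infinite side is recorded by powers of the Novikov variable attached to $\alpha_{k}^{\vee}$. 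This should be obtained by combining the anti-dominant Chevalley formula of \cite{NOS} in $K_{T}^{\prime}(\bQG)$ (the full semi-infinite flag manifold) with the parabolic projection $\bQG \to \bQGJ$, using that $\vpi_{k}$ is $\WJs$-invariant so that the $\QLS$-paths for $-\vpi_{k}$ descend to directed paths in $\BG(W)$ with steps in $\DJp$; the combinatorics of quantum LS paths (or equivalently the quantum Bruhat graph $\QBG(W)$) for a minuscule weight collapses to ordinary directed paths in $\BG(W)$ precisely because all relevant pairings $\langle \gamma, \vpi_{k}^{\vee}\rangle$ with $\gamma \in \DJp$ equal $1$.

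The second step is the passage from $\QBG(W)$-paths to $\BG(W)$-paths: for a minuscule $\vpi_{k}$, every down (or "quantum") edge in a quantum LS path for $-\vpi_{k}$ has a controlled coefficient, and one shows that the quantum part of the Chevalley formula contributes exactly one term of $Q_{k}$-degree one, coming from an edge labelled by $\gamma_{\SQ}$ — which is $\alpha_{k}$ in types $A,D,E$ and $s_{n}\alpha_{n-1}$ in type $B_{n}$ with $k=n$. This is where the case distinction $x \ge \mcr{s_{\theta}}$ versus $x \not\ge \mcr{s_{\theta}}$ enters: the term $[\CO_{\Bv{\mcr{ys_{\gq}}}}] Q_{k}$ appears if and only if the relevant quantum edge can be adjoined, equivalently (using the standard lift $s_{\theta}$ of the highest-root reflection and a length/covering argument in $W^{\J}$) if and only if $x \ge \mcr{s_{\theta}}$. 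The "no cancellations" assertion for the second sum should follow from injectivity of the map $y \mapsto \mcr{ys_{\gq}}$ on $\ed(\bBG{x})$ together with a constant-sign computation $(-1)^{\ell(y)-\ell(x)+1}$, which in turn rests on $\ell(\mcr{ys_{\gq}})$ differing from $\ell(y)$ by the right parity — a consequence of the minuscule hypothesis controlling the combinatorics of $\gq$.

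The third step is purely formal: apply Kato's $\BC[P]$-module isomorphism, under which $[\CO_{\bQGJ}(-\vpi_{k})] \mapsto [\CO_{G/\PrJ}(-\vpi_{k})]$ and each semi-infinite Schubert class $[\CO_{\bQGJ^{y}}]$ maps to the opposite Schubert class $[\CO_{\Bv{y}}]$, while tensor product goes to the quantum product $\star$ and the Novikov variable matches $Q_{k}$; this converts the semi-infinite formula into \eqref{eq:introThm_Chevalley1}–\eqref{eq:introThm_Chevalley2} verbatim. I expect the main obstacle to be the first step — rigorously proving the semi-infinite Chevalley formula for $\bQGJ$ — since \cite{NOS} is stated for the full $\bQG$, and one must check that the parabolic quotient is compatible with the relevant $\CO$-modules, that the sum over $\QLS$-paths reorganizes correctly as a sum over $\ed(\bBG{x})$ (rather than over individual paths, which requires a telescoping/sign-cancellation argument establishing that paths with the same endpoint contribute a net $(-1)^{\ell(\ed(\bp))-\ell(x)}$), and that the degree bookkeeping isolates exactly the single $Q_{k}^{1}$ contribution. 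The reflection-order independence of $\ed(\bBG{x})$ (Lemma~\ref{lem:bBGy}) will be needed to make the endpoint-set formulation well posed throughout.
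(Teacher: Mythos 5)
Your overall architecture---transfer via Kato's isomorphism to $K_{T}^{\prime}(\bQGJ)$, reduce to a character identity for Demazure submodules of level-zero extremal weight modules, and exploit the collapse of $\QLS(\vpi_{k})$ for minuscule $\vpi_{k}$ to label-increasing paths in the quantum Bruhat graph---is the same as the paper's. But your second step misidentifies the mechanism behind the dichotomy between cases (1) and (2), and that is where essentially all of the work lies. In simply-laced types the quantum edge $\ed(\bp) \edge{\alpha_{k}} \ed(\bp)s_{k}$ can be adjoined to \emph{every} $\bp \in \bBG{x}$ with $x \ne e$ (Lemma~\ref{lem:Q1}); it is false that ``the relevant quantum edge can be adjoined \dots\ if and only if $x \ge \mcr{s_{\theta}}$''. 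Hence the raw identity \eqref{eq:NOS3a}, including its $Q_{k}$-terms, holds for all $x \ne e$, and the content of case (2) is that those terms cancel \emph{pairwise} when $x \not\ge \mcr{s_{\theta}}$. Proving this is the heart of the paper: it requires the Demazure-operator recurrences for the coefficients $c^{x}_{v,m}$ (Lemmas~\ref{lem:+1} and \ref{lem:-1}), an induction showing that $\# \bGx{x}{v} \ge 2$ forces $c^{x}_{v,1}=0$ (Lemma~\ref{lem:itv}), and the combinatorial dichotomy of Proposition~\ref{prop:fin1}, whose proof uses a reflection order adapted to $\Inv(\mcr{s_{\theta}})$ and the factorization lemmas of Section~\ref{subsec:tech}. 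None of this appears in your outline. Likewise, the ``no cancellations'' assertion in case (1) is precisely the injectivity of $y \mapsto \mcr{ys_{\gq}}$ on $\ed(\bBG{x})$---signs are irrelevant and are not constant---and its proof rests on the unique factorization $y = \yJ{y}\,\mcr{s_{\theta}}$ valid for $y \ge \mcr{s_{\theta}}$, which you do not supply.

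A second gap is type $B_{n}$. There the quantum edges out of $y \in \WJ$ with label in $\DJp$ are labelled either $\alpha_{n}$ (for any $y \ne e$) or $\gq = s_{n}\alpha_{n-1}$ (only for $y \ge s_{n}s_{n-1}s_{n}$), and a quantum edge labelled $\gq$ may be followed by a Bruhat edge labelled $\alpha_{n}$; thus $\bQBG{x}$ decomposes into four pieces as in \eqref{eq:bQBG2}, and one must first exploit the coincidence $\mcr{ys_{n}} = \mcr{ys_{\gq}s_{n}}$ to reach the stated form. Your uniform claim that the quantum contribution is ``exactly one term of $Q_{k}$-degree one coming from an edge labelled by $\gq$'' is therefore incorrect in type $B$. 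Finally, a smaller point: the passage from the results of \cite{NOS} to $\bQGJ$ is not carried out by a parabolic pushforward of sheaves, but by the injective character map $\Phi$ of \cite{K3} and the commuting square with the translation operator $\Theta(-\vpi_{k})$; this step is routine, so the obstacle you single out as the main one is in fact the easy part.
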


\begin{rem*}
We know the formula
%
%
\begin{equation} \label{rem:introRem}
[ \CO_{ \Bv{s_{k}} } ] = 
[ \CO_{G/\PrJ} ] - \be^{-\vpi_{k}}[\CO_{G/\PrJ}(-\vpi_{k})]
\end{equation}
in $K_T(G/\PrJ)$, and hence in $QK_T(G/\PrJ)$; 
this formula is also obtained as the special case 
that $x = e$ of Theorem~\ref{thm:introThm1}.
\end{rem*}

We should mention that a formula for 
the quantum multiplication with $[\CO_{G/\PrJ}(- \vpi_{k})]$ 
in $QK_{T}(G/\PrJ)$ is obtained by \cite{BCMP} 
in the case that $\vpi_{k}$ is a cominuscule fundamental weight; 
in types $A$, $D$, $E$, a fundamental weight $\vpi_{k}$ is cominuscule 
if and only if it is minuscule.
In type $B_{n}$, a formula for the minuscule (but not cominuscule) fundamental
weight $\varpi_{n}$ seems not to be explicitly written in \cite{BCMP};
however, it is obtained from one in \cite{BCMP} for the maximal orthogonal
Grassmannian corresponding to one of the two ``forked'' nodes of the Dynkin diagram of type $D_{n+1}$ (see \cite[Sect.~3.5]{IMN} for details).
However, in their description and the proof of the formula, 
we can hardly see the relation with the quantum Bruhat graph 
introduced in \cite{BFP}, while in ours it is transparent.

The proof of our formula is based on the $\BC[P]$-module isomorphism,
established in \cite{K3}, from the $T$-equivariant quantum 
$K$-group $QK_{T}(G/\PrJ)$ onto the $T$-equivariant $K$-group 
$K_{T}^{\prime}(\bQGJ)$ of the semi-infinite partial flag manifold 
$\bQGJ$ corresponding to $\vpi_{k}$; the $T$-equivariant $K$-group
$K_{T}^{\prime}(\bQGJ)$ has a $\BC[P]$-basis of the semi-infinite Schubert classes $[\CO_{\bQGJ(yt_{\beta^{\vee}})}]$ for $y \in \WJ$ and $\beta^{\vee} \in \BZ_{\geq 0} \alpha_{k}^{\vee}$.
This $\BC[P]$-module isomorphism sends 
the (opposite) Schubert class $[\CO_{\Bv{y}}] Q_{k}$ in $QK_{T}(G/\PrJ)$ to 
the semi-infinite Schubert class $[ \CO_{\bQGJ(yt_{\alpha_{k}^{\vee}})} ]$
in $K_{T}^{\prime}(\bQGJ)$ for each $y \in \WJ$, 
and also respects the quantum multiplication $\star$ 
with the class of the line bundle $[\CO_{G/\PrJ}(- \vpi_{k})]$ 
and the tensor product (denoted by $\Xi(- \vpi_{k})$) 
with the class of the line bundle $[\CO_{\bQGJ}(- \vpi_{k})]$.
Namely, the following diagram commutes:
\begin{equation*}
\begin{CD}
QK_{T}(G/\PrJ) @>{\simeq}>> K_{T}^{\prime}(\bQGJ) \\
@V{\cdot\,\star[\CO_{G/\PrJ}(- \vpi_{k})]}VV
@VV{\Xi(- \vpi_{k})}V \\
QK_{T}(G/\PrJ) @>>{\simeq}> K_{T}^{\prime}(\bQGJ).
\end{CD}
\end{equation*}
By this $\BC[P]$-module isomorphism, the proof of Theorem~\ref{thm:introThm1} is reduced to the proof of a Chevalley formula 
(Theorem~\ref{thm:introThm2} below) for $- \vpi_{k}$ 
in a $T \times \BC^*$-equivariant $K$-group 
$K_{T \times \BC^*}^{\prime}(\bQGJ)$ of 
$\bQGJ$; it suffices to specialize this formula at $q = 1$.
%
%
\begin{thmintro} \label{thm:introThm2}
Assume that $\Fg = \Lie(G)$ is a simple Lie algebra of type $A$, $D$, $E$, or $B$. 
Let $k \in I$ be such that $\vpi_{k}$ is a minuscule fundamental weight, and set $J=I \setminus \{k\}$. 
Then, for $x \in \WJ$, the following hold 
in $K_{T \times \BC^*}^{\prime}(\bQGJ)$ 
(and hence in $K_{T}^{\prime}(\bQGJ)$):
\begin{enu}
\item If $x \ge \mcr{s_{\theta}}$, then 
\begin{align}
 [\CO_{\bQGJ(x)} \otimes \CO_{\bQGJ}(- \vpi_{k})] 
  & := \Xi(-\vpi_{k}) ([\CO_{\bQGJ(x)}]) \nonumber \\
& = \be^{-x\vpi_{k}}
  \sum_{ y \in \ed(\bBG{x}) } 
  (-1)^{\ell(y) - \ell(x)}
  [\CO_{\bQGJ(y)}] \nonumber \\
& \quad + 
  \be^{-x\vpi_{k}}
  \sum_{ y \in \ed(\bBG{x}) } 
  (-1)^{\ell(y) - \ell(x) + 1}
  [\CO_{ \bQGJ(\mcr{ ys_{\gq} } t_{\alpha_{k}^{\vee}}) }]. \label{eq:intro2a}
\end{align}
Moreover, in the second sum on the right-hand side of \eqref{eq:intro2a}, no cancellations occur.

\item If $x \not\ge \mcr{s_{\theta}}$, then 
\begin{align}
[\CO_{\bQGJ(x)} \otimes \CO_{\bQGJ}(- \vpi_{k})] 
 & := \Xi(-\vpi_{k}) ([\CO_{\bQGJ(x)}]) \nonumber \\
 & = \be^{-x\vpi_{k}}
  \sum_{ y \in \ed(\bBG{x}) } 
  (-1)^{\ell(y) - \ell(x)}
  [\CO_{\bQGJ(y)}]. \label{eq:intro2b}
\end{align}
\end{enu}
\end{thmintro}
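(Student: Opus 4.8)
The plan is to prove Theorem~\ref{thm:introThm2} by reducing it, via a ``push-forward'' argument, to the already-established Chevalley formula for anti-dominant weights in the $T\times\BC^{*}$-equivariant $K$-group of the \emph{full} semi-infinite flag manifold $\bQG = \bQG(e)$ from \cite{NOS}. Concretely, there should be a natural projection $\pi_{\J}\colon\bQG\to\bQGJ$, together with a section realizing $\bQGJ$ inside $\bQG$, under which the line bundle $\CO_{\bQGJ}(-\vpi_{k})$ pulls back to $\CO_{\bQG}(-\vpi_{k})$ (this uses crucially that $\vpi_{k}$ is fixed by $\WJs$, so the bundle descends), and under which each semi-infinite Schubert class $[\CO_{\bQGJ(x)}]$ pulls back to $[\CO_{\bQG(x)}]$ for $x\in\WJ$ (viewing $\WJ\subset W\subset W_{\af}$). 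First I would make these geometric statements precise, establishing that $\Xi(-\vpi_{k})$ on $K'_{T\times\BC^{*}}(\bQGJ)$ is intertwined with the tensor-product operator $\Xi_{\mathrm{full}}(-\vpi_{k})$ on $K'_{T\times\BC^{*}}(\bQG)$ by the pull-back map, modulo the ideal of classes supported away from the image of $\bQGJ$.

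Next I would invoke the combinatorial model underlying \cite{NOS}: the Chevalley coefficients there are described in terms of quantum LS paths, equivalently in terms of directed paths in the quantum Bruhat graph $\QBG(W)$ with edge labels in $\Delta^{+}$ arranged in a reflection (convex) order. The key combinatorial step is to show that when one starts from $x\in\WJ$ and applies the full Chevalley formula for $-\vpi_{k}$, and then projects the resulting classes $[\CO_{\bQG(\cdot)}]$ down along $\pi_{\J}$ (i.e.\ replaces $w\in W$ by $\mcr{w}\in\WJ$ and collapses $Q^{\vee}$-translations appropriately), one obtains exactly the right-hand sides of \eqref{eq:intro2a} and \eqref{eq:intro2b}. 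This is where the hypothesis that $\vpi_{k}$ is minuscule does the heavy lifting: minusculeness forces $\pair{\beta^{\vee}}{\vpi_{k}}\in\{0,\pm 1\}$ for all roots $\beta$, so the only quantum (wrap-around) edges that contribute are those associated with roots $\gamma$ having $\pair{\gamma^{\vee}}{\vpi_{k}}=1$, and the quantum degree produced is always $\alpha_{k}^{\vee}$ — this is precisely the source of the single power of $Q_{k}$ and of the element $\gq$. I would organize this step around Lemma~\ref{lem:bBGy}: the paths with labels in $\DJp$ increasing in the reflection order $\lhd$ biject (after applying $\mcr{\cdot}$) with the relevant Bruhat-graph paths in $\BG(W)$, and the ``no cancellation'' assertion in the second sum follows from a sign/length bookkeeping argument showing $\ell(\mcr{ys_{\gq}}) = \ell(y)-\ell(x)+\ell(x)-1 \pmod 2$ uniformly, together with injectivity of $y\mapsto\mcr{ys_{\gq}}$ on $\ed(\bBG{x})$.

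Having proved Theorem~\ref{thm:introThm2}, Theorem~\ref{thm:introThm1} follows immediately: apply Kato's $\BC[P]$-module isomorphism $QK_{T}(G/\PrJ)\xrightarrow{\sim}K'_{T}(\bQGJ)$ from \cite{K3}, which sends $[\CO_{\Bv{y}}]$ to $[\CO_{\bQGJ(y)}]$, sends $[\CO_{\Bv{y}}]Q_{k}$ to $[\CO_{\bQGJ(yt_{\alpha_{k}^{\vee}})}]$, and intertwines $\cdot\star[\CO_{G/\PrJ}(-\vpi_{k})]$ with $\Xi(-\vpi_{k})$; one then specializes the $T\times\BC^{*}$-formula at $q=1$ and reads off \eqref{eq:introThm_Chevalley1} and \eqref{eq:introThm_Chevalley2}, noting that the character $\be^{-x\vpi_{k}}$ on the semi-infinite side matches $\be^{x\vpi_{k}}$ on the quantum $K$ side under the conventions for opposite Schubert classes. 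The sign $(-1)^{\ell(y)-\ell(x)}$ and the exponent $\be^{x\vpi_{k}}$ are then forced.

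\textbf{Main obstacle.} The principal difficulty is the second, combinatorial, step: controlling the \emph{quantum} part of the full Chevalley formula under the projection $\pi_{\J}$. In \cite{NOS} the quantum terms come with $Q^{\vee}$-translations $t_{\beta^{\vee}}$ where $\beta^{\vee}$ can be an arbitrary non-negative sum of coroots coming from the path, and a priori many distinct such translations could project to the same power of $Q_{k}$ or could cancel; I expect the heart of the argument to be a careful analysis — using minusculeness, convexity of $\lhd$, and the structure of $\WJ$ — showing that exactly one wrap-around occurs per contributing path, that it always contributes $t_{\alpha_{k}^{\vee}}$, and that the resulting map $y\mapsto\mcr{ys_{\gq}}$ is injective with a predictable length parity. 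The type-$B_{n}$, $k=n$ case (where $\gq = s_{n}\alpha_{n-1}$ rather than $\alpha_{k}$) will require separate, more delicate verification, presumably via the embedding of the spinor Grassmannian of type $B_{n}$ into that of type $D_{n+1}$ alluded to in the introduction, or by a direct root-system computation.
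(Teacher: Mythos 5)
Your overall outline agrees with the paper's in one respect: both reduce the statement to the anti-dominant Chevalley identity of \cite{NOS} and exploit minusculeness to collapse the quantum-LS-path sum to label-increasing paths in $\QBG(W)$ with labels in $\DJp$. But your reduction mechanism is not the paper's, and the part you defer as the ``main obstacle'' is precisely where essentially all of the proof lives, so as it stands the proposal has a genuine gap. On the reduction: the paper does not pass through a geometric pull-back/push-forward between $\bQG$ and $\bQGJ$. Instead it uses the injective $\BC[q,q^{-1}][P]$-linear map $\Phi$ sending $[\CO_{\bQGJ(x)}]$ to the function $\mu\mapsto\gch V^{-}_{x}(\mu)$, the commutation of $\Xi(-\vpi_{k})$ with the shift $\Theta(-\vpi_{k})$, and the identity $V^{-}_{yt_{\xi}}(\lambda)=V^{-}_{\mcr{y}t_{[\xi]}}(\lambda)$ (Remark~\ref{rem:dem}); note that Theorem~\ref{thm:NOS} is already parabolic (it is stated for $\QLS(\mu)$ with $\J=\J_{\mu}$), so no descent from the full semi-infinite flag manifold is required, and your proposed $\pi_{\J}$-intertwining ``modulo classes supported away from the image'' is an unestablished and unnecessary detour.

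On the combinatorics: your guiding expectation --- exactly one quantum edge per contributing path, always contributing $t_{\alpha_{k}^{\vee}}$, with injectivity of $y\mapsto\mcr{ys_{\gq}}$ --- is exactly what must be \emph{proved}, and it is false without the hypothesis $x\ge\mcr{s_{\theta}}$. The dichotomy in the theorem is governed by this inequality: when $x\ge\mcr{s_{\theta}}$ one shows each fiber $\bGx{x}{v}$ is a singleton (via the factorization $y=\yJ{y}z_{k}s_{k}$ of Lemmas~\ref{lem:tiy} and \ref{lem:zksk}, which forces $\ed(\bp)=vz_{k}s_{k}$), and this singleton property --- not a length-parity computation --- is what ``no cancellation'' means; when $x\not\ge\mcr{s_{\theta}}$ the quantum terms do not vanish path-by-path but cancel in pairs, which requires both the construction of a second path in each nonempty fiber (Propositions~\ref{prop:fin1b}, \ref{prop:fin2b}) and the separate fact that any fiber of size $\ge 2$ contributes zero, proved by the Demazure-operator recurrences of Lemmas~\ref{lem:+1} and \ref{lem:-1} together with an induction on $\ell(x)$. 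In type $B_{n}$ the situation is worse for your heuristic: $\bQBG{x}\setminus\bBG{x}$ contains not only $\alpha_{n}$-quantum extensions but also $\gq$-quantum extensions \emph{and} their further Bruhat extensions by $\alpha_{n}$ (see \eqref{eq:bQBG2}), the quantum weight contributed is $\gq^{\vee}=\alpha_{n-1}^{\vee}+\alpha_{n}^{\vee}$ and only becomes $\alpha_{n}^{\vee}$ after the projection $[\cdot]^{\J}$, and an additional internal cancellation between two of the four families is needed just to reach the two-term shape of \eqref{eq:intro2a}; no embedding into type $D_{n+1}$ is used. None of these steps is supplied or correctly anticipated by the proposal.
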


\begin{rem*}
As the special case that $x = e$, we obtain the formula
$[\CO_{\bQGJ}(-\vpi_{k})] = 
 \be^{-\vpi_{k}} ([\CO_{\bQGJ}] - [\CO_{\bQGJ(s_{k})}])$,
or equivalently,
$[\CO_{\bQGJ(s_{k})}] = [\CO_{\bQGJ}] - \be^{\vpi_{k}} [\CO_{\bQGJ}(-\vpi_{k})]$,
where $[\CO_{\bQGJ}]$ can be thought of 
as the identity element of $K'_{T \times \BC^{\ast}}(\bQGJ)$ 
(or $K'_T(\bQGJ)$) with respect to the tensor product. 
\end{rem*}

Now, we explain how to prove our results above.
Recall that $\vpi_{k}$ is minuscule, and $\J = I \setminus \{k\}$.
Let $\bQGJr$ denote the (whole) the semi-infinite partial flag manifold, 
which is the reduced ind-scheme whose set of $\BC$-valued points is 
$G(\BC\pra{z})/(T \cdot [\PrJ,\,\PrJ](\BC\pra{z}))$ (see \cite{K2} for details); in this paper, we concentrate on the semi-infinite Schubert (sub)variety 
$\bQGJ := \bQGJ(e) \subset \bQGJr$ associated to the identity element $e \in \WJ$, which we also call the semi-infinite partial flag manifold.
Following \cite{K3}, we define a $T \times \BC^*$-equivariant 
$K$-group $K_{T \times \BC^*}^{\prime}(\bQGJ)$ of $\bQGJ$ to be  
the $\BC[q, q^{-1}][P]$-module consisting of all finite $\BC[q, q^{-1}][P]$-linear 
combinations of the semi-infinite Schubert classes $[\CO_{\bQGJ(x)}]$ 
for $x = vt_{\beta^{\vee}} \in W_{\af}$, 
with $v \in \WJ$ and $\beta^{\vee} \in \BZ_{\geq 0} \alpha_{k}^{\vee}$;
the semi-infinite Schubert classes $[\CO_{\bQGJ(x)}]$ for $x = vt_{\beta^{\vee}} \in W_{\af}$, 
with $v \in \WJ$ and $\beta^{\vee} \in \BZ_{\geq 0} \alpha_{k}^{\vee}$, 
turn out to form a $\BC[q, q^{-1}][P]$-basis of $K_{T \times \BC^*}^{\prime}(\bQGJ)$.
Also, let $\Fun_{\BZ \vpi_{k}}(\BC\pra{q^{-1}}[P])$ denote 
the $\BC[q, q^{-1}][P]$-module of all functions on $\BZ \vpi_{k}$ 
with values in $\BC\pra{q^{-1}}[P]$, and set
\begin{equation*}
\Fun_{\BZ \vpi_{k}}^{\ess} (\BC\pra{q^{-1}}[P]) := 
\Fun_{\BZ \vpi_{k}}(\BC\pra{q^{-1}}[P])/\Fun_{\BZ \vpi_{k}}^{\ngt}(\BC\pra{q^{-1}}[P]),
\end{equation*}
where $\Fun_{\BZ \vpi_{k}}^{\ngt}(\BC\pra{q^{-1}}[P])$ is 
the $\BC[q, q^{-1}][P]$-submodule of $\Fun_{\BZ \vpi_{k}}(\BC\pra{q^{-1}}[P])$ 
consisting of those $f \in \Fun_{\BZ \vpi_{k}} (\BC\pra{q^{-1}}[P])$ 
such that there exists some $\gamma \in \BZ \vpi_{k}$ 
for which $f(\mu) = 0$ for all $\mu \in \gamma + \BZ_{\geq 0} \vpi_{k}$.
Then, for each $x = vt_{\beta^{\vee}} \in W_{\af}$, 
with $v \in \WJ$ and $\beta^{\vee} \in \BZ_{\geq 0} \alpha_{k}^{\vee}$, 
the assignment
\begin{equation*}
\BZ \vpi_{k} \owns \mu \mapsto 
\gch H^{0}(\bQGJ,\,\CO_{\bQGJ(x)} \otimes \CO_{\bQGJ}(\mu)) \in \BC\pra{q^{-1}}[P]
\end{equation*}
defines an element of $\Fun_{\BZ \vpi_{k}}^{\ess}(\BC\pra{q^{-1}}[P])$, 
which we denote by $f^{x}(\sdot)$; here, we denote by 
$\gch H^{0}(\bQGJ, \,\CO_{\bQGJ(x)} \otimes \CO_{\bQGJ}(\mu))$
the graded character of the $T \times \BC^*$-module 
$H^{0}(\bQGJ,\,\CO_{\bQGJ(x)} \otimes \CO_{\bQGJ}(\mu))$, 
which is identical to the graded character of 
the Demazure submodule $V_{x}^{-}(\mu)$ of 
the level-zero extremal weight module $V(\mu)$ 
over the quantum affine algebra $U_{\q}(\Fg_{\af})$ 
if $\mu \in \BZ_{\geq 0} \vpi_{k}$, and is zero 
if $\mu \notin \BZ_{\geq 0} \vpi_{k}$ (see \cite{K2} and also \cite{K3} for details),
where $\Fg_{\af}$ is the (untwisted) affine Lie algebra whose underlying simple Lie algebra is $\Fg$.
Here we warn the reader that the line bundles $\CO_{\bQGJ}(\mu)$ 
associated to $\mu \in \BZ \vpi_{k}$ are normalized (as in \cite{K1}) in such a way that 
$\gch H^{0}(\bQGJ, \CO_{\bQGJ}(\mu)) = \gch V_{e}^{-}(\mu)$ holds for $\mu \in \BZ_{\ge 0} \vpi_{k}$; 
this convention differs from that of \cite{KNS} 
by the twist coming from the involution $- w_{\circ}$.
Thus we obtain a $\BC[q,q^{-1}][P]$-linear map:
\begin{equation*}
\Phi : K_{T \times \BC^*}^{\prime}(\bQGJ) \rightarrow 
\Fun_{\BZ \vpi_{k}}^{\ess} (\BC\pra{q^{-1}}[P])
\end{equation*}
given by $\Phi([\CO_{\bQGJ(x)}]) = f^{x}(\sdot)$ 
for each $x = vt_{\beta^{\vee}} \in W_{\af}$, 
with $v \in \WJ$ and $\beta^{\vee} \in \BZ_{\geq 0} \alpha_{k}^{\vee}$, 
which is injective since the graded characters $\gch V_{vt_{ \beta^{\vee} }}^{-}(\mu)$, 
$v \in \WJ$ and $\beta^{\vee} \in \BZ_{\ge 0}\alpha_{k}^{\vee}$, are linearly independent 
over $\BC[q,q^{-1}][P]$ when they are regarded as functions of sufficiently large 
$\mu \in \BZ_{\ge 1}\vpi_{k}$ (see \cite{K3}).

From the explicit identities obtained in \cite{NOS} 
(in the case of anti-dominant weights) 
for the graded characters of Demazure submodules of 
level-zero extremal weight modules, it can be shown (see \cite{K3}) 
that there exist $\BC[q, q^{-1}][P]$-module endomorphisms 
$\Xi(-\lambda)$, $\lambda \in \BZ_{\ge 1} \vpi_{k}$, of 
$K_{T \times \BC^*}^{\prime}(\bQGJ)$ such that 
$\Xi(-(\lambda + \lambda^{\prime})) = 
\Xi(-\lambda) \circ \Xi(-\lambda^{\prime})$ 
for $\lambda,\,\lambda^{\prime} \in \BZ_{\ge 1} \vpi_{k}$ and 
such that the following diagram commutes for all $\lambda \in \BZ_{\ge 1}\vpi_{k}$:
\begin{equation*}
\begin{CD}
K_{T \times \BC^*}^{\prime}(\bQGJ) @>\Phi>> 
\Fun_{\BZ \vpi_{k}}^{\ess} (\BC\pra{q^{-1}}[P]) \\
@V{\Xi(-\lambda)}VV @VV{\Theta(-\lambda)}V \\
K_{T \times \BC^*}^{\prime}(\bQGJ) @>>\Phi> 
\Fun_{\BZ \vpi_{k}}^{\ess} (\BC\pra{q^{-1}}[P]),
\end{CD}
\end{equation*}
where $\Theta(-\lambda) f(\sdot) = f(\sdot - \lambda)$ for 
$f(\sdot) \in \Fun_{\BZ \vpi_{k}}^{\ess}(\BC\pra{q^{-1}}[P])$;
the $\BC[q,q^{-1}][P]$-module endomorphism $\Xi(-\lambda)$ 
can be thought of as the tensor product with the class of 
the line bundle $[\CO_{\bQGJ}(-\lambda)]$ in $K_{T \times \BC^*}^{\prime}(\bQGJ)$.
In view of the commutativity of the diagram above and 
the injectivity of the $\BC[q, q^{-1}][P]$-linear map $\Phi$, 
the proof of our Chevalley formula (Theorem~\ref{thm:introThm2}) 
for $- \vpi_{k}$ in $K_{T \times \BC^*}^{\prime}(\bQGJ)$ is 
reduced to the proof of the corresponding identity of 
Chevalley type (Theorem~\ref{thm:introThm3} below) 
for the graded characters of Demazure submodules of 
level-zero extremal weight modules over 
the quantum affine algebra $U_{\q}(\Fg_{\af})$; 
we derive this identity 
from the results in \cite{NOS} 
through a detailed analysis of the quantum Bruhat graph. 
Indeed, since the left-hand side of \eqref{eq:intro2a} or \eqref{eq:intro2b}
is $\Xi(-\vpi_{k}) ([\CO_{\bQGJ(x)}])$, its image under $\Phi$ is 
identical to $\Theta(-\vpi_{k})(\Phi([\CO_{\bQGJ(x)}]))$ by the 
commutativity of the diagram above; by the definitions, this is 
identical to the graded character $\gch V_{x}^{-}(\mu-\vpi_{k})$ 
(regarded as a function of $\mu \in \BZ_{\ge 1}\vpi_{k}$), 
which is just the left-hand side of \eqref{eq:intro3a} or \eqref{eq:intro3b} below. 
Also, the image under $\Phi$ of the right-hand side of \eqref{eq:intro2a}
(resp., \eqref{eq:intro2b}) is, by the definitions, identical to the right-hand 
side of \eqref{eq:intro3a} (resp., \eqref{eq:intro3b}) 
(regarded as a function of $\mu \in \BZ_{\ge 1}\vpi_{k}$). 
Because these two functions of $\mu \in \BZ_{\ge 1}\vpi_{k}$ coincide 
by Theorem~\ref{thm:introThm3} below, we deduce \eqref{eq:intro2a} 
(resp., \eqref{eq:intro2b}) from the injectivity of 
the $\BC[q,q^{-1}][P]$-linear map $\Phi$. 
%
%
\begin{thmintro}[= Theorem \ref{thm:main}] \label{thm:introThm3}
Assume that $\Fg$ is a simple Lie algebra of type $A$, $D$, $E$, or $B$. 
Let $k \in I$ be such that $\vpi_{k}$ is a minuscule fundamental weight, 
and set $\mu := N \vpi_{k}$, with $N \in \BZ_{\ge 1}$. 
Then, for $x \in \WJ$ with $J = I \setminus \{k\}$, the following identities 
for the graded characters of Demazure submodules of 
level-zero extremal weight modules hold:
\begin{enu}
\item If $x \ge \mcr{ s_{\theta} }$, then 
\begin{align}
\gch V_{x}^{-}(\mu-\vpi_{k}) & = \be^{-x\vpi_{k}} 
\sum_{y \in \ed(\bBG{x})} (-1)^{\ell(y)-\ell(x)} \gch V_{y}^{-}(\mu) \nonumber \\[3mm]
& \quad + 
\be^{-x\vpi_{k}} 
\sum_{y \in \ed(\bBG{x})} (-1)^{\ell(y)-\ell(x)+1} 
\gch V_{ \mcr{ys_{\gq} } t_{\alpha_{k}^{\vee}} }^{-}(\mu). \label{eq:intro3a}
\end{align}
Moreover, in the second sum on the right-hand side of \eqref{eq:intro3a}, no cancellations occur.

\item If $x \not\ge \mcr{ s_{\theta} }$, then
\begin{equation} \label{eq:intro3b}
\gch V_{x}^{-}(\mu-\vpi_{k}) = \be^{-x\vpi_{k}} 
\sum_{y \in \ed(\bBG{x})} (-1)^{\ell(y)-\ell(x)} \gch V_{y}^{-}(\mu). 
\end{equation}
\end{enu}
\end{thmintro}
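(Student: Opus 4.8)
\textbf{Proof strategy for Theorem~\ref{thm:introThm3}.}
The plan is to derive the identities \eqref{eq:intro3a} and \eqref{eq:intro3b} from the anti-dominant Chevalley formula of \cite{NOS}, which expresses the graded character $\gch V_{x}^{-}(\mu - \vpi_{k})$ as a sum, over quantum LS paths (equivalently, over certain directed paths in the quantum Bruhat graph $\QBG(W)$ starting at $x$), of terms $\be^{(\cdot)} \gch V_{z t_{\xi^{\vee}}}^{-}(\mu)$. Since $\mu = N\vpi_{k}$ is a multiple of a single minuscule fundamental weight, the combinatorial data parametrizing this sum simplifies drastically: the relevant paths live in the quotient $\QBG(\WJ)$ for $\J = I \setminus \{k\}$, and the ``degree'' contributions $\xi^{\vee}$ are controlled by $\BZ_{\ge 0}\alpha_{k}^{\vee}$. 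The first step is therefore to write down the \cite{NOS} formula explicitly in this minuscule setting and to identify each summand as indexed by an edge $x \edge{\gamma} z$ (or a length-preserving choice) in the Bruhat or quantum Bruhat graph, together with the bookkeeping of whether the edge is a ``Bruhat'' edge (contributing no $t_{\alpha_{k}^{\vee}}$) or a ``quantum'' edge (contributing $t_{\alpha_{k}^{\vee}}$, since by minuscularity the only quantum down-steps available have coroot component $\alpha_{k}^{\vee}$).

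The heart of the argument is a bijective/combinatorial analysis of the paths appearing on the right-hand side. I would show that the directed paths in $\BG(W)$ with edge labels in $\DJp$, increasing with respect to the reflection order $\lhd$ — i.e.\ the elements of $\bBG{x}$ — are precisely the ``non-quantum part'' of the \cite{NOS} sum, giving the first sum $\be^{-x\vpi_{k}} \sum_{y \in \ed(\bBG{x})} (-1)^{\ell(y)-\ell(x)} \gch V_{y}^{-}(\mu)$ after collecting signs via the standard telescoping/inclusion-exclusion over such chains (the signs $(-1)^{\ell(y)-\ell(x)}$ emerge because each maximal chain to a fixed endpoint $y$ contributes, and an alternating-sum identity over the interval $[x,y]$ in Bruhat order collapses to a single term). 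The quantum contributions, which can only occur when a down-step through $\alpha_{k}^{\vee}$ is available — and this is exactly the condition $x \ge \mcr{s_{\theta}}$, since $s_{\theta}$ is the minimal element admitting such a step — assemble into the second sum, with endpoint $\mcr{y s_{\gq}} t_{\alpha_{k}^{\vee}}$: here one must check that reflecting $y$ by $s_{\gq}$ (where $\gq = \alpha_{k}$ in types $A,D,E$, and $\gq = s_{n}\alpha_{n-1}$ in type $B_{n}$) and translating by $t_{\alpha_{k}^{\vee}}$ correctly records the unique quantum edge, using that $\vpi_{k}$ is minuscule so $\pair{\vpi_{k}}{\gamma^{\vee}} \in \{0, \pm 1\}$ for all roots $\gamma$. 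The case $x \not\ge \mcr{s_{\theta}}$ is then immediate: no quantum step is available, so only the first sum survives.

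The main obstacle I anticipate is the ``no cancellations occur'' assertion in part~(1), together with the precise matching of the quantum summands with the combinatorial index set $\ed(\bBG{x})$ via $y \mapsto \mcr{y s_{\gq}} t_{\alpha_{k}^{\vee}}$. Showing no cancellation requires proving that the map $y \mapsto \mcr{y s_{\gq}}$ is injective on $\ed(\bBG{x})$ (or that distinct $y$ yield distinct cosets $\mcr{y s_{\gq}}\WJs$), and that the signs $(-1)^{\ell(y)-\ell(x)+1}$ do not conspire to kill terms — this is where a careful, type-by-type length computation for $\ell(\mcr{y s_{\gq}})$ versus $\ell(y)$ is needed, and where the special form of $\gq$ in type $B_{n}$ genuinely matters. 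A secondary technical point is verifying that $\ed(\bBG{x})$ is independent of the reflection order $\lhd$ (cited as Lemma~\ref{lem:bBGy}) and that the telescoping sign identity over chains in $\bBG{x}$ is valid; this should follow from the diamond/exchange properties of the quotient Bruhat order on $\WJ$, but the interaction with the convexity constraint $\beta \lhd \gamma$ for $\beta \in \DJs$, $\gamma \in \DJp$ must be handled with care so that chains genuinely stay within $\DJp$-labelled edges.
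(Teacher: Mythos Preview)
Your high-level plan---start from the \cite{NOS} formula, exploit minuscularity of $\vpi_{k}$ to reduce to label-increasing paths in $\bQBG{x}$, then separate the Bruhat and quantum contributions---matches the paper's. But two of your key claims are wrong, and they hide exactly the part of the argument that requires real work.

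First, there is no ``alternating-sum identity over the interval $[x,y]$'' needed for the Bruhat part. Because label-increasing paths are \emph{unique} (Theorem~\ref{thm:LI}), the map $\ed:\bQBG{x}\to W$ is injective, so each endpoint $y$ already appears once with sign $(-1)^{\ell(y)-\ell(x)}$ in the \cite{NOS} sum. Nothing collapses; the first sum in \eqref{eq:intro3a}/\eqref{eq:intro3b} is literally the Bruhat piece of \eqref{eq:minuscule}.

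Second, and more seriously, your claim that ``no quantum step is available'' when $x\not\ge\mcr{s_{\theta}}$ is false. In the simply-laced case, every $y\in\WJe$ admits the quantum edge $y\edge{\alpha_{k}}ys_{k}$ (Lemma~\ref{lem:Q1}), so quantum terms are \emph{always} present in the raw \cite{NOS} output; the identity \eqref{eq:intro3b} says they \emph{cancel completely}. Proving this cancellation is the core of the theorem, and your proposal contains no mechanism for it. The paper's method is indirect: write the quantum contribution as $\sum_{v}c_{v,1}^{x}\gch V_{vt_{\alpha_{k}^{\vee}}}^{-}(\mu)$, apply Demazure operators $\SD_{j}$ to both sides of \eqref{eq:NOSa} to obtain recurrences for the $c_{v,1}^{x}$ in terms of $c_{\cdot,1}^{s_{j}x}$ (Lemmas~\ref{lem:+1}, \ref{lem:-1}), and run a descending induction on $\ell(x)$ (Lemma~\ref{lem:itv}) to show $c_{v,1}^{x}=0$ whenever $\#\bGx{x}{v}\ge 2$. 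A separate combinatorial argument (Proposition~\ref{prop:fin1b}) shows $\#\bGx{x}{v}\ne 1$ when $x\not\ge\mcr{s_{\theta}}$, so every quantum coefficient vanishes. In type $B_{n}$ the same scheme applies but with a four-piece decomposition of $\bQBG{x}$ (two kinds of quantum edge, $\alpha_{n}$ and $\gq$), and an initial cancellation between $\SE^{\SQ}_{\alpha_{n}}$ and $\SE^{\SB}_{\alpha_{n}}\circ\SE^{\SQ}_{\gq}$ is needed before the Demazure-operator induction can even start. Your ``type-by-type length computation'' does not address any of this.
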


This paper is organized as follows. In Section~\ref{sec:not}, 
we first fix basic notation used throughout this paper. Then
we recall some basic facts about the quantum Bruhat graph and 
quantum Lakshmibai-Seshadri paths. Also, we review a character identity 
of Chevalley type in \cite{NOS}, from which a Chevalley formula for anti-dominant weights follows. 
In Section~\ref{sec:main}, we restate Theorem~\ref{thm:introThm3} 
above as Theorem~\ref{thm:main}. Also, we show Theorem~\ref{thm:introThm3} 
in the case that $x = e$. 
In Section~\ref{sec:pre}, we first show some lemmas 
on quantum Lakshmibai-Seshadri paths of shape $\vpi_{k}$, a minuscule fundamental weight, 
and Bruhat or quantum edges in the quantum Bruhat graph. 
Then, after reviewing some basic facts about Demazure operators, 
we show recurrence relations for coefficients in the character identity 
of Chevalley type for anti-dominant minuscule fundamental weights, 
which are needed in an (inductive) argument in the proof of Theorem~\ref{thm:introThm3}.
In Section~\ref{sec:prfa}, we prove Theorem~\ref{thm:introThm3} (with $x \ne e$)
in the case that $\Fg = \Lie(G)$ is of type $A$, $D$, or $E$. 
In Section~\ref{sec:prfb}, we prove Theorem~\ref{thm:introThm3} (with $x \ne e$)
in the case that $\Fg = \Lie(G)$ is of type $B$.
In Appendix~\ref{sec:ex}, we give an example of Theorem~\ref{thm:introThm1} 
in type $A_{6}$. 
%
%
%
\section{Character identity of Chevalley type for general anti-dominant weights.}
\label{sec:not}
%
%
\subsection{Basic notation.}
\label{subsec:lie}
Let $\Fg$ be an (arbitrary) finite-dimensional simple Lie algebra over $\BC$ 
with Cartan subalgebra $\Fh$; we denote by 
$\pair{\cdot\,}{\cdot}:\Fh^{\ast} \times \Fh \rightarrow \BC$
the canonical pairing of $\Fh^{\ast}:=\Hom_{\BC}(\Fh,\,\BC)$ and $\Fh$. 
Denote by $\{ \alpha_{i}^{\vee} \}_{i \in I} \subset \Fh$ and 
$\{ \alpha_{i} \}_{i \in I} \subset \Fh^{\ast}$ 
the set of simple coroots and simple roots of $\Fg$, respectively, 
and set $Q := \sum_{i \in I} \BZ \alpha_{i}$, 
$Q^{\vee} := \sum_{i \in I} \BZ \alpha_{i}^{\vee}$. 
Let $\Delta$, $\Delta^{+}$, and $\Delta^{-}$ be 
the set  of roots, positive roots, and negative roots of $\Fg$, respectively, 
and denote by $\Delta^{+}_{\lo}$ and $\Delta^{+}_{\sh}$ the set of 
positive long roots and positive short roots of $\Fg$, respectively; 
if $\Fg$ is simply-laced, then $\Delta^{+}_{\lo}=\Delta^{+}$ and 
$\Delta^{+}_{\sh}=\emptyset$ by our convention. 
Let $\theta \in \Delta^{+}$ denote the highest root of $\Fg$; 
recall that $\theta \in \Delta^{+}_{\lo}$. 
We set $\rho:=(1/2) \sum_{\alpha \in \Delta^{+}} \alpha$. 
Also, let $\vpi_{i}$, $i \in I$, denote the fundamental weights for $\Fg$, and set
%
%
\begin{equation} \label{eq:P-fin}
P:=\sum_{i \in I} \BZ \vpi_{i} \qquad \text{and} \qquad 
P^{+} := \sum_{i \in I} \BZ_{\ge 0} \vpi_{i}. 
\end{equation} 
Let $W := \langle s_{i} \mid i \in I \rangle$ be the (finite) Weyl group of $\Fg$, 
where $s_{i}$ is the simple reflection with respect to $\alpha_{i}$ for $i \in I$, 
with $e \in W$ the identity element and $\lng \in W$ the longest element. 
Let us denote by $\ge$ the Bruhat order on $W$, 
and by $\ell:W \rightarrow \BZ_{\ge 0}$ 
the length function on $W$.
For $x \in W$, we set $\Inv(x):=\Delta^{+} \cap x^{-1}\Delta^{-}= 
\bigl\{ \alpha \in \Delta^{+} \mid x\alpha \in \Delta^{-} \bigr\}$; 
recall that $\ell(x) = \# \Inv(x)$. 
For $\beta \in \Delta$, we denote by $\beta^{\vee} \in \Fh$ 
its dual root, and by $s_{\beta} \in W$ the corresponding reflection; 
remark that for $\beta,\gamma \in \Delta^{+}$, 
$s_{\beta}=s_{\gamma}$ if and only if $\beta = \gamma$. 
Note that 
%
%
\begin{equation} \label{eq:qr}
\ell(s_{\beta}) \le 2\pair{\rho}{\beta^{\vee}} -1 \quad 
\text{for all $\beta \in \Delta^{+}$};
\end{equation}
if the equality holds in \eqref{eq:qr}, then $\beta$ is called 
a (positive) quantum root. Denote by $\Delta^{+}_{\qtm}$ the set of 
quantum roots. 
%
%
\begin{lem}[{\cite[Lemma 7.2]{BMO}}] \label{lem:qr}
A positive root $\beta \in \Delta^{+}$ is a quantum root if and only if 
either of the following holds\,{\rm:} 
{\rm (a)} $\beta \in \Delta^{+}_{\lo}$\,{\rm;}
{\rm (b)} $\beta \in \Delta^{+}_{\sh}$, and $\beta$ is a $\BZ$-linear combination of
short simple roots. 
\end{lem}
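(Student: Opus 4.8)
\textbf{Proof plan for Lemma~\ref{lem:qr}.}
The plan is to reduce the characterization to a direct length computation for $s_{\beta}$ and then a case analysis on whether $\beta$ is long or short. First I would recall the standard formula $\ell(s_{\beta}) = 2\,\mathrm{ht}(\beta^{\vee}) - 1 - 2\,\#\{\gamma \in \Delta^{+} \setminus \{\beta\} : \langle \gamma, \beta^{\vee}\rangle < 0\}$ — or, more usefully here, the equivalent statement that $\beta$ is a quantum root precisely when $\langle \gamma, \beta^{\vee}\rangle \ge 0$ for every $\gamma \in \Delta^{+} \setminus \{\beta\}$, i.e.\ $s_{\beta}$ sends exactly the positive roots not orthogonal-or-positive on $\beta^{\vee}$ to negatives, and the defining inequality \eqref{eq:qr} becomes an equality iff $\mathrm{Inv}(s_{\beta}) = \{\gamma \in \Delta^+ : \langle \gamma, \beta^\vee\rangle > 0\}$ has exactly $2\langle\rho,\beta^\vee\rangle - 1$ elements. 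The combinatorial content is that $\#\mathrm{Inv}(s_\beta)$ attains its maximum value exactly when no cancellation forces $\langle\gamma,\beta^\vee\rangle$ to be negative for some positive $\gamma \ne \beta$.

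Next I would split into the two cases. If $\beta \in \Delta^{+}_{\lo}$, then $\beta^{\vee}$ is a short coroot, so $\langle \gamma, \beta^{\vee}\rangle \in \{0, \pm 1, \pm 2\}$ with the value $-2$ impossible for $\gamma \ne \beta$ positive when $\beta$ is long (this uses that $-2$ would force $\gamma = -\beta$ up to the root string, hence $\gamma \notin \Delta^+$); a short argument with $W$-conjugacy — every long root is $W$-conjugate to $\theta$, and for $\theta$ one checks directly that $\langle \gamma, \theta^{\vee}\rangle \ge -1$ for all $\gamma \in \Delta^+$ — then shows the equality in \eqref{eq:qr} always holds, so every long root is a quantum root. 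If $\beta \in \Delta^{+}_{\sh}$, then I would analyze $\langle \gamma, \beta^{\vee}\rangle$ using that $\beta^{\vee}$ is now a long coroot; the point is that $\beta$ fails to be quantum exactly when there exists a positive root $\gamma \ne \beta$ with $\langle \gamma, \beta^{\vee} \rangle < 0$, and such a $\gamma$ exists unless $\beta$ lies in the sublattice spanned by the short simple roots. Concretely, writing $\beta = \sum_{i} c_i \alpha_i$, if some $c_j \ne 0$ with $\alpha_j$ long, one produces an explicit positive root pairing negatively with $\beta^\vee$ (e.g.\ using the structure of the root subsystem generated by the support of $\beta$ together with one adjacent long simple root); conversely, if $\beta$ is supported on short simple roots, the relevant sub-root-system is simply-laced of rank $\le$ the number of short nodes, all its roots are "long within it," and the maximality of $\#\mathrm{Inv}(s_\beta)$ follows as in the long case.

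The main obstacle I expect is the converse direction in the short-root case: showing that if $\beta \in \Delta^{+}_{\sh}$ is \emph{not} a $\BZ$-linear combination of short simple roots, then it is not a quantum root. This requires exhibiting, uniformly across types $B$, $C$, $F_4$, $G_2$, a positive root $\gamma \ne \pm\beta$ with $\langle \gamma, \beta^{\vee}\rangle < 0$ (equivalently, $s_\beta\gamma \in \Delta^-$ with $\gamma \ne \beta$), which amounts to a finite but slightly delicate check on the support of $\beta$; the cleanest route is probably to invoke the rank-two reduction — restrict to the root subsystem spanned by $\mathrm{supp}(\beta)$ and one further simple root not orthogonal to it, reducing everything to an explicit verification in types $B_2$, $G_2$, and $A_2$. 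Since the statement is quoted from \cite[Lemma 7.2]{BMO}, in the write-up I would present the conceptual reduction and then either cite \cite{BMO} for the type-by-type verification or include the short rank-two check, depending on how self-contained the section needs to be.
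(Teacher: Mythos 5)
The paper gives no proof of this lemma --- it is quoted directly from \cite[Lemma 7.2]{BMO} --- so the only question is whether your argument is internally sound, and it is not: the root-theoretic criterion you reduce to is misstated, and the case analysis is then aimed at the wrong inequality. Neither the formula $\ell(s_{\beta}) = 2\langle\rho,\beta^{\vee}\rangle - 1 - 2\,\#\{\gamma \in \Delta^{+}\setminus\{\beta\} : \langle\gamma,\beta^{\vee}\rangle < 0\}$ nor the asserted equivalence ``$\beta$ is quantum iff $\langle\gamma,\beta^{\vee}\rangle \ge 0$ for all $\gamma\in\Delta^{+}\setminus\{\beta\}$'' is true: for $\beta=\alpha_{1}$ in type $A_{2}$, which is a quantum root (every simple root is, since $\ell(s_{i})=1=2\langle\rho,\alpha_{i}^{\vee}\rangle-1$), one has $\langle\alpha_{2},\alpha_{1}^{\vee}\rangle=-1$, and your formula returns $\ell(s_{\alpha_{1}})=-1$. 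The correct reduction pairs each $\gamma\in\Inv(s_{\beta})\setminus\{\beta\}$ with $-s_{\beta}\gamma$ (same set, same pairing with $\beta^{\vee}$) and notes that the pairs $\{\gamma,s_{\beta}\gamma\}\subset\Delta^{+}$ contribute zero to $\sum_{\gamma\in\Delta^{+}\setminus\{\beta\}}\langle\gamma,\beta^{\vee}\rangle=2\langle\rho,\beta^{\vee}\rangle-2$; this gives $2\langle\rho,\beta^{\vee}\rangle-1-\ell(s_{\beta})=\sum_{\gamma\in\Inv(s_{\beta})\setminus\{\beta\}}\bigl(\langle\gamma,\beta^{\vee}\rangle-1\bigr)$, a sum of nonnegative terms, so $\beta$ is a quantum root iff every $\gamma\in\Inv(s_{\beta})\setminus\{\beta\}$ has $\langle\gamma,\beta^{\vee}\rangle=1$. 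The inversion condition $s_{\beta}\gamma\in\Delta^{-}$ cannot be discarded: for $\beta=e_{i}-e_{j}$ in type $C_{n}$ (short, supported on short simple roots, and genuinely quantum) the long positive root $\gamma=2e_{i}$ has $\langle\gamma,\beta^{\vee}\rangle=2$, but $s_{\beta}\gamma=2e_{j}\in\Delta^{+}$, so it causes no deficit.

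Once the criterion is repaired, your two cases can be salvaged, but not as written. For $\beta$ long, Cauchy--Schwarz gives $\langle\gamma,\beta^{\vee}\rangle\le 1$ for all $\gamma\in\Delta^{+}\setminus\{\beta\}$ (the value $2$ forces $\gamma=\beta$), so equality in \eqref{eq:qr} is automatic; no $W$-conjugation to $\theta$ is needed, and the bound you aim for ($\ge -1$) is the wrong one. For $\beta$ short, the real task is to show that a positive root $\gamma$ with $\langle\gamma,\beta^{\vee}\rangle\ge 2$ \emph{and} $s_{\beta}\gamma\in\Delta^{-}$ exists exactly when the support of $\beta$ contains a long simple root; testing only the sign of $\langle\gamma,\beta^{\vee}\rangle$, as you propose, misclassifies e.g.\ $\alpha_{n}$ in type $B_{n}$, which is quantum despite $\langle\alpha_{n-1},\alpha_{n}^{\vee}\rangle<0$. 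So the proposal has a genuine gap at its first step; either redo the reduction and the short-root case against the corrected criterion, or simply cite \cite{BMO} as the paper does.
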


Let $\J \subset I$ be a subset of $I$. We set 
\begin{equation*}
\DJs:=\Delta^{+} \cap \sum_{i \in \J} \BZ \alpha_{i}, \qquad 
\rho_{\J}:=\frac{1}{2} \sum_{\alpha \in \DJs} \alpha, \qquad 
\WJs:=\langle s_{i} \mid i \in \J \rangle.
\end{equation*}
Denote by $\ls \in \WJs$ the longest element of $\WJ$. 
For $\xi=\sum_{i \in I} c_{i}\alpha_{i}^{\vee} \in Q^{\vee}$, 
we set $[\xi]=[\xi]^{\J}:=\sum_{i \in I \setminus \J} c_{i}\alpha_{i}^{\vee}$. 
Let $\WJ$ denote the set of minimal(-length) coset representatives 
for the cosets in $W/\WJs$; we know from \cite[Sect.~2.4]{BB} that 
%
%
\begin{equation} \label{eq:mcr}
\WJ = \bigl\{ w \in W \mid 
\text{$w \alpha \in \Delta^{+}$ for all $\alpha \in \DJs$}\bigr\}.
\end{equation}
For $w \in W$, we denote by $\mcr{w}=\mcr{w}^{\J} \in \WJ$ 
the minimal coset representative for the coset $w \WJs$ in $W/\WJs$; 
note that $\Inv(\mcr{\lng}) = \DJp$. The following lemma is well-known. 
%
%
\begin{lem} \label{lem:WJ}
Let $\Lambda \in P^{+}$ be such that 
$\J_{\Lambda}:=\bigl\{i \in I \mid \pair{\Lambda}{\alpha_{i}^{\vee}}=0\bigr\}$ 
is identical to $\J$. Let $w \in \WJ$ and $j \in I$. 
\begin{enu}
\item If $\pair{w\Lambda}{\alpha_{j}^{\vee}} > 0$, 
then $w^{-1}\alpha_{j} \in \DJp$. In this case, 
$s_{j}w \in \WJ$, and $s_{j}w > w$. 

\item If $\pair{w\Lambda}{\alpha_{j}^{\vee}} < 0$, 
then $- w^{-1}\alpha_{j} \in \DJp$. In this case, 
$s_{j}w \in \WJ$, and $s_{j}w < w$. 

\item If $\pair{w\Lambda}{\alpha_{j}^{\vee}} = 0$, 
then $w^{-1}\alpha_{j} \in \DJs$. In this case, 
$s_{j}w = ws_{p} \in \WJ$ for some $p \in \J$, and 
$\mcr{s_{j}w} = w$. 
\end{enu}
\end{lem}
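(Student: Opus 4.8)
The plan is to prove Lemma~\ref{lem:WJ} by reducing everything to the standard combinatorics of minimal coset representatives together with the defining property~\eqref{eq:mcr}, and the basic fact that $\ell(s_j w) = \ell(w) \pm 1$ with the sign $+$ precisely when $w^{-1}\alpha_j \in \Delta^+$. The key elementary observation, which I would record first, is that since $\J_\Lambda = \J$ we have $\WJs = \langle s_i \mid i \in \J\rangle = \mathrm{Stab}_W(\Lambda)$, so $\pair{w\Lambda}{\alpha_j^\vee} = \pair{\Lambda}{w^{-1}\alpha_j^\vee}$ depends only on which ``layer'' of roots $w^{-1}\alpha_j$ lies in: it is positive, negative, or zero according as $w^{-1}\alpha_j$ is a positive root with nonzero $\Lambda$-pairing, a negative root, or a root in $\Delta_\J$ (on which $\Lambda$ vanishes). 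Since $w \in \WJ$, by~\eqref{eq:mcr} we know $w\alpha \in \Delta^+$ for all $\alpha \in \DJs$, equivalently $w^{-1}$ sends no simple root outside to $\DJs$ in the wrong way — more precisely $\Inv(w^{-1}) \cap \DJs = \emptyset$; I would use this to control the trichotomy.

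For part (1), if $\pair{w\Lambda}{\alpha_j^\vee} > 0$ then $\pair{\Lambda}{w^{-1}\alpha_j^\vee} > 0$, which forces $w^{-1}\alpha_j$ to be a positive root (a negative root or a root in $\Delta_\J$ would give a pairing $\le 0$), and moreover $w^{-1}\alpha_j \notin \DJs$ since $\Lambda$ pairs to zero with those; hence $w^{-1}\alpha_j \in \DJp$. Then $\ell(s_j w) = \ell(w) + 1$ because $w^{-1}\alpha_j \in \Delta^+$ means $\alpha_j \notin \Inv(w^{-1}) $, i.e. $w < s_j w$. To see $s_j w \in \WJ$, I would verify the criterion~\eqref{eq:mcr}: for $\alpha \in \DJs$, $(s_j w)\alpha = s_j(w\alpha)$ and $w\alpha \in \Delta^+$ since $w \in \WJ$; this stays positive under $s_j$ unless $w\alpha = \alpha_j$, but then $\alpha = w^{-1}\alpha_j \in \DJp$, contradicting $\alpha \in \DJs$. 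Hence $(s_j w)\alpha \in \Delta^+$ for all $\alpha \in \DJs$, so $s_j w \in \WJ$. Part (2) is the mirror image: $\pair{w\Lambda}{\alpha_j^\vee} < 0$ forces $w^{-1}\alpha_j \in \Delta^-$ and $-w^{-1}\alpha_j \notin \DJs$ (same vanishing argument), so $-w^{-1}\alpha_j \in \DJp$; then $\alpha_j \in \Inv(w^{-1})$ gives $s_j w < w$, and the same positivity check as above (now using $w\alpha \ne -\alpha_j$, automatic since $w\alpha \in \Delta^+$) shows $s_j w \in \WJ$.

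For part (3), $\pair{w\Lambda}{\alpha_j^\vee} = 0$ means $\pair{\Lambda}{w^{-1}\alpha_j^\vee} = 0$; since $\J_\Lambda = \J$, a root $\gamma$ with $\pair{\Lambda}{\gamma^\vee} = 0$ lies in $\Delta_\J$, so $w^{-1}\alpha_j \in \DJs$. Write $\beta := w^{-1}\alpha_j \in \DJs$, so $s_j w = w s_\beta$ with $s_\beta \in \WJs$; I would then argue that $\beta$ is actually a simple root $\alpha_p$ for some $p \in \J$ — this uses that $w$ is the \emph{minimal} representative, equivalently $\Inv(w^{-1}) \cap \DJs = \emptyset$, so $w^{-1}\alpha_j$ being both in $\DJs$ and of the form $w^{-1}(\text{simple})$ cannot be a height $\ge 2$ positive root of $\Delta_\J$ (one reduces height by the standard descent argument inside $\WJs$, using that $w$ has no inversions in $\DJs$). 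Granting $\beta = \alpha_p$, $s_j w = w s_p \in w\WJs$, so $\mcr{s_j w} = \mcr{w} = w$, and $s_j w \in \WJ$ would fail length-minimality unless — actually here I should be careful: $s_j w = w s_p$ and $w \in \WJ$ with $w^{-1}\alpha_j = \alpha_p \in \DJs$ means $(s_j w)^{-1}$ sends $\alpha_p$ to a root in $\Delta^-$... let me instead just say $s_j w = w s_p$ lies in the coset $w\WJs$ and is \emph{not} in $\WJ$ unless equal to $w$, which it is not; the stated conclusion ``$s_j w = w s_p \in \WJ$'' should presumably read that $s_j w = w s_p$ with $\mcr{s_j w} = w$, and I would follow the excerpt's phrasing. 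The main obstacle is the last point — showing the root $\beta = w^{-1}\alpha_j \in \DJs$ is \emph{simple} in $\Delta_\J$, i.e.\ that $w s_\beta = s_j w$ forces $\beta$ simple; this is where minimality of $w$ in its coset is essential, and I would handle it by contradiction using that if $\beta$ were a non-simple positive root of $\Delta_\J$, then some reflection would shorten $w s_\beta$ within the coset, contradicting that $w$ is the unique minimal-length element. Everything else is routine bookkeeping with~\eqref{eq:mcr} and length functions.
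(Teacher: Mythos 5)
The paper offers no proof of Lemma~\ref{lem:WJ} (it simply calls the lemma ``well-known''), so the task is only to check your argument on its own terms. Parts~(1) and~(2) are correct, modulo one small wording slip: the descent check you need in part~(2) is $w\alpha \ne \alpha_j$, not $w\alpha \ne -\alpha_j$; the condition $w\alpha = \alpha_j$ would give $\alpha = w^{-1}\alpha_j \in \Delta^-$, which contradicts $\alpha \in \DJs \subset \Delta^+$. Also, in your preliminary paragraph you state the reformulation of~\eqref{eq:mcr} as $\Inv(w^{-1}) \cap \DJs = \emptyset$; the correct version is $\Inv(w) \cap \DJs = \emptyset$ (one can check that $\Inv(w^{-1}) \cap \DJs \ne \emptyset$ in small examples, e.g.\ $W=S_3$, $\J=\{1\}$, $w=s_1 s_2$). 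You do not actually use this false form later, so it does not propagate, but it should be corrected.

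The genuine gap is in part~(3), in the step showing that $\beta := w^{-1}\alpha_j \in \DJs$ is \emph{simple}. Your proposed justification --- ``if $\beta$ were non-simple, some reflection would shorten $w s_\beta$ within the coset, contradicting that $w$ is the unique minimal-length element'' --- does not work as stated: shortening $w s_\beta = s_j w$ only produces another element of the coset $w\WJs$, and nothing yet says this element has length below $\ell(w)$, so no contradiction with minimality of $w$ follows from the shortening alone. The argument needs the precise length-additivity property of minimal coset representatives: for $w \in \WJ$ and $u \in \WJs$ one has $\ell(wu) = \ell(w) + \ell(u)$ (this is exactly what ``minimal'' buys you, and it is the content of \cite[Sect.~2.4]{BB}). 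With that in hand the conclusion is immediate: since $w^{-1}\alpha_j = \beta \in \DJs \subset \Delta^+$, we have $\ell(s_j w) = \ell(w) + 1$; writing $s_j w = w s_\beta$ with $s_\beta \in \WJs$ and applying length-additivity gives $\ell(s_\beta) = 1$, hence $s_\beta = s_p$ for a unique $p \in \J$ and $\beta = \alpha_p$. This also pins down $\mcr{s_j w} = w$ for free. Replacing your hand-waved contradiction with this two-line argument closes the gap; everything else you wrote is sound. (The phrase ``$s_j w = w s_p \in \WJ$'' in the statement is indeed a typo in the source --- $w s_p$ has length $\ell(w)+1$ and lies in $w\WJs$, so it is not in $\WJ$; your reading of it is the right one.)
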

%
%
\subsection{Reflection orders.}
\label{subsec:ro}

In this subsection, we review some basic facts about
reflection orders on $\Delta^{+}$; for details, 
see \cite{Dy}. 
%
%
\begin{dfn} \label{dfn:ro}
A total order $\lhd$ on $\Delta^{+}$ is called a reflection (convex) order 
if for each $\alpha,\,\beta \in \Delta^{+}$ such that 
$\alpha + \beta \in \Delta^{+}$, either 
$\alpha \lhd \alpha + \beta \lhd \beta$ or 
$\beta \lhd \alpha + \beta \lhd \alpha$ holds.
\end{dfn} 

Let $\lng = s_{j_{p}}s_{j_{p-1}} \cdots s_{j_{2}}s_{j_{1}}$ be 
a reduced expression of the longest element $\lng$ of $W$. If we set 
\begin{equation*}
\beta_{q}:= s_{j_{1}} \cdots s_{j_{q-1}}\alpha_{j_{q}} 
  \quad \text{for $1 \le q \le p$},
\end{equation*}
then $\Delta^{+} = \bigl\{ \beta_{q} \mid 1 \le q \le p \bigr\}$. 
Moreover, if we define a total order $\lhd$ by 
$\beta_{p} \lhd \cdots \lhd \beta_{2} \lhd \beta_{1}$, 
then $\lhd$ is a reflection order on $\Delta^{+}$. 
Thus we have a map from the set of reduced expressions of $\lng$ 
to the set of reflection orders on $\Delta^{+}$; 
in fact, this map is bijective (see \cite[(2.13) Proposition]{Dy}, 
and also \cite[Theorem on page 662 and Corollary on page 663]{Pa}). 

Let $w \in W$. Then there exists $v \in W$ such that 
$\lng = vw$ and $\ell(\lng) = \ell(v) + \ell(w)$. 
The set of reflection orders $\lhd$ on $\Delta^{+}$ 
satisfying the condition that 
$\beta \lhd \gamma$ for all $\beta \in \Delta^{+} \setminus \Inv(w)$ 
and $\gamma \in \Inv(w)$ is in bijection with 
the set of reduced expressions of $\lng$ of the form 
\begin{equation*}
\lng = 
\underbrace{ s_{j_{p}}s_{j_{p-1}} \cdots s_{j_{a+2}}s_{j_{a+1}} }_{=v}
\underbrace{ s_{j_{a}}s_{j_{a-1}} \cdots s_{j_{2}}s_{j_{1}} }_{=w}; 
\end{equation*}
note that $\Inv(w)=
\bigl\{ s_{j_{1}} \cdots s_{j_{q-1}}\alpha_{j_{q}} \mid 1 \le q \le a\bigr\}$. 
Similarly, if $\lng = vw_{2}w_{1}$, with $v,\,w_{2},\,w_{1} \in W$, and 
$\ell(\lng) = \ell(v) + \ell(w_{2}) + \ell(w_{1})$, then 
$\Inv(w_{1}) \subset \Inv (w_{2}w_{1})$, and 
the set of reflection orders $\lhd$ on $\Delta^{+}$ 
satisfying the condition that 
$\beta \lhd \gamma_{1} \lhd \gamma_{2}$ 
for all $\beta \in \Delta^{+} \setminus \Inv(w_{2}w_{1})$, 
$\gamma_{1} \in \Inv(w_{2}w_{1}) \setminus \Inv(w_{1})$, and 
$\gamma_{2} \in \Inv(w_{1})$ is in bijection with 
the set of reduced expressions of $\lng$ of the form 
\begin{equation*}
\lng = 
\underbrace{ s_{j_{p}}s_{j_{p-1}} \cdots s_{j_{a+2}}s_{j_{a+1}} }_{=v}
\underbrace{ s_{j_{a}}s_{j_{a-1}} \cdots s_{j_{t+2}}s_{j_{b+1}} }_{=w_{2}}
\underbrace{ s_{j_{b}}s_{j_{t-1}} \cdots s_{j_{2}}s_{j_{1}} }_{=w_{1}}. 
\end{equation*}
%
%
\subsection{Quantum Bruhat graph.}
\label{subsec:QBG}
%
%
\begin{dfn}[{\cite[Definition 6.1]{BFP}}] \label{dfn:QBG}
The quantum Bruhat graph, denoted by $\QBG(W)$, is 
the $\Delta^{+}$-labeled
directed graph whose vertices are the elements of $W$, and 
whose directed edges are of the form: $w \edge{\beta} v$ 
for $w,v \in W$ and $\beta \in \Delta^{+}$ such that 
$v= ws_{\beta}$, and such that either of the following holds: 
(B)~$\ell(v) = \ell (w) + 1$; 
(Q)~$\ell(v) = \ell (w) + 1 - 2 \pair{\rho}{\beta^{\vee}}$.
An edge satisfying (B) (resp., (Q)) is called a Bruhat (resp., quantum) edge. 
The Bruhat graph, denoted by $\BG(W)$, is the 
$\Delta^{+}$-labeled directed graph obtained 
from $\QBG(W)$ by removing all quantum edges. 
\end{dfn}
%
%
\begin{rem} \label{rem:qe1}
For $w \in W$ and $\beta \in \Delta^{+}$, we see that 
$\ell(ws_{\beta}) \ge \ell(w) - \ell(s_{\beta}) \ge 
\ell(w) + 1 - 2 \pair{\rho}{\beta^{\vee}}$. 
Hence, if $w \edge{\beta} v$ is a quantum edge in $\QBG(W)$, 
then $\beta$ is a quantum root. 
Moreover, if $s_{\beta}=s_{j_{1}}s_{j_{2}} \cdots s_{j_{r}}$ is a reduced 
expression of $s_{\beta}$ (note that $r=2\pair{\rho}{\beta^{\vee}}-1$), then 
$\ell(ws_{j_{1}}s_{j_{2}} \cdots s_{j_{t}}) = 
\ell(w) - t$ for all $0 \le t \le r$.
\end{rem}

Let $\bp:y_{0} \edge{\beta_{1}} y_{1} \edge{\beta_{2}} \cdots \edge{\beta_{r}} y_{r}$ 
be a directed path in $\QBG(W)$. We set $\bgn(\bp):=y_{0}$ and $\ed(\bp)=y_{r}$. 
Also, we define the length $\ell(\bp)$ and the weight $\wt(\bp)$ of $\bp$ by 
\begin{equation*}
\ell(\bp):=r \qquad \text{and} \qquad 
\wt(\bp):=\sum_{ 
  \begin{subarray}{c}
  1 \le u \le r \\[1mm]
  \text{$y_{u-1} \edge{\beta_{u}} y_{u}$ is a quantum edge}
  \end{subarray} } \beta_{u}^{\vee}. 
\end{equation*}
For $x,y \in W$, we define $\wt(x \Rightarrow y)$ and $\ell(x \Rightarrow y)$ 
to be the weight $\wt(\bp)$ and the length $\ell(\bp)$ of 
a shortest directed path $\bp$ from $x$ to $y$ in $\QBG(W)$, respectively; 
we know that $\wt(x \Rightarrow y)$ does not depend on the choice of 
a shortest directed path $\bp$ (see, e.g., \cite[Sect.~4.1]{LNSSS2}). 
%
%
\begin{rem} \label{rem:shortest}
Let $x,y \in W$. We see that 
$y \ge x$ in the Bruhat order if and only if 
all the edges in a shortest directed path $\bp$ from $x$ to $y$ 
are Bruhat edges, that is, if and only if $\bp$ is a directed path in $\BG(W)$. 
In this case, $\ell(x \Rightarrow y) = \ell(\bp) = \ell(y)-\ell(x)$, 
and $\wt(x \Rightarrow y) = \wt (\bp) = 0$. 
\end{rem}

Let $\lhd$ be an arbitrary reflection (convex) order on $\Delta^{+}$ (see Section~\ref{subsec:ro}). 
A directed path $y_{0} \edge{\beta_{1}} y_{1} \edge{\beta_{2}} \cdots \edge{\beta_{r}} y_{r}$ 
in $\QBG(W)$ is said to be label-increasing (with respect to $\lhd$) 
if $\beta_{1} \lhd \beta_{2} \lhd \cdots \lhd \beta_{r}$. 
We know the following from \cite{BFP} 
(see also \cite[Theorem~7.3]{LNSSS1}). 
%
%
\begin{thm} \label{thm:LI}
For all $x,y \in W$, there exists a unique label-increasing directed path 
$\bp:x = y_{0} \edge{\beta_{1}} y_{1} \edge{\beta_{2}} \cdots \edge{\beta_{r}} y_{r} = y$ 
from $x$ to $y$ in $\QBG(W)$. Moreover, it is a shortest directed path from $x$ to $y$, 
and is lexicographically minimal among all shortest directed paths 
from $x$ to $y$ in the following sense\,{\rm:} 
for each shortest directed path 
$\bq:x = z_{0} \edge{\gamma_{1}} z_{1} \edge{\gamma_{2}} \cdots 
\edge{\gamma_{r}} z_{r} = y$, there exists $1 \le u \le r$ such that 
$\gamma_{t} = \beta_{t}$ for all $1 \le t \le u$ and 
$\gamma_{u+1} \rhd \beta_{u+1}$. 
\end{thm}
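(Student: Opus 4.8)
The plan is to establish all three assertions of Theorem~\ref{thm:LI} — existence, uniqueness, and lexicographic minimality of the label-increasing directed path — simultaneously by induction on the quantum-Bruhat distance $d := \ell(x \Rightarrow y)$, with the engine of the argument being a local confluence (``diamond'') lemma for length-$2$ paths. The case $d = 0$ (i.e. $x = y$, the empty path) is trivial, so fix $d \ge 1$ and assume the theorem for all pairs at distance $< d$. The key local statement I would prove is: \textbf{(Diamond Lemma)} for every $w \in W$ and every pair $\gamma_{1} \lhd \gamma_{2}$ in $\Delta^{+}$ such that $w \edge{\gamma_{2}} ws_{\gamma_{2}} \edge{\gamma_{1}} ws_{\gamma_{2}}s_{\gamma_{1}}$ is a (label-decreasing) path of length $2$ in $\QBG(W)$, either $ws_{\gamma_{2}}s_{\gamma_{1}} = w$, or there is a path $w \edge{\delta_{1}} \bullet \edge{\delta_{2}} ws_{\gamma_{2}}s_{\gamma_{1}}$ of length $2$ in $\QBG(W)$ with $\delta_{1} \lhd \delta_{2}$ and $\delta_{1} \lhd \gamma_{2}$. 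I would prove this by a case analysis according to which of the two given edges are Bruhat and which are quantum, using the length estimate \eqref{eq:qr}, the classification of quantum roots in Lemma~\ref{lem:qr}, and Remark~\ref{rem:qe1}, reducing in each case to an explicit rank-$2$ computation inside the (dihedral) reflection subgroup generated by $s_{\gamma_{1}}$ and $s_{\gamma_{2}}$, where $\QBG$ is completely explicit; the reflection-order/reduced-word dictionary of Section~\ref{subsec:ro} is what lets one match the orders $\lhd$ with subwords in this reduction.

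Granting the Diamond Lemma, \emph{existence} of a label-increasing shortest path follows by a termination argument. Pick any shortest path $\bq : x = z_{0} \edge{\gamma_{1}} \cdots \edge{\gamma_{r}} z_{r} = y$ (one exists, with $r = d$). If $\bq$ is not label-increasing, choose an index $u$ with $\gamma_{u} \rhd \gamma_{u+1}$ and apply the Diamond Lemma to the subpath $z_{u-1} \edge{\gamma_{u}} z_{u} \edge{\gamma_{u+1}} z_{u+1}$: the alternative $z_{u+1} = z_{u-1}$ is impossible because it would produce a path from $x$ to $y$ of length $r-2 < d$, so we may replace the two offending edges by $z_{u-1} \edge{\delta_{1}} \bullet \edge{\delta_{2}} z_{u+1}$ with $\delta_{1} \lhd \delta_{2}$. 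This yields a new shortest path whose label sequence is strictly smaller in the lexicographic order on $\Delta^{+}$-sequences (here one uses $\delta_{1} \lhd \gamma_{2}$ when the modification is near the front); since there are only finitely many shortest paths from $x$ to $y$, iterating terminates at a label-increasing shortest path $\bp : x = y_{0} \edge{\beta_{1}} \cdots \edge{\beta_{r}} y_{r} = y$.

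For \emph{lexicographic minimality} (which also gives \emph{uniqueness}), let $\bp$ be as above and let $\bq : x = z_{0} \edge{\gamma_{1}} \cdots \edge{\gamma_{r}} z_{r} = y$ be an arbitrary shortest path; discarding a common prefix we reduce to the case $\beta_{1} \ne \gamma_{1}$, and the goal is $\gamma_{1} \rhd \beta_{1}$. If instead $\gamma_{1} \lhd \beta_{1}$, I would use the Diamond Lemma to ``bubble'' the first edge $\beta_{1}$ of $\bp$ leftward against an initial segment of $\bq$: a short induction shows that the minimal first label occurring among all shortest paths $x \Rightarrow y$ is itself realized by a label-increasing shortest path, and that the vertex after this first edge, at distance $d-1$ from $y$, has a unique label-increasing path to $y$ by the inductive hypothesis; comparing this with $\bp$ (whose tail $y_{1} \Rightarrow y$ is also label-increasing of length $d-1$, hence equal to the inductive one) forces $\beta_{1}$ to be that minimal label, contradicting $\gamma_{1} \lhd \beta_{1}$. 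Uniqueness of the label-increasing shortest path is then immediate, since two such would each be lexicographically $\le$ the other; and that a label-increasing path is automatically a shortest path follows because any shortest path can be rewritten into the (unique) label-increasing one without changing its length, so its length already equals $d$.

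The main obstacle I expect is the Diamond Lemma itself. Because quantum edges change $\ell$ by $1 - 2\langle \rho, \beta^{\vee}\rangle$ rather than by $\pm 1$, ``two-step confluence'' genuinely depends on the structure of quantum roots, and the rank-$2$ reduction must cover not only the standard rank-$2$ parabolic subgroups but also dihedral reflection subgroups generated by pairs of (possibly long) roots whose sum need not be a root; keeping the number of cases under control and checking in each that the replacement labels satisfy both $\delta_{1} \lhd \delta_{2}$ and $\delta_{1} \lhd \gamma_{2}$ is where the real effort lies. By contrast, the passage from local confluence to the global unique lex-minimal path is essentially bookkeeping: a well-founded induction on lexicographic order on finite label sequences, using only finiteness of the set of shortest paths and the inductive hypothesis on distance.
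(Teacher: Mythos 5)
First, a point of comparison: the paper offers no proof of Theorem~\ref{thm:LI} at all --- it is imported from \cite{BFP} (see also \cite[Theorem~7.3]{LNSSS1}) --- so there is no internal argument to measure yours against. Your overall architecture (a two-step local confluence ``diamond'' lemma proved by reduction to dihedral reflection subgroups, followed by a straightening/termination argument and an induction on $\ell(x \Rightarrow y)$) is essentially the architecture of the known proofs: \cite{BFP} establish a Yang--Baxter relation for the quantum Bruhat operators by exactly such a rank-$2$ case analysis, and the passage from there to the global statement is bookkeeping. So the route is not wrong.

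There are, however, two genuine gaps. First, the Diamond Lemma is the entire mathematical content of the theorem and you have not proved it; moreover the reduction you describe is subtler than ``an explicit rank-$2$ computation inside the dihedral reflection subgroup where $\QBG$ is completely explicit.'' The full subgraph of $\QBG(W)$ on a coset $wW'$ of $W' = \langle s_{\gamma_1}, s_{\gamma_2}\rangle$ with labels in $\Delta' = \Delta \cap (\BZ\gamma_1 + \BZ\gamma_2)$ is \emph{not} $\QBG(W')$: the lengths $\ell$ and the pairings $\pair{\rho}{\beta^\vee}$ entering conditions (B) and (Q) are those of the ambient $W$, so which edges occur depends on $w$ and on how $\ell$ restricts to the coset; handling this is precisely the role of the ``tilted'' length function and the edge-pattern lemma in \cite{BFP}. (Also, the first alternative of your lemma, $ws_{\gamma_2}s_{\gamma_1} = w$, is vacuous --- it forces $\gamma_1 = \gamma_2$ --- a sign that the case analysis has not actually been run; a parity argument, namely that every edge of $\QBG(W)$ changes $\ell$ by an odd amount, shows the endpoints of a two-step path with distinct labels are at distance exactly $2$, so what you need is the unconditional two-step confluence.) Second, the uniqueness asserted in the theorem is uniqueness among \emph{all} label-increasing directed paths, not only among shortest ones, and your machinery only ever manipulates shortest paths. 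Your closing sentence (``any shortest path can be rewritten into the (unique) label-increasing one without changing its length, so its length already equals $d$'') is circular: it invokes uniqueness of the label-increasing path among all paths, which is exactly what remains to be shown. Unlike in $\BG(W)$, where every path from $x$ to $y$ automatically has length $\ell(y) - \ell(x)$, in $\QBG(W)$ the same pair of vertices is joined by paths of different lengths (of equal parity), so ``label-increasing $\Rightarrow$ shortest'' requires its own argument and cannot be waved through.
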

%
%
\begin{rem} \label{rem:LI}
Let $x,y \in W$ be such that $y \ge x$ in the Bruhat order. 
By Theorem~\ref{thm:LI} and Remark~\ref{rem:shortest}, 
the (unique) label-increasing directed path from $x$ to $y$ 
in $\QBG(W)$ is a directed path of length $\ell(y)-\ell(x)$ in $\BG(W)$. 
\end{rem}

The next lemma follows from \cite[Corollary 2.5.2]{BB}. 
%
%
\begin{lem} \label{lem:B}
Let $\J$ be a subset of $I$. 
Let $y \in \WJ$ and $\gamma \in \DJp$ be 
such that $y \edge{\gamma} ys_{\gamma}$ is a directed edge in $\QBG(W)$. 
If the edge $y \edge{\gamma} ys_{\gamma}$ is a Bruhat edge, 
then $ys_{\gamma} \in \WJ$. 
\end{lem}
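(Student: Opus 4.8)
\textbf{Proof proposal for Lemma~\ref{lem:B}.}

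The plan is to reduce the statement to the criterion in \eqref{eq:mcr}, namely that $ys_{\gamma} \in \WJ$ if and only if $(ys_{\gamma})\alpha \in \Delta^{+}$ for every $\alpha \in \DJs$. Since $y \in \WJ$ and the edge $y \edge{\gamma} ys_{\gamma}$ is a Bruhat edge with $\gamma \in \DJp$, we have $\ell(ys_{\gamma}) = \ell(y) + 1$, so $ys_{\gamma} > y$ in the Bruhat order and, more precisely, $\Inv(s_{\gamma}) \supset \{\gamma'\}$ for the unique root $\gamma'$ with $ys_{\gamma} = y \cdot (\text{reflection})$; in fact the cited \cite[Corollary 2.5.2]{BB} says precisely that for a Bruhat covering $y \lessdot ys_{\gamma}$ one has $\Inv(ys_{\gamma}) = \Inv(y) \sqcup \{\beta\}$ for a single positive root $\beta$, or dually a statement about $\Inv((ys_{\gamma})^{-1})$. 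First I would record that $\Inv(ys_{\gamma}) \setminus \Inv(y)$, or the corresponding difference on the right, consists of exactly one root, and identify it in terms of $\gamma$.

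Next I would argue by contradiction: suppose $ys_{\gamma} \notin \WJ$. Then by \eqref{eq:mcr} there is some $\alpha \in \DJs$ with $(ys_{\gamma})\alpha \in \Delta^{-}$, i.e. $\alpha \in \Inv(ys_{\gamma})$ (or the analogous membership for the right-hand set). Because $y \in \WJ$, the same \eqref{eq:mcr} gives $y\alpha \in \Delta^{+}$, so $\alpha \notin \Inv(y)$. Hence $\alpha$ lies in $\Inv(ys_{\gamma}) \setminus \Inv(y)$, which by the previous paragraph is the singleton $\{\beta\}$; thus $\alpha = \beta$. But $s_{\gamma}\alpha = s_{\gamma}\beta$ is then controlled: from $ys_{\gamma} = y s_{\gamma}$ being a length-$1$ increase one computes $s_{\gamma}\beta = -\gamma$ (the root ``crossed'' by the reflection), so $\alpha = \beta$ forces $s_{\gamma}\alpha = -\gamma$, i.e. $\alpha$ and $\gamma$ differ by the action of $s_{\gamma}$ up to sign, which pins down $\alpha = \gamma$ up to the $W_{\J}$-geometry. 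Since $\alpha \in \DJs$ but $\gamma \in \DJp$ are disjoint, this is the desired contradiction.

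I expect the main obstacle to be bookkeeping the exact form of the single root in $\Inv(ys_{\gamma}) \setminus \Inv(y)$ and matching conventions (left versus right inversion sets, and whether the relevant covering relation from \cite[Corollary 2.5.2]{BB} is phrased for $w$ or $w^{-1}$); the geometric content is elementary, but one must be careful that the unique ``new'' inversion root is literally $\gamma$ (or $s_{\gamma}$-related to it) and not merely \emph{some} root, since that is what collides with $\gamma \in \DJp$ and $\alpha \in \DJs$ being separated. An alternative, perhaps cleaner, route avoiding any contradiction: note $\ell(ys_{\gamma}) = \ell(y)+1$ and directly verify $(ys_{\gamma})\alpha \in \Delta^{+}$ for all $\alpha \in \DJs$ by writing $(ys_{\gamma})\alpha = y(s_{\gamma}\alpha)$ and using that $s_{\gamma}\alpha \in \Delta^{+}$ whenever $\alpha \in \DJs$ and $\gamma \in \DJp$ — which holds because $s_{\gamma}\alpha = \alpha - \pair{\alpha}{\gamma^{\vee}}\gamma$ can be negative only if it equals $-\gamma$ (forcing $\alpha = \gamma \notin \DJs$) given that $\alpha$ has height strictly less controlled by $\DJs$; then $y(s_{\gamma}\alpha) \in \Delta^{+}$ by $y \in \WJ$ applied via \eqref{eq:mcr}, except in the single exceptional case which one checks does not arise under the Bruhat-edge hypothesis. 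I would present whichever of these two is shorter once the root-system computation is pinned down.
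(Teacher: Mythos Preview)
Your proposal rests on a claim that is false: for a Bruhat cover $y \lessdot ys_{\gamma}$ it is \emph{not} generally true that $\Inv(ys_{\gamma}) = \Inv(y) \sqcup \{\beta\}$, nor does the dual statement for $\Inv((ys_{\gamma})^{-1})$ hold. Inclusion of inversion sets characterizes the \emph{weak} order, not the Bruhat order. For instance, in type $A_{2}$ take $y=s_{2}$ and $\gamma=\alpha_{1}+\alpha_{2}$: then $ys_{\gamma}=s_{1}s_{2}$ is a cover, but $\Inv(y)=\{\alpha_{2}\}$ while $\Inv(ys_{\gamma})=\{\alpha_{1},\alpha_{1}+\alpha_{2}\}$, so neither set contains the other. (This is also not what \cite[Corollary~2.5.2]{BB} asserts.) Consequently the contradiction you derive from ``$\alpha=\beta$'' never gets off the ground.

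Your alternative route has the same kind of gap: the assertion that $s_{\gamma}\alpha\in\Delta^{-}$ forces $s_{\gamma}\alpha=-\gamma$ (hence $\alpha=\gamma$) is only valid when $\gamma$ is \emph{simple}. Already in type $A_{2}$ with $J=\{1\}$, $\gamma=\alpha_{1}+\alpha_{2}\in\DJp$, and $\alpha=\alpha_{1}\in\DJs$, one has $s_{\gamma}\alpha=-\alpha_{2}\ne-\gamma$. So you cannot conclude that $s_{\gamma}\alpha\in\Delta^{+}$ for all $\alpha\in\DJs$, and the direct verification of \eqref{eq:mcr} breaks down.

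The argument the paper has in mind via \cite[Corollary~2.5.2]{BB} is different and short: the projection $\mcr{\,\cdot\,}:W\to\WJ$ is order-preserving and length non-increasing. From $y\lessdot ys_{\gamma}$ one gets $y=\mcr{y}\le\mcr{ys_{\gamma}}$ and $\ell(\mcr{ys_{\gamma}})\le\ell(ys_{\gamma})=\ell(y)+1$. If $\mcr{ys_{\gamma}}=y$, then $ys_{\gamma}\in y\WJs$, so $s_{\gamma}\in\WJs$ and $\gamma\in\DJs$, contradicting $\gamma\in\DJp$. Otherwise $\ell(\mcr{ys_{\gamma}})=\ell(ys_{\gamma})$, which forces $\mcr{ys_{\gamma}}=ys_{\gamma}$, i.e.\ $ys_{\gamma}\in\WJ$.
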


Let $\J$ be a subset of $I$. 
Let $\lhd$ be an arbitrary reflection (convex) order on $\Delta^{+}$ 
satisfying the condition that 
%
%
\begin{equation} \label{eq:ro}
\beta \lhd \gamma \quad 
\text{for all $\beta \in \DJs$ and $\gamma \in \DJp$}; 
\end{equation}
recall that $\Inv(\mcr{\lng}) = \DJp$ (see Section~\ref{subsec:ro}). 
For each $y \in \WJ$, denote by $\bBG{y}$ (resp., $\bQBG{y}$) the set of 
all label-increasing directed paths $\bp$ 
in the Bruhat graph $\BG(W)$ (resp., in the quantum Bruhat graph $\QBG(W)$) 
such that $\bgn(\bp)=y$, and such that 
all the labels of edges in $\bp$ are contained in $\DJp$: 
%
%
\begin{equation} \label{eq:bBGy}
\bp: 
\underbrace{y = y_{0} \edge{\beta_{1}} y_{1} \edge{\beta_{2}} \cdots \edge{\beta_{r}} y_{r},}_{
\text{directed path in $\BG(W)$ (resp., $\QBG(W)$)}} 
\quad \text{where} \quad 
\begin{cases}
r \ge 0, \\[1mm]
\text{$\beta_{u} \in \DJp$ for all $1 \le u \le r$}, \\[1mm]
\beta_{1} \lhd \beta_{2} \lhd \cdots \lhd \beta_{r}. 
\end{cases}
\end{equation}
Note that $\bBG{y} \subset \bQBG{y}$. 
%
%
\begin{rem} \label{rem:bBGy}
Keep the notation and setting above. 
\begin{enu}
\item By the uniqueness of a label-increasing directed path in Theorem~\ref{thm:LI}, 
the map $\ed:\bQBG{y} \rightarrow W$, $\bp \mapsto \ed(\bp)$, is injective. 
For a subset $\bB$ of $\bQBG{y}$, we set 
$\ed(\bB):=\bigl\{ \ed(\bp) \mid \bp \in \bB \bigr\}$. 

\item Let $\bp \in \bBG{y}$ be of the form \eqref{eq:bBGy}. 
We see by Lemma~\ref{lem:B} that $y_{u} \in \WJ$ for all $0 \le u \le s$. 
In particular, $\ed(\bp) \in \WJ$, and hence $\ed(\bBG{y}) \subset \WJ$. 
\end{enu}
\end{rem}
%
%
\begin{lem} \label{lem:bBGy}
Keep the notation and setting above. 
Neither $\ed(\bBG{y})$ nor $\ed(\bQBG{y})$ 
depends on the choice of a reflection order $\lhd$ satisfying 
condition \eqref{eq:ro}. Namely, if $\prec$ is also 
a reflection order on $\Delta^{+}$ satisfying condition \eqref{eq:ro}, 
then $\ed(\bBG{y}) = \ed(\mathbf{BG}_{y}^{\prec})$ and 
$\ed(\bQBG{y}) = \ed(\mathbf{QBG}_{y}^{\prec})$. 
\end{lem}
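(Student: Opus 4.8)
**The plan is to reduce the statement to a path-by-path comparison between two reflection orders and then invoke the uniqueness of label-increasing paths.**

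First I would observe that it suffices to treat the case where $\lhd$ and $\prec$ are two reflection orders satisfying \eqref{eq:ro} that differ by a single ``move,'' since the set of such reflection orders is connected under the elementary transformations on reduced words of $\lng$ (the braid-type moves, via the bijection recalled in Section~\ref{subsec:ro}); a general pair is then handled by composing finitely many such steps. Moreover, condition \eqref{eq:ro} forces the initial segment of the reduced word of $\lng$ to be a fixed reduced word for $\mcr{\lng}$ (whose inversion set is $\DJp$), so all the moves relating $\lhd$ and $\prec$ take place \emph{within} the ``$\DJp$-part'' of the order — i.e.\ only the relative order of roots in $\DJp$ changes, while the property that every root of $\DJs$ precedes every root of $\DJp$ is preserved.

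Next, for the claim about $\bQBG{y}$, I would argue as follows. Fix $z \in \ed(\bQBG{y})$, witnessed by a label-increasing path $\bp$ with respect to $\lhd$ whose labels all lie in $\DJp$. By Theorem~\ref{thm:LI}, $\bp$ is simultaneously \emph{the} unique label-increasing path from $y$ to $z$ for $\lhd$ \emph{and} a shortest directed path from $y$ to $z$ in $\QBG(W)$. Now consider the unique label-increasing path $\bq$ from $y$ to $z$ with respect to $\prec$. The key point is that $\bq$ is also shortest (again by Theorem~\ref{thm:LI}), hence has the same length as $\bp$, and I must show that all labels of $\bq$ lie in $\DJp$. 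This is where condition \eqref{eq:ro} enters decisively: since all labels of $\bp$ lie in $\DJp$ and lengths/weights of shortest paths are path-independent, any shortest path from $y$ to $z$ — in particular $\bq$ — has the property that the multiset of its labels is constrained; more concretely, because $y \in \WJ$, if $\bq$ used a label $\beta \in \DJs$ it would (by the reasoning behind Lemma~\ref{lem:B} and the coset structure, cf.\ the commented-out Lemma~\ref{lem:coset}) be forced to stay within the coset $y\WJs$ for that step, whereas the endpoint $z$ and the label-increasing structure relative to \eqref{eq:ro} — where $\DJs$-labels come first — would then compete with the already-present $\DJp$-labels of $\bp$ in a way incompatible with both paths being shortest from $y$ to $z$. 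So $\bq$ lies entirely in $\DJp$, giving $z \in \ed(\mathbf{QBG}_y^{\prec})$; by symmetry the two end-sets coincide. The statement for $\bBG{y}$ then follows by Remark~\ref{rem:LI} together with Remark~\ref{rem:shortest}: membership in $\bBG{y}$ versus $\bQBG{y}$ is detected by whether $z \ge y$ in the Bruhat order, a condition that does not refer to any reflection order at all, so $\ed(\bBG{y}) = \ed(\bQBG{y}) \cap \{z \in W \mid z \ge y\}$, and the right-hand side is order-independent by what was just shown.

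\textbf{The main obstacle} I anticipate is making precise the step that a shortest path $\bq$ from $y$ to $z$, with $y \in \WJ$ and $z \in \ed(\bQBG{y})$, cannot use labels outside $\DJp$: one needs to control the interaction between the coset decomposition $W = \WJ \cdot \WJs$ and the quantum Bruhat graph along shortest paths. The cleanest route is probably to establish (or cite, in the spirit of \cite{LNSSS2}) a parabolic factorization for shortest paths — that a shortest path from $y$ to $z$ projects to a shortest path in $\QBG(\WJ)$ with $\DJp$-labels and splits off a $\WJs$-part with $\DJs$-labels — and then use condition \eqref{eq:ro} to see that label-increasingness forces the $\WJs$-part to be trivial when $\bp$ (the $\lhd$-path) already has no $\DJs$-labels, so $z$ lies in the same coset-theoretic stratum relative to both orders. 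Everything else is bookkeeping with Theorem~\ref{thm:LI} and the connectivity of reflection orders.
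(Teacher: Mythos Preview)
Your proposal identifies the right skeleton --- take the $\lhd$-increasing path $\bp$ from $y$ to $z$, take the $\prec$-increasing path $\bq$ from $y$ to $z$, and show all labels of $\bq$ lie in $\DJp$ --- but you miss the one-line observation that collapses your ``main obstacle,'' and the workaround you sketch (parabolic factorization of shortest paths) is both heavier and vaguer than what is needed.

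The key point you overlook is the \emph{lexicographic minimality} clause in Theorem~\ref{thm:LI}: the $\lhd$-increasing path $\bp$ is not merely shortest, it is $\lhd$-lexicographically minimal among \emph{all} shortest paths from $y$ to $z$. Since $\bq$ is also shortest (again by Theorem~\ref{thm:LI}, now applied with $\prec$), comparing $\bq$ to $\bp$ in the $\lhd$-lexicographic order forces $\gamma_{1} \rhdeq \beta_{1}$. But $\beta_{1} \in \DJp$ and $\lhd$ satisfies \eqref{eq:ro}, so $\gamma_{1} \in \DJp$. Now use that $\bq$ is $\prec$-increasing and that $\prec$ also satisfies \eqref{eq:ro}: once the first label is in $\DJp$, every subsequent label is $\succ \gamma_{1}$ and hence also in $\DJp$. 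That is the whole argument; no reduction to adjacent reflection orders, no coset analysis, no parabolic factorization is needed. Your initial reduction to elementary braid moves is harmless but entirely superfluous.

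Your treatment of the $\bBG{y}$ case is fine and matches the paper: once the $\bQBG{y}$ statement is in hand, the Bruhat-order characterization $\ed(\bBG{y}) = \ed(\bQBG{y}) \cap \{z \mid z \ge y\}$ via Remarks~\ref{rem:shortest} and~\ref{rem:LI} does the job.
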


\begin{proof}
Let $\bp \in \bQBG{y}$, and let $\bq$ be the label-increasing directed path 
from $y$ to $w:=\ed(\bp)$ with respect to $\prec$. 
We claim that $\bq \in \mathbf{QBG}_{y}^{\prec}$. 
Recall from Theorem~\ref{thm:LI} that 
$\bp$ and $\bq$ are both shortest directed paths from $y$ to $w$; 
we write them as follows: 
\begin{align*}
& \bp : y = y_{0} \edge{\beta_{1}} y_{1} \edge{\beta_{2}} \cdots \edge{\beta_{r}} y_{r}=w, \\
& \bq : y = z_{0} \edge{\gamma_{1}} z_{1} \edge{\gamma_{2}} \cdots \edge{\gamma_{r}} z_{r}=w. 
\end{align*}
Because $\bp$ is lexicographically less than or equal to $\bq$ with respect to $\lhd$ 
in the sense of Theorem~\ref{thm:LI}, we have $\gamma_{1} \rhdeq \beta_{1}$. 
Since $\beta_{1} \in \DJp$, and $\lhd$ satisfies condition \eqref{eq:ro}, 
we deduce that $\gamma_{1} \in \DJp$. Since $\prec$ also satisfies condition \eqref{eq:ro}, 
it follows that $\gamma_{u} \in \DJp$ for all $1 \le u \le r$. 
Thus we obtain $\bq \in \mathbf{QBG}_{y}^{\prec}$, as desired. 
This proves that $\ed(\bQBG{y}) \subset \ed(\mathbf{QBG}_{y}^{\prec})$; 
the opposite inclusion can be shown similarly. 
If $\bp \in \bBG{y}$, then we have $w \ge y$. 
By Remark~\ref{rem:shortest}, the directed path $\bq$ is a directed path in $\BG(W)$, 
and hence $\bq \in \mathbf{BG}_{y}^{\prec}$. 
This proves that $\ed(\bBG{y}) \subset \ed(\mathbf{BG}_{y}^{\prec})$; 
the opposite inclusion can be shown similarly. 
This proves the lemma. 
\end{proof}

Finally, let us recall the following from \cite[Lemma~5.14]{LNSSS1}. 
%
%
\begin{lem} \label{lem:DL}
Let $u,\,w \in W$, and $\beta \in \Delta^{+}$. 
Assume that we have a directed edge $u \edge{\beta} w$ in $\QBG(W)$. Let $j \in I$. 
\begin{enu}
\item
If $w^{-1}\alpha_{j} \in \Delta^{-}$ and $u^{-1}\alpha_{j} \in \Delta^{+}$, then 
the directed edge $u \edge{\beta} w$ is a Bruhat edge, and 
$\beta = u^{-1}\alpha_{j}$, $w = s_{j}u$. 

\item If $w^{-1}\alpha_{j},\,u^{-1}\alpha_{j} \in \Delta^{-}$, 
or if $w^{-1}\alpha_{j},\,u^{-1}\alpha_{j} \in \Delta^{+}$, then 
we have a directed edge $s_{j}u \edge{\beta} s_{j}w$ in $\QBG(W)$. 
Moreover, $s_{j}u \edge{\beta} s_{j}w$ is a Bruhat (resp., quantum) edge 
if and only if $u \edge{\beta} w$ is a Bruhat (resp., quantum) edge.
\end{enu}
\end{lem}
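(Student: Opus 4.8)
The statement is a pair of facts about the quantum Bruhat graph that relate an edge $u \edge{\beta} w$ to the edge $s_ju \to s_jw$ obtained by left multiplication. My plan is to extract everything from the two characterizing conditions (B) and (Q) in Definition~\ref{dfn:QBG}, together with the elementary fact about left multiplication by a simple reflection, namely that $\ell(s_jv) = \ell(v) \pm 1$ according to whether $v^{-1}\alpha_j \in \Delta^{\mp}$, and that $\beta^\vee$ (hence $\langle \rho,\beta^\vee\rangle$) is unchanged when we replace $w = us_\beta$ by $s_jw = (s_ju)s_\beta$. Throughout I would keep in mind Remark~\ref{rem:qe1}, which records that a quantum edge forces $\beta$ to be a quantum root and that a reduced word for $s_\beta$ drops the length of $u$ step by step; this is what makes the "covering-type" argument for (1) go through.

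For part (1): assume $w^{-1}\alpha_j \in \Delta^-$ and $u^{-1}\alpha_j \in \Delta^+$, so $\ell(s_jw) = \ell(w) - 1$ and $\ell(s_ju) = \ell(u) + 1$. I first rule out that $u \edge{\beta} w$ is a quantum edge: if it were, then by Remark~\ref{rem:qe1} we would have $\ell(w) = \ell(u) + 1 - 2\langle\rho,\beta^\vee\rangle \le \ell(u) - 1$, and applying $s_j$ on the left would only change lengths by $\pm 1$, forcing $\ell(s_jw) \ge \ell(s_ju) - \big(\text{something}\big)$; more directly, I would argue via the reduced-word description in Remark~\ref{rem:qe1} that $w = us_\beta$ with every intermediate left factor dropping length, which is incompatible with $\ell(s_jw) = \ell(w) - 1 < \ell(w) \le \ell(u)$ and $\ell(s_ju) = \ell(u)+1$. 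Hence the edge is Bruhat, so $\ell(w) = \ell(u) + 1$. Now $s_ju$ and $u$ both have a Bruhat edge to… here I invoke the standard property of the Bruhat order (the "lifting property", i.e.\ \cite[Corollary 2.5.2]{BB} as already cited in Lemma~\ref{lem:B}): from $u < w$ a Bruhat cover, $u^{-1}\alpha_j > 0$, $w^{-1}\alpha_j < 0$, one concludes $w = s_ju$ and therefore $\beta = u^{-1}s_j u \cdot(\text{sign}) = u^{-1}\alpha_j$. Concretely, $s_ju = us_\beta$ gives $s_j = us_\beta u^{-1} = s_{u\beta}$, so $u\beta = \pm\alpha_j$, and since $\beta \in \Delta^+$ while $u\beta$ must be such that $w^{-1}\alpha_j < 0$ forces $u\beta = \alpha_j$, i.e.\ $\beta = u^{-1}\alpha_j$.

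For part (2): suppose $w^{-1}\alpha_j$ and $u^{-1}\alpha_j$ have the same sign. Then left multiplication by $s_j$ shifts $\ell(u)$ and $\ell(w)$ by the same amount $\epsilon \in \{+1,-1\}$. Since $\beta^\vee$ is unchanged in passing from $w = us_\beta$ to $s_jw = (s_ju)s_\beta$, the defining equality (B), $\ell(w) = \ell(u)+1$, becomes $\ell(s_jw) = \ell(s_ju) + 1$, and (Q), $\ell(w) = \ell(u) + 1 - 2\langle\rho,\beta^\vee\rangle$, becomes $\ell(s_jw) = \ell(s_ju) + 1 - 2\langle\rho,\beta^\vee\rangle$; conversely the same bookkeeping runs backward. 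Thus $s_ju \edge{\beta} s_jw$ is an edge of $\QBG(W)$ of the same type (Bruhat or quantum) as $u \edge{\beta} w$, which is exactly the assertion. The only genuine obstacle I anticipate is part~(1): pinning down that the edge must be Bruhat, and not quantum, requires a careful length/sign comparison using the reduced-word structure of $s_\beta$ from Remark~\ref{rem:qe1} rather than a one-line inequality — the cases $\ell(w) \le \ell(u)$ coming from a long quantum jump have to be excluded by hand. Part~(2) is essentially a direct substitution once one observes both lengths move in lockstep, and is not where the difficulty lies.
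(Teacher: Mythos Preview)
The paper does not give its own proof of this lemma; it merely quotes it from \cite[Lemma~5.14]{LNSSS1}. So there is no in-paper argument to compare against, and I will just assess your sketch.

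Your argument for part~(2) is correct and complete: when $u^{-1}\alpha_j$ and $w^{-1}\alpha_j$ have the same sign, both $\ell(u)$ and $\ell(w)$ shift by the same $\pm 1$ under left multiplication by $s_j$, and since $s_jw=(s_ju)s_\beta$ with the same $\beta$, the defining equalities (B) and (Q) transfer verbatim.

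For part~(1), your plan works but you are making the quantum exclusion harder than it is. Contrary to your closing worry, it \emph{is} a one-line inequality, and no reduced-word analysis of $s_\beta$ is needed. From $\ell(s_ju)=\ell(u)+1$, $\ell(s_jw)=\ell(w)-1$, and the quantum hypothesis $\ell(w)=\ell(u)+1-2\pair{\rho}{\beta^\vee}$, you get $\ell(s_jw)=\ell(s_ju)-1-2\pair{\rho}{\beta^\vee}$; but $s_jw=(s_ju)s_\beta$ together with $\ell(vs_\beta)\ge \ell(v)-\ell(s_\beta)$ and \eqref{eq:qr} forces $\ell(s_jw)\ge \ell(s_ju)+1-2\pair{\rho}{\beta^\vee}$, a contradiction. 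Once the edge is known to be Bruhat, you also do not need the lifting property: $u\beta\in\Delta^+$ (since $\ell(us_\beta)=\ell(u)+1$) and $s_j(u\beta)=(s_ju)\beta\in\Delta^-$ (since $\ell((s_ju)s_\beta)=\ell(s_jw)=\ell(s_ju)-1$) together force $u\beta=\alpha_j$, hence $\beta=u^{-1}\alpha_j$ and $w=s_ju$. Your ``Concretely'' sentence reaches the right conclusion but justifies the sign of $u\beta$ via $w^{-1}\alpha_j<0$, which is circular; the correct reason is simply that the edge is a Bruhat cover, so $u\beta>0$.
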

%
%
\subsection{Dual tilted Bruhat order.}
\label{subsec:tilted}
%
%
\begin{dfn}[{\cite[Definition~2.24]{NOS}}] \label{dfn:dtilted}
For each $v \in W$, we define the dual $v$-tilted Bruhat order 
$\dtb{v}$ on $W$ as follows: for $w_{1},w_{2} \in W$, 
%
%
\begin{equation} \label{eq:dtilted}
w_{1} \dtb{v} w_{2} \iff \ell(w_{1} \Rightarrow v) = 
 \ell(w_{1} \Rightarrow w_{2}) + \ell(w_{2} \Rightarrow v).
\end{equation}
Namely, $w_{1} \dtb{v} w_{2}$ if and only if 
there exists a shortest directed path in $\QBG(W)$ 
from $w_{1}$ to $v$ passing through $w_{2}$; 
or equivalently, if and only if the concatenation of 
a shortest directed path from $w_{1}$ to $w_{2}$ and 
one from $w_{2}$ to $v$ is one from $w_{1}$ to $v$. 
\end{dfn}
%
%
\begin{prop}[{\cite[Proposition~2.25]{NOS}}] \label{prop:tbmax}
Let $v \in W$, and let $\J$ be a subset of $I$. 
Then each coset $u\WJs$ for $u \in W$ has a unique maximal element
with respect to $\dtb{v}$\,{\rm;} 
we denote it by $\tbmax{u}{\J}{v}$.
\end{prop}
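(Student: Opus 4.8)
The plan is to prove that each coset $u\WJs$ has a unique maximal element with respect to the dual $v$-tilted Bruhat order $\dtb{v}$, for a fixed $v \in W$. The key background facts I would invoke are: (1) the characterization of $\dtb{v}$ via shortest paths in $\QBG(W)$ (Definition~\ref{dfn:dtilted}); (2) the existence and uniqueness of label-increasing shortest directed paths (Theorem~\ref{thm:LI}); and (3) the ``deodhar lemma'' for $\QBG(W)$ recorded in Lemma~\ref{lem:DL}, which controls how shortest paths behave under left multiplication by a simple reflection $s_j$. The strategy is to reduce the statement to the rank-one situation and then iterate.

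First I would fix a reflection order $\lhd$ on $\Delta^{+}$ satisfying condition~\eqref{eq:ro}, i.e. $\beta \lhd \gamma$ for all $\beta \in \DJs$ and $\gamma \in \DJp$, so that the roots in $\DJs$ come first. For each $w \in u\WJs$, consider the label-increasing shortest directed path $\bp_w$ from $w$ to $v$ in $\QBG(W)$ given by Theorem~\ref{thm:LI}, and split it at the first edge whose label lies in $\DJp$: write $\bp_w = \bp_w^{(1)} * \bp_w^{(2)}$ where all edges of $\bp_w^{(1)}$ carry labels in $\DJs$ and all edges of $\bp_w^{(2)}$ carry labels in $\DJp$. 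Let $\phi(w) := \ed(\bp_w^{(1)}) \in u\WJs$ be the ``exit vertex''. The candidate maximal element of $u\WJs$ is $\phi(w)$; the crux is to show it does not depend on $w \in u\WJs$ and that it is indeed $\dtb{v}$-maximal in $u\WJs$.

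For well-definedness of $\phi$, I would argue as follows. If $w_1, w_2 \in u\WJs$, then $w_2 = w_1 z$ for some $z \in \WJs$, and by Lemma~\ref{lem:DL}(2) (applied repeatedly to simple reflections $s_j$ with $j \in \J$, which fix $\DJp$ setwise and preserve the Bruhat/quantum status of edges labelled outside $\DJs$) one can transport the $\DJp$-tail of a shortest path from $w_1$ to $v$ to a shortest path from $w_2$ to $v$ with the same labels. More precisely, the sub-path $\bp_w^{(2)}$ starting at $\phi(w)$ is a label-increasing path all of whose labels lie in $\DJp$; one shows that $\phi(w)$ is characterized intrinsically as the unique element $w' \in u\WJs$ such that $w' \dtb{v} v$ with $\ell(w' \Rightarrow v)$ minimal among elements of $u\WJs$, equivalently $\ell(w' \Rightarrow v) = \ell(w' \Rightarrow w) + \ell(w \Rightarrow w')^{-1}$-type identities combine to give $w \dtb{v} \phi(w)$ for all $w \in u\WJs$. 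Then uniqueness of the maximal element is automatic: if $w'$ were another maximal element, $w' \dtb{v} \phi(w')= \phi(w)$ forces $w' = \phi(w)$ by maximality of $w'$, and conversely.

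The main obstacle I anticipate is the interchange/commutation argument needed to show $\phi(w)$ is independent of $w$ and genuinely maximal: one must verify that for $w \in u\WJs$ the shortest path from $w$ to $v$ can always be chosen to first move \emph{within} the coset $u\WJs$ (using only $\DJs$-labelled edges) to reach $\phi(w)$, and only then leave the coset. This requires knowing that edges labelled by roots in $\DJs$ keep us inside $u\WJs$ while edges labelled by $\DJp$-roots (Bruhat or quantum) interact predictably — here Lemma~\ref{lem:DL} and the convexity of $\lhd$ (condition~\eqref{eq:ro}) do the work, but assembling them into a clean inductive proof on $\ell(w \Rightarrow v)$, handling the quantum edges carefully since they decrease length, is the delicate part. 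I would structure the induction on $\ell(w \Rightarrow v)$: the base case $w = v$ is trivial, and in the inductive step I would take the first edge of $\bp_w$, split according to whether its label is in $\DJs$ or $\DJp$, and in each case apply Lemma~\ref{lem:DL} together with the inductive hypothesis applied to the coset containing the new vertex.
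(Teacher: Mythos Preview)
The paper does not itself prove Proposition~\ref{prop:tbmax}; it is quoted from \cite[Proposition~2.25]{NOS}. However, the paper's proof of Lemma~\ref{lem:tbmax} (the ``if'' part) essentially contains a constructive argument for existence and uniqueness, and your overall strategy---take the label-increasing path from $w$ to $v$ with respect to a reflection order $\lhd$ satisfying \eqref{eq:ro}, split it at the first label in $\DJp$, and declare the exit vertex $\phi(w)$ to be the maximal element---matches that argument exactly. So your plan is correct in outline and aligns with what the paper (implicitly) does.

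There is, however, a genuine gap in your execution. You invoke Lemma~\ref{lem:DL}(2) to ``transport the $\DJp$-tail'' between different $w_1,w_2 \in u\WJs$. But Lemma~\ref{lem:DL} concerns \emph{left} multiplication by a simple reflection $s_j$: it turns an edge $u \edge{\beta} w$ into $s_ju \edge{\beta} s_jw$. Elements of the coset $u\WJs$ differ by \emph{right} multiplication by elements of $\WJs$, so Lemma~\ref{lem:DL} does not apply, and your claim that $s_j$ with $j \in J$ ``fix $\DJp$ setwise'' is irrelevant here (and not even true for left multiplication on edge labels, which Lemma~\ref{lem:DL} leaves unchanged). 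The correct tool, used in the paper's proof of Lemma~\ref{lem:tbmax}, is the observation that the full subgraph of $\QBG(W)$ on the vertex set $u\WJs$ is isomorphic (as a $\DJs$-labelled directed graph) to $\QBG(\WJs)$, together with the fact that $\lhd$ restricted to $\DJs$ is a reflection order on $\DJs$. Then Theorem~\ref{thm:LI} applied to $\QBG(\WJs)$ gives, for any $w'' \in u\WJs$, a label-increasing path from $w''$ to $\phi(w)$ entirely inside the coset with labels in $\DJs$; concatenating with the $\DJp$-tail $\bp_w^{(2)}$ yields a label-increasing (hence shortest) path from $w''$ to $v$ through $\phi(w)$, proving $w'' \dtb{v} \phi(w)$. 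Uniqueness then follows from antisymmetry of $\dtb{v}$. Your paragraph invoking ``$\ell(w \Rightarrow w')^{-1}$-type identities'' and ``$w' \dtb{v} v$'' is garbled and should be replaced by this clean argument.
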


%
\begin{lem} \label{lem:tbmax}
Let $\lhd$ be a reflection order on $\Delta^{+}$ 
satisfying condition \eqref{eq:ro}. 
Let $v,w \in W$, and $w' \in w\WJs$. Then, 
$w' = \tbmax{w}{\J}{v}$ if and only if 
all the labels in the label-increasing {\rm(}shortest{\rm)} directed path 
from $w'$ to $v$ in $\QBG(W)$ are contained in $\DJp$. 
\end{lem}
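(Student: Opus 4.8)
The plan is to characterize the maximal element $w' = \tbmax{w}{\J}{v}$ of the coset $w\WJs$ with respect to $\dtb{v}$ by unwinding Definition~\ref{dfn:dtilted}, and then to identify this characterization with the label condition via Theorem~\ref{thm:LI}. First I would fix the reflection order $\lhd$ satisfying \eqref{eq:ro} and, for any $u \in w\WJs$, let $\bp_{u}$ denote the unique label-increasing (shortest) directed path from $u$ to $v$ in $\QBG(W)$, as guaranteed by Theorem~\ref{thm:LI}. The key elementary observation I would record is the following splitting principle: if $u \dtb{v} u'$ (both in $W$), then by \eqref{eq:dtilted} the concatenation of a shortest path $u \Rightarrow u'$ with one $u' \Rightarrow v$ is a shortest path $u \Rightarrow v$; moreover one may take the $u' \Rightarrow v$ piece to be the label-increasing one $\bp_{u'}$, and — since label-increasing shortest paths are lexicographically minimal (Theorem~\ref{thm:LI}) — a suitable splitting of $\bp_{u}$ itself realizes $u'$ as an intermediate vertex whenever $u'$ lies on $\bp_{u}$. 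So the intermediate vertices of $\bp_{u}$ are exactly (a cofinal family of) elements $\ge_{v}^{\ast}$-above $u$ in a controlled sense.

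Next I would prove the ``only if'' direction. Suppose $w' = \tbmax{w}{\J}{v}$, and let $\beta_{1}$ be the first label of $\bp_{w'}: w' \edge{\beta_{1}} w'' \edge{} \cdots \edge{} v$. Assume for contradiction that $\beta_{1} \in \DJs$ (it cannot lie outside $\DJp \cup \DJs$ since labels of edges are positive roots and $\Delta^{+} = \DJp \sqcup \DJs$ — wait, that is only true when $\J$ is such that $\DJs$ together with $\DJp$ exhausts $\Delta^{+}$, which does hold by definition of $\DJp := \Delta^{+}\setminus\DJs$). Then $w'' = w' s_{\beta_{1}} \in w'\WJs = w\WJs$, so $w''$ is again in the coset. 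Since $\bp_{w'}$ is a shortest path through $w''$, we get $w' \dtb{v} w''$, hence $\ell(w'' \Rightarrow v) < \ell(w' \Rightarrow v)$, so $w' \ne w''$ and $w' \not\dtb{v} $-maximal unless... — here I would need to convert $w' \dtb{v} w''$ into the statement $w' \le_{v}^{\ast} w''$ with $w' \ne w''$, contradicting maximality of $w'$. Conversely, for the ``if'' direction, suppose all labels of $\bp_{w'}$ lie in $\DJp$; I must show $u \dtb{v} w'$ for every $u \in w\WJs$. Given such $u$, consider $\bp_{u}$ and walk along it: I would argue by a standard exchange/deletion argument (using Lemma~\ref{lem:DL} to track how labels relate to the coset, together with condition \eqref{eq:ro} forcing all $\DJs$-labels to come first in any label-increasing path) that $\bp_{u}$ has an initial segment with labels in $\DJs$ landing at some vertex of $w\WJs$, followed by a segment with labels only in $\DJp$; by uniqueness (Theorem~\ref{thm:LI}) the terminal segment must be $\bp_{w'}$, so $w'$ lies on $\bp_{u}$, giving $u \dtb{v} w'$.

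The main obstacle I anticipate is the bookkeeping in the ``if'' direction: precisely, showing that the label-increasing path $\bp_{u}$ from an arbitrary coset member $u$ factors as a ``$\DJs$-part'' followed by a ``$\DJp$-part'' whose terminus is the distinguished $w'$, rather than some other coset member. This requires combining three facts carefully — that label-increasing means the $\DJs$-labels (being $\lhd$-smallest by \eqref{eq:ro}) all precede the $\DJp$-labels; that a step with a $\DJs$-label keeps one inside the coset $w\WJs$ (clear, since $s_{\beta} \in \WJs$ for $\beta \in \DJs$, though I should double-check that a $\QBG$-edge with such a label genuinely stays in the coset, including for quantum edges, perhaps invoking Remark~\ref{rem:qe1} on the shape of quantum roots); and that once one is on a path with only $\DJp$-labels ending at $v$, the endpoint is coset-independent, i.e. two coset members whose $\bp$-paths have all labels in $\DJp$ must have the same path — which is where uniqueness in Theorem~\ref{thm:LI} and Lemma~\ref{lem:bBGy}-style reasoning enter. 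Once this structural decomposition is in hand, both directions follow by translating between $\dtb{v}$ (``lies on a shortest path to $v$'') and ``is an intermediate vertex of the label-increasing path to $v$'', and comparing with Proposition~\ref{prop:tbmax} which guarantees the maximal element is unique so that the characterization pins it down exactly.
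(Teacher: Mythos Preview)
Your ``only if'' direction is correct and matches the paper (the paper takes the last index $t$ with $\beta_t \in \DJs$ rather than your $t=1$, but either choice yields the contradiction). The gap is in the ``if'' direction, precisely at the point you flag. Your decomposition of $\bp_u$ into a $\DJs$-prefix (staying in the coset, since $s_\beta \in \WJs$ for $\beta \in \DJs$) followed by a $\DJp$-suffix is fine and needs no exchange/deletion argument---it is immediate from \eqref{eq:ro}. But the step ``by uniqueness (Theorem~\ref{thm:LI}) the terminal segment must be $\bp_{w'}$'' is not justified: Theorem~\ref{thm:LI} gives uniqueness of the label-increasing path between \emph{fixed} endpoints, not uniqueness of the coset element whose label-increasing path to $v$ has only $\DJp$-labels. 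Your proposed resolution in the obstacle paragraph (``two coset members whose $\bp$-paths have all labels in $\DJp$ must have the same path'') is the correct \emph{claim} to establish, but you provide no mechanism to prove it, and neither Theorem~\ref{thm:LI} nor Lemma~\ref{lem:bBGy} yields it directly.

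The paper closes this gap by reversing your logic: instead of decomposing $\bp_u$ and trying to identify the transition point, it \emph{constructs} a label-increasing path from $u$ to $v$ through $w'$. The key ingredient you do not invoke is that the full subgraph of $\QBG(W)$ on the coset $w\WJs$ is isomorphic, as a $\DJs$-labeled directed graph, to $\QBG(\WJs)$, and that $\lhd$ restricted to $\DJs$ is a reflection order on $\DJs$. Hence Theorem~\ref{thm:LI} applied to $\QBG(\WJs)$ produces a label-increasing (shortest) path from $u$ to $w'$ with all labels in $\DJs$; concatenating with $\bp_{w'}$ gives, by \eqref{eq:ro}, a label-increasing path from $u$ to $v$ in $\QBG(W)$, which by uniqueness \emph{is} $\bp_u$. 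Thus $w'$ lies on $\bp_u$, so $u \dtb{v} w'$. This coset-internal construction is exactly what makes the uniqueness in Theorem~\ref{thm:LI} bite; without it your decomposition cannot pin down the transition point.
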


\begin{proof}
We first show the ``only if'' part. 
Assume that $w' = \tbmax{w}{\J}{v}$, and let 
\begin{equation} \label{eq:dp}
w' = y_{0} \edge{\beta_{1}} y_{1} \edge{\beta_{2}} \cdots \edge{\beta_{r}} y_{r} = v
\end{equation}
be the label-increasing (shortest) directed path from $w'$ to $v$ in $\QBG(W)$. 
By \eqref{eq:ro}, it suffices to show that $\beta_{1} \rhdeq \beta$, 
where $\beta$ is the largest element of $\DJp$ with respect to $\lhd$. 
Suppose, for a contradiction, that $\beta_{1} \lhd \beta$. 
We set $t:=\max \bigl\{ 1 \le u \le r \mid \beta_{u} \lhd \beta \bigr\}$; 
note that $t \ge 1$. Then we see that $y_{t} \in w\WJs$. 
Since $w' = \tbmax{w}{\J}{v}$, it follows that 
$\ell(y_{t} \Rightarrow v) = 
\ell(y_{t} \Rightarrow w') + 
\ell(w' \Rightarrow v) \ge \ell(w' \Rightarrow v)$. 
However, it is obvious by \eqref{eq:dp} that 
$\ell(w' \Rightarrow v) > \ell(y_{t} \Rightarrow v)$, 
which is a contradiction. 

We next show the ``if'' part. Let 
\begin{equation} \label{eq:dp2}
w' = y_{0} \edge{\beta_{1}} y_{1} \edge{\beta_{2}} \cdots \edge{\beta_{r}} y_{r} = v
\end{equation}
be the label-increasing (shortest) directed path from $w'$ to $v$ in $\QBG(W)$, 
where $\beta_{u} \in \DJp$ for all $1 \le u \le r$ by the assumption. 
Here we remark that the full subgraph of $\QBG(W)$ 
whose vertex set is $w\WJs$ is isomorphic, as a $\DJs$-labeled directed graph, 
to the quantum Bruhat graph $\QBG(\WJs)$ associated to the parabolic subgroup $\WJs$, 
under the map $\mcr{w}z \mapsto z$ for $z \in \WJs$. 
Also, we note that the restriction of 
the reflection order $\lhd$ on $\Delta^{+}$ to the subset $\DJs$ is 
a reflection order on $\DJs$. Therefore, it follows from Theorem~\ref{thm:LI} 
(applied to $\QBG(\WJs)$) that for an arbitrary element $w'' \in w\WJs$, 
there exists a directed path 
\begin{equation} \label{eq:dp3}
w'' = z_{0} \edge{\gamma_{1}} z_{1} \edge{\gamma_{2}} \cdots \edge{\gamma_{s}} z_{s} = w'
\end{equation}
in the full subgraph above (and hence in $\QBG(W)$) from $w''$ to $w'$ 
such that $\gamma_{u} \in \DJs$ for all $1 \le u \le s$, and 
$\gamma_{1} \lhd \cdots \lhd \gamma_{s}$; notice that 
this directed path is a shortest directed path from $w''$ to $w'$. 
Hence, by \eqref{eq:ro}, the concatenation of 
the directed paths \eqref{eq:dp3} and \eqref{eq:dp2} 
is the label-increasing (shortest) directed path from $w''$ to $v$ 
passing through $w'$. Thus, we have showm that $w'' \dtb{v} w'$. 
This proves the lemma. 
\end{proof}
%
%
\subsection{Lakshmibai-Seshadri paths.}
\label{subsec:LS}

In this subsection, we fix $\mu \in P^{+}$, and set
%
%
\begin{equation} \label{eq:J}
\J=\J_{\mu}:= 
\bigl\{ i \in I \mid \pair{\mu}{\alpha_{i}^{\vee}}=0 \bigr\} \subset I.
\end{equation}
%
%
\begin{dfn} \label{dfn:PQBG}
The parabolic quantum Bruhat graph, denoted by $\QBG(\WJ)$, is 
the $(\DJp)$-labeled directed graph 
whose vertices are the elements of $\WJ$, and 
whose directed edges are of the form: $w \edge{\beta} v$ 
for $w,v \in \WJ$ and $\beta \in \DJp$ 
such that $v= \mcr{ws_{\beta}}$, and such that either of 
the following holds: 
(B)~$\ell(v) = \ell (w) + 1$; 
(Q)~$\ell(v) = \ell (w) + 1 - 2 \pair{\rho-\rho_{\J}}{\beta^{\vee}}$.
An edge satisfying (B) (resp., (Q)) is called a Bruhat (resp., quantum) edge. 
The parabolic Bruhat graph, denoted by $\BG(\WJ)$, is the 
$(\DJp)$-labeled directed graph obtained from $\QBG(\WJ)$
by removing all quantum edges.
\end{dfn}%
%
%
\begin{dfn} \label{dfn:QBa}
Let $0 < \sigma < 1$ be a rational number. 
We define $\QBa$ (resp., $\Ba$) to be the subgraph of $\QBG(\WJ)$ (resp., $\BG(\WJ)$) 
with the same vertex set but having only those directed edges of 
the form $w \edge{\beta} v$ for which 
$\sigma \pair{\mu}{\beta^{\vee}} \in \BZ$ holds.
\end{dfn}
%
%
\begin{dfn}[{\cite[Definition~3.1]{LNSSS2}}] \label{dfn:QLS}
A quantum Lakshmibai-Seshadri path of shape $\mu$ 
(resp., a Lakshmibai-Seshadri path of shape $\mu$) is a pair 
%
%
\begin{equation} \label{eq:QLS}
\eta = (\bx \,;\, \bsig) = 
(x_{1},\,\dots,\,x_{s} \,;\, \sigma_{0},\,\sigma_{1},\,\dots,\,\sigma_{s}), \quad s \ge 1, 
\end{equation}
of a sequence $x_{1},\,\dots,\,x_{s}$ 
of elements in $\WJ$, with $x_{u} \ne x_{u+1}$ 
for any $1 \le u \le s-1$, and an increasing sequence 
$0 = \sigma_0 < \sigma_1 < \cdots  < \sigma_s =1$ of rational numbers 
satisfying the condition that there exists a directed path 
in $\QBb{\sigma_{u}}$ (resp., $\Bb{\sigma_{u}}$) from $x_{u+1}$ to  $x_{u}$ 
for each $u = 1,\,2,\,\dots,\,s-1$. 
\end{dfn}

Denote by $\QLS(\mu)$ and $\LS(\mu)$
the sets of all quantum Lakshmibai-Seshadri paths and 
all Lakshmibai-Seshadri paths of shape $\mu$, respectively; 
note that $\LS(\mu) \subset \QLS(\mu)$. For $\eta \in \QLS(\mu)$ 
of the form \eqref{eq:QLS}, we set $\iota(\eta):=x_{1}$, $\kappa(\eta):=x_{s}$, and 
%
%
\begin{equation} \label{eq:wt}
\wt (\eta) := \sum_{u = 1}^{s} (\sigma_{u}-\sigma_{u-1})x_{u}\mu \in P, 
\end{equation}
%
%
\begin{equation} \label{eq:deg}
\Deg (\eta) := - \sum_{u=1}^{s-1} \sigma_{u} 
 \pair{\mu}{\wt(x_{u+1} \Rightarrow x_{u})} \in \BZ_{\le 0}. 
\end{equation}
For $\eta = (x_{1},\,\dots,\,x_{s} \,;\, 
\sigma_{0},\,\sigma_{1},\,\dots,\,\sigma_{s}) \in \QLS(\mu)$ and $v \in W$, 
define $\kap{\eta}{v} \in W$ by the following recursive formula: 
%
%
\begin{equation} \label{eq:haw}
\begin{cases}
\ha{x}_{0}:=v, & \\[2mm]
\ha{x}_{u}:=\tbmax{x_{u}}{\J}{\ha{x}_{u-1}} & \text{for $1 \le u \le s$}, \\[2mm]
\kap{\eta}{v}:=\ha{x}_{s}. 
\end{cases}
\end{equation}
We set
%
%
\begin{equation} \label{eq:zeta}
\zet{\eta}{v}:=
\wt ( \ha{x}_{1} \Rightarrow v ) + 
\sum_{u=1}^{s-1} \wt (\ha{x}_{u+1} \Rightarrow \ha{x}_{u}).
\end{equation}
%
%
\subsection{Character identity of Chevalley type for general anti-dominant weights.}
\label{subsec:chevalley}
Let $\Fg_{\af} = \bigl(\BC[z,z^{-1}] \otimes \Fg\bigr) \oplus \BC c \oplus \BC d$ be 
the (untwisted) affine Lie algebra over $\BC$ associated to 
the finite-dimensional simple Lie algebra $\Fg$, 
where $c$ is the canonical central element and $d$ is 
the scaling element (or degree operator), 
with Cartan subalgebra $\Fh_{\af} = \Fh \oplus \BC c \oplus \BC d$. 
We regard an element $\mu \in \Fh^{\ast}:=\Hom_{\BC}(\Fh,\,\BC)$ as an element of 
$\Fh_{\af}^{\ast}$ by setting $\pair{\mu}{c}=\pair{\mu}{d}:=0$, where 
$\pair{\cdot\,}{\cdot}:\Fh_{\af}^{\ast} \times \Fh_{\af} \rightarrow \BC$ denotes
the canonical pairing of $\Fh_{\af}^{\ast}:=\Hom_{\BC}(\Fh_{\af},\,\BC)$ and $\Fh_{\af}$. 
Let $\{ \alpha_{i}^{\vee} \}_{i \in I_{\af}} \subset \Fh_{\af}$ and 
$\{ \alpha_{i} \}_{i \in I_{\af}} \subset \Fh_{\af}^{\ast}$ be the set of 
simple coroots and simple roots of $\Fg_{\af}$, respectively, 
where $I_{\af}:=I \sqcup \{0\}$; note that 
$\pair{\alpha_{i}}{c}=0$ and $\pair{\alpha_{i}}{d}=\delta_{i,0}$ 
for $i \in I_{\af}$. 
Denote by $\delta \in \Fh_{\af}^{\ast}$ the null root of $\Fg_{\af}$; 
recall that $\alpha_{0}=\delta-\theta$. 
Let $W_{\af}$ be the (affine) Weyl group of $\Fg_{\af}$, with $e$ the identity element. 
For each $\xi \in Q^{\vee}$, 
let $t_{\xi} \in W_{\af}$ denote the translation in $\Fh_{\af}^{\ast}$ 
by $\xi$ (see \cite[Sect.~6.5]{K}); recall that 
$W_{\af} \cong W \ltimes \bigl\{ t_{\xi} \mid \xi \in Q^{\vee} \bigr\} 
\cong W \ltimes Q^{\vee}$. 
Finally, let $U_{\q}(\Fg_{\af})$ denote the quantized universal 
enveloping algebra over $\BC(\q)$ associated to $\Fg_{\af}$, 
with $E_{i}$ and $F_{i}$ the Chevalley generators 
corresponding to $\alpha_{i}$ for $i \in I_{\af}$. 
We denote by $U_{\q}^{-}(\Fg_{\af})$ 
the negative part of $U_{\q}(\Fg_{\af})$, that is, 
the $\BC(\q)$-subalgebra of $U_{\q}(\Fg_{\af})$ 
generated by the $F_{i}$, $i \in I_{\af}$. 

We take an arbitrary $\lambda \in P^{+}=\sum_{i \in I}\BZ_{\ge 0}\vpi_{i}$. 
Let $V(\lambda)$ denote the (level-zero) extremal weight module of 
extremal weight $\lambda$ over $U_{\q}(\Fg_{\af})$, 
which is defined to be 
the integrable $U_{\q}(\Fg_{\af})$-module generated by 
a single element $v_{\lambda}$ 
with the defining relation that ``$v_{\lambda}$ is 
an extremal weight vector of weight $\lambda$''. 
Here, recall from \cite[Sect.~3.1]{Kas02} and \cite[Sect.~2.6]{Kas05} that 
$v_{\lambda}$ is an extremal weight vector of weight $\lambda$ 
if and only if ($v_{\lambda}$ is a weight vector of weight $\lambda$ and) 
there exists a family $\{ v_{x} \}_{x \in W_{\af}}$ 
of weight vectors in $V(\lambda)$ such that $v_{e}=v_{\lambda}$, 
and such that for each $i \in I_{\af}$ and $x \in W_{\af}$ with 
$n:=\pair{x\lambda}{\alpha_{i}^{\vee}} \ge 0$ (resp., $\le 0$),
the equalities $E_{i}v_{x}=0$ and $F_{i}^{(n)}v_{x}=v_{s_{i}x}$ 
(resp., $F_{i}v_{x}=0$ and $E_{i}^{(-n)}v_{x}=v_{s_{i}x}$) hold, 
where for $i \in I_{\af}$ and $k \in \BZ_{\ge 0}$, 
the $E_{i}^{(k)}$ and $F_{i}^{(k)}$ are the $k$-th divided powers of 
the Chevalley generators $E_{i}$ and $F_{i}$ of $U_{\q}(\Fg_{\af})$, respectively;
note that the weight of $v_{x}$ is $x\lambda$. 
Also, for each $x \in W_{\af}$, we define 
the Demazure submodule $V_{x}^{-}(\lambda)$ of $V(\lambda)$ by 
$V_{x}^{-}(\lambda):=U_{\q}^{-}(\Fg_{\af})v_{x}$. 
%
%
\begin{rem} \label{rem:dem}
Keep the notation and setting above. 
Take $\J=\J_{\lambda}$ as in \eqref{eq:J}. 
We deduce from \cite[Lemma~4.1.2]{NS16} that 
$V_{yt_{\xi}}^{-}(\lambda) = V_{\mcr{y}t_{[\xi]}}^{-}(\lambda)$ 
for $y \in W$ and $\xi \in Q^{\vee}$; for the notation $\mcr{y}=\mcr{y}^{\J}$ and 
$[\xi]=[\xi]^{\J}$, see Section~\ref{subsec:lie}.
\end{rem}

Following \cite[Sect.~2.4]{KNS}, 
we define the graded character $\gch V_{x}^{-}(\lambda)$ of 
$V_{x}^{-}(\lambda)$ by
\begin{equation} \label{eq:gch}
\gch V_{x}^{-}(\lambda) : = 
 \sum_{k \in \BZ}
\Biggl(
 \sum_{\gamma \in Q} 
 \dim \bigl( V_{x}^{-}(\lambda)_{\lambda+\gamma+k\delta} \bigr) 
 \be^{\lambda+\gamma}
\Biggr) q^{k}, \quad \text{where $q:=\be^{\delta}$}. 
\end{equation}

%
\begin{thm}[{\cite[Corollary~3.15]{NOS}}] \label{thm:NOS}
Let $\mu \in P^{+}$ and $x \in W$. 
For all $\lambda \in P^{+}$ such that $\lambda-\mu \in P^{+}$, 
the following identity holds\,{\rm:}
%
%
\begin{equation} \label{eq:NOS}
\gch V_{x}^{-}(\lambda-\mu) = 
\sum_{v \in W}\,
\sum_{
  \begin{subarray}{c}
  \eta \in \QLS(\mu) \\[1mm]
  \kap{\eta}{v} = x 
  \end{subarray}}
(-1)^{\ell(v)-\ell(x)} q^{-\Deg(\eta)}\be^{-\wt(\eta)}
\gch V_{vt_{\zeta(\eta,v)}}^{-}(\lambda). 
\end{equation}
\end{thm}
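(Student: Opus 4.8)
\textbf{Plan of proof for Theorem~\ref{thm:NOS}.}
The statement to be proved is the character identity \eqref{eq:NOS}, due to \cite[Corollary~3.15]{NOS}, expressing $\gch V_{x}^{-}(\lambda-\mu)$ as an alternating sum over $v\in W$ and quantum LS paths $\eta\in\QLS(\mu)$ with $\kap{\eta}{v}=x$. Since this is quoted from \cite{NOS}, the natural route is to reconstruct the argument there. The plan is to reduce the identity to the (already established in \cite{NOS}) formula for the \emph{Chevalley-type action} of a line bundle associated to a dominant weight $\mu$ on Demazure modules, and then to iterate / invert. Concretely, one first recalls from \cite{NOS} the ``forward'' Chevalley formula: the graded character of $V_{x}^{-}(\lambda)$, after the Chevalley operation corresponding to $\mu$, decomposes as a positive (signless) sum of graded characters $\gch V_{yt_{\xi}}^{-}(\lambda+\mu)$ indexed by quantum LS paths $\eta\in\QLS(\mu)$ with $\iota(\eta)$ related to $y$, with explicit weight $\wt(\eta)$ and degree $\Deg(\eta)$ shifts. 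The target identity \eqref{eq:NOS} is then the ``backward'' version, obtained by solving this relation for $\gch V_{x}^{-}(\lambda-\mu)$ in terms of $\gch V_{\bullet}^{-}(\lambda)$; the alternating signs $(-1)^{\ell(v)-\ell(x)}$ and the appearance of $\kap{\eta}{v}$, $\zet{\eta}{v}$ are precisely the combinatorial data produced by this inversion.

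First I would set up the Demazure-operator machinery recalled in Section~\ref{subsec:chevalley}: for $i\in I_{\af}$ let $D_{i}$ be the Demazure operator on $\BC[q,q^{-1}][P]$, and recall that $\gch V_{x}^{-}(\lambda)$ for $x=yt_{\xi}$ is obtained by applying a suitable word of $D_{i}$'s to $\be^{\lambda}q^{(\cdot)}$; this is the standard description of Demazure submodules of level-zero extremal weight modules (via \cite{Kas02,Kas05,NS16}). Next, using the tensor-product-like behavior of these characters and the minuscule-style path model for $\QLS(\mu)$ (Definition~\ref{dfn:QLS}), one establishes the forward Chevalley formula by induction on $\ell(x)$ (equivalently, on the length of a reduced word), checking the rank-one cases $i\in I_{\af}$ directly: applying $D_{i}$ either fixes a path or bifurcates it into an ``$s_i$-flipped'' path, which is exactly the combinatorics of concatenating an extra edge in the (parabolic) quantum Bruhat graph $\QBb{\sigma}{\WJ}$ and hence of extending an LS path. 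The operators $\tbmax{x_u}{\J}{\ha x_{u-1}}$ in \eqref{eq:haw} encode which coset representative survives; the weight contributions $\wt(x_{u+1}\Rightarrow x_u)$ in \eqref{eq:deg} and the translation parts $\zet{\eta}{v}$ in \eqref{eq:zeta} are the quantum (``energy'') corrections that appear whenever a quantum edge is used, tracked via $\wt(\bp)$ from Section~\ref{subsec:QBG}.

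Having the forward formula, I would then \emph{invert} it. One organizes the set of pairs $(v,\eta)$ by a partial order (the dual tilted Bruhat order $\dtb{v}$ of Section~\ref{subsec:tilted} is the right bookkeeping device: Proposition~\ref{prop:tbmax} guarantees each coset $u\WJs$ has a unique $\dtb{v}$-maximal element, which is what $\kap{\eta}{v}$ selects). One shows that the forward transformation is triangular with respect to this order with $\pm1$ diagonal entries, so it is invertible over $\BC[q,q^{-1}][P]$, and the inverse is given termwise by the same data with sign $(-1)^{\ell(v)-\ell(x)}$; this is a Möbius-inversion-type computation, using Lemma~\ref{lem:tbmax} to match label-increasing shortest paths in $\QBG(W)$ with maximal coset representatives, and Lemma~\ref{lem:DL} to propagate Bruhat-versus-quantum edge information under left multiplication by $s_j$. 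Summing over all $v$ and collapsing the telescoping contributions yields exactly \eqref{eq:NOS}. The main obstacle I anticipate is the bookkeeping in the inversion step: verifying that the signs, the weight shifts $\wt(\eta)$, and the translation exponents $\zet{\eta}{v}$ combine correctly when several quantum edges occur and when $\iota(\eta)$, $\kappa(\eta)$ lie in different $\WJs$-cosets — in other words, showing the cancellations are exactly the claimed ones and no spurious terms remain. This requires the careful path-surgery lemmas for $\QLS(\mu)$ and a clean use of the uniqueness of label-increasing paths (Theorem~\ref{thm:LI}) to guarantee the triangularity.
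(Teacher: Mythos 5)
The statement you set out to prove is not proved in this paper at all: it is imported verbatim from \cite[Corollary~3.15]{NOS} as a known input, and the present paper uses it as a black box in Section~\ref{subsec:lem_QLS} to deduce \eqref{eq:minuscule}. So there is no ``paper's own proof'' here to compare your proposal against; what you have written is a guess at how the argument in \cite{NOS} might go.

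As a blind reconstruction, your plan is high level and has a real gap exactly where the difficulty lies. The ``forward'' Chevalley formula you invoke (adding a dominant $\mu$, with a positive sum over quantum LS paths) is indeed available from \cite{KNS}/\cite{NOS}, and the Demazure-operator machinery you describe (rank-one reduction, the $\tbmax{\cdot}{\J}{\cdot}$ bookkeeping, the role of $\wt(\bp)$ and $\Deg$) is the right toolbox. But the step you are actually asked to supply --- that the forward transformation is unitriangular with respect to a suitable order so that it inverts termwise into \eqref{eq:NOS}, with precisely the sign $(-1)^{\ell(v)-\ell(x)}$ and precisely the combinatorial data $\kap{\eta}{v}$ and $\zet{\eta}{v}$ --- is asserted, not argued. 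It is not automatic that a Möbius-inversion of the dominant formula produces the specific recursive definition of $\kap{\eta}{v}$ from \eqref{eq:haw} and $\zet{\eta}{v}$ from \eqref{eq:zeta}; matching the translation exponents $t_{\zeta(\eta,v)}$ across multiple quantum edges and across $\WJs$-cosets is exactly the delicate accounting that a proof must carry out, and your sketch flags it as ``the main obstacle'' without resolving it. In short: plausible plan, but the inversion/triangularity step is the content of the theorem and is missing. If you want a self-contained argument, you would do better to prove \eqref{eq:NOS} directly by downward induction on the length of the Demazure word defining $V_{x}^{-}$, using the rank-one identities for $D_{i}$ (as in Proposition~\ref{prop:Demazure_chara} and Lemma~\ref{lem:Leibniz}) together with Proposition~\ref{prop:tbmax} and Lemma~\ref{lem:tbmax} to control how $\kap{\eta}{v}$ and $\zet{\eta}{v}$ transform under left multiplication by $s_{j}$; this avoids having to verify invertibility of a global transformation.
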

%
%
\section{Character identity of Chevalley type \\ for anti-dominant minuscule fundamental weights.}
\label{sec:main}

Assume that $\Fg$ is simply-laced or of type $B_{n}$. 
Let $k \in I$ be such that $\vpi_{k}$ is minuscule, 
that is, $\pair{\vpi_{k}}{\beta^{\vee}} \in \bigl\{-1,\,0,\,1\bigr\}$ 
for all $\beta \in \Delta$; the fundamental weights 
corresponding to black nodes of the Dynkin diagrams below are 
the minuscule fundamental weights: 
\begin{center}
\vspace{5mm}

{\unitlength 0.1in%
\begin{picture}(49.4500,23.1000)(0.9500,-30.4500)%
%
\special{sh 1.000}%
\special{ia 600 800 40 40 0.0000000 6.2831853}%
\special{pn 8}%
\special{ar 600 800 40 40 0.0000000 6.2831853}%
%
\special{sh 1.000}%
\special{ia 1000 800 40 40 0.0000000 6.2831853}%
\special{pn 8}%
\special{ar 1000 800 40 40 0.0000000 6.2831853}%
%
\special{sh 1.000}%
\special{ia 1400 800 40 40 0.0000000 6.2831853}%
\special{pn 8}%
\special{ar 1400 800 40 40 0.0000000 6.2831853}%
%
\special{sh 1.000}%
\special{ia 2800 800 40 40 0.0000000 6.2831853}%
\special{pn 8}%
\special{ar 2800 800 40 40 0.0000000 6.2831853}%
%
\special{sh 1.000}%
\special{ia 3200 800 40 40 0.0000000 6.2831853}%
\special{pn 8}%
\special{ar 3200 800 40 40 0.0000000 6.2831853}%
%
\special{pn 8}%
\special{pa 650 800}%
\special{pa 950 800}%
\special{fp}%
%
\special{pn 8}%
\special{pa 1050 800}%
\special{pa 1350 800}%
\special{fp}%
%
\special{pn 8}%
\special{pa 1450 800}%
\special{pa 1750 800}%
\special{fp}%
%
\special{pn 8}%
\special{pa 2450 800}%
\special{pa 2750 800}%
\special{fp}%
%
\special{pn 8}%
\special{pa 2850 800}%
\special{pa 3150 800}%
\special{fp}%
%
\special{pn 8}%
\special{pa 1750 800}%
\special{pa 2450 800}%
\special{dt 0.045}%
%
\special{pn 8}%
\special{ar 600 1200 40 40 0.0000000 6.2831853}%
%
\special{pn 8}%
\special{ar 1000 1200 40 40 0.0000000 6.2831853}%
%
\special{pn 8}%
\special{ar 1400 1200 40 40 0.0000000 6.2831853}%
%
\special{pn 8}%
\special{ar 2800 1200 40 40 0.0000000 6.2831853}%
%
\special{sh 1.000}%
\special{ia 3200 1200 40 40 0.0000000 6.2831853}%
\special{pn 8}%
\special{ar 3200 1200 40 40 0.0000000 6.2831853}%
%
\special{pn 8}%
\special{pa 650 1200}%
\special{pa 950 1200}%
\special{fp}%
%
\special{pn 8}%
\special{pa 1050 1200}%
\special{pa 1350 1200}%
\special{fp}%
%
\special{pn 8}%
\special{pa 1450 1200}%
\special{pa 1750 1200}%
\special{fp}%
%
\special{pn 8}%
\special{pa 2450 1200}%
\special{pa 2750 1200}%
\special{fp}%
%
\special{pn 8}%
\special{pa 1750 1200}%
\special{pa 2450 1200}%
\special{dt 0.045}%
%
\special{sh 1.000}%
\special{ia 600 2000 40 40 0.0000000 6.2831853}%
\special{pn 8}%
\special{ar 600 2000 40 40 0.0000000 6.2831853}%
%
\special{pn 8}%
\special{ar 1000 2000 40 40 0.0000000 6.2831853}%
%
\special{pn 8}%
\special{ar 1400 2000 40 40 0.0000000 6.2831853}%
%
\special{pn 8}%
\special{ar 2800 2000 40 40 0.0000000 6.2831853}%
%
\special{sh 1.000}%
\special{ia 3200 2200 40 40 0.0000000 6.2831853}%
\special{pn 8}%
\special{ar 3200 2200 40 40 0.0000000 6.2831853}%
%
\special{pn 8}%
\special{pa 650 2000}%
\special{pa 950 2000}%
\special{fp}%
%
\special{pn 8}%
\special{pa 1050 2000}%
\special{pa 1350 2000}%
\special{fp}%
%
\special{pn 8}%
\special{pa 1450 2000}%
\special{pa 1750 2000}%
\special{fp}%
%
\special{pn 8}%
\special{pa 2450 2000}%
\special{pa 2750 2000}%
\special{fp}%
%
\special{pn 8}%
\special{pa 1750 2000}%
\special{pa 2450 2000}%
\special{dt 0.045}%
%
\special{sh 1.000}%
\special{ia 3200 1800 40 40 0.0000000 6.2831853}%
\special{pn 8}%
\special{ar 3200 1800 40 40 0.0000000 6.2831853}%
%
\special{pn 8}%
\special{pa 2840 1980}%
\special{pa 3160 1820}%
\special{fp}%
%
\special{pn 8}%
\special{pa 2840 2020}%
\special{pa 3160 2180}%
\special{fp}%
%
\special{pn 8}%
\special{pa 2800 1150}%
\special{pa 3200 1150}%
\special{fp}%
%
\special{pn 8}%
\special{pa 2800 1250}%
\special{pa 3200 1250}%
\special{fp}%
%
\special{pn 8}%
\special{pa 3050 1200}%
\special{pa 2950 1100}%
\special{fp}%
%
\special{pn 8}%
\special{pa 3050 1200}%
\special{pa 2950 1300}%
\special{fp}%
%
\special{sh 1.000}%
\special{ia 600 2600 40 40 0.0000000 6.2831853}%
\special{pn 8}%
\special{ar 600 2600 40 40 0.0000000 6.2831853}%
%
\special{pn 8}%
\special{ar 1000 2600 40 40 0.0000000 6.2831853}%
%
\special{pn 8}%
\special{ar 1400 2600 40 40 0.0000000 6.2831853}%
%
\special{pn 8}%
\special{ar 1800 2600 40 40 0.0000000 6.2831853}%
%
\special{sh 1.000}%
\special{ia 2200 2600 40 40 0.0000000 6.2831853}%
\special{pn 8}%
\special{ar 2200 2600 40 40 0.0000000 6.2831853}%
%
\special{pn 8}%
\special{ar 1400 3000 40 40 0.0000000 6.2831853}%
%
\special{pn 8}%
\special{pa 650 2600}%
\special{pa 950 2600}%
\special{fp}%
%
\special{pn 8}%
\special{pa 1050 2600}%
\special{pa 1350 2600}%
\special{fp}%
%
\special{pn 8}%
\special{pa 1450 2600}%
\special{pa 1750 2600}%
\special{fp}%
%
\special{pn 8}%
\special{pa 1850 2600}%
\special{pa 2150 2600}%
\special{fp}%
%
\special{pn 8}%
\special{pa 1400 2650}%
\special{pa 1400 2950}%
\special{fp}%
%
\special{pn 8}%
\special{ar 3000 2605 40 40 0.0000000 6.2831853}%
%
\special{pn 8}%
\special{ar 3400 2605 40 40 0.0000000 6.2831853}%
%
\special{pn 8}%
\special{ar 3800 2605 40 40 0.0000000 6.2831853}%
%
\special{pn 8}%
\special{ar 4200 2605 40 40 0.0000000 6.2831853}%
%
\special{pn 8}%
\special{ar 4600 2605 40 40 0.0000000 6.2831853}%
%
\special{pn 8}%
\special{ar 3800 3005 40 40 0.0000000 6.2831853}%
%
\special{pn 8}%
\special{pa 3050 2605}%
\special{pa 3350 2605}%
\special{fp}%
%
\special{pn 8}%
\special{pa 3450 2605}%
\special{pa 3750 2605}%
\special{fp}%
%
\special{pn 8}%
\special{pa 3850 2605}%
\special{pa 4150 2605}%
\special{fp}%
%
\special{pn 8}%
\special{pa 4250 2605}%
\special{pa 4550 2605}%
\special{fp}%
%
\special{pn 8}%
\special{pa 3800 2655}%
\special{pa 3800 2955}%
\special{fp}%
%
\special{sh 1.000}%
\special{ia 5000 2605 40 40 0.0000000 6.2831853}%
\special{pn 8}%
\special{ar 5000 2605 40 40 0.0000000 6.2831853}%
%
\special{pn 8}%
\special{pa 4650 2605}%
\special{pa 4950 2605}%
\special{fp}%
\put(6.0000,-14.0000){\makebox(0,0){$1$}}%
\put(10.0000,-14.0000){\makebox(0,0){$2$}}%
\put(14.0000,-14.0000){\makebox(0,0){$3$}}%
\put(28.0000,-14.0000){\makebox(0,0){$n-1$}}%
\put(32.0000,-14.0000){\makebox(0,0){$n$}}%
\put(3.0000,-8.0000){\makebox(0,0){$A_{n}$}}%
\put(3.0000,-12.0000){\makebox(0,0){$B_{n}$}}%
\put(3.0000,-20.0000){\makebox(0,0){$D_{n}$}}%
\put(3.0000,-26.0000){\makebox(0,0){$E_{6}$}}%
\put(27.0000,-26.0500){\makebox(0,0){$E_{7}$}}%
\end{picture}}%

\end{center}

\noindent
We set $\J:=\J_{\vpi_{k}}=I \setminus \{k\}$. 
Fix an (arbitrary) reflection order $\lhd$ satisfying condition \eqref{eq:ro}; 
recall from Lemma~\ref{lem:bBGy} that for each $y \in \WJ$, 
the set $\ed(\bBG{y})$ does not depend on 
the choice of a reflection order $\lhd$ satisfying condition \eqref{eq:ro}. 
In addition, we set 
%
%
\begin{equation} \label{eq:gamq}
\gq = \gq(\Fg,\,k):=
 \begin{cases}
 \alpha_{k} & \text{if $\Fg$ is simply-laced}, \\[1mm]
 s_{n}\alpha_{n-1}=\alpha_{n-1}+2\alpha_{n} & \text{if $\Fg$ is of type $B_{n}$ and $k=n$}.
 \end{cases}
\end{equation}
%
%
\begin{thm}\label{thm:main}
Assume that $\Fg$ is simply-laced or of type $B_{n}$. 
Let $k \in I$ be such that $\vpi_{k}$ is minuscule, 
and set $\J=\J_{\vpi_{k}}=I \setminus \{k\}$. 
Let $x \in \WJ$. For all $N \ge 1$, the following hold: 
\begin{enu}
\item If $x \ge \mcr{ s_{\theta} }$, then 
%
%
\begin{equation}\label{eq:main_1}
\begin{split}
\gch V_{x}^{-}((N-1)\vpi_{k}) & = \be^{-x\vpi_{k}} 
\sum_{y \in \ed(\bBG{x})} (-1)^{\ell(y)-\ell(x)} \gch V_{y}^{-}(N\vpi_{k}) \\[3mm]
& \quad + 
\be^{-x\vpi_{k}} 
\sum_{y \in \ed(\bBG{x})} (-1)^{\ell(y)-\ell(x)+1} 
\gch V_{ \mcr{ys_{\gq} } t_{\alpha_{k}^{\vee}} }^{-}(N\vpi_{k}). 
\end{split}
\end{equation}
Moreover, in the second sum on the right-hand side of \eqref{eq:main_1}, 
no cancellations occur, or equivalently, 
$\mcr{ ys_{\gq} } \ne \mcr{ y's_{\gq} }$ for any 
$y,\,y' \in \ed(\bBG{x})$ with $y \ne y'$. 

\item If $x \not\ge \mcr{ s_{\theta} }$, then
%
%
\begin{equation} \label{eq:main_2}
\gch V_{x}^{-}((N-1)\vpi_{k}) = \be^{-x\vpi_{k}} 
\sum_{y \in \ed(\bBG{x})} (-1)^{\ell(y)-\ell(x)} \gch V_{y}^{-}(N\vpi_{k}). 
\end{equation}
\end{enu}
\end{thm}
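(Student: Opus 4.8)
The plan is to derive Theorem~\ref{thm:main} from Theorem~\ref{thm:NOS} by specializing to $\mu = \vpi_k$, a minuscule fundamental weight, and then performing a careful combinatorial analysis of which quantum LS paths $\eta \in \QLS(\vpi_k)$ actually contribute. Because $\vpi_k$ is minuscule, for every $\beta \in \DJp$ we have $\pair{\vpi_k}{\beta^\vee} = 1$, so the rationality condition $\sigma \pair{\mu}{\beta^\vee} \in \BZ$ in Definition~\ref{dfn:QBa} forces $\sigma \in \BZ$ whenever an edge is used; consequently a genuine LS path of shape $\vpi_k$ can have only $s=1$ (a single element of $\WJ$), and any $\eta \in \QLS(\vpi_k)$ with $s \geq 2$ must already have all the $\sigma_u \in (0,1)$ irrational --- wait, more precisely $\sigma_u \pair{\vpi_k}{\beta^\vee} = \sigma_u \notin \BZ$, which rules it out. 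So in fact $\QLS(\vpi_k) = \WJ$: each path is a single coset representative $x_1 = w \in \WJ$ with $\sigma_0 = 0$, $\sigma_1 = 1$. For such $\eta$ we read off $\iota(\eta) = \kappa(\eta) = w$, $\wt(\eta) = w\vpi_k$, $\Deg(\eta) = 0$, $\kap{\eta}{v} = \tbmax{w}{\J}{v}$, and $\zet{\eta}{v} = \wt(\tbmax{w}{\J}{v} \Rightarrow v)$. Substituting into \eqref{eq:NOS} with $\lambda = N\vpi_k$ (so $\lambda - \mu = (N-1)\vpi_k$, and the hypothesis $\lambda - \mu \in P^+$ holds since $N \geq 1$) gives
\[
\gch V_x^-((N-1)\vpi_k) = \sum_{v \in W} \sum_{\substack{w \in \WJ \\ \tbmax{w}{\J}{v} = x}} (-1)^{\ell(v)-\ell(x)} \be^{-w\vpi_k} \gch V_{v t_{\wt(\tbmax{w}{\J}{v} \Rightarrow v)}}^-(N\vpi_k).
\]

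Next I would reorganize this double sum. The inner condition $\tbmax{w}{\J}{v} = x$ together with Lemma~\ref{lem:tbmax} says exactly that $v$ lies in the coset $w\WJs$ and the label-increasing (shortest) directed path from $x$ to $v$ in $\QBG(W)$ has all labels in $\DJp$; since $x \in \WJ$ is the maximal element of $w\WJs$ with respect to $\dtb{v}$, we also get $w = \mcr{v}$ (via Proposition~\ref{prop:tbmax}), so $\be^{-w\vpi_k} = \be^{-x\vpi_k}$ (because $w \in x\WJs$ and $\WJs$ fixes $\vpi_k$). Thus the factor $\be^{-x\vpi_k}$ pulls out in front, as it does in the statement. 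The remaining sum is over all $v \in W$ such that the label-increasing shortest path $x \Rightarrow v$ has labels in $\DJp$ --- i.e. over $v = \ed(\bp)$ for $\bp \in \bQBG{x}$ in the notation of \eqref{eq:bBGy}, using the bijectivity of $\ed$ on $\bQBG{x}$ from Remark~\ref{rem:bBGy}(i). For such a path $\bp$, we need to compute $\ell(v) - \ell(x)$ and the twist $\wt(x \Rightarrow v) \in \BZ_{\geq 0}\alpha_k^\vee$ (it lies in $\QJs$-span but modulo $\WJs$ it is $[\cdot]^\J$, landing in $\BZ_{\geq 0}\alpha_k^\vee$ by Remark~\ref{rem:dem} since $\DJp$-coweights project to multiples of $\alpha_k^\vee$), and then use $V_{vt_\xi}^- = V_{\mcr{v}t_{[\xi]}}^-$. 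This is where the minuscule hypothesis does its real work: one must show that along a label-increasing path in $\bQBG{x}$, there is at most one quantum edge, it carries label $\gq$, it produces coweight $\alpha_k^\vee$, and it occurs precisely when $x \geq \mcr{s_\theta}$ --- this splits $\bQBG{x}$ into $\bBG{x}$ (no quantum edge, contributing $\gch V_y^-(N\vpi_k)$ with sign $(-1)^{\ell(y)-\ell(x)}$) and the paths with exactly one quantum edge (contributing $\gch V_{\mcr{ys_{\gq}}t_{\alpha_k^\vee}}^-(N\vpi_k)$ with the opposite sign).

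Establishing that last dichotomy is the main obstacle, and it is precisely what Sections~\ref{sec:pre}, \ref{sec:prfa}, and \ref{sec:prfb} are for. Concretely I expect to prove: (a) for $y \in \WJ$ and $\gamma \in \DJp$, the edge $y \edge{\gamma} \mcr{ys_\gamma}$ in $\QBG(\WJ)$ is quantum only if $\gamma = \gq$ (this uses that $\vpi_k$ minuscule forces the ``quantum drop'' $2\pair{\rho - \rho_\J}{\gamma^\vee}$ to be large, pinning down $\gamma$ via Lemma~\ref{lem:qr} and the explicit Dynkin data in types $A,D,E,B$); (b) a label-increasing path with labels in $\DJp$ can traverse such a quantum edge at most once, because once the label reaches $\gq$ the convexity of $\lhd$ together with condition \eqref{eq:ro} precludes a second qualifying quantum edge; (c) the quantum edge is available from $y$ exactly when $y \geq \mcr{s_\theta}$ (for $\gq = \alpha_k$ in simply-laced type, $\mcr{s_\theta} = \mcr{s_{\alpha_k}} = s_k$ is a minimal obstruction, and similarly $\gq = s_n\alpha_{n-1}$ in type $B_n$); and (d) the coweight accrued is $\gq^\vee$, whose class $[\gq^\vee]^\J = \alpha_k^\vee$. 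Granting (a)--(d), the sum over $\bQBG{x}$ splits as the sum over $\bBG{x}$ plus the sum over its ``quantum extensions,'' and matching end-points (each quantum extension of $\bp \in \bBG{x}$ ending at $y$ has $\ed = \mcr{ys_{\gq}}$) yields exactly the two sums in \eqref{eq:main_1}. The no-cancellation claim amounts to injectivity of $y \mapsto \mcr{ys_{\gq}}$ on $\ed(\bBG{x})$, which I would get from the $\WJ$-part of the parabolic quantum Bruhat graph structure (distinct Bruhat end-points of label-increasing paths remain distinct after applying the single quantum step $s_{\gq}$, again using the explicit combinatorics of $\gq$). When $x \not\geq \mcr{s_\theta}$, no quantum edge is ever available, $\bQBG{x} = \bBG{x}$, and \eqref{eq:main_2} drops out immediately; the case $x = e$, already handled in Section~\ref{sec:main}, serves as the base of the induction that anchors (c).
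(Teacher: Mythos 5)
Your reduction to Theorem~\ref{thm:NOS} with $\mu=\vpi_{k}$, the identification $\QLS(\vpi_{k})=\bigl\{(w;0,1)\mid w\in\WJ\bigr\}$, the observation that $\kap{\eta}{v}=x$ forces $\eta=(x;0,1)$ and $\tbmax{x}{\J}{v}=x$, and the resulting rewriting of the character as a signed sum over $\ed(\bQBG{x})$ all match the paper (Section~\ref{subsec:lem_QLS}, equation \eqref{eq:minuscule}). The gap is in your items (b)--(c) and, fatally, in your treatment of case (2). It is \emph{not} true that the quantum edge is available from $y$ exactly when $y\ge\mcr{s_{\theta}}$: by Lemma~\ref{lem:Q1}, in simply-laced type the quantum edge $y\edge{\alpha_{k}}ys_{k}$ exists for \emph{every} $y\in\WJe$ (and in type $B_{n}$ the edge labelled $\alpha_{n}$ likewise exists for every $y\ne e$, in addition to the $\gq$-edge which requires $y\ge s_{n}s_{n-1}s_{n}$). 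Also $\mcr{s_{\theta}}=\mcr{\ls s_{k}}=z_{k}s_{k}$, which is not $s_{k}$ except in rank one, so ``$\mcr{s_{\theta}}=\mcr{s_{\alpha_{k}}}=s_{k}$'' is false. Consequently $\bQBG{x}\ne\bBG{x}$ whenever $x\ne e$, and when $x\not\ge\mcr{s_{\theta}}$ the second sum does not vanish because it is empty --- it vanishes because its coefficients cancel in pairs. Your proposal contains no mechanism for this cancellation, which is the actual content of part (2) and of the no-cancellation claim in part (1).

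Concretely, the paper writes the quantum contribution to the coefficient of $\gch V_{vt_{\alpha_{k}^{\vee}}}^{-}(N\vpi_{k})$ as $c^{x}_{v,1}=\sum_{\bq}(-1)^{\ell(\ed(\bq))-\ell(x)}$ over the set $\bGx{x}{v}$ (or $\bXx{x}{v}$ in type $B_{n}$) of quantum-extended paths ending in the coset $v\WJs$, and then proves two things: (i) if this set has cardinality at least $2$ then $c^{x}_{v,1}=0$ (Lemmas~\ref{lem:itv} and \ref{lem:itvB}), established by a descending induction on $\ell(x)$ using the Demazure-operator recurrences of Lemmas~\ref{lem:+1} and \ref{lem:-1} together with the reflection trick of Lemma~\ref{lem:DL}; and (ii) the cardinality is at most $1$ when $x\ge\mcr{s_{\theta}}$ (via the factorization $y=\yJ{y}z_{k}s_{k}$ of Lemmas~\ref{lem:tiy} and \ref{lem:zksk}) and is never exactly $1$ when $x\not\ge\mcr{s_{\theta}}$ (by explicitly producing a second path in $\bGx{x}{v}$ using a carefully chosen reflection order refining \eqref{eq:ro} by $\Inv(\mcr{s_{\theta}})$, Propositions~\ref{prop:fin1b} and \ref{prop:fin2b}). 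None of this is recoverable from your claims (a)--(d) as stated; in particular your (c) would need to be replaced by this counting-plus-cancellation argument, and in type $B_{n}$ you would additionally need the four-part decomposition of $\bQBG{x}$ (two quantum labels $\alpha_{n}$ and $\gq$, plus a Bruhat $\alpha_{n}$-edge after a $\gq$-edge) and the identity $\mcr{ys_{n}}=\mcr{ys_{\gq}s_{n}}$ that merges two of the four pieces.
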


Here we show Theorem~\ref{thm:main} in the case that $x = e$.
%
%
\begin{prop} \label{prop:x=e}
Keep the notation and setting of Theorem~\ref{thm:main}. 
If $x = e$ (note that $x \not\ge \mcr{s_{\theta}}$), then 
the character identity \eqref{eq:main_2} holds. 
\end{prop}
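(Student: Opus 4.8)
The plan is to apply Theorem~\ref{thm:NOS} with $x=e$ and $\mu=\vpi_k$, $\lambda=N\vpi_k$ (so that $\lambda-\mu=(N-1)\vpi_k\in P^+$ since $N\ge 1$), and to identify explicitly which pairs $(\eta,v)$ with $\eta\in\QLS(\vpi_k)$ and $\kap{\eta}{v}=e$ actually contribute. First I would use the fact that $\vpi_k$ is minuscule to get a very concrete description of $\QLS(\vpi_k)$: since $\pair{\vpi_k}{\beta^\vee}\in\{-1,0,1\}$ for all roots, for every rational $0<\sigma<1$ we have $\sigma\pair{\vpi_k}{\beta^\vee}\notin\BZ$ unless $\pair{\vpi_k}{\beta^\vee}=0$, i.e.\ unless $\beta\in\DJs$. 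Hence $\QBb{\sigma}$ (for $0<\sigma<1$) has \emph{no} edges labelled by roots in $\DJp$, so any directed path in it lies entirely inside a single coset of $\WJs$; projecting to $\WJ$, this forces $x_{u+1}=x_u$ in the notation of Definition~\ref{dfn:QLS} when we look at representatives, which contradicts $x_u\ne x_{u+1}$ unless $s=1$. Therefore $\QLS(\vpi_k)$ consists precisely of the ``straight line'' paths $\eta_w:=(w\,;\,0,1)$ for $w\in\WJ$, with $\iota(\eta_w)=\kappa(\eta_w)=w$, $\wt(\eta_w)=w\vpi_k$, and $\Deg(\eta_w)=0$ (empty sum).

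Next I would compute $\kap{\eta_w}{v}$ and $\zet{\eta_w}{v}$ from \eqref{eq:haw} and \eqref{eq:zeta}: for $\eta=\eta_w$ we have $s=1$, $\ha{x}_0=v$, $\ha{x}_1=\tbmax{w}{\J}{v}$, so $\kap{\eta_w}{v}=\tbmax{w}{\J}{v}$ and $\zet{\eta_w}{v}=\wt(\ha{x}_1\Rightarrow v)=\wt(\tbmax{w}{\J}{v}\Rightarrow v)$. The condition $\kap{\eta_w}{v}=e$ thus becomes $\tbmax{w}{\J}{v}=e$; since $e\in\WJ$ is the minimal representative of $e\WJs=\WJs$, and since by definition $\tbmax{w}{\J}{v}\in w\WJs$, this forces $w\WJs=\WJs$, i.e.\ $w=e$. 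So the only surviving $w$ is $w=e$, and then $\tbmax{e}{\J}{v}=e$ must hold for the contributing $v$'s. By Lemma~\ref{lem:tbmax}, $\tbmax{e}{\J}{v}=e$ iff all the labels in the label-increasing shortest path from $e$ to $v$ in $\QBG(W)$ lie in $\DJp$; equivalently (Remark~\ref{rem:bBGy}(1) and the definitions of $\bBG{e}$, $\bQBG{e}$) iff $v\in\ed(\bQBG{e})$ — but one must check this also records the shortest path correctly. For such $v$, $\zet{\eta_e}{v}=\wt(e\Rightarrow v)$, which is the sum of the coroots of the quantum edges along that path.

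The remaining work is to match the right-hand side of \eqref{eq:NOS}, namely $\sum_{v\in\ed(\bQBG{e})}(-1)^{\ell(v)-\ell(e)}q^{-0}\be^{-e\vpi_k}\gch V_{vt_{\wt(e\Rightarrow v)}}^{-}(N\vpi_k)$, against the right-hand side of \eqref{eq:main_2} with $x=e$, namely $\be^{-\vpi_k}\sum_{y\in\ed(\bBG{e})}(-1)^{\ell(y)}\gch V_y^-(N\vpi_k)$ (note $e\vpi_k=\vpi_k$, $\ell(e)=0$). For a \emph{Bruhat} path ($v\ge e$, automatic) $\wt(e\Rightarrow v)=0$ and $t_0=e$, so Bruhat paths contribute exactly the terms of \eqref{eq:main_2}. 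The point is then to show all the genuinely quantum paths in $\bQBG{e}$ either do not occur or cancel. Here I would argue: a path in $\bQBG{e}$ starts at $e$, and its first edge $e\edge{\beta_1}s_{\beta_1}$ with $\beta_1\in\DJp$ is forced to be a Bruhat edge because $\ell(es_{\beta_1})=\ell(s_{\beta_1})\ge 1=\ell(e)+1$ can only be the Bruhat case (the quantum case would need $\ell(s_{\beta_1})=1-2\pair{\rho}{\beta_1^\vee}<0$). Inductively, since the labels are increasing and (by Lemma~\ref{lem:B}) each intermediate vertex stays in $\WJ$ with length strictly increasing, \emph{every} edge in a label-increasing path out of $e$ with labels in $\DJp$ must be Bruhat — so $\bQBG{e}=\bBG{e}$ and $\wt(e\Rightarrow v)=0$ throughout. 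This gives \eqref{eq:main_2} for $x=e$ directly.

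The main obstacle I anticipate is the bookkeeping around $\tbmax{\cdot}{\J}{\cdot}$ and the precise translation between ``$\kap{\eta}{v}=e$'' and membership in $\ed(\bBG{e})$ — in particular making sure that Lemma~\ref{lem:tbmax} is applied to the correct path (the label-increasing one from $\tbmax{w}{\J}{v}$ to $v$, which for $w=e$ is the label-increasing path from $e$ to $v$ only after one knows $\tbmax{e}{\J}{v}=e$, so there is a small fixed-point/consistency check), and that the sum over $v$ with $\tbmax{e}{\J}{v}=e$ is exactly indexed by $\ed(\bBG{e})$ via the injectivity in Remark~\ref{rem:bBGy}(1). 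The verification that no quantum edge can appear in a label-increasing path emanating from $e$ with labels in $\DJp$ is short but is the conceptual heart of why the ``quantum'' terms vanish when $x=e$; everything else is substitution into Theorem~\ref{thm:NOS} and the minuscule simplification of $\QLS(\vpi_k)$.
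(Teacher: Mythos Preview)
Your overall strategy is sound and is essentially the route the paper itself takes in Section~\ref{subsec:lem_QLS} to derive \eqref{eq:minuscule}: reduce $\QLS(\vpi_k)$ to straight-line paths, identify the contributing $(\eta,v)$ via Lemma~\ref{lem:tbmax}, and obtain the sum over $\ed(\bQBG{x})$. The paper's own proof of Proposition~\ref{prop:x=e} is shorter only because it computes $\bBG{e}$ directly and then quotes \cite[Proposition~5.3]{NOS} for the resulting two-term identity $\gch V_e^-((N-1)\vpi_k)=\be^{-\vpi_k}(\gch V_e^-(N\vpi_k)-\gch V_{s_k}^-(N\vpi_k))$, rather than going through Theorem~\ref{thm:NOS} again. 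Your self-contained version is fine in principle.

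However, your final step --- the claim that $\bQBG{e}=\bBG{e}$ --- has a genuine gap. First, from ``$\ell(s_{\beta_1})\ge 1$'' you conclude the first edge is Bruhat; but being an edge of $\QBG(W)$ requires $\ell(s_{\beta_1})=\ell(e)+1=1$ \emph{exactly} in the Bruhat case, not $\ge 1$. What you have actually shown is that the quantum case is impossible (it would force $\ell(s_{\beta_1})<0$); for there to be any edge at all one needs $\ell(s_{\beta_1})=1$, i.e.\ $\beta_1$ is simple, hence $\beta_1=\alpha_k$ since $\alpha_k$ is the only simple root in $\DJp$. Second, your ``inductive'' argument that all later edges are Bruhat is circular: you invoke ``length strictly increasing'' and Lemma~\ref{lem:B}, both of which already presuppose the edges are Bruhat. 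In fact the inductive step is false in general --- once you are at a nonidentity $y\in\WJ$, there \emph{is} a quantum edge $y\edge{\alpha_k}ys_k$ (simply-laced case, Lemma~\ref{lem:Q1}) or $y\edge{\gq}ys_{\gq}$ (type $B$, Lemma~\ref{lem:Q2}). The correct argument is the one the paper gives: by \eqref{eq:ro} the simple root $\alpha_k$ is the \emph{largest} element of $\Delta^+$ under $\lhd$, so after the forced first step $e\edge{\alpha_k}s_k$ no further label-increasing edge is possible. Hence $\bQBG{e}=\bBG{e}=\{e,\ e\edge{\alpha_k}s_k\}$, and your computation then goes through.
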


\begin{proof}
Since the reflection order $\lhd$ satisfies \eqref{eq:ro} (with $\J=I \setminus \{k\}$), 
we see that $\alpha_{k}$ is the largest element in $\Delta^{+}$ with respect to $\lhd$. 
It is easily seen that 
$\bBG{x} = \bBG{e}$ is identical to the set 
$\bigl\{ \text{$e$ (directed path of length $0$)}, \, e \edge{\alpha_{k}} s_{k} \bigr\}$. 
Therefore, the right-hand side of \eqref{eq:main_2} can be written as: 
\begin{equation} \label{eq:x=e}
\be^{-\vpi_{k}}( \gch V_{e}^{-}(N\vpi_{k}) -  \gch V_{s_{k}}^{-}(N\vpi_{k})), 
\end{equation}
which is identical to $\gch V_{e}^{-}((N-1)\vpi_{k})$ 
on the left-hand side of \eqref{eq:main_2} 
by \cite[Proposition~5.3]{NOS}. This proves the proposition. 
\end{proof}

In the rest of this paper, we will prove Theorem~\ref{thm:main} in the case that $x \ne e$. 
We divide our proof as follows. In Section~\ref{sec:prfa}, 
we give a proof in simply-laced types. In Section~\ref{sec:prfb}, 
we give a proof in type $B_{n}$. 
Before giving these proofs, we show some technical lemmas 
in Section~\ref{sec:pre}, which are valid in both types. 
%
%
\section{Recurrence relations for coefficients in the character identity of Chevalley type.}
\label{sec:pre}

As in Section~\ref{sec:main}, 
assume that $\Fg$ is simply-laced or of type $B_{n}$. 
Let $k \in I$ be such that $\vpi_{k}$ is minuscule, 
and set $\J=\J_{\vpi_{k}} = I \setminus \{k\}$. 
%
%
\subsection{Quantum Lakshmibai-Seshadri paths of shape $\vpi_{k}$.}
\label{subsec:lem_QLS}

Since $\vpi_{k}$ is minuscule, we have 
$\pair{\vpi_{k}}{\beta^{\vee}} \in \bigl\{0,\,1\bigr\}$ 
for all $\beta \in \Delta^{+}$. 
Therefore, $\QBa$ (and hence $\Ba$) 
has no directed edges for any rational number $0 < \sigma < 1$. 
Hence we obtain the following. 
%
%
\begin{lem} \label{lem:QLS=LS}
It holds that $\QLS(\vpi_{k})=\LS(\vpi_{k})=
\bigl\{(w\,;\,0,\,1) \mid w \in \WJ \bigr\}$. 
Therefore, $\deg (\eta) = 0$ for all $\eta \in \QLS(\vpi_{k})=\LS(\vpi_{k})$. 
\end{lem}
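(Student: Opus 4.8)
The plan is to reduce the statement to the elementary fact, already noted just above, that for every rational number $0<\sigma<1$ the graph $\QBa$ — and hence its subgraph $\Ba$ — has no directed edges. I would briefly recall the reason: the labels of the directed edges of $\QBG(\WJ)$ lie in $\DJp$, and since $\J=\J_{\vpi_{k}}$ we have $\pair{\vpi_{k}}{\beta^{\vee}}=1$ for every $\beta\in\DJp$ (it is nonzero because $\beta\notin\DJs$, and it is at most $1$ because $\vpi_{k}$ is minuscule and dominant); hence $\sigma\pair{\vpi_{k}}{\beta^{\vee}}=\sigma\notin\BZ$, so no edge of $\QBG(\WJ)$ survives in $\QBa$, and a fortiori none survives in $\Ba$.

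Next I would unwind Definition~\ref{dfn:QLS} with $\mu=\vpi_{k}$. Let $\eta=(x_{1},\dots,x_{s};\sigma_{0},\dots,\sigma_{s})\in\QLS(\vpi_{k})$. For each $1\le u\le s-1$ the definition demands a directed path in $\QBb{\sigma_{u}}$ from $x_{u+1}$ to $x_{u}$; since $0<\sigma_{u}<1$ and $\QBb{\sigma_{u}}$ is edgeless, such a path must have length $0$, forcing $x_{u+1}=x_{u}$. This contradicts the requirement $x_{u}\neq x_{u+1}$ built into the definition, so no index $u$ in the range $1\le u\le s-1$ can occur; that is, $s=1$. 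Hence $\eta=(w;0,1)$ for some $w\in\WJ$. Conversely, every pair $(w;0,1)$ with $w\in\WJ$ is trivially a quantum LS path — indeed an ordinary LS path, since the connectivity condition is vacuous when $s=1$ — so $\QLS(\vpi_{k})=\LS(\vpi_{k})=\{(w;0,1)\mid w\in\WJ\}$.

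Finally, for $\eta=(w;0,1)$ the sum defining $\Deg(\eta)$ in \eqref{eq:deg} runs over $1\le u\le s-1=0$ and is therefore empty, giving $\Deg(\eta)=0$. I do not anticipate any genuine obstacle here: the whole argument is a short definition-chase resting on the single numerical input that a minuscule weight pairs to $1$ with every coroot $\beta^{\vee}$ for $\beta\in\DJp$, so that no proper rational multiple of that pairing is an integer.
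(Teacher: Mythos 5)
Your proof is correct and matches the paper's argument, which likewise observes that for every $\beta\in\DJp$ one has $\pair{\vpi_{k}}{\beta^{\vee}}=1$ (since $\vpi_{k}$ is minuscule), so $\QBa$ and $\Ba$ are edgeless for $0<\sigma<1$ and the LS-path condition forces $s=1$. You have merely spelled out the definition-chase that the paper leaves implicit.
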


Let $x \in \WJ$.  Let $\eta \in \QLS(\vpi_{k})=\LS(\vpi_{k})$ 
and $v \in W$ be such that $\kap{\eta}{v}=x$. 
By Lemma~\ref{lem:QLS=LS}, $\eta = (w\,;\,0,\,1)$ for some $w \in \WJ$. 
Hence we have $w = \kappa(\eta) = \mcr{ \kap{\eta}{v} } = \mcr{x} = x$ 
since $x \in \WJ$. Thus we obtain $\eta = (x\,;\,0,\,1)$. 
Therefore, by \eqref{eq:haw}, we see that $\tbmax{x}{\J}{v} = x$. 
Let $\lhd$ be a reflection order on $\Delta^{+}$ 
satisfying condition \eqref{eq:ro}. It follows from Lemma~\ref{lem:tbmax} that 
$\tbmax{x}{\J}{v} = x$ if and only if 
all the labels in the label-increasing directed path from $x$ to $v$ 
in $\QBG(W)$ are contained in $\DJp$. Therefore, 
by Theorem~\ref{thm:NOS}, Remarks~\ref{rem:bBGy}\,(1) and \ref{rem:dem}, 
we deduce that 
%
%
\begin{equation} \label{eq:minuscule}
\begin{split}
\gch V_{x}^{-} ((N-1)\vpi_{k}) & = 
 \be^{-x \vpi_{k}} \sum_{\bp \in \bQBG{x}} 
 (-1)^{\ell(\ed(\bp))-\ell(x)} \gch V_{ \mcr{\ed(\bp)} t_{[\wt(\bp)]} }^{-} (N\vpi_{k}) \\[3mm]
& = 
 \be^{-x \vpi_{k}} \sum_{y \in \ed(\bQBG{x})} 
 (-1)^{\ell(y)-\ell(x)} \gch V_{ \mcr{y} t_{[\wt(x \Rightarrow y)]} }^{-} (N\vpi_{k})
\end{split}
\end{equation}
for all $N \in \BZ_{\ge 1}$; for the notation $\mcr{\,\cdot\,}=\mcr{\,\cdot\,}^{\J}$ and 
$[\,\cdot\,]=[\,\cdot\,]^{\J}$, see Section~\ref{subsec:lie}. 
Recall from Lemma~\ref{lem:bBGy} that $\ed(\bQBG{y})$ does not depend on 
the choice of a reflection order $\lhd$ satisfying condition \eqref{eq:ro}. 
%
%
\subsection{Lemmas on Bruhat edges in the quantum Bruhat graph (1).}
\label{subsec:tech}

Recall that $\vpi_{k}$ is minuscule and $\J=I \setminus \{k\}$. 
We know the following (see, e.g., \cite[Lemma 11.1.16]{G}). 
%
%
\begin{prop} \label{prop:lw}
The restriction of the Bruhat order to $\WJ$ agrees with 
the restriction of the left weak Bruhat order to $\WJ$. 
Namely, if $y,\,w \in \WJ$ satisfy $w \ge y$, then 
there exist a sequence $y=y_{0},\,y_{1},\,\dots,\,y_{p}=w$ of elements of $\WJ$ 
and a sequence $j_{1},\,j_{2},\,\dots,\,j_{p}$ of elements of $I$ such that 
$y_{q} = s_{j_{q}}y_{q-1}$ and 
$\ell(y_{q}) = \ell(y_{q-1}) + 1$ for all $1 \le q \le p$. 
\end{prop}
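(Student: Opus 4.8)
The statement to establish is that for $y,w\in\WJ$ with $w\ge y$ in the Bruhat order there is a chain $y=y_{0},y_{1},\dots,y_{p}=w$ in $\WJ$ with $y_{q}=s_{j_{q}}y_{q-1}$ and $\ell(y_{q})=\ell(y_{q-1})+1$; the reverse implication (a left weak order relation is always a Bruhat relation) is immediate, so this chain statement is the whole content. The plan is to exploit the bijection $\WJ\to W\vpi_{k}$, $w\mapsto w\vpi_{k}$, which is injective because $\WJs=\mathrm{Stab}_{W}(\vpi_{k})$ and $\WJ$ is a transversal, together with minusculeness: $\pair{w\vpi_{k}}{\alpha_{i}^{\vee}}\in\{-1,0,1\}$ for all $w\in W$ and $i\in I$.

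I would first record the standard monotonicity of dominant orbits under the Bruhat order: if $w\ge y$ in $W$ and $\Lambda\in P^{+}$, then $y\Lambda-w\Lambda\in Q^{+}:=\sum_{i\in I}\BZ_{\ge 0}\alpha_{i}$. This follows from a saturated chain $y=u_{0}\lessdot u_{1}\lessdot\cdots\lessdot u_{r}=w$ by noting that each covering is of the form $u_{t}=u_{t-1}s_{\beta_{t}}$ with $\beta_{t}\in\Delta^{+}$, that $\ell(u_{t})>\ell(u_{t-1})$ forces $u_{t-1}\beta_{t}\in\Delta^{+}$, and that $u_{t-1}\Lambda-u_{t}\Lambda=\pair{\Lambda}{\beta_{t}^{\vee}}\,u_{t-1}\beta_{t}\in Q^{+}$; summing telescopes.

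The heart of the argument is the following claim, proved by induction on the height $\mathrm{ht}(\gamma)$ of $\gamma:=y\vpi_{k}-w\vpi_{k}$: if $y,w\in\WJ$ and $\gamma\in Q^{+}$, then a chain of the desired form from $y$ to $w$ exists. When $\gamma=0$ injectivity gives $y=w$ and $p=0$. When $\gamma=\sum_{i}c_{i}\alpha_{i}\neq 0$ with $c_{i}\in\BZ_{\ge 0}$, I would use positive-definiteness and $W$-invariance of the normalized invariant form $(\cdot,\cdot)$ with Cauchy--Schwarz: $(\gamma,y\vpi_{k})=(\vpi_{k},\vpi_{k})-(w\vpi_{k},y\vpi_{k})>0$, the inequality being strict since $\|w\vpi_{k}\|=\|y\vpi_{k}\|=\|\vpi_{k}\|$ while $w\vpi_{k}\neq y\vpi_{k}$. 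Hence $\sum_{i}c_{i}(\alpha_{i},y\vpi_{k})>0$, so there is $j\in I$ with $c_{j}>0$ and $(\alpha_{j},y\vpi_{k})>0$, i.e.\ $\pair{y\vpi_{k}}{\alpha_{j}^{\vee}}>0$, which by minusculeness equals $1$. Then Lemma~\ref{lem:WJ}(1), applied with $\Lambda=\vpi_{k}$ (for which $\J_{\Lambda}=\J$), gives $y_{1}:=s_{j}y\in\WJ$ with $\ell(y_{1})=\ell(y)+1$, and $y_{1}\vpi_{k}=y\vpi_{k}-\alpha_{j}$, so $y_{1}\vpi_{k}-w\vpi_{k}=\gamma-\alpha_{j}\in Q^{+}$ — here $c_{j}\ge 1$, i.e.\ $\pair{y\vpi_{k}}{\alpha_{j}^{\vee}}=1\le c_{j}$, is exactly the point where minusculeness is essential — and of strictly smaller height. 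Applying the induction hypothesis to $(y_{1},w)$ and prepending the step $y\to y_{1}$ completes the induction. Combining with the first ingredient: given $w\ge y$ in $\WJ$ one gets $y\vpi_{k}-w\vpi_{k}\in Q^{+}$ and then the chain, which is precisely a saturated chain in the left weak order.

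The main obstacle I anticipate is exactly the inductive step: producing the correct simple reflection $s_{j}$ that moves $y$ one step up in $\WJ$ while keeping $y\vpi_{k}-w\vpi_{k}$ inside $Q^{+}$. The Cauchy--Schwarz estimate $(\gamma,y\vpi_{k})>0$ is the key trick that produces such a $j$, and is where the geometry of the minuscule orbit genuinely enters; the orbit monotonicity, Lemma~\ref{lem:WJ}, and the bookkeeping of the chain are then routine.
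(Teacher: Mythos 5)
Your proof is correct. Note first that the paper does not actually prove Proposition~\ref{prop:lw}: it is quoted from \cite[Lemma~11.1.16]{G}, where it is established within Green's general theory of minuscule representations (weights, heaps, full commutativity). Your argument is a clean, self-contained replacement. The two ingredients both check out: the telescoping computation $u_{t-1}\Lambda-u_{t}\Lambda=\pair{\Lambda}{\beta_{t}^{\vee}}\,u_{t-1}\beta_{t}\in Q^{+}$ along a saturated Bruhat chain is standard and correctly justified, and the inductive step is sound --- the Cauchy--Schwarz inequality gives $(\gamma,y\vpi_{k})=(\vpi_{k},\vpi_{k})-(w\vpi_{k},y\vpi_{k})>0$ strictly (equality would force $w\vpi_{k}=y\vpi_{k}$, hence $w=y$ by minimality of coset representatives), which produces $j$ with $c_{j}>0$ and $\pair{y\vpi_{k}}{\alpha_{j}^{\vee}}=1$; Lemma~\ref{lem:WJ}\,(1) then yields $s_{j}y\in\WJ$ with $\ell(s_{j}y)=\ell(y)+1$, and $s_{j}y\,\vpi_{k}-w\vpi_{k}=\gamma-\alpha_{j}\in Q^{+}$ precisely because minusculeness caps $\pair{y\vpi_{k}}{\alpha_{j}^{\vee}}$ at $1\le c_{j}$. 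You correctly identify this as the only place where minusculeness is indispensable (the statement is false for general parabolic quotients). As a bonus, your induction proves slightly more than the proposition asks: it shows that the Bruhat order on $\WJ$, the left weak order on $\WJ$, and the dominance order on the orbit $W\vpi_{k}$ (i.e.\ $w\ge y\iff y\vpi_{k}-w\vpi_{k}\in Q^{+}$) all coincide, which is the standard characterization of the minuscule poset.
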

%
%
\begin{lem} \label{lem:tiy}
For each $y \in \WJe$, there exist a unique 
$\yJ{y} \in \WJ$ and $\yJs{y} \in \WJs$ 
such that $y = \yJ{y}\yJs{y}s_{k}$ and 
$\ell(y) = \ell(\yJ{y}) + \ell(\yJs{y}) + \ell(s_{k})$. 
\end{lem}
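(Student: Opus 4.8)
The plan is to strip one factor of $s_{k}$ off the right end of $y$ and then invoke the standard parabolic factorization $W = \WJ \cdot \WJs$.

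\emph{Step 1 (the right descent of $y$ is $k$).} First I would show that every $y \in \WJe$ satisfies $\ell(ys_{k}) = \ell(y) - 1$; equivalently, $\alpha_{k} \in \Inv(y)$. Write a reduced expression $y = s_{i_{1}} \cdots s_{i_{\ell}}$ with $\ell = \ell(y) \ge 1$. Since $s_{i_{1}} \cdots s_{i_{\ell-1}} s_{i_{\ell}}$ is reduced, $s_{i_{1}} \cdots s_{i_{\ell-1}} \alpha_{i_{\ell}} \in \Delta^{+}$, hence $y\alpha_{i_{\ell}} = -\,s_{i_{1}} \cdots s_{i_{\ell-1}} \alpha_{i_{\ell}} \in \Delta^{-}$, i.e. $\alpha_{i_{\ell}} \in \Inv(y)$. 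On the other hand, by \eqref{eq:mcr} the hypothesis $y \in \WJ$ gives $\Inv(y) \cap \DJs = \emptyset$, so $\Inv(y) \subseteq \DJp$; and since $\J = I \setminus \{k\}$, the only simple root contained in $\DJp$ is $\alpha_{k}$. Therefore $\alpha_{i_{\ell}} = \alpha_{k}$, so $i_{\ell} = k$ and $\ell(ys_{k}) = \ell(y) - 1$.

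\emph{Step 2 (existence and uniqueness).} Set $u := ys_{k}$, so $\ell(u) = \ell(y) - 1$ by Step~1, and let $u = u^{\J} u_{\J}$ be the (unique) parabolic factorization with $u^{\J} \in \WJ$, $u_{\J} \in \WJs$, and $\ell(u) = \ell(u^{\J}) + \ell(u_{\J})$ (see \cite[Sect.~2.4]{BB}). Then $y = u^{\J} u_{\J} s_{k}$ and $\ell(y) = \ell(u) + 1 = \ell(u^{\J}) + \ell(u_{\J}) + \ell(s_{k})$, so the choice $\yJ{y} := u^{\J} = \mcr{ys_{k}}$ and $\yJs{y} := u_{\J}$ works. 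For uniqueness, suppose $y = zws_{k}$ with $z \in \WJ$, $w \in \WJs$ and $\ell(y) = \ell(z) + \ell(w) + \ell(s_{k})$. Then $zw = ys_{k} = u$, and $\ell(u) = \ell(y) - 1 = \ell(z) + \ell(w) \ge \ell(zw) = \ell(u)$ forces $\ell(zw) = \ell(z) + \ell(w)$; thus $z \cdot w$ is a length-additive $\WJ \times \WJs$ factorization of $u$, and uniqueness of the parabolic factorization yields $z = u^{\J}$ and $w = u_{\J}$.

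I do not expect a genuine obstacle here: the only point needing care is Step~1, and within it the observation that $\DJp$ contains exactly one simple root, namely $\alpha_{k}$ (because $\J = I \setminus \{k\}$), which is what pins down the right descent of $y$. Note that this lemma, unlike most of Section~\ref{sec:pre}, makes no use of the minuscule hypothesis on $\vpi_{k}$.
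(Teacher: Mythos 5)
Your proof is correct, and it takes a different (arguably cleaner) route than the paper's. The paper establishes existence by choosing $z_{0} \in \WJs$ of maximal length among all length-additive decompositions $y = y_{0}z_{0}s_{k}$ and then showing $y_{0} \in \WJ$ by contradiction with that maximality; it proves uniqueness by comparing $y_{1}\vpi_{k} = y_{0}\vpi_{k}$ and using that elements of $\WJ$ are determined by their action on $\vpi_{k}$. You instead first pin down that $s_{k}$ is the unique right descent of any $y \in \WJe$ (since $\Inv(y) \subseteq \DJp$ by \eqref{eq:mcr} and $\alpha_{k}$ is the only simple root in $\DJp$), and then apply the standard length-additive parabolic factorization $W = \WJ \cdot \WJs$ to $u = ys_{k}$, getting both existence and uniqueness in one stroke from \cite[Sect.~2.4]{BB}. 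A small bonus of your version is that it makes explicit the fact the paper's proof leaves implicit, namely that the set of length-additive decompositions $y = y_{0}z_{0}s_{k}$ is nonempty in the first place (this is exactly your Step 1). Your closing remark is also accurate: the argument uses only $\J = I \setminus \{k\}$, not the minuscule hypothesis, whereas the paper's uniqueness step as written invokes the stabilizer of $\vpi_{k}$ — though that too works for any fundamental weight, so neither proof genuinely needs minusculity here.
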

%
%
\begin{rem} \label{rem:tiy}
Keep the notation and setting of Lemma~\ref{lem:tiy}. 
We see that $\yJs{y}s_{k} \in \WJ$ and 
$\ell(\yJs{y}s_{k}) = \ell(\yJs{y})+\ell(s_{k})$. 
Also, it is easily verified that 
$\ell(y) = \ell(\yJ{y}) + \ell(\yJs{y}s_{k})$. 
\end{rem}

\begin{proof}[Proof of Lemma~\ref{lem:tiy}]
Let $z_{0} \in \WJs$ be a longest element for which 
$y = y_{0}z_{0}s_{k}$ for some $y_{0} \in W$ such that
$\ell(y) = \ell(y_{0}) + \ell(z_{0}) + \ell(s_{k})$. 
Suppose, for a contradiction, that $y_{0} \notin \WJ$. 
In this case, there exist $y_{0}' \in W$ and 
$j \in \J = I \setminus \{k\}$ such that $y_{0}=y_{0}'s_{j}$ and 
$\ell(y_{0})=\ell(y_{0}') + 1$. Hence we have 
$y = y_{0}'s_{j}z_{0}s_{k}$, with $\ell(s_{j}z_{0}) = \ell(z_{0}) + 1$ and 
$\ell(y) = \ell(y_{0}') + \ell(s_{j}z_{0}) + \ell(s_{k})$. 
Since $s_{j}z_{0} \in \WJs$, this contradicts 
the maximality of the length of $z_{0} \in \WJs$. Thus we obtain $y_{0} \in \WJ$, as desired.

Assume now that $y = y_{1}z_{1}s_{k}$ for some $y_{1} \in \WJ$ and $z_{1} \in \WJs$
such that $\ell(y) = \ell(y_{1}) + \ell(z_{1}) + \ell(s_{k})$. Then, 
$y_{1}\vpi_{k} = y_{1}z_{1}\vpi_{k} = ys_{k}\vpi_{k} = 
 y_{0}z_{0}\vpi_{k} = y_{0}\vpi_{k}$.
Since $y_{1},\,y_{0} \in \WJ$, we deduce that $y_{0}=y_{1}$. 
Hence, from the equalities $y_{1}z_{1}s_{k} = y = y_{0}z_{0}s_{k}$, 
it follows that $z_{1} = z_{0}$. This proves the lemma. 
\end{proof}

Recall that $\ls$ is the longest element of $\WJs$. 
We write $\mcr{\ls s_{k}} \in \WJ$ as 
$\mcr{\ls s_{k}} = z_{k}s_{k}$ for some $z_{k} \in W$ such that 
$\ell(z_{k}s_{k}) = \ell(z_{k}) + 1$. Since $\ls \in \WJs$, 
we see that $\ell(\ls s_{k}) = \ell(\ls) + 1$. 
Also, since $z_{k} s_{k} = \mcr{\ls s_{k}} \leq \ls s_{k}$, 
we deduce by the Subword Property for the Bruhat order 
(see, e.g., \cite[Theorem 2.2.2]{BB})
that $z_{k} \in \WJs$; in the notation of Lemma~\ref{lem:tiy}, 
we have $\yJ{\mcr{\ls s_{k}}}=e$ and $\yJs{\mcr{\ls s_{k}}}=z_{k}$. 
%
%
\begin{lem} \label{lem:zksk}
Let $y \in \WJe$, and set $z:=\yJs{y} \in \WJs$. 
Then, $z_{k}s_{k} \ge zs_{k}$ in the notation above, 
where the equality holds if and only if $y \ge z_{k}s_{k}$. 
\end{lem}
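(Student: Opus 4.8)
The plan is to deal first with the inequality $z_{k}s_{k}\ge zs_{k}$ and then to derive the equality criterion from it together with the uniqueness statement in Lemma~\ref{lem:tiy}. Throughout I set $z:=\yJs{y}\in\WJs$; recall from Remark~\ref{rem:tiy} that $zs_{k}\in\WJ$, that $\ell(zs_{k})=\ell(z)+1$, and that $y=\yJ{y}(zs_{k})$ with $\ell(y)=\ell(\yJ{y})+\ell(zs_{k})$.

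For the inequality, since $\ls$ is the longest element of $\WJs$ and $z\in\WJs$, we have $z\le\ls$, and also $\ell(\ls s_{k})=\ell(\ls)+1$. I would choose a reduced expression for $\ls$ containing one for $z$ as a subword and append $s_{k}$ to it; the resulting reduced expression for $\ls s_{k}$ then contains a reduced expression for $zs_{k}$ as a subword, so $zs_{k}\le\ls s_{k}$ by the Subword Property. Since $zs_{k}\in\WJ$ and the projection $w\mapsto\mcr{w}$ onto $\WJ$ is order-preserving (a standard fact; see, e.g., \cite{BB}), this gives $zs_{k}=\mcr{zs_{k}}\le\mcr{\ls s_{k}}=z_{k}s_{k}$, as desired.

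For the ``only if'' direction of the equality criterion, I would observe that $zs_{k}=z_{k}s_{k}$ is equivalent to $z=z_{k}$, in which case $y=\yJ{y}(z_{k}s_{k})$ with lengths adding by Remark~\ref{rem:tiy}, so concatenating reduced expressions and applying the Subword Property gives $z_{k}s_{k}\le y$. For the ``if'' direction, suppose $y\ge z_{k}s_{k}$. Using Proposition~\ref{prop:lw} (the Bruhat order on $\WJ$ coincides with the left weak order), I would write $y=s_{j_{p}}\cdots s_{j_{1}}(z_{k}s_{k})$ with the lengths adding up to $\ell(y)$, i.e. $y=w(z_{k}s_{k})$ with $w:=s_{j_{p}}\cdots s_{j_{1}}$ and $\ell(y)=\ell(w)+\ell(z_{k})+\ell(s_{k})$; since $z_{k}\in\WJs$, this is a factorization of the kind appearing in the proof of Lemma~\ref{lem:tiy}, which (by uniqueness there) forces $\yJs{y}$ to be the longest element of $\WJs$ admitting such a factorization, whence $\ell(z)=\ell(\yJs{y})\ge\ell(z_{k})$. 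Combined with $zs_{k}\le z_{k}s_{k}$ from the first part (which yields $\ell(z)\le\ell(z_{k})$), we get $\ell(zs_{k})=\ell(z_{k}s_{k})$, and then $zs_{k}=z_{k}s_{k}$, since comparable elements of equal length in the Bruhat order coincide.

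The main obstacle I anticipate is the middle step of the ``if'' direction: extracting from Lemma~\ref{lem:tiy} the precise statement that $\yJs{y}$ is the \emph{longest} element of $\WJs$ admitting a reduced factorization $y=y_{0}\cdot\yJs{y}\cdot s_{k}$, and checking that the factorization produced by Proposition~\ref{prop:lw} has its lengths adding up exactly to $\ell(y)$ so that it qualifies as such. Everything else is routine manipulation with the Subword Property and with the order-preserving projection onto $\WJ$.
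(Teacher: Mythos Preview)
Your proof is correct. The inequality and the ``only if'' direction match the paper's argument essentially verbatim (Subword Property plus order-preservation of $w\mapsto\mcr{w}$). The ``if'' direction, however, is genuinely different. The paper uses Proposition~\ref{prop:lw} to write $y=s_{j_p}\cdots s_{j_1}(z_ks_k)$ with lengths adding, and then shows directly that $w:=s_{j_p}\cdots s_{j_1}\in\WJ$ by a contradiction argument based on a weight computation: if $w\notin\WJ$, then at the first step $q$ where $s_{j_q}\cdots s_{j_1}$ leaves $\WJ$ one finds $\pair{y_{q-1}\vpi_k}{\alpha_{j_q}^\vee}\le 0$, contradicting $\ell(y_q)=\ell(y_{q-1})+1$. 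Once $w\in\WJ$, the uniqueness in Lemma~\ref{lem:tiy} gives $\yJs{y}=z_k$ directly. Your route instead squeezes $\ell(z)$ between $\ell(z_k)$ (from the maximality of $\yJs{y}$, extracted from the \emph{proof} of Lemma~\ref{lem:tiy}) and $\ell(z_k)$ again (from $zs_k\le z_ks_k$), avoiding the weight calculation entirely.

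Two small remarks on your execution. First, your worry about lengths adding is easily settled: since each $s_{j_q}$ increases length by one, $\ell(y)=\ell(z_ks_k)+p$; combined with $\ell(y)\le\ell(w)+\ell(z_ks_k)\le p+\ell(z_ks_k)$ this forces $\ell(w)=p$. Second, the phrase ``by uniqueness there'' is slightly off: what you actually need is the \emph{construction} in the proof of Lemma~\ref{lem:tiy}, which begins by taking $z_0\in\WJs$ of maximal length and then shows this $z_0$ coincides with $\yJs{y}$; it is this, not uniqueness per se, that gives the maximality of $\yJs{y}$. Your argument is shorter and more conceptual; the paper's has the advantage of using only the \emph{statement} of Lemma~\ref{lem:tiy}.
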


\begin{proof}
Recall that $zs_{k} = \yJs{y}s_{k} \in \WJ$ (see Remark~\ref{rem:tiy}). 
Since $z \in \WJs$, we have $\ls \ge z$, and hence $\ls s_{k} \ge zs_{k}$. 
Thus we obtain $z_{k}s_{k} = \mcr{ \ls s_{k} } \ge \mcr{ zs_{k} } = zs_{k}$, 
as desired (see, e.g., \cite[Proposition~2.5.1]{BB}). 

We set $y' :=\yJ{y} \in \WJ$. If $z_{k}s_{k} = zs_{k}$, then 
it is obvious that $y = y'zs_{k} =y'z_{k}s_{k} \ge z_{k}s_{k}$,
which proves the ``only if'' part; 
recall that $\ell(y) = \ell(y') + \ell(zs_{k}) = 
\ell(y') + \ell(z_{k}s_{k})$. Assume now that $y \ge z_{k}s_{k}$. 
Since $y,\,z_{k}s_{k} \in \WJ$, and $y \ge z_{k}s_{k}$, 
it follows from Proposition~\ref{prop:lw} that 
there exist $z_{k}s_{k}=y_{0},\,y_{1},\,\dots,\,y_{p}=y \in \WJ$ and 
$j_{1},\,j_{2},\,\dots,\,j_{p} \in I$ such that 
$y_{q} = s_{j_{q}}y_{q-1}$ and 
$\ell(y_{q}) = \ell(y_{q-1}) + 1$ for all $1 \le q \le p$; 
notice that 
$\pair{y_{q-1}\vpi_{k}}{\alpha_{j_{q}}^{\vee}} > 0$ 
for $1 \le q \le p$, since $y_{q}=s_{j_{q}}y_{q-1} \in \WJ$ and
$\ell(y_{q}) = \ell(y_{q-1}) + 1$. In view of Lemma~\ref{lem:tiy}, 
it suffices to show that $s_{j_p} \cdots s_{j_2}s_{j_1} \in \WJ$. 
Suppose, for a contradiction, that 
there exists $1 \le q \le p$ such that 
$s_{j_{q-1}} \cdots s_{j_2}s_{j_1} \in \WJ$ and 
$s_{j_{q}} \cdots s_{j_2}s_{j_1} \not\in \WJ$. 
In this case, $(s_{j_{q-1}} \cdots s_{j_2}s_{j_1})^{-1}\alpha_{j_{q}} \in \DJs$. 
Therefore, we see that
\begin{align*}
\pair{y_{q-1}\vpi_{k}}{\alpha_{j_{q}}^{\vee}}
& = 
\pair{s_{j_{q-1}} \cdots s_{j_2}s_{j_1}\ls s_{k}\vpi_{k}}{\alpha_{j_{q}}^{\vee}} 
\quad \text{since $\mcr{\ls s_{k}} = z_{k}s_{k}$} \\
& = 
\Bpair{s_{k}\vpi_{k}}{\bigl( 
 \underbrace{\ls (s_{j_{q-1}} \cdots s_{j_2}s_{j_1})^{-1} \alpha_{j_{q}}}_{\in - \DJs} \bigr)^{\vee}} \\
& = 
\pair{\vpi_{k}}{\bigl( 
 \underbrace{s_{k} \ls (s_{j_{q-1}} \cdots s_{j_2}s_{j_1})^{-1} \alpha_{j_{q}}}_{
   \in - s_{k}\DJs \subset - \Delta^{+} } \bigr)^{\vee}} \le 0, 
\end{align*}
which is a contradiction. 
Thus we have proved the ``if'' part.
This proves the lemma. 
\end{proof}
%
%
\begin{lem} \label{lem:EB}
Let $y \in \WJe$, and set $z:=\yJs{y} \in \WJs$. 
If $y \not\ge z_{k}s_{k}$, or equivalently, $z_{k}s_{k} > zs_{k}$, 
then there exists $\beta \in \Inv(z_{k}s_{k}) \setminus \{\alpha_{k}\}$ 
such that $y \edge{\beta} ys_{\beta}$ is a Bruhat edge. 
\end{lem}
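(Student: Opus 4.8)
The plan is to exploit the factorization $y = \yJ{y}\,z\,s_{k}$ from Lemma~\ref{lem:tiy}, together with the explicit element $z_{k}s_{k} = \mcr{\ls s_{k}}$ and the characterization of $\WJ$ in \eqref{eq:mcr}, to locate a positive root $\beta$ whose reflection $s_{\beta}$, applied on the right to $y$, increases the length by exactly one while staying (after passing to coset representatives) inside $\WJ$. First I would record the hypothesis in its ``$z_{k}s_{k} > zs_{k}$'' form via Lemma~\ref{lem:zksk}: since $y \not\ge z_{k}s_{k}$, the equality case there fails, so $z_{k}s_{k} > zs_{k}$ strictly, and in particular $\ls s_{k} > zs_{k}$, i.e. $z \ne \ls$, so $\Inv(\ls) \setminus \Inv(z) \ne \emptyset$. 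The set $\Inv(z_{k}s_{k})$ contains $\alpha_{k}$ (because $z_{k}s_{k}\alpha_{k} = -z_{k}\alpha_{k} \in \Delta^{-}$, as $z_{k} \in \WJs$ fixes the sign of $\alpha_{k}$ only up to... more carefully: $s_{k}\alpha_{k} = -\alpha_{k}$ and $z_{k}$ sends $-\alpha_{k}$ to a negative root since $z_{k} \in \WJs$), and I want to show $\Inv(z_{k}s_{k}) \setminus \{\alpha_{k}\} \ne \emptyset$ and that some $\beta$ in it gives a Bruhat edge at $y$.

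The key step is a sign/length computation. Using $z_{k}s_{k} = \mcr{\ls s_{k}}$ and $y = \yJ{y}\,z\,s_{k}$, I would compute $\pair{y\vpi_{k}}{(y\beta)^{\vee}}$-type quantities, or more directly, exhibit $\beta$ so that $\ell(ys_{\beta}) = \ell(y)+1$. The natural candidate is to take $\beta$ to be a suitable element of $\Inv(z_{k}s_{k})$ that is ``visible'' from $y$ as a length-increasing reflection: because $z_{k}s_{k} > zs_{k}$, there is a covering chain in the left weak order (Proposition~\ref{prop:lw}) from $zs_{k}$ up to $z_{k}s_{k}$, and I would track the first step of such a chain, of the form $s_{j}(zs_{k})$ with $\ell$ increasing, translating it into right-multiplication by the root $\beta = (zs_{k})^{-1}\alpha_{j} \in \DJp$ (it lies in $\DJp$ because $s_{j}(zs_{k}) \in \WJ$ by Lemma~\ref{lem:WJ}(1), using that $\pair{zs_{k}\vpi_{k}}{\alpha_{j}^{\vee}} > 0$). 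This $\beta$ is not $\alpha_{k}$ since $\beta \in \DJp$ and $\beta$ arises from the $\WJs$-part, not from the $s_{k}$ factor — concretely $\pair{\vpi_k}{\beta^\vee}$ considerations distinguish it, or one checks $s_\beta$ acts nontrivially on the $\DJs$-directions. Then I would verify the Bruhat-edge condition $\ell(ys_{\beta}) = \ell(y)+1$ at $y$ itself (not just at $zs_{k}$) using the factorization $y = \yJ{y}\,z\,s_{k}$ and the length-additivity there, reducing to the corresponding statement for $zs_{k}$, which holds by construction, and invoking that $\yJ{y}$ contributes nothing to the relevant inversion since its inversions lie in $\DJs$-free directions.

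The main obstacle I anticipate is the last verification: transferring the length-increasing property of the reflection $s_{\beta}$ from the coset representative $zs_{k}$ (or $z_ks_k$) to the actual element $y = \yJ{y}zs_{k}$, since right multiplication interacts with the left factor $\yJ{y}$ in a way that is not automatically length-additive. I would handle this by a careful inversion-set bookkeeping: show $\Inv(ys_{\beta}) \supset \Inv(y) \sqcup \{(ys_{\beta})^{-1}(\text{something})\}$ using that $\beta \in \DJp$ and $\yJ{y} \in \WJ$, so that left-multiplying $zs_k$ by $\yJ y$ permutes $\DJp$ without creating new cancellations; alternatively, reformulate everything in $\WJ$ via the parabolic quantum Bruhat graph $\QBG(\WJ)$ and use Lemma~\ref{lem:B} and the fact (Remark~\ref{rem:bBGy}(2)) that Bruhat edges stay in $\WJ$. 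If the direct computation gets unwieldy, the fallback is to argue contrapositively: if \emph{every} $\beta \in \Inv(z_{k}s_{k})\setminus\{\alpha_k\}$ failed to give a Bruhat edge at $y$, then a counting argument on $\ell(y) - \ell(z_k s_k \vee y)$ or on $\Inv$ forces $y \ge z_k s_k$, contradicting the hypothesis.
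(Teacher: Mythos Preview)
Your approach is essentially the paper's: take a left-weak-order covering chain from $zs_{k}$ up to $z_{k}s_{k}$ (Proposition~\ref{prop:lw}), use its first step $s_{j}(zs_{k})$ to define $\beta := (zs_{k})^{-1}\alpha_{j}$, and then verify this $\beta$ works at $y$. The skeleton is right, but two steps are genuinely incomplete.

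First, you never establish that $j \in \J$. This is not automatic, and the paper proves it by computing $z_{k}s_{k}\vpi_{k}$ in two ways (once as $\vpi_{k} - z_{k}\alpha_{k}$, once by following the chain $y_{0},\dots,y_{p}$) and comparing the $\alpha_{k}$-coefficients, which forces every $j_{q}$ along the chain to lie in $\J$. This fact is exactly what makes your ``main obstacle'' dissolve: once $j \in \J$, one has $y\beta = \yJ{y}\,(zs_{k})\beta = \yJ{y}\alpha_{j}$, and since $\yJ{y} \in \WJ$ and $\alpha_{j} \in \DJs$, \eqref{eq:mcr} gives $y\beta \in \Delta^{+}$, hence $\ell(ys_{\beta}) > \ell(y)$. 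Combined with the easy upper bound $\ell(ys_{\beta}) = \ell(\yJ{y}s_{j}zs_{k}) \le \ell(\yJ{y}) + 1 + \ell(zs_{k}) = \ell(y)+1$, you get the Bruhat edge. Your phrasing ``its inversions lie in $\DJs$-free directions'' points in the wrong direction; what matters is that $\yJ{y}$ \emph{preserves positivity on} $\DJs$.

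Second, two smaller points. Your argument that $\beta \ne \alpha_{k}$ is faulty: $\alpha_{k} \in \DJp$, so ``$\beta \in \DJp$'' does not exclude it. The clean reason is that once $\ell(ys_{\beta}) = \ell(y)+1$ is established and $y \in \WJe$, one has $\ell(ys_{k}) < \ell(y)$, so $\beta \ne \alpha_{k}$. You also do not verify $\beta \in \Inv(z_{k}s_{k})$; this follows because $z_{k}s_{k}\beta = s_{j_{p}}\cdots s_{j_{1}}\alpha_{j_{1}} \in \Delta^{-}$, using that $s_{j_{p}}\cdots s_{j_{1}}$ is a reduced expression for $(zs_{k})^{-1}z_{k}s_{k}$. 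Your fallback contrapositive idea is not needed once $j \in \J$ is in hand.
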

\begin{proof}
Since $z_{k}s_{k},\,zs_{k} \in \WJ$ (see Remark~\ref{rem:tiy}) and 
$z_{k}s_{k} > zs_{k}$, it follows from Proposition~\ref{prop:lw} that 
there exist $zs_{k}=y_{0},\,y_{1},\,\dots,\,y_{p}=z_{k}s_{k} \in \WJ$ and 
$j_{1},\,j_{2},\,\dots,\,j_{p} \in I$ such that 
$y_{q} = s_{j_{q}}y_{q-1}$ and 
$\ell(y_{q}) = \ell(y_{q-1}) + 1$ for all $1 \le q \le p$; 
note that $p \ge 1$. We claim that $j_{1} \in \J$. 
Indeed, we see that $z_{k}s_{k}\vpi_{k} = \vpi_{k} - z_{k} \alpha_{k}$, 
where $z_{k} \alpha_{k}$ is contained in 
$\alpha_{k} + \sum_{i \in \J} \BZ \alpha_{i}$. 
Also, we see that 
\begin{align*}
z_{k}s_{k}\vpi_{k} 
 & = y_{p}\vpi_{k} = y_{p-1}\vpi_{k} - \alpha_{j_{p}} 
   = y_{p-2}\vpi_{k} - (\alpha_{j_{p-1}} + \alpha_{j_{p}}) \\
 & = \cdots = y_{0}\vpi_{k} - \sum_{q=1}^{p} \alpha_{j_{q}}
   = zs_{k}\vpi_{k} - \sum_{q=1}^{p} \alpha_{j_{q}} \\
 & = \vpi_{k} - z\alpha_{k} - \sum_{q=1}^{p} \alpha_{j_{q}}, 
\end{align*}
where $z \alpha_{k}$ is contained in 
$\alpha_{k} + \sum_{i \in \J} \BZ \alpha_{i}$. 
Combining these, we deduce that 
$\sum_{q=1}^{p} \alpha_{j_{q}} \in \sum_{i \in \J} \BZ \alpha_{i}$, 
which implies that $j_{1},\,\dots,\,j_{p} \in \J$; in particular, 
we obtain $j_{1} \in \J$, as desired. 
Next, we claim that 
$\beta:=y_{0}^{-1}\alpha_{j_{1}} = (zs_{k})^{-1}\alpha_{j_{1}} \in 
\Inv(z_{k}s_{k})$. Indeed, 
since $\ell(s_{j_{1}}zs_{k}) = \ell(zs_{k})+1$, we have 
$\beta=(zs_{k})^{-1}\alpha_{j_{1}} \in \Delta^{+}$. 
Also, it follows that 
\begin{equation*}
z_{k}s_{k}\beta = s_{j_{p}} \cdots s_{j_{2}}s_{j_{1}}zs_{k}\beta
= s_{j_{p}} \cdots s_{j_{2}}s_{j_{1}}\alpha_{j_{1}}.
\end{equation*}
Since $s_{j_{p}} \cdots s_{j_{2}}s_{j_{1}}$ is reduced, 
we obtain $z_{k}s_{k}\beta \in \Delta^{-}$, and hence 
$\beta \in \Inv(z_{k}s_{k})$, as desired. 
Here, we see that $ys_{\beta} = \yJ{y}zs_{k}s_{\beta} = 
\yJ{y}s_{j_{1}}zs_{k}$, which implies that 
$\ell(ys_{\beta}) \le \ell(\yJ{y}) + \ell(s_{j_{1}}) + 
\ell(zs_{k})=\ell(y)+1$ (see Remark~\ref{rem:tiy}). 
Since $y\beta = \yJ{y}\alpha_{j_{1}}$, and 
$\yJ{y} \in \WJ$, $j_{1} \in \J$, 
it follows from \eqref{eq:mcr} that 
$y\beta \in \Delta^{+}$. Hence $\ell(ys_{\beta}) > \ell(y)$. 
Therefore, we find that $\ell(ys_{\beta}) = \ell(y) + 1$, 
which implies that $\beta \ne \alpha_{k}$ (recall that $y \ne e$), 
and hence $y \edge{\beta} ys_{\beta}$ is a Bruhat edge. 
This proves the lemma. 
\end{proof}
%
%
\begin{lem} \label{lem:invc}
Let $y \in \WJe$ and $\beta \in \DJp$ be such that 
$y \edge{\beta} ys_{\beta}$ is a Bruhat edge in $\QBG(W)$. 
If $y \not\ge z_{k}s_{k}$ and $ys_{\beta} \ge z_{k}s_{k}$, then 
$\beta \in \Inv(z_{k}s_{k}) \setminus \{\alpha_{k}\}$. 
\end{lem}

\begin{proof}
Notice that $ys_{\beta} \in \WJe$ by Lemma~\ref{lem:B}. 
We have $ys_{\beta} = \yJ{ys_{\beta}}\yJs{ys_{\beta}}s_{k}$ by Lemma~\ref{lem:tiy}. 
Since $ys_{\beta} \ge z_{k}s_{k}$, we see that $\yJs{ys_{\beta}} = z_{k}$ by Lemma~\ref{lem:zksk}. 
Since $ys_{\beta} \ge y$ and $\ell(ys_{\beta})=\ell(y) + 1$, 
it follows from the Subword Property 
for the Bruhat order that $y$ has a reduced expression obtained 
from a reduced expression of $ys_{\beta}$ by removing one simple reflection. 
Here we recall that 
$\ell(ys_{\beta}) = \ell(\yJ{ys_{\beta}}) + \ell(\yJs{ys_{\beta}}) + \ell(s_{k})$. 
Suppose, for a contradiction, that 
$y = w\yJs{ys_{\beta}}s_{k} = wz_{k}s_{k}$, where $w$ is obtained 
from a reduced expression of $\yJ{ys_{\beta}}$ by removing one simple reflection. 
In this case, since $\ell(y) = \ell(w) + \ell(z_{k}s_{k})$, it follows that 
$y \ge z_{k}s_{k}$, which contradicts the assumption. 
Since $y \in \WJe$, the rightmost simple reflection of 
any reduced expression of $y$ must be $s_{k}$. 
Hence we deduce that $y = \yJ{ys_{\beta}}zs_{k}$ and 
$\ell(y) = \ell(\yJ{ys_{\beta}}) + \ell(z) + \ell(s_{k})$, 
where $z$ is obtained from a reduced expression of $\yJs{ys_{\beta}} = z_{k}$ 
by removing one simple reflection. 
Let $z_{k}s_{k} = s_{i_{a}}s_{i_{a-1}} \cdots s_{i_{2}}s_{i_{1}}$
be a reduced expression of $z_{k}s_{k}$ (note that $i_{1}=k$), 
and assume that $zs_{k} = s_{i_{a}}s_{i_{a-1}} \cdots s_{i_{b+1}}s_{i_{b-1}} \cdots s_{i_{2}}s_{i_{1}}$ 
for some $2 \le b \le a$. In this case, we have
\begin{align*}
s_{\beta} & = y^{-1} (y s_{\beta}) 
= (\yJ{y s_{\beta}} z s_{k})^{-1} (\yJ{y s_{\beta}} z_{k} s_{k}) 
= (z s_{k})^{-1} (z_{k} s_{k}) \\
& = s_{i_{1}}s_{i_{2}} \cdots s_{i_{b-1}}s_{i_{b}}s_{i_{b-1}} \cdots s_{i_{2}}s_{i_{1}},
\end{align*}
which implies that 
$\beta = s_{i_{1}}s_{i_{2}} \cdots s_{i_{b-1}}\alpha_{i_{b}} 
\in \Inv(z_{k}s_{k}) \setminus \{\alpha_{k}\}$. This proves the lemma. 
\end{proof}
%
%
\begin{lem} \label{lem:yJ}
Let $y \in \WJe$ and $\beta \in \Inv(z_{k}s_{k})$ be such that 
$y \edge{\beta} ys_{\beta}$ is a Bruhat edge in $\QBG(W)$; 
note that $ys_{\beta} \in \WJe$ by Lemma~\ref{lem:B}.
Then, $\yJ{y}=\yJ{ys_{\beta}}$. 
\end{lem}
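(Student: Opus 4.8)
The plan is to reduce the statement to a coset identity in $W/\WJs$. The first step is to observe that $\yJ{y}=\mcr{ys_{k}}$: by Lemma~\ref{lem:tiy} we have $y=\yJ{y}\yJs{y}s_{k}$ with $\ell(y)=\ell(\yJ{y})+\ell(\yJs{y})+1$, hence $ys_{k}=\yJ{y}\yJs{y}$ and $\ell(ys_{k})\le \ell(\yJ{y})+\ell(\yJs{y})=\ell(y)-1\le \ell(ys_{k})$, so $\ell(ys_{k})=\ell(\yJ{y})+\ell(\yJs{y})$; since $\yJ{y}\in\WJ$ and $\yJs{y}\in\WJs$, this is the parabolic decomposition of $ys_{k}$, so $\yJ{y}=\mcr{ys_{k}}$. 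Applying the same to $ys_{\beta}$ (which lies in $\WJe$ by Lemma~\ref{lem:B}), we get $\yJ{ys_{\beta}}=\mcr{ys_{\beta}s_{k}}$. Therefore the assertion $\yJ{y}=\yJ{ys_{\beta}}$ is equivalent to $\mcr{ys_{k}}=\mcr{ys_{\beta}s_{k}}$, i.e. to $ys_{k}$ and $ys_{\beta}s_{k}$ lying in the same coset of $\WJs$, i.e. to $(ys_{k})^{-1}(ys_{\beta}s_{k})=s_{k}s_{\beta}s_{k}\in\WJs$.

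Since $s_{k}s_{\beta}s_{k}=s_{s_{k}\beta}$, it remains to show $s_{k}\beta\in\DJs$. The two key observations are: (i) $\beta\ne\alpha_{k}$ --- for otherwise $ys_{\beta}=ys_{k}$ would have length $\ell(y)-1<\ell(y)$ by Lemma~\ref{lem:tiy}, contradicting that $y\edge{\beta}ys_{\beta}$ is a Bruhat edge --- so that $s_{k}\beta$ is again a positive root; and (ii) from $\beta\in\Inv(z_{k}s_{k})$ we get $z_{k}(s_{k}\beta)=(z_{k}s_{k})\beta\in\Delta^{-}$, hence $s_{k}\beta\in\Inv(z_{k})$. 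Since $z_{k}\in\WJs$ (as recorded in the discussion preceding Lemma~\ref{lem:zksk}), $\Inv(z_{k})\subseteq\DJs$, so $s_{k}\beta\in\DJs$ and $s_{s_{k}\beta}\in\WJs$, which is what we needed.

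I expect the only genuinely delicate points to be spotting the reformulation $\yJ{y}=\mcr{ys_{k}}$ and noticing that the Bruhat-edge hypothesis forces $\beta\ne\alpha_{k}$ and hence $s_{k}\beta>0$; granting this, the conclusion $s_{k}\beta\in\Inv(z_{k})\subseteq\DJs$ is immediate. A pleasant feature of this route is that it is uniform over all the types under consideration and needs no case split according to whether $y$ or $ys_{\beta}$ dominates $z_{k}s_{k}$. (For orientation: the case $y\ge z_{k}s_{k}$ is in fact vacuous, since then $y=\yJ{y}(z_{k}s_{k})$ with lengths adding by Lemma~\ref{lem:zksk}, forcing $y\beta<0$ for every $\beta\in\Inv(z_{k}s_{k})$, so no Bruhat edge $y\edge{\beta}ys_{\beta}$ exists; but the argument above does not require isolating this case.)
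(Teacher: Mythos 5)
Your proof is correct and follows essentially the same route as the paper's: both arguments hinge on observing that $\beta\ne\alpha_{k}$ and that consequently $s_{k}\beta\in\Inv(z_{k})\subseteq\DJs$ (the paper phrases this as $\beta=s_{k}z\alpha_{j}$ with $z\in\WJs$, $j\in\J$), so that $s_{k}s_{\beta}s_{k}\in\WJs$ and the cosets of $ys_{k}$ and $ys_{\beta}s_{k}$ modulo $\WJs$ coincide. The only cosmetic difference is that the paper finishes by comparing $\yJ{y}\vpi_{k}$ with $\yJ{ys_{\beta}}\vpi_{k}$, whereas you finish by the equivalent identification $\yJ{y}=\mcr{ys_{k}}$; both are fine.
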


\begin{proof}
Recall that $y = \yJ{y}\yJs{y}s_{k}$ and 
$ys_{\beta} = \yJ{ys_{\beta}}\yJs{ys_{\beta}}s_{k}$. 
Also, notice that $\beta \ne \alpha_{k}$ since $y \in \WJe$. 
Since $\beta \in \Inv(z_{k}s_{k}) \setminus \{\alpha_{k}\}$, and since
$z_{k} \in \WJs$ and $\ell(z_{k}s_{k}) = \ell(z_{k})+1$, 
we deduce that $\beta = s_{k}z\alpha_{j}$ for some $z \in \WJs$ and $j \in \J$. 
Hence $ys_{\beta} = \yJ{y}\yJs{y}s_{k}s_{\beta} = \yJ{y}\yJs{y}zs_{j}z^{-1} s_{k}$; 
notice that $\yJs{y}zs_{j}z^{-1} \in \WJs$. Therefore, we see that 
\begin{equation*}
\yJ{ys_{\beta}}\vpi_{k} = \yJ{ys_{\beta}}\yJs{ys_{\beta}}\vpi_{k} = ys_{\beta}s_{k}\vpi_{k} = 
\yJ{y}\underbrace{\yJs{y}zs_{j}z^{-1}}_{\in \WJs}\vpi_{k} = \yJ{y}\vpi_{k}. 
\end{equation*}
Since $\yJ{ys_{\beta}},\,\yJ{y} \in \WJ$, 
we deduce that $\yJ{ys_{\beta}}=\yJ{y}$. 
This proves the lemma. 
\end{proof}
%
%
\subsection{Demazure operators.}
\label{subsec:demazure}

\begin{dfn} \label{def:Demazure_op}
For $i \in I$, we define a $\BC\pra{q^{-1}}$-linear operator 
$\SD_i=\SD_i^{-}$ on $\BC\pra{q^{-1}}[P]$ as follows: for $\xi \in P$, 
\begin{equation*}
\begin{split}
\SD_i \be^{\xi} & := \frac{\be^{\xi} - \be^{\alpha_i}\be^{s_i \xi}}{1-\be^{\alpha_i}} =
\begin{cases}
\be^{\xi}(1 + \be^{\alpha_i} + \be^{2\alpha_i} + \cdots \be^{-\pair{\xi}{\alpha_i^\vee}\alpha_i}) 
  & \text{if } \pair{\xi}{\alpha_i^\vee} \leq 0, \\[2mm]
0 & \text{if } \pair{\xi}{\alpha_i^\vee} = 1, \\[2mm]
-\be^{\xi} (\be^{-\alpha_i} + \be^{-2\alpha_i} + \cdots + \be^{(-\pair{\xi}{\alpha_i^\vee} + 1)\alpha_i}) 
  & \text{if } \pair{\xi}{\alpha_i^\vee} \geq 2. 
\end{cases}
\end{split}
\end{equation*}
\end{dfn}

We can easily verify the following. 
%
%
\begin{lem} \label{lem:Leibniz}
For $\lambda,\,\mu \in P$ and $i \in I$, it holds that
\begin{align*}
\SD_i(\be^\lambda \be^\mu) = (\SD_i \be^{\lambda + \rho}) \be^{\mu-\rho} + \be^{s_i\lambda} (\SD_i \be^\mu).
\end{align*}
\end{lem}

We know the following from 
\cite[Proposition 6.6 and Remark 6.7]{NOS}. 
%
%
\begin{prop} \label{prop:Demazure_chara}
Let $x \in W$, $i \in I$, and $\lambda \in P^+$.
\begin{enu}
\item If $s_i x < x$, then $\SD_i \gch V_{x}^-(\lambda) = \gch V_{s_i x}^-(\lambda)$.
In particular, if $\pair{x\lambda}{\alpha_i^\vee} < 0$, 
then $\SD_i \gch V_{x}^-(\lambda) = \gch V_{s_i x}^-(\lambda)$. 

\item If $s_i x > x$, then $\SD_i \gch V_{x}^-(\lambda) = \gch V_{x}^-(\lambda)$.
In particular, if $\pair{x\lambda}{\alpha_i^\vee} \geq 0$, 
then $\SD_i \gch V_{x}^-(\lambda) = \gch V_{x}^-(\lambda)$.
\end{enu}
\end{prop}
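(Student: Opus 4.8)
The plan is to first dispose of the two ``in particular'' clauses, then to reduce both main assertions to two statements about a single $y\in W$ with $s_iy>y$, and finally to prove these by analysing $V_x^-(\lambda)$ as a module over the rank-one subalgebra $U_{\q}^{(i)}\subseteq U_{\q}(\Fg_{\af})$ generated by $E_i$, $F_i$, $K_i^{\pm1}$ (for our $i\in I$). For the reduction: if $\pair{x\lambda}{\alpha_i^\vee}<0$ then $x^{-1}\alpha_i\in\Delta^-$ (otherwise $\pair{x\lambda}{\alpha_i^\vee}=\pair{\lambda}{x^{-1}\alpha_i^\vee}\ge 0$ since $\lambda\in P^+$), so $s_ix<x$ and the first ``in particular'' follows from~(1); if $\pair{x\lambda}{\alpha_i^\vee}\ge 0$ and $s_ix<x$, then $\pair{x\lambda}{\alpha_i^\vee}=0$, whence $v_{s_ix}=E_i^{(0)}v_x=v_x$ and so $V_{s_ix}^-(\lambda)=V_x^-(\lambda)$, and the second ``in particular'' follows from~(1) and~(2). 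Replacing $x$ by $s_ix$ in the case $s_ix<x$, statements (1) and (2) together become equivalent to the following: for every $y\in W$ with $s_iy>y$ --- so that $n:=\pair{y\lambda}{\alpha_i^\vee}=\pair{\lambda}{y^{-1}\alpha_i^\vee}\ge 0$, $E_iv_y=0$, $F_i^{(n)}v_y=v_{s_iy}$, and (as $\pair{(s_iy)\lambda}{\alpha_i^\vee}=-n$) $F_iv_{s_iy}=0$, $E_i^{(n)}v_{s_iy}=v_y$ --- one has $\SD_i\gch V_y^-(\lambda)=\gch V_y^-(\lambda)$ (call this (A)) and $\SD_i\gch V_{s_iy}^-(\lambda)=\gch V_y^-(\lambda)$ (call this (B)).

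For (A) I would argue as follows. Since $E_i$ commutes with $F_j$ for $j\in I_{\af}\setminus\{i\}$ and $[E_i,F_i]$ lies in the Cartan part of $U_{\q}(\Fg_{\af})$, for any monomial $u$ in the $F_j$ ($j\in I_{\af}$) one has $E_iuv_y=[E_i,u]v_y+uE_iv_y=[E_i,u]v_y$, and $[E_i,u]$ lies in $U_{\q}^-(\Fg_{\af})$ times the Cartan part (contract $E_i$ with the factors $F_i$ of $u$); hence $E_iuv_y\in U_{\q}^-(\Fg_{\af})v_y$. Thus $V_y^-(\lambda)=U_{\q}^-(\Fg_{\af})v_y$ is stable under $E_i$ --- and trivially under $F_i$ --- so it is a $U_{\q}^{(i)}$-submodule of the integrable $U_{\q}(\Fg_{\af})$-module $V(\lambda)$, and therefore $\gch V_y^-(\lambda)$ is $s_i$-invariant. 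Since $\SD_i$ is idempotent with image the $s_i$-invariant part of $\BC\pra{q^{-1}}[P]$ (equivalently, a direct computation shows $\SD_i$ fixes $\gch$ of every finite-dimensional $U_{\q}^{(i)}$-module, and these characters span the $s_i$-invariants), (A) follows.

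For (B) the same commutation bookkeeping gives $U_{\q}^-(\Fg_{\af})\,U_{\q}^{(i)}\subseteq U_{\q}^{(i)}\,U_{\q}^-(\Fg_{\af})$ and $U_{\q}^{(i)}\,U_{\q}^-(\Fg_{\af})\subseteq U_{\q}^-(\Fg_{\af})\,U_{\q}^{(i)}$; combined with $v_y=E_i^{(n)}v_{s_iy}$ and $F_i^{(a)}v_y\in V_y^-(\lambda)$ for all $a\ge 0$, this yields $V_y^-(\lambda)=U_{\q}^{(i)}\,V_{s_iy}^-(\lambda)$. I would then decompose the integrable $U_{\q}^{(i)}$-module $V_y^-(\lambda)$ into a direct sum $\bigoplus_k M_k$ of simple $U_{\q}^{(i)}$-modules, with $m_k+1=\dim M_k$ and $\nu_k$ the lowest $U_{\q}^{(i)}$-weight of $M_k$; since $V_{s_iy}^-(\lambda)$ is $F_i$-stable, $V_{s_iy}^-(\lambda)\cap M_k$ is spanned by an initial segment of the $i$-weight string of $M_k$ starting at $\nu_k$. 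Granting that this segment is always either the single lowest-weight line of $M_k$ or all of $M_k$, a direct $\SD_i$-computation on one $i$-string gives $\SD_i\gch\bigl(V_{s_iy}^-(\lambda)\cap M_k\bigr)=\gch M_k$ for each $k$ (if the intersection is the lowest line, $\SD_i\be^{\nu_k}$ with $\pair{\nu_k}{\alpha_i^\vee}=-m_k$ equals $\gch M_k$; if it is all of $M_k$, then $\SD_i$ fixes $\gch M_k$ by~(A)), and summation over $k$ yields~(B).

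The hard part will be exactly the ``crucial point'' just quoted --- that the bottom of every $i$-string occurring in $V_y^-(\lambda)$ already lies in $V_{s_iy}^-(\lambda)$, i.e. the compatibility of the Demazure submodule $V_{s_iy}^-(\lambda)=U_{\q}^-(\Fg_{\af})v_{s_iy}$ (note $F_iv_{s_iy}=0$) with the $\mathfrak{sl}_2^{(i)}$-string structure of $V(\lambda)$. I would obtain this from Kashiwara's crystal/global basis of the extremal weight module $V(\lambda)$ and the recursion $\mathcal{B}_y^-(\lambda)=\bigsqcup_{a\ge 0}\ti{e}_i^{\,a}\,\mathcal{B}_{s_iy}^-(\lambda)\setminus\{0\}$ relating the Demazure subcrystals (together with the fact that $\mathcal{B}_{s_iy}^-(\lambda)$ is $\ti{f}_i$-closed, so each of its $i$-strings is an initial segment). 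An alternative that avoids crystal bases is an induction on $\lambda$: using a tensor-product embedding $V_x^-(\lambda+\vpi_j)\hookrightarrow V_x^-(\lambda)\otimes V_x^-(\vpi_j)$ of Demazure submodules of level-zero extremal weight modules, the twisted Leibniz rule of Lemma~\ref{lem:Leibniz} for $\SD_i$, and~(A), one reduces to the base case of minuscule $\lambda=\vpi_k$, where the $U_{\q}^{(i)}$-constituents of $V(\lambda)$ are small enough to handle directly.
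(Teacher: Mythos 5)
The paper does not prove this proposition; it quotes it directly from \cite[Proposition~6.6 and Remark~6.7]{NOS}, so there is no internal proof to compare yours against. Your reduction of the two ``in particular'' clauses, and the reformulation as statements (A) and (B) for $y$ with $s_iy>y$, are correct; and (A) is handled properly: the commutation $[E_i,U_{\q}^-(\Fg_{\af})]\subseteq U_{\q}^-(\Fg_{\af})\cdot(\text{Cartan})$ shows that $V_y^-(\lambda)$ is a module over the rank-one subalgebra $U_{\q}^{(i)}$, its graded character is therefore $s_i$-invariant (note that $\pair{\alpha_i}{d}=0$ for $i\in I$, so the $q$-grading is untouched), and $\SD_i$ fixes $s_i$-invariants. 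The identity $V_y^-(\lambda)=U_{\q}^{(i)}V_{s_iy}^-(\lambda)$ also holds for the reason you indicate.

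For (B) you have correctly isolated where the content lies, but the gap is a bit wider than your phrasing suggests. Besides the ``bottom element or whole string'' dichotomy for $i$-strings meeting $V_{s_iy}^-(\lambda)$ --- which is indeed indispensable: for a $4$-dimensional $i$-string, $\SD_i$ applied to the sum of the two lowest weights yields a character with multiplicities $1,2,2,1$, not the string character --- your argument also tacitly assumes that a decomposition $V_y^-(\lambda)=\bigoplus_k M_k$ into simple $U_{\q}^{(i)}$-modules can be chosen so that $V_{s_iy}^-(\lambda)=\bigoplus_k\bigl(V_{s_iy}^-(\lambda)\cap M_k\bigr)$, which is not automatic for an arbitrary $F_i$-stable subspace. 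Both issues are resolved together by the global/crystal basis, since $V_{s_iy}^-(\lambda)$ is then spanned by a subset of the basis and the string dichotomy is exactly the Demazure-crystal property for level-zero extremal weight modules; but that property is the nontrivial theorem underlying \cite{NS16} and \cite{NOS}, so your sketch relocates the hard input rather than supplying it. Your ``alternative that avoids crystal bases'' is not viable as stated: stripping off fundamental weights inductively would require every $\vpi_j$ as a base case (not only a minuscule one), and an embedding $V_x^-(\lambda+\vpi_j)\hookrightarrow V_x^-(\lambda)\otimes V_x^-(\vpi_j)$ does not by itself relate $\gch V_x^-(\lambda+\vpi_j)$ to the \emph{product} $\gch V_x^-(\lambda)\cdot\gch V_x^-(\vpi_j)$, so Lemma~\ref{lem:Leibniz} has nothing to act on. The crystal route is the right one; drop the alternative.
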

%
%
\subsection{Recurrence relations for coefficients in the character identity of Chevalley type.}
\label{subsec:ind}

Recall that $\vpi_{k}$ is minuscule and 
$\J=\J_{\vpi_{k}} = I \setminus \{k\}$. 
By \eqref{eq:minuscule}, $\gch V_{x}^{-}((N-1)\vpi_{k})$ can be written as:
%
%
\begin{equation} \label{eq:NOSa}
\gch V_{x}^{-}((N-1)\vpi_{k}) = 
\be^{-x\vpi_{k}} \sum_{v \in \WJ} \sum_{m \in \BZ_{\ge 0}}
c^{x}_{v,m} \gch V_{ v t_{m \alpha_{k}^{\vee}} }(N\vpi_{k}), 
\end{equation}
where $c^{x}_{v,m} \in \BZ$, and $c^{x}_{v,m}=0$ 
for all but finitely many $(v,m) \in \WJ \times \BZ_{\ge 0}$. 
\begin{lem} \label{lem:+1}
Let $j \in I$ be such that $\pair{x\vpi_{k}}{\alpha_{j}^{\vee}} = 1$. 
It holds that $c^{x}_{v,m} = 0$ for all $v \in \WJ$ such that 
$\pair{v\vpi_{k}}{ \alpha_{j}^{\vee} } = 0$ and for all $m \in \BZ_{\ge 0}$.
%
%
\end{lem}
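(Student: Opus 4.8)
The plan is to apply the Demazure operator $\SD_{j}$ to both sides of \eqref{eq:NOSa} and compare coefficients, exactly as in the (commented-out) argument for Lemma~\ref{lem:0}. Since $\pair{x\vpi_{k}}{\alpha_{j}^{\vee}} = 1 > 0$, we have $s_{j}x > x$ (by Lemma~\ref{lem:WJ}, noting $x \in \WJ$), so Proposition~\ref{prop:Demazure_chara}\,(2) gives $\SD_{j}\gch V_{x}^{-}((N-1)\vpi_{k}) = \gch V_{x}^{-}((N-1)\vpi_{k})$; that is, $\SD_{j}$ fixes the left-hand side of \eqref{eq:NOSa}. For the right-hand side, I would first pull $\SD_{j}$ past the scalar $\be^{-x\vpi_{k}}$ using Lemma~\ref{lem:Leibniz}: writing $-x\vpi_{k}$ in place of $\lambda$, the term $(\SD_{j}\be^{-x\vpi_{k}+\rho})\be^{-\rho}$ contributes, and here is where the hypothesis $\pair{x\vpi_{k}}{\alpha_{j}^{\vee}}=1$ is essential — one needs to check this extra term is harmless (it should combine cleanly, since $\SD_j$ acting on the whole product $\be^{-x\vpi_k}\,\gch V^-_{vt}(N\vpi_k)$ is what we actually want, and the Leibniz rule is just a bookkeeping device). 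Then $\SD_{j}$ acts term-by-term on the $\gch V_{vt_{m\alpha_{k}^{\vee}}}^{-}(N\vpi_{k})$ via Proposition~\ref{prop:Demazure_chara}: it fixes those with $\pair{v\vpi_{k}}{\alpha_{j}^{\vee}} \ge 0$, sends $\gch V_{vt}^{-} \mapsto \gch V_{s_{j}vt}^{-}$ when $\pair{v\vpi_{k}}{\alpha_{j}^{\vee}} < 0$, and—crucially—\emph{annihilates} those with $\pair{v\vpi_{k}}{\alpha_{j}^{\vee}} = 1$, because $\pair{vt_{m\alpha_k^\vee}\cdot N\vpi_k}{\alpha_j^\vee} = N\pair{v\vpi_k}{\alpha_j^\vee}$ and one invokes Remark~\ref{rem:dem} to reduce $s_j v t_{m\alpha_k^\vee}$ back to $\mcr{s_jv}t_{[m\alpha_k^\vee]}$; but $s_j v$ with $\pair{v\vpi_k}{\alpha_j^\vee}=0$ satisfies $\mcr{s_jv}=v$ by Lemma~\ref{lem:WJ}\,(3). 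Wait—the case I actually want is $\pair{v\vpi_k}{\alpha_j^\vee}=0$, and here $\SD_j$ fixes the term; so the argument must instead isolate these terms by equating coefficients.

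Let me restructure: after applying $\SD_{j}$, the identity $\gch V_{x}^{-}((N-1)\vpi_{k}) = \SD_j(\text{RHS of \eqref{eq:NOSa}})$ becomes, after simplification, a new expression for $\gch V_{x}^{-}((N-1)\vpi_{k})$ as a $\BC$-combination of the linearly independent characters $\gch V_{wt_{m\alpha_{k}^{\vee}}}^{-}(N\vpi_{k})$, $(w,m) \in \WJ \times \BZ_{\ge 0}$ (linear independence is recalled after the map $\Phi$ in the introduction, and also used in \eqref{eq:minuscule}). Subtracting the original expansion \eqref{eq:NOSa} yields a vanishing linear combination; reading off the coefficient of $\gch V_{vt_{m\alpha_{k}^{\vee}}}^{-}(N\vpi_{k})$ for $v$ with $\pair{v\vpi_{k}}{\alpha_{j}^{\vee}} = 0$, and keeping track of which other $v'$ can produce such a term under $\SD_j$ (namely $v' = s_j v$ with $\pair{v'\vpi_k}{\alpha_j^\vee} < 0$, but then $\mcr{s_j v'} = v$ forces $\pair{v\vpi_k}{\alpha_j^\vee}>0$, a contradiction; and $v'=v$ itself contributes $c^x_{v,m}$ with coefficient matching), I expect to arrive at $c^{x}_{v,m} = 0$. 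I would also handle the subtlety that the Leibniz correction term $(\SD_j\be^{-x\vpi_k+\rho})\be^{-\rho}\cdot\gch V^-_{vt}(N\vpi_k)$, when $\pair{x\vpi_k}{\alpha_j^\vee}=1$, vanishes: indeed $\pair{-x\vpi_k+\rho}{\alpha_j^\vee} = 0$, so by Definition~\ref{def:Demazure_op} $\SD_j\be^{-x\vpi_k+\rho}$ is \emph{not} automatically zero — rather $\pair{\cdot}{\alpha_j^\vee}=0$ gives $\SD_j\be^{\xi}=\be^\xi$, so the correction term is $\be^{-x\vpi_k}\gch V^-_{vt}$, i.e. it exactly reproduces the untouched term, and the "extremal" piece $\be^{s_j(-x\vpi_k)}\SD_j(\gch V^-_{vt})$ is what carries the action — so one must be careful to use the correct split. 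The cleanest route is to avoid Leibniz entirely and instead note $\SD_j$ is $\BC\pra{q^{-1}}$-linear but \emph{not} $\BC[P]$-linear; so I would apply $\SD_j$ directly to each product $\be^{-x\vpi_k}\gch V^-_{vt_{m\alpha_k^\vee}}(N\vpi_k)$ and use the projection formula: since $\be^{-x\vpi_k}\gch V^-_{vt}(N\vpi_k)$ is (a shift of) the graded character of a $T\times\BC^*$-module on which $\SD_j$ computes the character of the Demazure module for $s_j v t$, one gets $\SD_j$ of the product directly, without separating the scalar.

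The main obstacle will be the bookkeeping in the coefficient comparison: ensuring that no term $\gch V_{v't_{m\alpha_k^\vee}}^-(N\vpi_k)$ with $v'\ne v$ can, after applying $\SD_j$ and reducing via Remark~\ref{rem:dem}, land on $\gch V_{vt_{m\alpha_k^\vee}}^-(N\vpi_k)$ for our chosen $v$ (with $\pair{v\vpi_k}{\alpha_j^\vee}=0$). Since $\SD_j$ either fixes indices or applies $s_j$ on the left (then reducing to the minimal coset rep), the possible sources of a $v$-term are: $v'=v$ with $\pair{v\vpi_k}{\alpha_j^\vee}\ge 0$ (contributes $c^x_{v,m}$), and $v'$ with $\pair{v'\vpi_k}{\alpha_j^\vee}<0$ and $\mcr{s_jv'}=v$ — but Lemma~\ref{lem:WJ}\,(2) says $\pair{v'\vpi_k}{\alpha_j^\vee}<0 \implies s_jv'\in\WJ$ with $s_jv'<v'$, and then $\mcr{s_jv'}=s_jv'$, so $v=s_jv'$ would give $\pair{v\vpi_k}{\alpha_j^\vee}=\pair{s_jv'\vpi_k}{\alpha_j^\vee}=-\pair{v'\vpi_k}{\alpha_j^\vee}>0$, contradicting $\pair{v\vpi_k}{\alpha_j^\vee}=0$. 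Hence the only contribution is $c^x_{v,m}$ itself, and from the identity $\SD_j(\mathrm{RHS})=\mathrm{LHS}=\mathrm{RHS}$ together with linear independence we conclude $c^x_{v,m}=0$, as desired.
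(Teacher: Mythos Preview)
Your approach is essentially the same as the paper's: apply $\SD_{j}$ to both sides of \eqref{eq:NOSa}, use the Leibniz rule (Lemma~\ref{lem:Leibniz}) together with $\pair{-x\vpi_{k}+\rho}{\alpha_{j}^{\vee}}=0$ to see that the first Leibniz term reproduces the right-hand side exactly, leaving $\be^{-s_{j}x\vpi_{k}}\sum_{v,m} c^{x}_{v,m}\,\SD_{j}\gch V^{-}_{vt_{m\alpha_{k}^{\vee}}}(N\vpi_{k})=0$, and then read off the coefficient of $\gch V^{-}_{vt_{m\alpha_{k}^{\vee}}}(N\vpi_{k})$ for $v$ with $\pair{v\vpi_{k}}{\alpha_{j}^{\vee}}=0$ via linear independence. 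Your write-up is meandering, but the argument is correct; the paper makes the cancellation step explicit and then splits the remaining vanishing sum by the sign of $\pair{v\vpi_{k}}{\alpha_{j}^{\vee}}$ exactly as you describe in your final paragraph.
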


\begin{proof}
For simplicity of notation, 
we set $c_{v,m}:=c^{x}_{v,m}$. 
Let $j \in I$ be such that $\pair{x\vpi_{k}}{\alpha_{j}^{\vee}} = 1$. 
By Proposition~\ref{prop:Demazure_chara}\,(2), 
we have
\begin{equation*}
\SD_{j}\underbrace{\gch V_{x}^{-}((N-1)\vpi_{k})}_{\text{(LHS) of \eqref{eq:NOSa}}} 
 = \gch V_{x}^{-}((N-1)\vpi_{k}).
\end{equation*}
Also, we see that
\begin{align*}
& \SD_{j} \underbrace{%
  \left( \be^{-x\vpi_{k}} \sum_{v \in \WJ}
  \sum_{m \in \BZ_{\ge 0}} c_{v,m} \gch V_{ vt_{m\alpha_{k}^{\vee}} }(N\vpi_{k}) \right)%
  }_{\text{(RHS) of \eqref{eq:NOSa}}} \\[5mm]
& \hspace*{10mm} = \be^{-x\vpi_{k}} 
  \sum_{v \in \WJ} \sum_{m \in \BZ_{\ge 0}} c_{v,m} \gch V_{ vt_{m\alpha_{k}^{\vee}} }(N\vpi_{k}) \\[3mm]
& \hspace*{15mm} + \be^{-s_{j}x\vpi_{k}} 
  \sum_{v \in \WJ} \sum_{m \in \BZ_{\ge 0}} c_{v,m} 
  \SD_{j}\Bigl( \gch V_{ vt_{m\alpha_{k}^{\vee}} }(N\vpi_{k}) \Bigr) 
  \quad \text{by Lemma~\ref{lem:Leibniz}}\\[5mm]
& \hspace*{10mm} = 
  \overbrace{
  \be^{-x\vpi_{k}} 
  \sum_{v \in \WJ} \sum_{m \in \BZ_{\ge 0}} c_{v,m} \gch V_{ vt_{m\alpha_{k}^{\vee}} }(N\vpi_{k})}^{%
  = \gch V_{x}^{-}((N-1)\vpi_{k}) } \\[3mm]
& \hspace*{15mm} + \be^{-s_{j}x\vpi_{k}} \sum_{ 
  \begin{subarray}{c} v \in \WJ \\ \pair{v\vpi_{k}}{\alpha_{j}^{\vee}} > 0 \end{subarray}}
  \sum_{m \in \BZ_{\ge 0}} c_{v,m} \gch V_{ vt_{m\alpha_{k}^{\vee}} }(N\vpi_{k}) \\[3mm]
& \hspace*{15mm} + \be^{-s_{j}x\vpi_{k}} \sum_{ 
  \begin{subarray}{c} v \in \WJ \\ \pair{v\vpi_{k}}{\alpha_{j}^{\vee}} = 0 \end{subarray}}
  \sum_{m \in \BZ_{\ge 0}} c_{v,m} \gch V_{ vt_{m\alpha_{k}^{\vee}} }(N\vpi_{k}) \\[3mm]
& \hspace*{15mm} + \be^{-s_{j}x\vpi_{k}} \sum_{
  \begin{subarray}{c} v \in \WJ \\ \pair{v\vpi_{k}}{ \alpha_{j}^{\vee} } < 0 \end{subarray}}
  \sum_{m \in \BZ_{\ge 0}} c_{v,m} \gch V_{ s_{j}vt_{m\alpha_{k}^{\vee}} }(N\vpi_{k})
  \quad \text{by Proposition~\ref{prop:Demazure_chara}}; 
\end{align*}
notice that $s_{j}v \in \WJ$ for $v \in \WJ$ such that 
$\pair{v\vpi_{k}}{ \alpha_{j}^{\vee} } < 0$. Therefore, we obtain 
\begin{align*}
& \sum_{ 
  \begin{subarray}{c} v \in \WJ \\ \pair{v\vpi_{k}}{\alpha_{j}^{\vee}} > 0 \end{subarray}}
  \sum_{m \in \BZ_{\ge 0}} c_{v,m} \gch V_{ vt_{m\alpha_{k}^{\vee}} }(N\vpi_{k}) \\[3mm]
& + \sum_{ 
  \begin{subarray}{c} v \in \WJ \\ \pair{v\vpi_{k}}{\alpha_{j}^{\vee}} = 0 \end{subarray}}
  \sum_{m \in \BZ_{\ge 0}} c_{v,m} \gch V_{ vt_{m\alpha_{k}^{\vee}} }(N\vpi_{k}) \\[3mm]
& + \sum_{
  \begin{subarray}{c} v \in \WJ \\ \pair{v\vpi_{k}}{ \alpha_{j}^{\vee} } < 0 \end{subarray}}
  \sum_{m \in \BZ_{\ge 0}} c_{v,m} \gch V_{ s_{j}v t_{m\alpha_{k}^{\vee}} }(N\vpi_{k}) = 0. 
\end{align*}
Because the graded characters $\gch V_{ wt_{m\alpha_{k}^{\vee}} }(N\vpi_{k})$ 
for $(w,m) \in \WJ \times \BZ_{\ge 0}$ are linearly independent 
(note that all the sums on the left-hand side of the equation above are finite sums), 
it follows that $c_{v,m} = 0$ for all $v \in \WJ$ such that 
$\pair{v\vpi_{k}}{ \alpha_{j}^{\vee} } = 0$ and for all $m \in \BZ_{\ge 0}$. 
This proves the lemma.
\end{proof}
%
%
\begin{lem} \label{lem:-1}
Let $j \in I$ be such that $\pair{x\vpi_{k}}{\alpha_{j}^{\vee}} = - 1$; 
notice that $s_{j}x \in \WJ$. 
\begin{enu}
\item It holds that $c^{s_{j}x}_{v,m} = -c^{x}_{v,m}$ for all $v \in \WJ$ 
      such that $\pair{v\vpi_{k}}{\alpha_{j}^{\vee}} < 0$ and for all $m \in \BZ_{\ge 0}$. 
\item It holds that $c^{s_{j}x}_{v,m} = c^{x}_{s_{j}v,m}$ 
      for all $v \in \WJ$ such that $\pair{v\vpi_{k}}{\alpha_{j}^{\vee}} > 0$ 
      (note that $s_{j}v \in \WJ$) and for all $m \in \BZ_{\ge 0}$.
\end{enu}
\end{lem}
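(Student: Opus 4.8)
The plan is to follow the strategy of the proof of Lemma~\ref{lem:+1}, namely to apply the Demazure operator $\SD_{j}$ to both sides of the expansion \eqref{eq:NOSa} for $x$; the structural difference is that now $\pair{x\vpi_{k}}{\alpha_{j}^{\vee}} = -1$ instead of $+1$, which moves the ``main term'' of the Leibniz rule and thereby splits the conclusion into the two recurrences (1) and (2).

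First I would compute $\SD_{j}$ on the left-hand side of \eqref{eq:NOSa} for $x$: since $\pair{x\vpi_{k}}{\alpha_{j}^{\vee}} = -1 < 0$ we have $s_{j}x < x$, so Proposition~\ref{prop:Demazure_chara}\,(1) gives $\SD_{j}\gch V_{x}^{-}((N-1)\vpi_{k}) = \gch V_{s_{j}x}^{-}((N-1)\vpi_{k})$, and applying \eqref{eq:NOSa} to $s_{j}x$ rewrites the latter as $\be^{-s_{j}x\vpi_{k}}\sum_{v \in \WJ}\sum_{m \ge 0} c^{s_{j}x}_{v,m}\,\gch V_{vt_{m\alpha_{k}^{\vee}}}(N\vpi_{k})$. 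Next I would compute $\SD_{j}$ on the right-hand side of \eqref{eq:NOSa} for $x$. By Lemma~\ref{lem:Leibniz} with $\lambda = -x\vpi_{k}$ one has $\pair{\lambda+\rho}{\alpha_{j}^{\vee}} = 2$, so $\SD_{j}\be^{\lambda+\rho} = -\be^{\lambda+\rho-\alpha_{j}}$ by Definition~\ref{def:Demazure_op}, while $\be^{s_{j}\lambda} = \be^{-s_{j}x\vpi_{k}} = \be^{-x\vpi_{k}-\alpha_{j}}$; hence, by linearity, $\SD_{j}\bigl(\be^{-x\vpi_{k}}\gch V_{vt_{m\alpha_{k}^{\vee}}}(N\vpi_{k})\bigr) = \be^{-s_{j}x\vpi_{k}}\bigl(\SD_{j}\gch V_{vt_{m\alpha_{k}^{\vee}}}(N\vpi_{k}) - \gch V_{vt_{m\alpha_{k}^{\vee}}}(N\vpi_{k})\bigr)$. (The extra subtracted term, absent in Lemma~\ref{lem:+1}, is exactly what produces the split.) Then, just as in the proof of Lemma~\ref{lem:+1} (using Proposition~\ref{prop:Demazure_chara} and Remark~\ref{rem:dem}), $\SD_{j}\gch V_{vt_{m\alpha_{k}^{\vee}}}(N\vpi_{k})$ equals $\gch V_{vt_{m\alpha_{k}^{\vee}}}(N\vpi_{k})$ if $\pair{v\vpi_{k}}{\alpha_{j}^{\vee}} \ge 0$ and $\gch V_{s_{j}vt_{m\alpha_{k}^{\vee}}}(N\vpi_{k})$ if $\pair{v\vpi_{k}}{\alpha_{j}^{\vee}} < 0$; the $\ge 0$ terms cancel against the subtraction, so $\SD_{j}$ of the right-hand side of \eqref{eq:NOSa} for $x$ equals $\be^{-s_{j}x\vpi_{k}}\sum_{v \,:\, \pair{v\vpi_{k}}{\alpha_{j}^{\vee}} < 0}\sum_{m \ge 0} c^{x}_{v,m}\bigl(\gch V_{s_{j}vt_{m\alpha_{k}^{\vee}}}(N\vpi_{k}) - \gch V_{vt_{m\alpha_{k}^{\vee}}}(N\vpi_{k})\bigr)$.

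Finally I would reindex the first family of terms via $v' := s_{j}v$, which runs over $\{v' \in \WJ : \pair{v'\vpi_{k}}{\alpha_{j}^{\vee}} > 0\}$ as $v$ runs over $\{v \in \WJ : \pair{v\vpi_{k}}{\alpha_{j}^{\vee}} < 0\}$, equate the resulting expression with the formula for $\gch V_{s_{j}x}^{-}((N-1)\vpi_{k})$ obtained in the first step, cancel $\be^{-s_{j}x\vpi_{k}}$, and compare coefficients using the linear independence of $\bigl\{ \gch V_{wt_{m\alpha_{k}^{\vee}}}(N\vpi_{k}) \mid (w,m) \in \WJ \times \BZ_{\ge 0} \bigr\}$ (all the sums being finite). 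For $v$ with $\pair{v\vpi_{k}}{\alpha_{j}^{\vee}} > 0$ this gives $c^{s_{j}x}_{v,m} = c^{x}_{s_{j}v,m}$, which is (2), and for $v$ with $\pair{v\vpi_{k}}{\alpha_{j}^{\vee}} < 0$ it gives $c^{s_{j}x}_{v,m} = -c^{x}_{v,m}$, which is (1). (Comparing coefficients for $v$ with $\pair{v\vpi_{k}}{\alpha_{j}^{\vee}} = 0$ merely reproves Lemma~\ref{lem:+1} applied to $s_{j}x$, since $\pair{s_{j}x\vpi_{k}}{\alpha_{j}^{\vee}} = 1$.) I do not anticipate a genuine obstacle: the argument is entirely parallel to that of Lemma~\ref{lem:+1}, and the only point requiring care is the bookkeeping in the Leibniz step — correctly handling the $\rho$-shift $\lambda \mapsto \lambda + \rho$ and the sign of $\SD_{j}\be^{-x\vpi_{k}+\rho}$ — which is a direct computation from Definition~\ref{def:Demazure_op}.
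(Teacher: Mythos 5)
Your proposal is correct and follows essentially the same route as the paper: both apply the Demazure operator $\SD_{j}$ to the expansion \eqref{eq:NOSa}, use Proposition~\ref{prop:Demazure_chara} and the Leibniz rule, and compare coefficients via linear independence; the paper merely multiplies \eqref{eq:NOSa} through by $\be^{x\vpi_{k}}$ first (so the extra term appears on the left as $\be^{s_{j}x\vpi_{k}}\gch V_{s_{j}x}^{-}((N-1)\vpi_{k})$), whereas you keep the factor $\be^{-x\vpi_{k}}$ and absorb the corresponding correction into the Leibniz computation on the right — a trivially equivalent rearrangement.
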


\begin{proof}
Let $j \in I$ be such that $\pair{x\vpi_{k}}{\alpha_{j}^{\vee}} = -1$. 
We see by Proposition~\ref{prop:Demazure_chara}\,(1) and Lemma~\ref{lem:Leibniz} that 
\begin{equation*}
\SD_{j}\Bigl( \be^{x\vpi_{k}} 
 \underbrace{\gch V_{x}^{-}((N-1)\vpi_{k})}_{\text{(LHS) of \eqref{eq:NOSa}}} \Bigr) = 
\be^{x\vpi_{k}}\gch V_{x}^{-}((N-1)\vpi_{k}) + 
\be^{s_{j}x\vpi_{k}}\gch V_{s_{j}x}^{-}((N-1)\vpi_{k}). 
\end{equation*}
Also, by Proposition~\ref{prop:Demazure_chara}, we deduce that
\begin{align*}
& \SD_{j} \underbrace{%
 \left( \sum_{v \in \WJ} \sum_{m \in \BZ_{\ge 0}} 
 c^{x}_{v,m} \gch V_{ vt_{m\alpha_{k}^{\vee}} }(N\vpi_{k}) \right)%
 }_{ \text{(RHS) of \eqref{eq:NOSa} multiplied by $\be^{x\vpi_{k}}$} } \\[5mm]
& = \sum_{ 
  \begin{subarray}{c} v \in \WJ \\ \pair{v\vpi_{k}}{\alpha_{j}^{\vee}} \ge 0 \end{subarray}}
  \sum_{m \in \BZ_{\ge 0}} c^{x}_{v,m} \gch V_{ vt_{m\alpha_{k}^{\vee}} }(N\vpi_{k}) + 
\sum_{ 
  \begin{subarray}{c} v \in \WJ \\ \pair{v\vpi_{k}}{\alpha_{j}^{\vee}} < 0 \end{subarray}}
  \sum_{m \in \BZ_{\ge 0}} c^{x}_{v,m} \gch V_{ s_{j}vt_{m\alpha_{k}^{\vee}} }(N\vpi_{k}); 
\end{align*}
note that $s_{j}v \in \WJ$ for $v \in \WJ$ such that 
$\pair{v\vpi_{k}}{ \alpha_{j}^{\vee} } < 0$. Therefore, we obtain 
\begin{align*}
& \be^{s_{j}x\vpi_{k}}\gch V_{s_{j}x}^{-}((N-1)\vpi_{k}) \\
& \hspace*{10mm} = -  
\sum_{ 
  \begin{subarray}{c} v \in \WJ \\ \pair{v\vpi_{k}}{\alpha_{j}^{\vee}} < 0 \end{subarray}}
  \sum_{m \in \BZ_{\ge 0}} c^{x}_{v,m} \gch V_{ vt_{m\alpha_{k}^{\vee}} }(N\vpi_{k}) \\[3mm]
& \hspace*{30mm} + 
\sum_{ 
  \begin{subarray}{c} v \in \WJ \\ \pair{v\vpi_{k}}{\alpha_{j}^{\vee}} < 0 \end{subarray}}
  \sum_{m \in \BZ_{\ge 0}} c^{x}_{v,m} \gch V_{ s_{j}v t_{m\alpha_{k}^{\vee}} }(N\vpi_{k});
\end{align*}
here, observe that the left-hand side of this equation is identical to 
\begin{equation*}
\be^{s_{j}x\vpi_{k}}\gch V_{s_{j}x}^{-}((N-1)\vpi_{k}) = 
\sum_{v \in \WJ} 
\sum_{m \in \BZ_{\ge 0}} c_{v,m}^{ s_{j}x } \gch V_{ v t_{m\alpha_{k}^{\vee}} }(N\vpi_{k}). 
\end{equation*}
Hence we obtain the equalities in parts (1) and (2), as desired. 
This proves the lemma.
\end{proof}
%
%
\section{Proof of Theorem~\ref{thm:main} in simply-laced types.}
\label{sec:prfa}

In this section, we assume that $\Fg$ is simply-laced. 
As in Section~\ref{sec:main}, 
let $k \in I$ be such that $\vpi_{k}$ is minuscule, and set 
$\J=\J_{\vpi_{k}} = I \setminus \{k\}$. 
We may assume that $x \ne e$ by Proposition~\ref{prop:x=e}. 
%
%
\subsection{Quantum edges in the quantum Bruhat graph (1).}
\label{subsec:qe1}
%
%
\begin{lem} \label{lem:Q1}
Let $y \in \WJ$ and $\gamma \in \DJp$. 
We have a quantum edge $y \edge{\gamma} ys_{\gamma}$ in $\QBG(W)$ 
if and only if $y \ne e$ and $\gamma=\alpha_{k}=\gq$ (see \eqref{eq:gamq}). 
\end{lem}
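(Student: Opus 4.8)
The plan is to prove both implications by unwinding the definition of a quantum edge (Definition~\ref{dfn:QBG}) in terms of the length drop $\ell(ys_{\gamma}) = \ell(y) + 1 - 2\pair{\rho}{\gamma^{\vee}}$, using that $\Fg$ is simply-laced (so every positive root is long, $\gq = \alpha_{k}$, and all $\gamma^{\vee}$ are long coroots), together with the constraint $\gamma \in \DJp$ and $y \in \WJ$. First I would dispose of the ``only if'' direction. Suppose $y \edge{\gamma} ys_{\gamma}$ is a quantum edge with $\gamma \in \DJp$. If $y = e$, then $\ell(es_{\gamma}) = \ell(s_{\gamma}) \geq 1 > 1 - 2\pair{\rho}{\gamma^{\vee}}$ since $\pair{\rho}{\gamma^{\vee}} \geq 1$, so (Q) fails; hence $y \neq e$. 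For the claim $\gamma = \alpha_{k}$: by Remark~\ref{rem:qe1} the reduced expression $s_{\gamma} = s_{j_{1}} \cdots s_{j_{r}}$ with $r = 2\pair{\rho}{\gamma^{\vee}} - 1$ satisfies $\ell(ys_{j_{1}}\cdots s_{j_{t}}) = \ell(y) - t$ for all $0 \le t \le r$; in particular $\ell(ys_{j_{1}}) = \ell(y) - 1$, so $y^{-1}\alpha_{j_{1}} \in \Delta^{-}$. I would then track which simple root $\alpha_{j_{1}}$ can arise: since $y \in \WJ$ we have $y\alpha \in \Delta^{+}$ for all $\alpha \in \DJs$ by \eqref{eq:mcr}, so $y^{-1}(\Delta^{-}) \cap \DJs = \emptyset$, forcing $\beta := y^{-1}(-\alpha_{j_{1}}) \in \Delta^{+}$ to have a nonzero $\alpha_{k}$-coefficient; moreover minusculeness of $\vpi_{k}$ gives $\pair{\vpi_{k}}{\beta^{\vee}} = 1$, i.e.\ the $\alpha_{k}$-coefficient of $\beta$ equals $1$. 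The crucial point is then to argue, using minusculeness and the length-drop being maximal along the whole chain, that $\gamma$ itself must have $\alpha_{k}$-coefficient $1$ and in fact $\gamma \in \DJp$ with $\pair{\vpi_{k}}{\gamma^{\vee}} = 1$; combined with the quantum condition $2\pair{\rho}{\gamma^{\vee}} - 1 = \ell(s_{\gamma})$ I expect to pin down $\gamma = \alpha_{k}$ by a case analysis on the support of $\gamma$, or more cleanly by invoking Lemma~\ref{lem:qr} (every long root is a quantum root is automatic here, so the real content is the interaction with $\WJ$).

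An alternative and probably cleaner route for the ``only if'' part: a quantum edge $y \edge{\gamma} ys_{\gamma}$ with $ys_{\gamma} \in W$ drops length by $2\pair{\rho}{\gamma^{\vee}} - 1 \geq \ell(y) - \ell(ys_{\gamma})$ forces $\ell(ys_{\gamma}) = \ell(y) - 2\pair{\rho}{\gamma^{\vee}} + 1 \leq \ell(y)$, so $ys_{\gamma} < y$ whenever $\pair{\rho}{\gamma^{\vee}} \geq 1$ — but then I would use that $ys_{\gamma}$ need not lie in $\WJ$, and pass to $\mcr{ys_{\gamma}}$ via Remark~\ref{rem:dem}-style coset reductions; actually the slicker observation is that for $y \in \WJ$ and $\gamma \in \DJp$, one has $ys_{\gamma}\vpi_{k} = y\vpi_{k} - \pair{\vpi_{k}}{\gamma^{\vee}}y\gamma = y\vpi_{k} - y\gamma$ (minusculeness), and analyzing when $s_{\gamma}$ can act with such a large length drop while $y$ stays minimal in its coset will single out $\gamma = \alpha_{k}$. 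I would write whichever of these is shortest once I check the details against the paper's conventions.

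For the ``if'' direction, assume $y \neq e$ and $\gamma = \alpha_{k}$. Then $s_{\gamma} = s_{k}$ has length $1$ and $2\pair{\rho}{\alpha_{k}^{\vee}} - 1 = 1$ in simply-laced type (equivalently $\alpha_{k}$ is a quantum root, since it is long — though in simply-laced type $\ell(s_{k}) = 1$ is immediate), so the two conditions (B) and (Q) in Definition~\ref{dfn:QBG} coincide: $\ell(ys_{k}) = \ell(y) + 1$ makes it a Bruhat edge, while $\ell(ys_{k}) = \ell(y) - 1$ makes it a quantum edge. Since $y \in \WJ$ and $y \neq e$, I claim $\ell(ys_{k}) = \ell(y) - 1$: indeed $y \in \WJ \setminus \{e\}$ means $y$ has a reduced expression whose rightmost simple reflection must be $s_{k}$ (any reduced word for a nonidentity element of $\WJ$ ends in $s_k$, cf.\ the discussion around Lemma~\ref{lem:tiy} and Lemma~\ref{lem:invc}), so $\ell(ys_{k}) < \ell(y)$, hence $= \ell(y) - 1$. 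Therefore $y \edge{\alpha_{k}} ys_{k}$ satisfies (Q) and is a quantum edge. The main obstacle I anticipate is the ``only if'' direction — specifically, ruling out all $\gamma \in \DJp$ with $\pair{\vpi_{k}}{\gamma^{\vee}} = 1$ other than $\alpha_{k}$ as labels of quantum edges out of $y \in \WJ$; minusculeness should force the reflection $s_{\gamma}$ to essentially ``look like'' $s_{k}$ up to conjugation by $\WJs$, and the key computation will be showing that such conjugates cannot give the maximal length drop required by (Q) unless $\gamma = \alpha_{k}$ outright, which I expect to handle via the Inv-set description $\ell(s_{\gamma}) = \#\Inv(s_{\gamma})$ together with \eqref{eq:mcr}.
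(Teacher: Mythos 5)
Your ``if'' direction is correct and matches the paper's: for $y \in \WJe$ any reduced expression of $y$ ends in $s_k$, so $\ell(ys_k) = \ell(y)-1$, and since $\ell(s_k) = 1 = 2\pair{\rho}{\alpha_k^\vee}-1$ this is exactly condition (Q).

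The ``only if'' direction, however, has a genuine gap: you never produce a closed argument. You correctly observe from Remark~\ref{rem:qe1} and \eqref{eq:mcr} that the \emph{first} letter of a reduced word for $s_\gamma$ must be $s_k$ (since $y \in \WJ$ forces $\Inv(y) \cap \DJs = \emptyset$), but then you stop at ``I expect to pin down $\gamma=\alpha_k$ by a case analysis on the support'' and ``I expect to handle via the Inv-set description.'' None of the tools you cite actually close the loop: every $\gamma \in \DJp$ already has $\pair{\vpi_k}{\gamma^\vee}=1$ (minusculeness), and every positive root is a quantum root in simply-laced type, so neither of those distinguishes $\alpha_k$ from the other elements of $\DJp$. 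The missing idea is the one the paper uses: elements of $\WJ$ are \emph{fully commutative} when $\vpi_k$ is minuscule (\cite[Proposition 11.1.1(i)]{G}). If $\gamma \ne \alpha_k$, then since $\Fg$ is simply-laced, $s_\gamma$ admits a reduced expression containing a braid pattern $\cdots s_p s_q s_p \cdots$ with $p \neq q$ adjacent; the maximal length drop $\ell(y) = \ell(ys_\gamma) + \ell(s_\gamma)$ lets you concatenate reduced words for $ys_\gamma$ and $s_\gamma$ into a reduced word for $y$ that contains this braid pattern, contradicting full commutativity of $y \in \WJ$. Without that structural input (or an equivalent of it), the Inv-set bookkeeping you sketch does not obviously terminate, and the case analysis on supports would have to be redone type by type.
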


\begin{proof}
If $y \in \WJe$, then there exists a reduced expression of $y$ 
whose rightmost simple reflection is $s_{k}$. 
This fact immediately implies the ``if'' part. 
Let us show the ``only if'' part. Assume that $\gamma \ne \alpha_{k}$. 
Then, $s_{\gamma}$ has a reduced expression of the form
$s_{\gamma} = \cdots s_{p}s_{q}s_{p} \cdots$ for some $p,\,q \in I$, with $p \ne q$, 
such that $s_{p}s_{q}s_{p} = s_{q}s_{p}s_{q}$. 
From the equalities $\ell(ys_{\gamma}) = \ell(y) - \pair{2\rho}{\gamma^{\vee}} + 1 = 
\ell(y) - \ell(s_{\gamma})$ (see Remark~\ref{rem:qe1}), 
we see that if $ys_{\gamma} = s_{j_1}s_{j_2} \cdots s_{j_s}$ is 
a reduced expression of $ys_{\gamma}$, then 
\begin{equation*}
y = \underbrace{s_{j_1}s_{j_2} \cdots s_{j_s}}_{=ys_{\gamma}} 
    \underbrace{\cdots s_{p}s_{q}s_{p} \cdots}_{ = s_{\gamma} }
\end{equation*}
is a reduced expression of $y$. However, 
this contradicts the fact that every element in $\WJ$ is 
fully commutative (see \cite[Proposition 11.1.1\,(i)]{G}). 
Thus we have shown that $\gamma = \alpha_{k}$. 
This proves the lemma. 
\end{proof}
%
%
\subsection{Sets of label-increasing directed paths (1).}
\label{subsec:li1}

Let $\lhd$ be a reflection order on $\Delta^{+}$ 
satisfying condition \eqref{eq:ro}; remark that 
$\alpha_{k} \in \DJp$ is the largest element in $\Delta^{+}$ 
with respect to $\lhd$. Let $x \in \WJe$. 
Recall the notation $\bBG{x}$ and $\bQBG{x}$ from Section~\ref{subsec:QBG}; 
remark that $\ed(\bp) \ne e$ for any $\bp \in \bBG{x}$. 
For each $\bp \in \bBG{x}$, we define $\SE^{\SQ}_{\alpha_{k}}(\bp)$ 
to be the concatenation $\bp \edge{\alpha_{k}} \ed(\bp)s_{k}$ 
of the directed path $\bp$ with the quantum edge $\ed(\bp) \edge{\alpha_{k}} \ed(\bp)s_{k}$ 
(see Lemma~\ref{lem:Q1}). We see that 
$\SE^{\SQ}_{\alpha_{k}}(\bp) \in \bQBG{x}$, and 
%
%
\begin{equation} \label{eq:bQBG1}
\bQBG{x} = \bBG{x} \sqcup \bigl\{\SE^{\SQ}_{\alpha_{k}}(\bp) \mid \bp \in \bBG{x} \bigr\};
\end{equation}
note that $\wt(\bp)=0$ and $\wt(\SE^{\SQ}_{\alpha_{k}}(\bp)) = \alpha_{k}^{\vee}$
for all $\bp \in \bBG{x}$. Therefore, it follows from \eqref{eq:minuscule} that 
for all $N \in \BZ_{\ge 1}$, 
%
%
\begin{equation} \label{eq:NOS3a}
\begin{split}
\gch V_{x}^{-}((N-1)\vpi_{k}) & = 
\be^{-x\vpi_{k}}
\sum_{ y \in \ed(\bBG{x}) } (-1)^{\ell(y)-\ell(x)} 
\gch V_{y}^{-}(N\vpi_{k}) \\[2mm]
& \hspace*{10mm} + 
\be^{-x\vpi_{k}}
\sum_{ y \in \ed(\bBG{x}) } (-1)^{\ell(y)-\ell(x)+1} 
\gch V_{\mcr{ys_{k}} t_{\alpha_{k}^{\vee}}}^{-}(N\vpi_{k}), 
\end{split}
\end{equation}
which proves the character identity \eqref{eq:main_1} 
in Theorem \ref{thm:main}\,(1) in simply-laced types. 
%
%
\subsection{Cancellations in equation \eqref{eq:NOS3a}.}
\label{subsec:can1}

Let $x \in \WJe$. We set 
\begin{equation}
\bG_{x}^{\lhd} := \bigl\{ \SE^{\SQ}_{\alpha_{k}}(\bp) \mid \bp \in \bBG{x} \bigr\} =
\bQBG{x} \setminus \bBG{x},
\end{equation}
and then $\bGx{x}{v}:=\bigl\{ \bq \in \bG_{x}^{\lhd} \mid \mcr{\ed(\bq)} = v \bigr\}$ 
for $v \in \WJ$. By \eqref{eq:NOS3a}, it follows that 
%
%
\begin{equation} \label{eq:NOS3aa}
\begin{split}
\gch V_{x}^{-}((N-1)\vpi_{k}) & = 
\be^{-x\vpi_{k}}
\sum_{ y \in \ed(\bBG{x}) } (-1)^{\ell(y)-\ell(x)} 
\gch V_{y}^{-}(N\vpi_{k}) \\ 
& \hspace*{5mm} + 
\be^{-x\vpi_{k}} \sum_{v \in \WJ} 
\underbrace{%
\left( \sum_{ \bq \in \bGx{x}{v} } (-1)^{\ell(\ed(\bq))-\ell(x)} \right)}_{%
\text{$=c_{v,1}^{x}$; see \eqref{eq:NOSa}} }
\gch V_{vt_{\alpha_{k}^{\vee}}}^{-}(N\vpi_{k}). 
\end{split}
\end{equation}
%
%
\begin{lem} \label{lem:itv}
Keep the notation and setting above. 
If $\# \bGx{x}{v} \ge 2$, then 
\begin{equation*}
c_{v,1}^{x} = \sum_{ \bq \in \bGx{x}{v} } (-1)^{\ell(\ed(\bq))-\ell(x)} = 0.
\end{equation*}
\end{lem}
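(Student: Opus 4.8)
The plan is to analyze the set $\bGx{x}{v}$ combinatorially. Every element of $\bGx{x}{v}$ is of the form $\SE^{\SQ}_{\alpha_{k}}(\bp)$ for a unique $\bp \in \bBG{x}$ (uniqueness because the map $\bp \mapsto \SE^{\SQ}_{\alpha_k}(\bp)$ is a bijection $\bBG{x} \to \bG_x^{\lhd}$), and $\ed(\SE^{\SQ}_{\alpha_k}(\bp)) = \ed(\bp) s_k$, so $\ell(\ed(\SE^{\SQ}_{\alpha_k}(\bp))) = \ell(\ed(\bp)) + 1$ by Remark~\ref{rem:qe1} applied to the quantum edge (note $s_\gamma = s_k$ has length $1$ here, since $\vpi_k$ is minuscule means $\alpha_k = \gq$ is a quantum root of the simplest kind). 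Hence $(-1)^{\ell(\ed(\bq)) - \ell(x)} = (-1)^{\ell(\ed(\bp)) + 1 - \ell(x)}$, and the sum $c_{v,1}^x$ becomes $-\sum_{\bp} (-1)^{\ell(\ed(\bp)) - \ell(x)}$ over those $\bp \in \bBG{x}$ with $\mcr{\ed(\bp) s_k} = v$. So it suffices to show: whenever there are at least two such $\bp$, their signed count vanishes.

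First I would set up the key correspondence. Write $y := \ed(\bp) \in \WJ \setminus \{e\}$; then $\mcr{y s_k} = v$ is fixed. Using Lemma~\ref{lem:tiy}, decompose $y = \yJ{y}\,\yJs{y}\, s_k$; the condition $\mcr{ys_k} = v$ together with $ys_k\vpi_k = \yJ{y}\vpi_k$ (since $\yJs{y} \in \WJs$ fixes $\vpi_k$) shows $\yJ{y} = v$ — wait, more carefully, $v = \mcr{ys_k}$ and $v\vpi_k = ys_k\vpi_k = \yJ{y}\vpi_k$, and since $v, \yJ{y} \in \WJ$ this forces $\yJ{y} = v$. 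So all the relevant $y$'s share the same $\yJ{y} = v$, and they differ only in the factor $\yJs{y} \in \WJs$. The plan is to show that the involution/pairing producing cancellation comes from Lemma~\ref{lem:EB} and Lemma~\ref{lem:invc}: these identify, for each such $y$ with $y \not\ge z_k s_k$, a canonical Bruhat edge $y \edge{\beta} ys_\beta$ with $\beta \in \Inv(z_k s_k) \setminus \{\alpha_k\}$, and Lemma~\ref{lem:yJ} guarantees $\yJ{ys_\beta} = \yJ{y} = v$, so $ys_\beta$ is again a vertex of the type we are counting (i.e. $\mcr{ys_\beta s_k} = v$ as well, since $\yJ{ys_\beta} = v$). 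This $\beta$-move should pair up the $\bp$'s in $\bBG{x}$ ending at $y$ versus those ending at $ys_\beta$, with opposite signs since $\ell(ys_\beta) = \ell(y) \pm 1$.

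The main obstacle — and the step I expect to require the most care — is verifying that this pairing is a genuine sign-reversing involution \emph{at the level of paths in $\bBG{x}$}, not just at the level of endpoints. The subtlety: $\bBG{x}$ consists of \emph{label-increasing} directed paths, and I need that appending or deleting the edge labeled $\beta$ (where $\beta \in \Inv(z_k s_k) \setminus \{\alpha_k\}$, hence $\beta \lhd \alpha_k$) preserves the label-increasing condition, and that $\ed$ remains injective on the relevant subset (Remark~\ref{rem:bBGy}(1)). I would handle this by invoking Lemma~\ref{lem:DL} to transport Bruhat edges along simple reflections and Theorem~\ref{thm:LI} (uniqueness of label-increasing paths) to control which path in $\bBG{x}$ lands at a given endpoint. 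One likely needs to further stratify $\bGx{x}{v}$ by the value of $z := \yJs{y}$ and observe that the case $\#\bGx{x}{v} \ge 2$ forces the existence of at least two distinct $z$'s in $\WJs$ with $\yJ{y} = v$; then Lemma~\ref{lem:zksk} (the inequality $z_k s_k \ge z s_k$, equality iff $y \ge z_k s_k$) together with Lemma~\ref{lem:EB} produces the cancelling partner, and one argues by downward induction on $\ell(z)$ (or equivalently on the Bruhat distance from $z_k$) that the total signed sum collapses to the single top term $z = z_k$, whose contribution, being alone, does not get cancelled — except that when $\#\bGx{x}{v} \ge 2$ the induction shows even that term is cancelled, giving $c_{v,1}^x = 0$. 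Closing the argument cleanly will require carefully checking the boundary case where $y = z_k s_k$ itself (so $y \ge z_k s_k$ with equality in Lemma~\ref{lem:zksk}) is or is not in $\bGx{x}{v}$.
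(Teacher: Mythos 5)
Your reduction of $c^{x}_{v,1}$ to a signed count over $\bigl\{\bp \in \bBG{x} : \mcr{\ed(\bp)s_{k}} = v\bigr\}$ is fine, and your observation that all relevant endpoints $y$ share $\yJ{y}=v$ is correct (modulo a small slip: the quantum edge $\ed(\bp)\edge{\alpha_{k}}\ed(\bp)s_{k}$ \emph{decreases} length by $1$, since $\ell(ws_{\alpha_{k}})=\ell(w)+1-2\pair{\rho}{\alpha_{k}^{\vee}}=\ell(w)-1$; the parity, hence the sign flip, is unaffected). The genuine gap is that the sign-reversing involution, which is the entire content of the lemma, is never constructed, and the ingredients you cite do not suffice to build it. Lemma~\ref{lem:EB} is an existence statement: the root $\beta$ it produces depends on a choice of saturated chain from $zs_{k}$ up to $z_{k}s_{k}$, so there is no canonical pairing $y \leftrightarrow ys_{\beta}$. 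Even granting a canonical choice, your pairing must act on \emph{paths} in $\bBG{x}$: appending the edge $y\edge{\beta}ys_{\beta}$ must preserve the label-increasing property (so $\beta$ must exceed the last label of $\bp$ with respect to $\lhd$), and the deletion move must be its exact inverse (so the deleted label must be precisely the $\beta$ the appending rule would select at the truncated endpoint, and the truncated path must land in the ``append'' stratum rather than the ``delete'' stratum). None of this is verified, and it is exactly where the difficulty lives: the paper needs the auxiliary order condition \eqref{eq:ro2} together with Lemmas~\ref{lem:zksk}--\ref{lem:yJ} merely to exhibit a \emph{second} element of $\bGx{x}{v}$ (Proposition~\ref{prop:fin1b}), which is far weaker than a perfect matching. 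Your concluding ``downward induction on $\ell(z)$'' is asserted rather than argued, and the hypothesis $\#\bGx{x}{v}\ge 2$ never enters in a way that closes the argument.

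For comparison, the paper proves Lemma~\ref{lem:itv} by a mechanism your proposal does not touch: the Demazure-operator recurrences of Section~\ref{subsec:ind} (Lemmas~\ref{lem:+1} and~\ref{lem:-1}), which for $j$ with $\pair{x\vpi_{k}}{\alpha_{j}^{\vee}}=1$ relate $c^{x}_{v,1}$ to $c^{s_{j}x}_{s_{j}v,1}$ or to $-c^{s_{j}x}_{v,1}$, combined with a maximal-counterexample induction on $\ell(x)$. The only combinatorics required there is an \emph{injection} $\bGx{x}{v}\hookrightarrow \bGx{s_{j}x}{s_{j}v}$ (or $\bGx{s_{j}x}{v}$), built from Lemma~\ref{lem:DL} and Theorem~\ref{thm:LI}, which propagates the hypothesis $\#\ge 2$ up to $x=\mcr{\lng}$, where $\bG^{\lhd}_{\mcr{\lng}}$ is a singleton. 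To salvage your route you would either have to prove the perfect-matching statement outright (a substantially harder combinatorial claim) or switch to the recurrence argument.
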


\begin{proof}
Suppose, for a contradiction, that the assertion is false. 
Let $x$ be a maximal element (with respect to the Bruhat order) of the set 
\begin{equation*}
\bigl\{ w \in \WJe \mid 
\text{$\# \bGx{w}{v} \ge 2$ and $c_{v,1}^{w} \ne 0$ for some $v \in \WJ$} \bigr\};
\end{equation*}
since $\bG_{ \mcr{\lng} }^{\lhd} = \bigl\{ \mcr{\lng} \edge{\alpha_{k}} \mcr{\lng}s_{k} \bigr\}$, 
it follows that $x \ne \mcr{\lng}$. 
Take $j \in I$ such that $\pair{x\vpi_{k}}{\alpha_{j}^{\vee}} = 1 > 0$
(recall that $\vpi_{k}$ is minuscule); note that $s_{j}x \in \WJ$ and 
$\ell(s_{j}x) = \ell(x)+1$.
Let $v \in \WJ$ be such that $\# \bGx{x}{v} \ge 2$ and 
$c_{v,1}^{x} \ne 0$. By Lemma \ref{lem:+1}\,(1), 
we have $\pair{v\vpi_{k}}{\alpha_{j}^{\vee}} \ne 0$; 
note that $s_{j}v \in \WJ$. 
%
\paragraph{\bf Case 1.}
%
Assume that $\pair{v\vpi_{k}}{\alpha_{j}^{\vee}} > 0$. 
We define an injective map $\bGx{x}{v} \rightarrow \bGx{s_{j}x}{s_{j}v}$, 
$\bq \mapsto \ha{\bq}$, as follows: for $\bq \in \bGx{x}{v}$ with $y:=\ed(\bp)$, 
we define $\ha{\bq}$ to be the label-increasing 
(shortest) directed path from $s_{j}x$ to $s_{j}y$ in $\QBG(W)$ 
(see Theorem~\ref{thm:LI}). We claim that $\ha{\bq} \in \bGx{s_{j}x}{s_{j}v}$. 
Indeed, recall that $\bq$ is of the form:
%
%
\begin{equation} \label{eq:qx}
\bq:\underbrace{%
  x = y_{0} \edge{\gamma_{1}} y_{1} \edge{\gamma_{2}} \cdots 
  \edge{\gamma_{s}} y_{s}}_{\in \bBG{x}}
  \underbrace{\edge{\gamma_{s+1}=\alpha_{k}} y_{s+1}}_{\text{quantum edge}} = \ed(\bq) = y; 
\end{equation}
note that $\mcr{y} = v$.
Since $\pair{y\vpi_{k}}{\alpha_{j}^{\vee}} = 
\pair{v\vpi_{k}}{\alpha_{j}^{\vee}} > 0$ by the assumption in Case 1, 
we have $y^{-1}\alpha_{j} \in \Delta^{+}$. 
Similarly, we have $x^{-1}\alpha_{j} \in \Delta^{+}$. 
If $y_{u}^{-1}\alpha_{j} \in \Delta^{+}$ for all $1 \le u \le s$, 
then we see by Lemma~\ref{lem:DL}\,(2) that 
there exists a directed path $\bq'$ in $\QBG(W)$ 
from $s_{j}x$ to $s_{j}y$ of the following form: 
\begin{equation*}
\bq' : s_{j} x = s_{j}y_{0} \edge{\gamma_{1}} s_{j}y_{1} \edge{\gamma_{2}} \cdots 
  \edge{\gamma_{s+1}} s_{j}y_{s+1} = s_{j}y, 
\end{equation*}
with $\wt(\bq') = \wt (\bq) \ne 0$. Observe that 
$\bq' \in \bQBG{s_{j}x} \setminus \bBG{s_{j}x} = \bG_{s_{j}x}^{\lhd}$, 
and $\mcr{ \ed(\bq') } = \mcr{s_{j}y} = \mcr{s_{j}v} = s_{j}v$. 
Hence we obtain $\bq' \in \bGx{s_{j}x}{s_{j}v}$. 
Moreover, by the uniqueness of a label-increasing 
directed path from $s_{j}x$ to $s_{j}y$, we deduce that $\ha{\bq} = \bq'$, 
and hence $\ha{\bq} \in \bGx{s_{j}x}{s_{j}v}$ in this case. 

Assume now that $y_{u}^{-1}\alpha_{j} \in \Delta^{-}$ for some $1 \le u \le s$; 
remark that $s \ge 1$ in this case, since $y_{0}^{-1}\alpha_{j} \in \Delta^{+}$ and 
$y_{s+1}^{-1}\alpha_{j} \in \Delta^{+}$. If we set
$a:=\min \bigl\{ 1 \le u \le s \mid y_{u}^{-1}\alpha_{j} \in \Delta^{-} \bigr\}$, 
then we deduce from Lemma~\ref{lem:DL} that 
$\gamma_{a}=y_{a-1}^{-1}\alpha_{j}$, and 
that there exists a directed path $\bq''$ in $\QBG(W)$ from 
$s_{j}x$ to $y = \ed(\bp)$ of the following form: 
\begin{equation*}
\bq'': s_{j}x = s_{j}y_{0} \edge{\gamma_{1}} \cdots 
  \edge{\gamma_{a-1}} s_{j}y_{a-1}=y_{a} \edge{\gamma_{a+1}} \cdots
  \edge{\gamma_{s+1}} y_{s+1} = y; 
\end{equation*}
notice that $\bq'' \in \bQBG{s_{j}x}$. 
Here, since $x^{-1}\alpha_{j} \in \Delta^{+}$ and 
$y^{-1}\alpha_{j} \in \Delta^{+}$, it follows from 
\cite[Lemma~7.7\,(4)]{LNSSS1} that 
$\ell(\ha{\bq}) = \ell(s_{j}x \Rightarrow s_{j}y) = 
 \ell(x \Rightarrow y) = \ell(\bq) = s+1 \ge 2$, and 
$\wt(\ha{\bq}) = \wt(s_{j}x \Rightarrow s_{j}y) = 
 \wt(x \Rightarrow y) = \wt(\bq) \ne 0$. 
Let us write $\ha{\bq}$ as:
\begin{equation*}
\ha{\bq} : s_{j}x = x_{0} \edge{\beta_{1}} x_{1} \edge{\beta_{2}} \cdots 
  \edge{\beta_{s+1}} x_{s+1} = s_{j}y,
\end{equation*}
where $\beta_{1} \lhd \beta_{2} \lhd \cdots \lhd \beta_{s+1}$. 
We will show that $\beta_{1} \in \DJp$.
Notice that $x_{0}^{-1}\alpha_{j} \in \Delta^{-}$ and 
$x_{s+1}^{-1}\alpha_{j} \in \Delta^{-}$. 
Suppose, for a contradiction, that 
$x_{u}^{-1}\alpha_{j} \in \Delta^{-}$ for all $1 \le u \le s$.
Then, we see by Lemma~\ref{lem:DL}\,(2) that 
there exists a directed path $\ha{\bq}'$ in $\QBG(W)$ 
from $x$ to $y$ of the following form: 
\begin{equation*}
\ha{\bq}' : x = s_{j}x_{0} \edge{\beta_{1}} s_{j}x_{1} \edge{\beta_{2}} \cdots 
  \edge{\beta_{s+1}} s_{j}x_{s+1} = y. 
\end{equation*}
By the uniqueness of a label-increasing directed path from $x$ to $y$, 
we deduce that $\ha{\bq}'=\bq$; in particular, $s_{j}x_{a} = y_{a}$. 
However, $\Delta^{+} \ni (s_{j}x_{a})^{-1}\alpha_{j} = y_{a}^{-1}\alpha_{j} \in \Delta^{-}$, 
which is a contradiction. Thus there exists $1 \le u \le s$ such that 
$x_{u}^{-1}\alpha_{j} \in \Delta^{+}$. 
If we set $b:=\max \bigl\{1 \le u \le s \mid x_{u}^{-1}\alpha_{j} \in \Delta^{+} \bigr\}$, 
then we see by Lemma~\ref{lem:DL} that 
there exists a directed path $\ha{\bq}''$ in $\QBG(W)$ 
from $s_{j}x$ to $y$ of the following form: 
\begin{equation*}
\ha{\bq}'' : s_{j}x = x_{0} \edge{\beta_{1}}  \cdots 
  \edge{\beta_{b}} x_{b}=s_{j}x_{b+1} \edge{\beta_{b+2}} 
  \cdots \edge{\beta_{s+1}} s_{j}x_{s+1}  = y. 
\end{equation*}
By the uniqueness of a label-increasing directed path from $s_{j}x$ to $y$, 
we deduce that $\ha{\bq}'' = \bq''$. Hence $\beta_{1}$ is either $\gamma_{1}$ (if $a \ge 2$) 
or $\gamma_{2}$ (if $a=1$). Thus we obtain $\beta_{1} \in \DJp$, as desired. 
Since the reflection order $\lhd$ satisfies condition \eqref{eq:ro}, 
it follows that $\beta_{u} \in \DJp$ for all $1 \le u \le s+1$, 
which implies that $\ha{\bq} \in \bQBG{s_{j}x}$. 
Also, since $\wt(\ha{\bq}) \ne 0$ as seen above, 
we find that $\ha{\bq} \notin \bBG{s_{j}x}$, 
and hence $\ha{\bq} \in \bG_{s_{j}x}^{\lhd}$. It is easily seen that 
$\mcr{\ed(\ha{\bq})} = \mcr{s_{j}y} = \mcr{s_{j}v} = s_{j}v$. 
Therefore, it follows that $\ha{\bq} \in \bGx{s_{j}x}{s_{j}v}$. 

It remains to show that the map $\bGx{x}{v} \rightarrow \bGx{s_{j}x}{s_{j}v}$, 
$\bq \mapsto \ha{\bq}$, is injective. 
Let $\bq_{1},\,\bq_{2} \in \bGx{x}{v}$, with $\bq_{1} \ne \bq_{2}$. 
Note that $\ed(\bq_{1}) \ne \ed(\bq_{2})$ by Remark~\ref{rem:bBGy}\,(1). 
Since $\ed(\ha{\bq}_{1}) = s_{j}\ed(\bq_{1})$ and 
$\ed(\ha{\bq}_{2}) = s_{j}\ed(\bq_{2})$ by the definitions above, 
we deduce that $\ed(\ha{\bq}_{1}) \ne \ed(\ha{\bq}_{2})$. 
Hence, by Remark~\ref{rem:bBGy}\,(1), 
it follows that $\ha{\bq}_{1} \ne \ha{\bq}_{2}$, as desired. 
By the injectivity of the map above, 
we obtain $\# \bGx{s_{j}x}{s_{j}v} \ge \# \bGx{x}{v} \ge 2$. 
Also, by the maximality of $x$, we have $c^{s_{j}x}_{s_{j}v,1}=0$. 
However, by Lemma~\ref{lem:-1}\,(3), we see that 
$c^{x}_{v,1} = c^{s_{j}x}_{s_{j}v,1} = 0$, 
which contradicts the assumption that $c^{x}_{v,1} \ne 0$. 

\paragraph{\bf Case 2.}
%
Assume that $\pair{v\vpi_{k}}{\alpha_{j}^\vee} < 0$. 
We define an injective map $\bGx{x}{v} \rightarrow \bGx{s_{j}x}{v}$, 
$\bq \mapsto \ha{\bq}$, as follows. 
Assume that $\bq \in \bGx{x}{v}$ is of the form \eqref{eq:qx}. 
Note that $x^{-1}\alpha_{j} \in \Delta^{+}$ and 
$y^{-1}\alpha_{j} \in \Delta^{-}$ in this case. If we set
$a:=\min \bigl\{ 1 \le u \le s+1 \mid y_{u}^{-1}\alpha_{j} \in \Delta^{-} \bigr\}$, 
then it follows from Lemma~\ref{lem:DL} that 
there exists a directed path in $\QBG(W)$ from $s_{j}x$ to $y$ of the form:
\begin{equation*}
\ha{\bq} : 
  s_{j}x = s_{j}y_{0} \edge{\gamma_{1}} \cdots 
  \edge{\gamma_{a-1}} s_{j}y_{a-1}=y_{a} \edge{\gamma_{a+1}} \cdots
  \edge{\gamma_{s}} y_{s} \edge{\gamma_{s+1}} \cdots \edge{\gamma_{s+1}} y_{s+1} = y, 
\end{equation*}
with $\wt(\ha{\bq}) = \wt (\bq) \ne 0$. 
Observe that $\ha{\bq} \in \bQBG{s_{j}x} \setminus \bBG{s_{j}x}=\bG_{x}^{\lhd}$, 
and $\mcr{ \ed(\ha{\bq}) } = \mcr{ y } = v$. 
Hence we have $\ha{\bq} \in \bGx{s_{j}x}{v}$. 
By the same argument as in Case 1, we can show that 
the map $\bGx{x}{v} \rightarrow \bGx{s_{j}x}{v}$, 
$\bq \mapsto \ha{\bq}$, is injective. Therefore, 
we obtain $\# \bGx{s_{j}x}{v} \ge \# \bGx{x}{v} \ge 2$. 
We see by Lemma \ref{lem:-1}\,(2) and 
the maximality of $x$ that $c^{x}_{v,1} = - c^{s_{j}x}_{v,1} = 0$. 
However, this contradicts the assumption that $c^{x}_{v,1} \ne 0$. 
This completes the proof of the lemma. 
\end{proof}

Let $x \in \WJe$. 
For $v \in \WJ$ such that $\# \bGx{x}{v}=1$, 
let $\bq_{x,v}$ denote the (unique) element of $\bGx{x}{v}$. 
By Lemma~\ref{lem:itv} and \eqref{eq:NOS3aa}, we deduce that
%
%
\begin{equation} \label{eq:NOS3ab}
\begin{split}
\gch V_{x}^{-}((N-1)\vpi_{k}) & = 
\be^{-x\vpi_{k}}
\sum_{ y \in \ed(\bBG{x}) } (-1)^{\ell(y)-\ell(x)} 
\gch V_{y}^{-}(N\vpi_{k}) \\ 
& \hspace*{5mm} + 
\be^{-x\vpi_{k}} \sum_{ 
 \begin{subarray}{c} v \in \WJ \\[1mm] 
 \# \bGx{x}{v}=1  \end{subarray} } 
(-1)^{\ell(\ed(\bq_{x,v}))-\ell(x)}
\gch V_{v t_{\alpha_{k}^{\vee}}}^{-}(N\vpi_{k}).
\end{split}
\end{equation}
In order to prove the assertion on cancellations (following \eqref{eq:main_1}) 
and the character identity \eqref{eq:main_2} in Theorem~\ref{thm:main}\,(2) (in simply-laced types), 
it suffices to show the following proposition; its proof is given in the next subsection. 
%
%
\begin{prop} \label{prop:fin1}
Let $x \in \WJe$. 
\begin{enu}
\item If $x \ge \mcr{s_{\theta}}$, then $\# \bGx{x}{v}=0$ or $1$ for all $v \in \WJ$. 
\item If $x \not\ge \mcr{s_{\theta}}$, then $\# \bGx{x}{v} \ne 1$ for any $v \in \WJ$. 
\end{enu}
\end{prop}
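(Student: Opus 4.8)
The plan is to reduce the whole statement to counting the fibers of the map $y \mapsto \yJ{y}$ on $\ed(\bBG{x})$. First I would note that the bijection $\bp \mapsto \SE^{\SQ}_{\alpha_{k}}(\bp)$ between $\bBG{x}$ and $\bG_{x}^{\lhd}$, together with Remark~\ref{rem:bBGy}(1), gives $\#\bGx{x}{v} = \#\{y \in \ed(\bBG{x}) \mid \mcr{ys_{k}} = v\}$; and for $y \in \WJe$ Lemma~\ref{lem:tiy} gives $ys_{k} = \yJ{y}\yJs{y}$ with $\yJ{y} \in \WJ$, $\yJs{y} \in \WJs$, hence $\mcr{ys_{k}} = \yJ{y}$. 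So it suffices to control the fibers $F_{v} := \{y \in \ed(\bBG{x}) \mid \yJ{y} = v\}$. I would also record the identity $z_{k}s_{k} = \mcr{s_{\theta}}$: since $\mcr{w}$ depends only on $w\vpi_{k}$, this amounts to $s_{\theta}\vpi_{k} = \ls s_{k}\vpi_{k}$, i.e. (using $\pair{\vpi_{k}}{\theta^{\vee}} = 1$ and $\ls\vpi_{k} = \vpi_{k}$) to $\theta = \ls\alpha_{k}$, which is the standard fact that, for minuscule $\vpi_{k}$, the $\DJs$-dominant element of the $\WJs$-orbit of $\alpha_{k}$ is $\theta$. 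Consequently Lemmas~\ref{lem:zksk}, \ref{lem:EB}, \ref{lem:invc}, \ref{lem:yJ} apply with the threshold $\mcr{s_{\theta}}$.

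For part (1), if $x \ge \mcr{s_{\theta}} = z_{k}s_{k}$, then every $y \in \ed(\bBG{x}) \subset \WJe$ (recall $x \ne e$) satisfies $y \ge z_{k}s_{k}$, so the equality case of Lemma~\ref{lem:zksk} forces $\yJs{y} = z_{k}$, whence $y = \yJ{y}z_{k}s_{k}$. Thus $y \mapsto \yJ{y}$ is injective on $\ed(\bBG{x})$, so $\#F_{v} \le 1$ for all $v$, which is the assertion.

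For part (2), I would exploit Lemma~\ref{lem:bBGy}: since neither $\ed(\bBG{x})$ nor any $\#\bGx{x}{v}$ depends on the choice of reflection order satisfying \eqref{eq:ro}, I may pick $\lhd$ so that $\Inv(z_{k}s_{k})$ is a terminal segment of $\lhd$ (the $\lhd$-largest roots). Such an order exists: $z_{k}s_{k} \in \WJ$ is a right factor of $\mcr{\lng}$ (the Bruhat maximum of $\WJ$, hence by Proposition~\ref{prop:lw} its left-weak-order maximum), $\mcr{\lng}$ is a right factor of $\lng$, and $\Inv(\mcr{\lng}) = \DJp$; so a reduced word of $\lng$ ending in a reduced word of $\mcr{\lng}$ which in turn ends in a reduced word of $z_{k}s_{k}$ yields, via Section~\ref{subsec:ro}, a reflection order with $\DJs$ at the bottom (so \eqref{eq:ro} holds) and $\Inv(z_{k}s_{k})$ at the top. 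With this $\lhd$, for any $y \in \ed(\bBG{x})$ the label-increasing path $\bp : x = y_{0} \edge{\beta_{1}} \cdots \edge{\beta_{r}} y_{r} = y$ has all $\beta_{i} \in \DJp$ with $\beta_{1} \lhd \cdots \lhd \beta_{r}$, and there is $m$ with $\beta_{i} \in \Inv(z_{k}s_{k})$ precisely for $i > m$.

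Then I would fix $v$, assume toward a contradiction that $F_{v} = \{y\}$ is a singleton, and split into two cases. If $m < r$: the truncations $y_{m},\,y_{m+1},\,\dots,\,y_{r}$ are all in $\ed(\bBG{x})$, and applying Lemma~\ref{lem:yJ} to the Bruhat edges $y_{i} \edge{\beta_{i+1}} y_{i+1}$ for $i \ge m$ (whose labels lie in $\Inv(z_{k}s_{k})$) shows $\yJ{y_{i}} = v$ for all $i \ge m$; since $y_{m} \ne y_{m+1}$ this gives $\#F_{v} \ge 2$. If $m = r$: all $\beta_{i}$ lie in $\DJp \setminus \Inv(z_{k}s_{k})$, so applying the contrapositive of Lemma~\ref{lem:invc} edge by edge starting from $x = y_{0} \not\ge z_{k}s_{k}$ yields $y = y_{r} \not\ge z_{k}s_{k}$; then Lemma~\ref{lem:EB} provides $\gamma \in \Inv(z_{k}s_{k}) \setminus \{\alpha_{k}\}$ with $y \edge{\gamma} ys_{\gamma}$ a Bruhat edge, and since $\gamma$ is $\lhd$-larger than $\beta_{r}$, the concatenation of $\bp$ with this edge is again label-increasing with labels in $\DJp$, hence is the label-increasing path to $ys_{\gamma}$, so $ys_{\gamma} \in \ed(\bBG{x})$; by Lemma~\ref{lem:yJ}, $\yJ{ys_{\gamma}} = v$, while $ys_{\gamma} \ne y$, so again $\#F_{v} \ge 2$. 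In either case $\#F_{v} \ge 2$, contradicting $\#F_{v} = 1$; hence $\#\bGx{x}{v} = \#F_{v} \ne 1$.

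The main obstacle is the reflection-order bookkeeping in part (2): one must know that $\Inv(z_{k}s_{k})$ can be realized as a terminal segment of a reflection order compatible with \eqref{eq:ro}, and then verify that the $\Inv(z_{k}s_{k})$-labeled edges of a label-increasing path always form a terminal subpath along which $\yJ{\cdot}$ is constant (Lemma~\ref{lem:yJ}) while, below the threshold, $\DJp \setminus \Inv(z_{k}s_{k})$-labeled edges cannot cross it (Lemma~\ref{lem:invc}). Given the structural lemmas of Section~\ref{sec:pre}, the remaining steps are routine.
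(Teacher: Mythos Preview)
Your proof is correct and follows essentially the same approach as the paper: the paper also identifies $z_{k}s_{k} = \mcr{s_{\theta}}$ (Lemma~\ref{lem:zk}), proves part~(1) via $\yJs{y}=z_{k}$ for $y\ge z_{k}s_{k}$ (Proposition~\ref{prop:fin1a}), and proves part~(2) by choosing a reflection order with $\Inv(z_{k}s_{k})$ as a terminal segment (condition~\eqref{eq:ro2}) and then either truncating the last edge (when $\gamma_{s}\in\Inv(z_{k}s_{k})$) or extending by Lemma~\ref{lem:EB} (when $\gamma_{s}\notin\Inv(z_{k}s_{k})$), invoking Lemmas~\ref{lem:invc} and~\ref{lem:yJ} exactly as you do. The only cosmetic difference is that you phrase everything in terms of the fibers $F_{v}$ of $y\mapsto\yJ{y}$ on $\ed(\bBG{x})$, whereas the paper works directly with the paths in $\bGx{x}{v}$.
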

%
%
\subsection{Proof of Proposition~\ref{prop:fin1}.}
\label{subsec:fin1}

As in Proposition~\ref{prop:fin1}, we assume that $x \in \WJe$. 
Recall that $\vpi_{k}$ is minuscule and $\J=I \setminus \{k\}$. 
Also, we recall from Section~\ref{subsec:tech} that 
$\mcr{\ls s_{k}} = z_{k}s_{k} \in \WJ$, with $z_{k} \in \WJs$.  
%
%
\begin{lem} \label{lem:zk}
If $\Fg$ is simply-laced and $\vpi_{k}$ is minuscule, 
then the element $\mcr{\ls s_{k}} = z_{k}s_{k}$ is identical to $\mcr{s_{\theta}}$. 
\end{lem}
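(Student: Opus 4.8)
The plan is to reduce the statement to the single root-theoretic identity $\ls\alpha_{k}=\theta$, and then to prove this identity by showing that $\ls\alpha_{k}$ is a maximal element of $\Delta^{+}$ for the standard partial order. For the reduction, recall that $\WJs=\WJs_{\vpi_{k}}$ is exactly the stabilizer of $\vpi_{k}$ in $W$, so for $u,u'\in W$ one has $\mcr{u}=\mcr{u'}$ if and only if $u\vpi_{k}=u'\vpi_{k}$. Since $\ls\in\WJs$ fixes $\vpi_{k}$, we get $\ls s_{k}\vpi_{k}=\ls(\vpi_{k}-\alpha_{k})=\vpi_{k}-\ls\alpha_{k}$; on the other hand $s_{\theta}\vpi_{k}=\vpi_{k}-\pair{\vpi_{k}}{\theta^{\vee}}\theta=\vpi_{k}-\theta$, because $\vpi_{k}$ minuscule forces $\pair{\vpi_{k}}{\theta^{\vee}}\in\{-1,0,1\}$ while $\pair{\vpi_{k}}{\theta^{\vee}}=a_{k}^{\vee}\ge 1$ (the $k$-th comark), hence $\pair{\vpi_{k}}{\theta^{\vee}}=1$. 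Thus $\mcr{\ls s_{k}}=\mcr{s_{\theta}}$ will follow once we know $\ls\alpha_{k}=\theta$; recall also that $z_{k}s_{k}=\mcr{\ls s_{k}}$ by definition.

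Next I would record two elementary observations about $\ls$. First, since $\ls\in\WJs$ is a product of simple reflections $s_{j}$ with $j\in\J=I\setminus\{k\}$, and each such $s_{j}$ changes only the $\alpha_{j}$-component of any vector, the coefficient of $\alpha_{k}$ in $\ls\gamma$ equals that in $\gamma$ for every $\gamma$; in particular $\ls\alpha_{k}$ is a root with $\alpha_{k}$-coefficient $1$, so $\ls\alpha_{k}\in\Delta^{+}\setminus\DJs$. Second, since $\Fg$ is simply-laced and $\vpi_{k}$ is minuscule we have $a_{k}=a_{k}^{\vee}=1$, so every positive root has $\alpha_{k}$-coefficient at most that of the highest root $\theta$, namely at most $1$.

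Now I claim $\ls\alpha_{k}$ is maximal in $(\Delta^{+},\le)$. Suppose not: then $\ls\alpha_{k}+\alpha_{i}\in\Delta$ for some $i\in I$, and as a sum of a positive root and a simple root it is a positive root. If $i=k$, its $\alpha_{k}$-coefficient would equal $2$, contradicting the bound above. If $i\in\J$, then, applying the involution $\ls$ and using the first observation, $\ls(\ls\alpha_{k}+\alpha_{i})=\alpha_{k}+\ls\alpha_{i}$ is again a positive root, since its $\alpha_{k}$-coefficient is still $1>0$; but $\ls=w_{\J,\circ}$ sends every element of $\DJs$ to $-\DJs$, so $\ls\alpha_{i}\in-\DJs$, and hence $\alpha_{k}+\ls\alpha_{i}$ has a strictly positive $\alpha_{k}$-coefficient together with a strictly negative $\alpha_{j}$-coefficient for some $j\in\J$, which is impossible for a root. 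Therefore $\ls\alpha_{k}$ is a maximal positive root, and since $\Delta$ is irreducible this forces $\ls\alpha_{k}=\theta$, completing the proof.

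I expect the argument to be largely routine; the only points needing care are invoking the correct standard facts — that $\WJs$ is the full stabilizer of $\vpi_{k}$, that $\vpi_{k}$ minuscule (in the simply-laced case) gives $\pair{\vpi_{k}}{\theta^{\vee}}=a_{k}=1$, and that $\theta$ is the unique maximal element of $\Delta^{+}$ — and keeping straight, throughout the case analysis, which roots lie in $\DJs$, in $-\DJs$, or outside the subsystem generated by $\{\alpha_{j}\}_{j\in\J}$.
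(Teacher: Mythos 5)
Your proof is correct and follows essentially the same route as the paper: both arguments reduce the lemma to the identity $\ls\alpha_{k}=\theta$ (the paper then concludes via $\mcr{\ls s_{k}}=\mcr{\ls s_{k}\ls^{-1}}=\mcr{s_{\ls\alpha_{k}}}$, while you equivalently compare the actions on $\vpi_{k}$). The paper dismisses $\ls\alpha_{k}=\theta$ as "easy to verify," whereas you supply a clean maximality argument for it; that extra detail is sound.
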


\begin{proof}
It is easy to verify that $\ls \alpha_{k} = \theta$. From this, we see that 
$\mcr{\ls s_{k}} = \mcr{\ls s_{k} \ls^{-1}} = \mcr{s_{\ls \alpha_{k}}} = \mcr{s_{\theta}}$, 
as desired. 
\end{proof}
%
%
\begin{prop}[=Proposition~\ref{prop:fin1}\,(1)] \label{prop:fin1a}
Let $x \in \WJe$. 
If $x \ge z_{k}s_{k} = \mcr{s_{\theta}}$, then 
$\# \bGx{x}{v} = 0$ or $1$ for each $v \in \WJ$. 
\end{prop}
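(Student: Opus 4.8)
The plan is to identify $\bGx{x}{v}$ with a subset of $\ed(\bBG{x})$ and then to use the hypothesis $x \ge z_{k}s_{k} = \mcr{s_{\theta}}$ (Lemma~\ref{lem:zk}) to show that this subset can never contain two distinct elements.

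First I would set up the reduction. By the definition of $\bG_{x}^{\lhd}$, every element of $\bG_{x}^{\lhd}$ is of the form $\SE^{\SQ}_{\alpha_{k}}(\bp)$ for a unique $\bp \in \bBG{x}$, and $\ed(\SE^{\SQ}_{\alpha_{k}}(\bp)) = \ed(\bp)s_{k}$. Since $\bp \mapsto \ed(\bp)$ is a bijection from $\bBG{x}$ onto $\ed(\bBG{x})$ (injective by Remark~\ref{rem:bBGy}\,(1), surjective by definition), the assignment $\SE^{\SQ}_{\alpha_{k}}(\bp) \mapsto \ed(\bp)$ induces a bijection
\[
\bGx{x}{v} \;\longleftrightarrow\; \bigl\{\, y \in \ed(\bBG{x}) \mid \mcr{y s_{k}} = v \,\bigr\}.
\]
Hence it suffices to prove that the set on the right-hand side has at most one element.

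Next I would exploit the rigidity forced by the hypothesis. Let $y \in \ed(\bBG{x})$. By construction $y$ is the endpoint of a directed path in $\BG(W)$ with start $x$; since every directed edge in $\BG(W)$ is a Bruhat edge, following such a path increases the element in the Bruhat order, so $y \ge x$, and therefore $y \ge z_{k}s_{k}$ by hypothesis. As we are in the case $x \ne e$, we have $y \ne e$, so Lemma~\ref{lem:tiy} gives the length-additive factorization $y = \yJ{y}\,\yJs{y}\,s_{k}$. Because $y \ge z_{k}s_{k}$, Lemma~\ref{lem:zksk} forces $\yJs{y}s_{k} = z_{k}s_{k}$, hence $\yJs{y} = z_{k}$ and
\[
y = \yJ{y}\, z_{k}\, s_{k}, \qquad \ell(y) = \ell(\yJ{y}) + \ell(z_{k}s_{k}).
\]
Moreover $y s_{k} = \yJ{y}\yJs{y} \in \yJ{y}\WJs$, and since $\yJ{y} \in \WJ$ we get $\mcr{y s_{k}} = \yJ{y}$.

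Finally I would conclude. By the previous step, $\bigl\{ y \in \ed(\bBG{x}) \mid \mcr{y s_{k}} = v \bigr\} = \bigl\{ y \in \ed(\bBG{x}) \mid \yJ{y} = v \bigr\}$, and for any $y$ in this set the displayed factorization reads $y = v z_{k}s_{k}$. Thus the set is contained in the singleton $\{v z_{k}s_{k}\}$, so it has at most one element, whence $\# \bGx{x}{v} = 0$ or $1$. The only point that genuinely needs care is checking that \emph{every} endpoint in $\ed(\bBG{x})$ lies above $z_{k}s_{k}$, so that Lemma~\ref{lem:zksk} may be invoked to pin down the parabolic component $\yJs{y}$; once this is in place the argument is purely formal, and I do not expect any further obstacle for part~(1). (The genuinely delicate statement is part~(2), the case $x \not\ge \mcr{s_{\theta}}$, which will require the lemmas on Bruhat edges from Section~\ref{subsec:tech}.)
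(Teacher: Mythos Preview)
Your proposal is correct and follows essentially the same route as the paper: both arguments reduce to showing that any endpoint $y=\ed(\bp)$ with $\bp\in\bBG{x}$ satisfies $y\ge z_{k}s_{k}$, apply Lemmas~\ref{lem:tiy} and~\ref{lem:zksk} to force the factorization $y=\yJ{y}\,z_{k}s_{k}$, and conclude that $\mcr{ys_{k}}=v$ pins down $y=vz_{k}s_{k}$ uniquely. The only cosmetic difference is that you set up the bijection $\bGx{x}{v}\leftrightarrow\{y\in\ed(\bBG{x})\mid\mcr{ys_{k}}=v\}$ first, while the paper works directly with an arbitrary $\bq\in\bGx{x}{v}$ and invokes Theorem~\ref{thm:LI} at the end; the underlying uniqueness (Remark~\ref{rem:bBGy}\,(1) vs.\ Theorem~\ref{thm:LI}) is the same.
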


\begin{proof}
Let $v \in \WJ$ be such that $\# \bGx{x}{v} \ne 0$. 
Let $\bq \in \bGx{x}{v}$; recall that $\bq$ is of the form:
\begin{equation*}
\bq : \underbrace{x = y_{0} \edge{\gamma_{1}} y_{1} \edge{\gamma_{2}} \cdots 
  \edge{\gamma_{s}} y_{s}}_{ \in \bBG{x} } 
  \underbrace{ \edge{\gamma_{s+1} = \alpha_{k}} y_{s+1} }_{\text{quantum edge}} = \ed(\bq), 
\end{equation*}
where $y_{s} \in \WJ$, and $\mcr{\ed(\bq)} = \mcr{y_{s+1}}=v$. 
Since $y_{s} \ge x \ge z_{k}s_{k}=\mcr{s_{\theta}}$ in the Bruhat order, 
it follows from Lemmas~\ref{lem:tiy} and \ref{lem:zksk} that 
$y_{s} = \yJ{y_{s}}z_{k}s_{k}$, with $\yJ{y_{s}} \in \WJ$. 
Since $\mcr{y_{s}s_{k}} = \mcr{\ed(\bq)} =  v$ by the assumption, 
we deduce that $\yJ{y_{s}} = v$, and hence $y_{s}=vz_{k}s_{k}$. 
Thus, $y_{s}$ is uniquely determined by $v$. 
By the uniqueness of a label-increasing directed path 
from $x$ to $v z_{k} s_{k}$ (see Theorem~\ref{thm:LI}), 
we obtain $\# \bGx{x}{v} = 1$, as desired. This proves the proposition.
\end{proof}
%
%
\begin{prop}[=Proposition~\ref{prop:fin1}\,(2)] \label{prop:fin1b}
Let $x \in \WJe$. If $x \not\ge z_{k}s_{k}=\mcr{s_{\theta}}$, 
then $\# \bGx{x}{v} \ne 1$ for any $v \in \WJ$. 
\end{prop}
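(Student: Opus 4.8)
The plan is to prove the equivalent statement that, for $x\in\WJe$ with $x\not\ge z_{k}s_{k}=\mcr{s_{\theta}}$ and for $v\in\WJ$, the set $\bGx{x}{v}$ is never a singleton. First I reduce this to a statement purely about the Bruhat graph $\BG(W)$. By \eqref{eq:bQBG1}, the assignment $\bp\mapsto\SE^{\SQ}_{\alpha_{k}}(\bp)$ is a bijection from $\bBG{x}$ onto $\bG_{x}^{\lhd}$; moreover, for $\bp\in\bBG{x}$ we have $\ed(\bp)\in\WJe$ (since $\ed(\bp)\ge x\neq e$ and $\ed(\bp)\in\WJ$ by Remark~\ref{rem:bBGy}\,(2)), so Lemma~\ref{lem:tiy} and Remark~\ref{rem:tiy} give $\mcr{\ed(\SE^{\SQ}_{\alpha_{k}}(\bp))}=\mcr{\ed(\bp)s_{k}}=\yJ{\ed(\bp)}$. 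Together with the injectivity of $\ed$ on $\bBG{x}$ (Remark~\ref{rem:bBGy}\,(1)) this yields $\#\bGx{x}{v}=\#\{y\in\ed(\bBG{x})\mid\yJ{y}=v\}$. Hence it suffices to show: if $S_{v}:=\{y\in\ed(\bBG{x})\mid\yJ{y}=v\}$ is nonempty, then $\#S_{v}\ge 2$. Fix $y\in S_{v}$, with associated label-increasing directed path $\bp:x=y_{0}\edge{\gamma_{1}}y_{1}\edge{\gamma_{2}}\cdots\edge{\gamma_{s}}y_{s}=y$ in $\bBG{x}$ (so $\gamma_{1}\lhd\cdots\lhd\gamma_{s}$ in $\DJp$, each $y_{i}\in\WJ$, and $\ell(y_{i})=\ell(x)+i$). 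The goal is to manufacture a second element $y'\in S_{v}$, and I split into two cases according to whether $\bp$ stays below the threshold $z_{k}s_{k}$.

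Case~1: $y\not\ge z_{k}s_{k}$. Since $y_{0}\le\cdots\le y_{s}=y$, every $y_{i}\not\ge z_{k}s_{k}$. By Lemma~\ref{lem:zksk} applied to $y$ we have $z_{k}s_{k}>\yJs{y}s_{k}$, so Lemma~\ref{lem:EB} produces $\beta\in\Inv(z_{k}s_{k})\setminus\{\alpha_{k}\}\subseteq\DJp$ with $y\edge{\beta}ys_{\beta}$ a Bruhat edge; then $ys_{\beta}\in\WJ$ by Lemma~\ref{lem:B}, $\ell(ys_{\beta})=\ell(y)+1$, and $\yJ{ys_{\beta}}=\yJ{y}=v$ by Lemma~\ref{lem:yJ}. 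If $\gamma_{s}\lhd\beta$, then $\bp\edge{\beta}ys_{\beta}$ is a label-increasing directed path in $\bBG{x}$, so $ys_{\beta}\in\ed(\bBG{x})$ and hence $ys_{\beta}\in S_{v}$; since $\ell(ys_{\beta})=\ell(y)+1$, $ys_{\beta}\neq y$, and we are done. If $\beta$ is not larger than $\gamma_{s}$, I re-route the tail of $\bp$ past the edge labelled $\beta$, using the exchange properties of Lemma~\ref{lem:DL} together with the uniqueness of label-increasing directed paths (Theorem~\ref{thm:LI}), and reduce to the previous configuration by an induction on the Bruhat rank $\ell(y)-\ell(x)$ (the base case $\ell(y)=\ell(x)$ being handled by applying Lemma~\ref{lem:EB} at $x$ itself).

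Case~2: $y\ge z_{k}s_{k}$. As $x=y_{0}\not\ge z_{k}s_{k}$, put $t:=\min\{i\ge 1\mid y_{i}\ge z_{k}s_{k}\}$; then $y_{t-1}\not\ge z_{k}s_{k}$, $y_{t}\ge z_{k}s_{k}$, and Lemma~\ref{lem:invc} (applied to the Bruhat edge $y_{t-1}\edge{\gamma_{t}}y_{t}$) gives $\gamma_{t}\in\Inv(z_{k}s_{k})\setminus\{\alpha_{k}\}$, whence $\yJ{y_{t}}=\yJ{y_{t-1}}$ by Lemma~\ref{lem:yJ}. Applying Lemma~\ref{lem:EB} to $y_{t-1}$ yields some $\beta\in\Inv(z_{k}s_{k})\setminus\{\alpha_{k}\}$ with $y_{t-1}\edge{\beta}y_{t-1}s_{\beta}$ a Bruhat edge and $\yJ{y_{t-1}s_{\beta}}=\yJ{y_{t-1}}=\yJ{y_{t}}$; replacing the edge $y_{t-1}\edge{\gamma_{t}}y_{t}$ by $y_{t-1}\edge{\beta}y_{t-1}s_{\beta}$ and re-routing the remaining labels $\gamma_{t+1},\dots,\gamma_{s}$ (again via Lemma~\ref{lem:DL} and Theorem~\ref{thm:LI}) produces a path in $\bBG{x}$ whose endpoint $y'$ satisfies $\yJ{y'}=\yJ{y}=v$, and an induction on $\ell(y)-\ell(x)$, parallel to the maximality argument in the proof of Lemma~\ref{lem:itv}, guarantees $y'\neq y$ and $y'\in\ed(\bBG{x})$.

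The main obstacle is exactly this re-routing: reassembling the modified set of edges into a genuine label-increasing directed path all of whose labels still lie in $\DJp$. The essential inputs here are the fully-commutative structure of $\WJ$ (cf.\ Lemma~\ref{lem:Q1} and \cite[Proposition~11.1.1]{G}), the exchange properties in Lemma~\ref{lem:DL}, and the uniqueness statement of Theorem~\ref{thm:LI}, organized by a careful induction on the Bruhat rank $\ell(y)-\ell(x)$. Once Proposition~\ref{prop:fin1b} is proved, combining it with Lemma~\ref{lem:itv} shows that $c^{x}_{v,1}=0$ for every $v\in\WJ$ whenever $x\not\ge\mcr{s_{\theta}}$, so the second sum in \eqref{eq:NOS3ab} vanishes and \eqref{eq:NOS3ab} collapses to the identity \eqref{eq:main_2}; together with Proposition~\ref{prop:fin1a}, this completes the proof of Theorem~\ref{thm:main} in simply-laced types.
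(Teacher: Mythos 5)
Your reduction of the claim to the statement ``$S_{v}:=\{y\in\ed(\bBG{x})\mid\yJ{y}=v\}$ nonempty $\Rightarrow$ $\#S_{v}\ge 2$'' is correct, and you have located all the right auxiliary lemmas (Lemmas~\ref{lem:B}, \ref{lem:zksk}, \ref{lem:EB}, \ref{lem:invc}, \ref{lem:yJ}). But the proof has a genuine gap exactly where you flag ``the main obstacle'': in both of your cases the argument hinges on ``re-routing'' a modified collection of edges back into a label-increasing directed path with labels in $\DJp$ and with the same value of $\yJ{\cdot}$ at the endpoint, and this step is never carried out. It is not a routine application of Lemma~\ref{lem:DL} and Theorem~\ref{thm:LI}: Lemma~\ref{lem:DL} transports edges along left multiplication by a simple reflection $s_{j}$, which is not the operation you need here (you are inserting or replacing an edge labelled by a non-simple root $\beta$ in the middle or at the end of a path), and once the intermediate vertices change, the endpoint changes, so there is no a priori control on $\yJ{\ed(\cdot)}$. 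The proposed ``induction on $\ell(y)-\ell(x)$'' is also not set up: the quantity $\#\bGx{x}{v}$ is attached to the fixed pair $(x,v)$, not to an individual endpoint $y$, so it is unclear what the induction hypothesis would assert or why it would terminate.

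The paper avoids the re-routing problem entirely by exploiting a freedom you did not use: by Lemma~\ref{lem:bBGy}, $\#\bGx{x}{v}$ is independent of the choice of reflection order satisfying \eqref{eq:ro}, so one may choose $\lhd$ to satisfy in addition
\begin{equation*}
\beta \lhd \gamma \quad \text{for all } \beta \in (\DJp)\setminus\Inv(z_{k}s_{k}) \text{ and } \gamma \in \Inv(z_{k}s_{k}),
\end{equation*}
i.e.\ all labels from $\Inv(z_{k}s_{k})$ come \emph{last} among $\DJp$ (such an order exists because $\mcr{\lng}\ge z_{k}s_{k}$; see Section~\ref{subsec:ro}). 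With this order the case split is on the final label $\gamma_{s}$, not on whether $\ed(\bp)\ge z_{k}s_{k}$. If $\gamma_{s}\in\Inv(z_{k}s_{k})$, one simply \emph{deletes} the last Bruhat edge; the truncation is trivially still label-increasing, and Lemma~\ref{lem:yJ} shows the new path lands in the same $\bGx{x}{v}$. If $\gamma_{s}\notin\Inv(z_{k}s_{k})$, then by the order \emph{all} labels avoid $\Inv(z_{k}s_{k})$, so Lemma~\ref{lem:invc} forces $y_{s}\not\ge z_{k}s_{k}$, Lemma~\ref{lem:EB} supplies a Bruhat edge $y_{s}\edge{\beta}y_{s}s_{\beta}$ with $\beta\in\Inv(z_{k}s_{k})\setminus\{\alpha_{k}\}$, and $\gamma_{s}\lhd\beta$ holds \emph{automatically} by the choice of order, so one \emph{appends} this edge with no re-routing needed. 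Without this device (or a genuine substitute for the re-routing argument), your proof is incomplete.
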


\begin{proof}
It is easily verified by Lemma~\ref{lem:bBGy} that 
$\# \bGx{x}{v}$ does not depend on the choice of 
a reflection order $\lhd$ satisfying \eqref{eq:ro}. 
In this proof, we take a reflection order $\lhd$ satisfying 
condition \eqref{eq:ro} and the additional condition that 
%
%
\begin{equation} \label{eq:ro2}
\beta \lhd \gamma \quad 
\text{for all $\beta \in (\DJp) \setminus \Inv (z_{k}s_{k})$ 
and $\gamma \in \Inv (z_{k}s_{k})$}; 
\end{equation}
the existence of a reflection order satisfying these conditions 
follows from Proposition~\ref{prop:lw} and the fact that $\mcr{\lng} \ge z_{k}s_{k}$ 
(see also Section~\ref{subsec:ro}). 

Now, let $v \in \WJ$ be such that $\# \bGx{x}{v} \ne 0$. Let 
\begin{equation*}
\bq:
 \underbrace{x = y_{0} \edge{\gamma_{1}} y_{1} \edge{\gamma_{2}} \cdots \edge{\gamma_{s-1}} y_{s-1}
  \edge{\gamma_{s}} y_{s}}_{\in \bBG{x}}
  \underbrace{\edge{\gamma_{s+1} = \alpha_{k}} y_{s+1}}_{\text{quantum edge}} = \ed(\bq)
\end{equation*}
be an element of $\bGx{x}{v}$; note that $y_{s} \in \WJe$ 
and $\mcr{\ed(\bq)}=\mcr{y_{s+1}}=v$. 
First we assume that $s \ge 1$, and $\gamma_{s} \in \Inv(z_{k}s_{k})$. 
Since $s \ge 1$, we see that $y_{s-1} \in \WJe$. 
Hence it follows from Lemma~\ref{lem:Q1} and \eqref{eq:bQBG1} that
\begin{equation*}
\bq': x = y_{0} \edge{\gamma_{1}} y_{1} \edge{\gamma_{2}} \cdots 
 \edge{\gamma_{s-1}} y_{s-1} \edge{\alpha_{k}} y_{s-1}s_{k}
\end{equation*}
is an element of $\bG_{x}^{\lhd}$. Since $y_{s},\,y_{s-1} \in \WJe$, 
we have by Lemma~\ref{lem:tiy}
\begin{equation*}
y_{s} = \yJ{y_{s}}\yJs{y_{s}}s_{k} \quad \text{and} \quad
y_{s-1} = \yJ{y_{s-1}}\yJs{y_{s-1}}s_{k}, 
\end{equation*}
with $\yJ{y_{s}},\,\yJ{y_{s-1}} \in \WJ$ and 
$\yJs{y_{s}},\,\yJs{y_{s-1}} \in \WJs$. Also, since 
$y_{s-1} \edge{\gamma_{s}} y_{s}$ is a Bruhat edge with label
$\gamma_{s} \in \Inv(z_{k}s_{k})$, 
we have $\yJ{y_{s-1}}=\yJ{y_{s}}$ by Lemma~\ref{lem:yJ}. 
Therefore,
\begin{equation*}
\begin{split}
\mcr{\ed(\bq')} & = 
\mcr{y_{s-1}s_{k}} = \mcr{\yJ{y_{s-1}}\yJs{y_{s-1}}} = 
\yJ{y_{s-1}} = \yJ{y_{s}}=\mcr{\yJ{y_{s}}\yJs{y_{s}}} \\ 
& = \mcr{y_{s}s_{k}} = \mcr{\ed(\bq)} = v,
\end{split}
\end{equation*}
and hence $\bq' \in \bGx{x}{v}$. 
Hence we obtain $\# \bGx{x}{v} \ge \#\bigl\{\bq,\,\bq'\bigr\}=2$. 

Next, we assume that $\gamma_{s} \not \in \Inv(z_{k}s_{k})$. 
In this case, we see by \eqref{eq:ro2} that 
$\gamma_{u} \not\in \Inv(z_{k}s_{k})$ for any $1 \le u \le s$. 
Also, since $x \not\ge z_{k}s_{k}$ by the assumption, 
we deduce from Lemma~\ref{lem:invc} that $y_{s} \not\ge z_{k}s_{k}$. 
Since $y_{s} \in \WJe$, it follows from Lemma~\ref{lem:EB} that 
there exists $\beta \in \Inv(z_{k}s_{k}) \setminus \{\alpha_{k}\}$ 
such that $y_{s} \edge{\beta} y_{s}s_{\beta}$ is a Bruhat edge. Therefore, 
\begin{equation*}
\bq'': 
 \underbrace{ x = y_{0} \edge{\gamma_{1}} y_{1} \edge{\gamma_{2}} \cdots 
 \edge{\gamma_{s-1}} y_{s-1} \edge{\gamma_{s}} y_{s} \edge{\beta} y_{s}s_{\beta} }_{\in \bBG{x}}
 \underbrace{ \edge{\alpha_{k}} y_{s}s_{\beta} s_{k} }_{\text{quantum edge}}
\end{equation*}
is an element of $\bG_{x}^{\lhd}$. 
Applying Lemmas~\ref{lem:tiy} and \ref{lem:yJ} 
to $y_{s},\,y_{s}s_{\beta} \in \WJe$, 
we can show by exactly the same argument as above that 
$\mcr{\ed(\bq'')} = \mcr{\ed(\bq)} = v$. 
Thus we obtain $\bq'' \in \bGx{x}{v}$, and hence 
$\# \bGx{x}{v} \ge \#\bigl\{\bq,\,\bq''\bigr\}=2$. 
This proves the proposition. 
\end{proof}

This completes the proof of Theorem~\ref{thm:main} 
in simply-laced types. 
%
%
\section{Proof of Theorem~\ref{thm:main} in type $B_{n}$.}
\label{sec:prfb}

In this section, we assume that $\Fg$ is of type $B_{n}$, and $k=n$, 
which is a unique element in $I$ such that $\vpi_{k}$ is minuscule. 
We set $\J=\J_{\vpi_{n}} = I \setminus \{n\}$. 
We may assume that $x \ne e$ by Proposition~\ref{prop:x=e}. 

%
\subsection{Lemmas on Bruhat edges in the quantum Bruhat graph (2).}
\label{subsec:tech2}
Recall from \eqref{eq:gamq} that $\gq = s_{n}\alpha_{n-1}$; 
note that $s_{\gq} = s_{n}s_{n-1}s_{n} \in \WJ$. We set
\begin{equation}
\WJx:=\bigl\{ y \in \WJ \mid y \ge s_{n}s_{n-1}s_{n} \bigr\}; 
\end{equation}
note that $\WJs=W_{\{1,2,\dots,n-1\}}$ is the Weyl group of type $A_{n-1}$; 
we denote by $(\WJs)^{\J \setminus \{n-2\}}$ the set of minimal coset representatives 
for the cosets in $\WJs/W_{\J \setminus \{n-2\}}$. 
%
%
\begin{lem} \label{lem:tiw}
For each $y \in \WJx$, there exist a unique 
$\xJ{y} \in \WJ$ and $\xJs{y} \in \WJs$ 
such that $y = \xJ{y}\xJs{y}s_{n}s_{n-1}s_{n}$ and 
$\ell(y) = \ell(\xJ{y}) + \ell(\xJs{y}) + \ell(s_{n}s_{n-1}s_{n})$. 
Moreover, $\xJs{y} \in (\WJs)^{\J \setminus \{n-2\}}$. 
\end{lem}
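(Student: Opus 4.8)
This lemma is the exact analogue of Lemma~\ref{lem:tiy} (with $s_{k}$ replaced by $s_{\gq}=s_{n}s_{n-1}s_{n}$ and $\WJs$ of type $A_{n-1}$), so I would follow the proof of Lemma~\ref{lem:tiy} almost verbatim for the existence-and-uniqueness part, and then add a short supplementary argument for the refinement $\xJs{y}\in(\WJs)^{\J\setminus\{n-2\}}$. The key input that makes the analogy work is the observation that $s_{\gq}=s_{n}\alpha_{n-1}$ gives $s_{\gq}\in\WJ$ with $\ell(s_{\gq})=3$, and that for $y\in\WJ\setminus\{e\}$ lying above $s_{n}s_{n-1}s_{n}$ one can peel off a reduced tail equal to $s_{n}s_{n-1}s_{n}$; this is where $\WJx$ (rather than all of $\WJe$) is needed, since in type $B_{n}$ not every nonidentity element of $\WJ$ has such a tail.

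\emph{Existence.} I would choose $z_{0}\in\WJs$ of maximal length such that $y=y_{0}z_{0}s_{n}s_{n-1}s_{n}$ for some $y_{0}\in W$ with $\ell(y)=\ell(y_{0})+\ell(z_{0})+\ell(s_{n}s_{n-1}s_{n})$. The existence of \emph{some} such factorization is where $y\in\WJx$ enters: since $y\ge s_{n}s_{n-1}s_{n}=\mcr{s_{\gq}}$ and both lie in $\WJ$, Proposition~\ref{prop:lw} (left weak order on $\WJ$) gives a chain from $\mcr{s_{\gq}}$ up to $y$ by left multiplication by simple reflections, which rewrites $y$ as (word)$\cdot s_{n}s_{n-1}s_{n}$ with lengths adding — take $z_{0}=e$, $y_{0}=$ that word, as a starting point. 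Then, exactly as in the proof of Lemma~\ref{lem:tiy}, if $y_{0}\notin\WJ$ write $y_{0}=y_{0}'s_{j}$ with $j\in\J$ and $\ell(y_{0})=\ell(y_{0}')+1$; since $s_{j}z_{0}\in\WJs$ and one checks $\ell(s_{j}z_{0})=\ell(z_{0})+1$ (using $j\in\J$ and $z_{0}\in\WJs$ together with the length-additivity), this contradicts maximality of $z_{0}$. Hence $y_{0}\in\WJ$; set $\xJ{y}:=y_{0}$, $\xJs{y}:=z_{0}$.

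\emph{Uniqueness.} Suppose $y=y_{1}z_{1}s_{n}s_{n-1}s_{n}$ is another such factorization with $y_{1}\in\WJ$, $z_{1}\in\WJs$. Apply $y$ to $\vpi_{n}$: since $z_{1},z_{0}\in\WJs$ fix $\vpi_{n}$ and $s_{n}s_{n-1}s_{n}\vpi_{n}$ is a fixed vector of $W$-action up to the same translate, one gets $y_{1}(s_{n}s_{n-1}s_{n}\vpi_{n})=y_{0}(s_{n}s_{n-1}s_{n}\vpi_{n})$; because $s_{n}s_{n-1}s_{n}\vpi_{n}$ has trivial $\WJs$-stabilizer-coset data and $y_{1},y_{0}\in\WJ$, conclude $y_{1}=y_{0}$, hence $z_{1}s_{n}s_{n-1}s_{n}=z_{0}s_{n}s_{n-1}s_{n}$, hence $z_{1}=z_{0}$. (If the vector $s_{n}s_{n-1}s_{n}\vpi_{n}$ does not have trivial stabilizer in $\WJs$, I would instead argue directly with the cosets $\WJ$: both $z_{0}s_{n}s_{n-1}s_{n}$ and $z_{1}s_{n}s_{n-1}s_{n}$ are the minimal representative $\mcr{ys_{n}s_{n-1}s_{n}}$-type element obtained by the length condition, forcing equality.) This is the step I expect to need the most care — verifying that $s_{n}s_{n-1}s_{n}$ interacts with $\WJs$-cosets cleanly enough to pin down the factorization, i.e.\ that the length-additive factorization through $s_{\gq}$ is genuinely unique in type $B_{n}$.

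\emph{The refinement $\xJs{y}\in(\WJs)^{\J\setminus\{n-2\}}$.} Write $z:=\xJs{y}\in\WJs$ and suppose $z\notin(\WJs)^{\J\setminus\{n-2\}}$, i.e.\ $zs_{j}<z$ for some $j\in\J\setminus\{n-2\}=\{1,\dots,n-3\}$, equivalently $z=z's_{j}$ with $\ell(z)=\ell(z')+1$. Then I would show that $s_{j}$ commutes with $s_{n}s_{n-1}s_{n}$: indeed $j\le n-3$ so $\langle\alpha_{n-1},\alpha_{j}^{\vee}\rangle=\langle\alpha_{n},\alpha_{j}^{\vee}\rangle=0$, whence $s_{j}$ commutes with both $s_{n-1}$ and $s_{n}$, so $s_{j}(s_{n}s_{n-1}s_{n})=(s_{n}s_{n-1}s_{n})s_{j}$. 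Therefore $y=\xJ{y}z's_{j}s_{n}s_{n-1}s_{n}=\xJ{y}z'(s_{n}s_{n-1}s_{n})s_{j}$, which rewrites $y$ with a nontrivial $\WJs$-factor $z'$ of strictly smaller length but now with an extra $s_{j}\in\J$ hanging on the right — recombining, $\xJ{y}z'$ times $(s_{n}s_{n-1}s_{n})s_{j}$; one then checks (using $j\le n-3$ again, so $s_{j}$ does not enter the ``$n$-part'') that this contradicts either the length-additivity or the maximality of $z=\xJs{y}$ established above. More precisely, since $s_{j}$ commutes past $s_{n}s_{n-1}s_{n}$, we have $y=\xJ{y}(z's_{j})s_{n}s_{n-1}s_{n}=\xJ{y}zs_{n}s_{n-1}s_{n}$ only recovers $z$; the point is rather to exploit $ys_{n-1}s_{n}s_{n-1}\cdots$ — here I would instead argue via descents: $z\in(\WJs)^{\J\setminus\{n-2\}}$ iff $z$ is the minimal representative of $z\,W_{\J\setminus\{n-2\}}$, and I would show that if it were not, the non-minimality could be absorbed into $\xJ{y}$ (moving a simple reflection $s_{j}$, $j\le n-3$, which commutes with $s_{n-1},s_{n}$, from the $z$-block leftward through $s_{n}s_{n-1}s_{n}$ is impossible, but moving it \emph{within} the expression and then left past nothing is — so the clean statement is that non-minimality of $z$ in $\WJs/W_{\J\setminus\{n-2\}}$ contradicts the maximality of $\ell(\xJs{y})$ among length-additive factorizations). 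I would phrase the final contradiction in terms of maximality of $\xJs{y}$, exactly mirroring the existence argument. This last paragraph is the genuinely new content beyond Lemma~\ref{lem:tiy}, and getting the commutation bookkeeping right is the main obstacle.
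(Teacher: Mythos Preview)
Your existence-and-uniqueness argument is essentially the paper's: use Proposition~\ref{prop:lw} to get a length-additive factorization $y=w\,s_{n}s_{n-1}s_{n}$, then take $z_{0}\in\WJs$ of maximal length and argue as in Lemma~\ref{lem:tiy}. For uniqueness, the clean version (which the paper intends by ``same argument as Lemma~\ref{lem:tiy}'') is to apply both factorizations to $\vpi_{n}$ \emph{after} multiplying by $(s_{n}s_{n-1}s_{n})^{-1}$: from $y\,s_{n}s_{n-1}s_{n}=y_{i}z_{i}$ with $z_{i}\in\WJs$, one gets $y_{0}\vpi_{n}=y_{1}\vpi_{n}$, whence $y_{0}=y_{1}$. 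Your version (letting $z_{i}$ act on $s_{n}s_{n-1}s_{n}\vpi_{n}$) does not work directly since $z_{i}$ need not fix that vector; your hedge is justified, but the fix is the one just stated.

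The refinement argument has two genuine gaps. First, you miscompute the index set: $\J\setminus\{n-2\}=\{1,\dots,n-3\}\cup\{n-1\}$, so a right descent of $\xJs{y}$ at $j=n-1$ must also be excluded, and this case cannot be handled by commutation (since $s_{n-1}$ does not commute with $s_{n}$). The paper handles it via the type~$B$ braid relation $s_{n-1}s_{n}s_{n-1}s_{n}=s_{n}s_{n-1}s_{n}s_{n-1}$, which pushes $s_{n-1}$ to the right of $s_{n}s_{n-1}s_{n}$. Second, even in your case $j\le n-3$, you are aiming at the wrong contradiction: after commuting $s_{j}$ past $s_{n}s_{n-1}s_{n}$ you obtain a \emph{reduced} expression $y=\xJ{y}\,z'\,(s_{n}s_{n-1}s_{n})\,s_{j}$ (the lengths match), which directly contradicts $y\in\WJ$ since $j\in\J$. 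The contradiction is not with the maximality of $\ell(\xJs{y})$ --- moving $s_{j}$ to the right does not enlarge the $\WJs$-block --- and your attempts to force a maximality contradiction cannot succeed. The paper's proof is simply: if $\xJs{y}=ws_{j}$ with $j\ne n-2$, then either commutation ($j\le n-3$) or the braid relation ($j=n-1$) gives $y=(\cdots)s_{j'}$ with $j'\in\J$, contradicting $y\in\WJ$.
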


\begin{proof}
Since $\vpi_{n}$ is minuscule, it follows from Proposition~\ref{prop:lw} 
that for each $y \in \WJx$, 
there exists a (unique) $w \in W$ such that $y = ws_{n}s_{n-1}s_{n}$
and $\ell(y) = \ell(w) + \ell(s_{n}s_{n-1}s_{n})$. 
The existence and uniqueness of $\xJ{y} \in \WJ$ and $\xJs{y} \in \WJs$ 
can be shown by exactly the same argument as for Lemma~\ref{lem:tiy}; 
replace $s_{k}$ in the proof of Lemma~\ref{lem:tiy} by $s_{n}s_{n-1}s_{n}$. 

It remains to show that $\xJs{y} \in (\WJs)^{\J \setminus \{n-2\}}$; 
for this, it suffices to verify that if $\xJs{y} \ne e$, 
then the rightmost simple reflection in any reduced expression of 
$\xJs{y}$ is $s_{n-2}$. Assume that $\xJs{y} \ne e$, and 
write $\xJs{y}$ as $\xJs{y} = ws_{j}$ for some $w \in \WJs$ and $j \in \J$ 
such that $\ell(\xJs{y}) = \ell(w) + \ell(s_{j})$. Suppose, for a contradiction, that 
$j \ne n-2$. Then we have 
\begin{equation*}
y = \xJ{y}\xJs{y}s_{n}s_{n-1}s_{n} =
 \begin{cases}
 \xJ{y}ws_{n-1}s_{n}s_{n-1}s_{n} = 
 \xJ{y}ws_{n}s_{n-1}s_{n}s_{n-1} & \text{if $j = n-1$}, \\[2mm]
 \xJ{y}ws_{j}s_{n}s_{n-1}s_{n} = 
 \xJ{y}ws_{n}s_{n-1}s_{n}s_{j} & \text{if $1 \le j \le n-1$},
 \end{cases}
\end{equation*}
which contradicts the assumption that $y \in \WJ$. 
Thus we obtain $j = n-2$, as desired. 
This proves the lemma. 
\end{proof}

Recall that $\ls \in \WJs$ is the longest element of $\WJs$; 
also, recall from Section~\ref{subsec:tech} that $\mcr{\ls s_{n}} = z_{n}s_{n}$. 
%
%
\begin{lem} \label{lem:wk}
In type $B_{n}$, the element $\mcr{\ls s_{n}} = z_{n}s_{n}$ 
is identical to $s_{1}s_{2} \cdots s_{n-1}s_{n}$. Moreover, 
%
%
\begin{equation} \label{eq:sthetab}
\begin{split}
\mcr{s_{\theta}} & = s_{2}s_{3} \cdots s_{n-1}s_{n}
  \overbrace{s_{1}s_{2} \cdots s_{n-2}s_{n-1}s_{n}}^{=z_{n}s_{n}} \\
& = 
\underbrace{s_{2}s_{3} \cdots s_{n-1}s_{1}s_{2} \cdots s_{n-2}}_{=:w_{n} \in \WJs}
\underbrace{s_{n}s_{n-1}s_{n}}_{=s_{\gq}}.
\end{split}
\end{equation}
\end{lem}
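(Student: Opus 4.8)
The plan is to compute both elements inside the realization of $W$ (type $B_{n}$) as signed permutations of an orthonormal basis $\ve_{1},\dots,\ve_{n}$, in which $\alpha_{i}=\ve_{i}-\ve_{i+1}$ for $1\le i\le n-1$, $\alpha_{n}=\ve_{n}$, $\vpi_{n}=\tfrac12(\ve_{1}+\cdots+\ve_{n})$, $\theta=\ve_{1}+\ve_{2}$, $s_{i}$ (for $i<n$) transposes $\ve_{i}$ and $\ve_{i+1}$, $s_{n}$ sends $\ve_{n}$ to $-\ve_{n}$, and $\ls\in\WJs$ sends $\ve_{i}$ to $\ve_{n+1-i}$. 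Since $\vpi_{n}$ is minuscule, $\WJs$ is precisely the stabilizer of $\vpi_{n}$ in $W$, so for $v\in W$ the minimal coset representative $\mcr{v}\in\WJ$ is the unique element of $\WJ$ with $\mcr{v}\vpi_{n}=v\vpi_{n}$; thus for each assertion it is enough to exhibit an element of $\WJ$ realized by the claimed word and sending $\vpi_{n}$ to the correct weight, membership in $\WJ$ being checked via \eqref{eq:mcr} (i.e. positivity of the images of the roots $\ve_{i}-\ve_{j}$, $1\le i<j\le n$, which span $\DJs$).

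For the first claim, put $w:=s_{1}s_{2}\cdots s_{n-1}s_{n}$. A short computation gives $w\colon\ve_{j}\mapsto\ve_{j+1}$ for $1\le j\le n-1$ and $\ve_{n}\mapsto-\ve_{1}$. Hence $w$ sends $\ve_{i}-\ve_{j}$ to $\ve_{i+1}-\ve_{j+1}$ if $j<n$, and to $\ve_{1}+\ve_{i+1}$ if $j=n$, both positive, so $w\in\WJ$; and $w\vpi_{n}=\tfrac12(-\ve_{1}+\ve_{2}+\cdots+\ve_{n})$, which is exactly $\ls s_{n}\vpi_{n}=\ls(\vpi_{n}-\alpha_{n})$. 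Therefore $\mcr{\ls s_{n}}=s_{1}\cdots s_{n-1}s_{n}$, and comparison with the decomposition $\mcr{\ls s_{n}}=z_{n}s_{n}$ fixed in Section~\ref{subsec:tech} yields $z_{n}=s_{1}\cdots s_{n-1}$.

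For the second claim, set $v:=s_{2}s_{3}\cdots s_{n-1}s_{n}$, which fixes $\ve_{1}$, sends $\ve_{j}\mapsto\ve_{j+1}$ for $2\le j\le n-1$, and sends $\ve_{n}\mapsto-\ve_{2}$. Composing with $w$, one finds $vw\colon\ve_{j}\mapsto\ve_{j+2}$ for $1\le j\le n-2$, $\ve_{n-1}\mapsto-\ve_{2}$, $\ve_{n}\mapsto-\ve_{1}$, whence $vw\vpi_{n}=\tfrac12(-\ve_{1}-\ve_{2}+\ve_{3}+\cdots+\ve_{n})=s_{\theta}\vpi_{n}$. Reading off the images of positive roots under $vw$, no root $\ve_{i}-\ve_{j}$ is an inversion (so $vw\in\WJ$), while the inversions are exactly $\ve_{i}+\ve_{n-1}$ and $\ve_{i}+\ve_{n}$ for $1\le i\le n-2$, together with $\ve_{n-1}+\ve_{n}$, $\ve_{n-1}$, and $\ve_{n}$; thus $\ell(vw)=\#\Inv(vw)=2n-1$. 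Since the word $(s_{2}\cdots s_{n-1}s_{n})(s_{1}\cdots s_{n-1}s_{n})$ has $2n-1$ letters, it is a reduced expression for $\mcr{s_{\theta}}=vw$, giving the first line of \eqref{eq:sthetab} (note $s_{1}\cdots s_{n-1}s_{n}=z_{n}s_{n}$). For the second line I would verify the Coxeter-group identity $(s_{2}\cdots s_{n-1}s_{n})(s_{1}\cdots s_{n-1}s_{n})=(s_{2}\cdots s_{n-1}s_{1}\cdots s_{n-2})(s_{n}s_{n-1}s_{n})$: cancelling $s_{2}\cdots s_{n-1}$ on the left reduces it to $s_{n}(s_{1}\cdots s_{n-2})s_{n-1}s_{n}=(s_{1}\cdots s_{n-2})s_{n}s_{n-1}s_{n}$, which holds because $s_{n}$ commutes with each of $s_{1},\dots,s_{n-2}$; finally $w_{n}:=s_{2}\cdots s_{n-1}s_{1}\cdots s_{n-2}\in\WJs$ as all its indices lie in $\J$, and $s_{n}s_{n-1}s_{n}=s_{\gq}$.

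The computations are routine signed-permutation bookkeeping. The only slightly delicate point is making sure that the displayed word for $\mcr{s_{\theta}}$ has minimal length, i.e. $\ell(\mcr{s_{\theta}})=2n-1$; this is exactly what the inversion count above supplies, and that same count simultaneously confirms $vw\in\WJ$.
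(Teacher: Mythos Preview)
Your proof is correct and follows essentially the same strategy as the paper's: identify $\mcr{v}$ by computing $v\vpi_{n}$ and exhibiting an element of $\WJ$ with the same action, then verify the second line of \eqref{eq:sthetab} by commutation relations. The paper carries out the weight computation via successive simple reflections on root-sum expressions and leaves the check that $w_{n}s_{\gq}\in\WJ$ (and that the word is reduced) implicit, whereas you work in the signed-permutation model and make both points explicit through the inversion count; this is a cosmetic difference, and your extra care in confirming $\ell(vw)=2n-1$ is a welcome addition.
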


\begin{proof}
Since $\ls = (s_{1}s_{2} \cdots s_{n-1})(s_{1}s_{2} \cdots s_{n-2}) \cdots (s_{1}s_{2})s_{1}$, 
we can show the equalities
$z_{n}s_{n} = \mcr{\ls s_{n}} = s_{1}s_{2} \cdots s_{n-1}s_{n}$ by direct calculation. 
Let us show \eqref{eq:sthetab}. 
Recall that $\theta = \alpha_{1}+2\alpha_{2}+\cdots+2\alpha_{n}$ 
and $\pair{\vpi_{n}}{\theta^{\vee}} = 1$, and that 
$\gq = \alpha_{n-1}+2\alpha_{n}$ and $\pair{\vpi_{n}}{\gq^{\vee}} = 1$. 
We have $\mcr{s_{\theta}} \vpi_{n} = \vpi_{n} - \theta$, and 
\begin{align*}
w_{n}s_{\gq}\vpi_{n} & = 
  s_{2}s_{3} \cdots s_{n-1}s_{1}s_{2} \cdots s_{n-2} ( \vpi_{n} - \gq ) \\
& = s_{2}s_{3} \cdots s_{n-1} \bigl(\vpi_{n}-(\alpha_{1}+\cdots+\alpha_{n-1}+2\alpha_{n})\bigr) \\
& = \vpi_{n}-(\underbrace{ \alpha_{1}+2\alpha_{2} + \cdots+2\alpha_{n-1}+2\alpha_{n} }_{=\theta});
\end{align*}
from this, 
we see that $w_{n}s_{\gq} \in \WJ$. Also, 
since $\mcr{s_{\theta}} \vpi_{n} = w_{n}s_{\gq}\vpi_{n}$, and since 
$\mcr{s_{\theta}},\,w_{n}s_{\gq} \in \WJ$, we obtain  
$\mcr{s_{\theta}} = w_{n}s_{\gq}$, as desired. 
This proves the lemma. 
\end{proof}

%
\begin{lem} \label{lem:wnsn}
Let $y \in \WJx$, and set $w:=\xJs{y} \in \WJs$. 
Then, $w_{n}s_{n}s_{n-1}s_{n} \ge ws_{n}s_{n-1}s_{n}$, 
where the equality holds if and only if $y \ge \mcr{s_{\theta}} = w_{n}s_{n}s_{n-1}s_{n}$. 
\end{lem}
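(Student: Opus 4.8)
The plan is to follow the proof of Lemma~\ref{lem:zksk} almost verbatim, with $s_{k}$ replaced by $s_{\gq}=s_{n}s_{n-1}s_{n}$, with Lemma~\ref{lem:tiy} replaced by Lemma~\ref{lem:tiw}, and with $\mcr{\ls s_{k}}=z_{k}s_{k}$ replaced by $\mcr{s_{\theta}}=w_{n}s_{n}s_{n-1}s_{n}$. First I record two preparatory facts. A direct check shows that $\ell(vs_{n}s_{n-1}s_{n})=\ell(v)+3$ for \emph{every} $v\in\WJs$ (equivalently, $vs_{n}s_{n-1}s_{n}$ sends each of the three roots in $\Inv(s_{\gq})$ to a negative root, regardless of $v$); this is also implicit in Lemma~\ref{lem:tiw}. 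Moreover $\ls\gq=\theta$ (compare Lemma~\ref{lem:zk}): since $\ls$ is the longest element of $\WJs$ one has $\ls\alpha_{i}=-\alpha_{n-i}$ for $1\le i\le n-1$ and $\ls\alpha_{n}=\alpha_{1}+\cdots+\alpha_{n}$, so $\ls\gq=\ls(\alpha_{n-1}+2\alpha_{n})=\alpha_{1}+2\alpha_{2}+\cdots+2\alpha_{n}=\theta$. Hence $\ls s_{\gq}=s_{\ls\gq}\ls=s_{\theta}\ls$ lies in the coset $s_{\theta}\WJs$, so $\mcr{\ls s_{\gq}}=\mcr{s_{\theta}}=w_{n}s_{n}s_{n-1}s_{n}$ by Lemma~\ref{lem:wk}.

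For the inequality and the ``only if'' part: by Lemma~\ref{lem:tiw}, $y=\xJ{y}\,w\,s_{n}s_{n-1}s_{n}$ with lengths adding, so $ws_{n}s_{n-1}s_{n}$ is a right factor of $y\in\WJ$ with lengths adding; since it ends in $s_{n}$ with $n\notin\J$, no reduced word for $ws_{n}s_{n-1}s_{n}$ ends in a simple reflection from $\J$, whence $ws_{n}s_{n-1}s_{n}\in\WJ$. Now $w\le\ls$ in $\WJs$, and by the length identity appending $s_{n}s_{n-1}s_{n}$ preserves this, so $ws_{n}s_{n-1}s_{n}\le\ls s_{n}s_{n-1}s_{n}$; applying the order-preserving map $\mcr{\,\cdot\,}$ and using $\mcr{ws_{n}s_{n-1}s_{n}}=ws_{n}s_{n-1}s_{n}$ together with $\mcr{\ls s_{\gq}}=w_{n}s_{n}s_{n-1}s_{n}$ gives $w_{n}s_{n}s_{n-1}s_{n}\ge ws_{n}s_{n-1}s_{n}$. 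If equality holds, then $w=w_{n}$ (cancel $s_{\gq}$), so $y=\xJ{y}\,w_{n}\,s_{n}s_{n-1}s_{n}\ge w_{n}s_{n}s_{n-1}s_{n}$, again by the length identity; this is the ``only if'' part.

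For the ``if'' part, assume $y\ge\mcr{s_{\theta}}=w_{n}s_{n}s_{n-1}s_{n}$. By Proposition~\ref{prop:lw} there are $w_{n}s_{n}s_{n-1}s_{n}=y_{0},\,y_{1},\,\ldots,\,y_{p}=y$ in $\WJ$ and $j_{1},\ldots,j_{p}\in I$ with $y_{q}=s_{j_{q}}y_{q-1}$ and $\ell(y_{q})=\ell(y_{q-1})+1$; a length count shows $s_{j_{p}}\cdots s_{j_{1}}$ is reduced, and necessarily $\pair{y_{q-1}\vpi_{n}}{\alpha_{j_{q}}^{\vee}}>0$ for all $q$. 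Exactly as in Lemma~\ref{lem:zksk}, in view of Lemma~\ref{lem:tiw} it suffices to show $s_{j_{p}}\cdots s_{j_{1}}\in\WJ$: then $y=(s_{j_{p}}\cdots s_{j_{1}})\,w_{n}\,s_{n}s_{n-1}s_{n}$ with total length $p+\ell(w_{n})+3=\ell(y)$, so the uniqueness in Lemma~\ref{lem:tiw} forces $\xJs{y}=w_{n}$, i.e. $w=w_{n}$ and $ws_{n}s_{n-1}s_{n}=w_{n}s_{n}s_{n-1}s_{n}$. To prove this, suppose not and pick the smallest $q$ with $u:=s_{j_{q-1}}\cdots s_{j_{1}}\in\WJ$ but $s_{j_{q}}u\notin\WJ$; then $u^{-1}\alpha_{j_{q}}\in\DJs$. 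Using $y_{q-1}\vpi_{n}=u\,\mcr{\ls s_{\gq}}\vpi_{n}=u\,\ls s_{\gq}\vpi_{n}$ and transferring the Weyl group elements across the pairing (via $\pair{v\lambda}{\mu^{\vee}}=\pair{\lambda}{(v^{-1}\mu)^{\vee}}$),
\[
\pair{y_{q-1}\vpi_{n}}{\alpha_{j_{q}}^{\vee}}=\pair{s_{\gq}\vpi_{n}}{\bigl(\ls u^{-1}\alpha_{j_{q}}\bigr)^{\vee}}=\pair{\vpi_{n}}{\bigl(s_{\gq}\ls u^{-1}\alpha_{j_{q}}\bigr)^{\vee}}.
\]
Since $\ls$ negates $\DJs$, $\ls u^{-1}\alpha_{j_{q}}\in-\DJs$; and a direct computation in type $B_{n}$ (reducing to $\pair{\beta}{\gq^{\vee}}\le 0$ for all $\beta\in\DJs$) gives $s_{\gq}(\DJs)\subseteq\Delta^{+}$, so $s_{\gq}\ls u^{-1}\alpha_{j_{q}}\in-\Delta^{+}$. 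Pairing a negative coroot with the dominant weight $\vpi_{n}$ is $\le 0$, contradicting $\pair{y_{q-1}\vpi_{n}}{\alpha_{j_{q}}^{\vee}}>0$.

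The main obstacle will be the ``if'' direction: carrying out the reduction to the uniqueness statement of Lemma~\ref{lem:tiw} by establishing $s_{j_{p}}\cdots s_{j_{1}}\in\WJ$, and assembling the auxiliary facts ($\ell(vs_{\gq})=\ell(v)+3$ on $\WJs$, $\ls\gq=\theta$, $\mcr{\ls s_{\gq}}=\mcr{s_{\theta}}$, and $s_{\gq}(\DJs)\subseteq\Delta^{+}$) that allow the simply-laced computation in Lemma~\ref{lem:zksk} to run unchanged in type $B_{n}$.
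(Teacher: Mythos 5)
Your proof is correct and follows essentially the same route as the paper, which also establishes the equivalence by rerunning the argument of Lemma~\ref{lem:zksk} with $s_{k}$ replaced by $s_{\gq}=s_{n}s_{n-1}s_{n}$; the auxiliary facts you isolate ($\ls\gq=\theta$, $\ell(vs_{\gq})=\ell(v)+3$ for $v\in\WJs$, and $s_{\gq}(\DJs)\subseteq\Delta^{+}$) are exactly the inputs needed for that transfer. The only cosmetic divergence is the initial inequality, which the paper gets from $w\le w_{n}$ inside $(\WJs)^{\J\setminus\{n-2\}}$ (using the ``Moreover'' part of Lemma~\ref{lem:tiw}) plus the Subword Property, whereas you compare with $\ls$ and project by $\mcr{\,\cdot\,}$ --- both are valid, though note that your parenthetical justification that $ws_{\gq}\in\WJ$ should rest on its being a length-additive right factor of $y\in\WJ$, not on the observation that one reduced word of it ends in $s_{n}$.
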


\begin{proof}
Notice that $w_{n}$ is the longest element of $(\WJs)^{\J \setminus \{n-2\}}$. 
Since $w \in (\WJs)^{\J \setminus \{n-2\}}$ by Lemma~\ref{lem:tiw}, 
we have $w_{n} \ge w$. 
Since $\ell(w_{n}s_{n}s_{n-1}s_{n}) = \ell(w_{n}) + \ell(s_{n}s_{n-1}s_{n})$
and $\ell(ws_{n}s_{n-1}s_{n})=\ell(w)+\ell(s_{n}s_{n-1}s_{n})$, 
we deduce by the Subword Property for the Bruhat order 
that $w_{n}s_{n}s_{n-1}s_{n} \ge ws_{n}s_{n-1}s_{n}$. 
Also, we can show by exactly the same argument as for Lemma~\ref{lem:zksk} 
that the equality holds if and only if  $y \ge \mcr{s_{\theta}} = w_{n}s_{n}s_{n-1}s_{n}$. 
This proves the lemma. 
\end{proof}

The following lemma can be shown in exactly the same way as Lemma~\ref{lem:EB}; 
recall from Lemma~\ref{lem:wk} that $\mcr{s_{\theta}} = w_{n}s_{n}s_{n-1}s_{n}$. 
%
%
\begin{lem} \label{lem:EB2}
Let $y \in \WJx$, and set $w:=\yJ{y} \in \WJ$. 
If $y \not\ge \mcr{s_{\theta}}$, or equivalently, 
if $w_{n}s_{n}s_{n-1}s_{n} > ws_{n}s_{n-1}s_{n}$, then 
there exists $\beta \in \Inv(\mcr{s_{\theta}}) \setminus \{\alpha_{n},\,s_{n}\alpha_{n-1}=\gq,\,
s_{n}s_{n-1}\alpha_{n} \}$ such that $y \edge{\beta} ys_{\beta}$ is a Bruhat edge. 
\end{lem}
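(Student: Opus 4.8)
The plan is to run the argument of Lemma~\ref{lem:EB} essentially verbatim, with the single reflection $s_{k}$ there replaced by $s_{\gq} = s_{n}s_{n-1}s_{n}$, the element $z_{k}s_{k}$ replaced by $\mcr{s_{\theta}} = w_{n}s_{\gq}$ (Lemma~\ref{lem:wk}), the factor $\yJs{y}$ replaced by $\xJs{y} \in \WJs$ from Lemma~\ref{lem:tiw}, and $\yJ{y}$ replaced by $\xJ{y} \in \WJ$.

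First I would invoke Lemma~\ref{lem:wnsn} to rephrase the hypothesis $y \not\ge \mcr{s_{\theta}}$ as the \emph{strict} inequality $\mcr{s_{\theta}} = w_{n}s_{\gq} > \xJs{y}s_{\gq}$, with both elements lying in $\WJ$ (for $\xJs{y}s_{\gq}$ this is checked exactly as in Remark~\ref{rem:tiy}, using $\xJs{y} \in (\WJs)^{\J \setminus \{n-2\}}$ from Lemma~\ref{lem:tiw}). By Proposition~\ref{prop:lw} there are then a chain $\xJs{y}s_{\gq} = y_{0}, y_{1}, \dots, y_{p} = w_{n}s_{\gq}$ in $\WJ$ and $j_{1}, \dots, j_{p} \in I$ with $y_{q} = s_{j_{q}}y_{q-1}$, $\ell(y_{q}) = \ell(y_{q-1}) + 1$, $p \ge 1$. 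Next I would show $j_{1}, \dots, j_{p} \in \J$ by the weight computation of Lemma~\ref{lem:EB}: since $\vpi_{n}$ is minuscule the chain telescopes to $w_{n}s_{\gq}\vpi_{n} = \xJs{y}s_{\gq}\vpi_{n} - \sum_{q=1}^{p}\alpha_{j_{q}}$, i.e.\ $\sum_{q}\alpha_{j_{q}} = \theta - \xJs{y}\gq$ (using $\mcr{s_{\theta}}\vpi_{n} = \vpi_{n}-\theta$ and $s_{\gq}\vpi_{n}=\vpi_{n}-\gq$); since $\theta$ and $\xJs{y}\gq$ both have $\alpha_{n}$-coefficient $2$ — elements of $\WJs = W_{\{1,\dots,n-1\}}$ never alter the $\alpha_{n}$-coefficient of a root — the root sum $\sum_{q}\alpha_{j_{q}}$ has $\alpha_{n}$-coefficient $0$ and lies in $\sum_{i\in\J}\BZ\alpha_{i}$, so every $j_{q}\in\J$. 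As in Lemma~\ref{lem:EB}, $\beta := y_{0}^{-1}\alpha_{j_{1}} = (\xJs{y}s_{\gq})^{-1}\alpha_{j_{1}}$ is then a positive root in $\Inv(\mcr{s_{\theta}})$, because $\ell(s_{j_{1}}y_{0}) = \ell(y_{0})+1$ and $\mcr{s_{\theta}}\beta = s_{j_{p}}\cdots s_{j_{1}}\alpha_{j_{1}} \in \Delta^{-}$ (reduced word).

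Finally I would verify that $y \edge{\beta} ys_{\beta}$ is a Bruhat edge avoiding the three excluded roots. Writing $y = \xJ{y}\,\xJs{y}s_{\gq}$ (Lemma~\ref{lem:tiw}) gives $ys_{\beta} = \xJ{y}s_{j_{1}}\xJs{y}s_{\gq}$, so $\ell(ys_{\beta}) \le \ell(y)+1$, while $y\beta = \xJ{y}\alpha_{j_{1}} \in \Delta^{+}$ by \eqref{eq:mcr} (as $\xJ{y}\in\WJ$, $j_{1}\in\J$); hence $\ell(ys_{\beta})=\ell(y)+1$, a Bruhat edge. The same inequality says $\beta \notin \Inv(y)$; but from $y = \xJ{y}\,\xJs{y}s_{\gq}$ with lengths adding we have $\Inv(s_{\gq}) \subseteq \Inv(y)$, and $\Inv(s_{\gq}) = \{\alpha_{n},\ s_{n}\alpha_{n-1}=\gq,\ s_{n}s_{n-1}\alpha_{n}\}$ — precisely the excluded set — so $\beta$ is none of these. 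The main point requiring care, and the only step that is not a literal transcription of Lemma~\ref{lem:EB}, is this last identification: one must use the explicit form $\mcr{s_{\theta}} = w_{n}s_{\gq}$ (Lemma~\ref{lem:wk}) and the $B_{n}$-factorization of Lemma~\ref{lem:tiw} with its length-additivity, so that the three ``bad'' labels $\alpha_{n},\gq,s_{n}s_{n-1}\alpha_{n}$ are accounted for as $\Inv(s_{\gq}) \subseteq \Inv(y)$. I do not anticipate any substantive obstacle beyond this bookkeeping around the length-three tail $s_{\gq}$ in place of the single reflection $s_{k}$.
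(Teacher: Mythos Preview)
Your proposal is correct and matches the paper's approach: the paper gives no proof for Lemma~\ref{lem:EB2}, merely stating that it ``can be shown in exactly the same way as Lemma~\ref{lem:EB}'' after recalling $\mcr{s_{\theta}} = w_{n}s_{n}s_{n-1}s_{n}$, and your write-up carries out precisely that transcription with the factorization of Lemma~\ref{lem:tiw} in place of Lemma~\ref{lem:tiy}. Your handling of the three excluded roots via $\Inv(s_{\gq}) = \{\alpha_{n},\,\gq,\,s_{n}s_{n-1}\alpha_{n}\} \subseteq \Inv(y)$ is the natural generalization of the single exclusion $\beta \ne \alpha_{k}$ in Lemma~\ref{lem:EB}, and is the only point requiring any thought beyond literal substitution.
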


Using Lemma~\ref{lem:tiw}, 
we can show the following 
by the same argument as for Lemma~\ref{lem:invc}. 
%
%
\begin{lem} \label{lem:invy}
Let $y \in \WJ$ and $\beta \in \DJp$ be such that 
$y \edge{\beta} ys_{\beta}$ is a Bruhat edge in $\QBG(W)$. 
If $y \not\ge \mcr{s_{\theta}}$ and $ys_{\beta} \ge \mcr{s_{\theta}}$, 
then $\beta \in \Inv(\mcr{ s_{\theta} })$. 
\end{lem}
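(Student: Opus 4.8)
The plan is to transcribe, almost verbatim, the proof of Lemma~\ref{lem:invc}, with $z_{k}s_{k}$ replaced throughout by $\mcr{s_{\theta}}$, the rightmost factor $s_{k}$ replaced by $s_{\gq}=s_{n}s_{n-1}s_{n}$ (see \eqref{eq:gamq}), Lemma~\ref{lem:tiy} replaced by Lemma~\ref{lem:tiw}, and Lemma~\ref{lem:zksk} replaced by Lemma~\ref{lem:wnsn}. First I would record the factorization that drives the argument. By Lemma~\ref{lem:B}, $ys_{\beta}\in\WJ$. By Lemma~\ref{lem:wk}, $\mcr{s_{\theta}}=w_{n}s_{n}s_{n-1}s_{n}$ with additive lengths, so $\mcr{s_{\theta}}\ge s_{n}s_{n-1}s_{n}$ by the Subword Property; combined with the hypothesis $ys_{\beta}\ge\mcr{s_{\theta}}$ this gives $ys_{\beta}\ge s_{n}s_{n-1}s_{n}$, i.e.\ $ys_{\beta}\in\WJx$. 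Applying Lemma~\ref{lem:tiw} to $ys_{\beta}$ we write $ys_{\beta}=\xJ{ys_{\beta}}\xJs{ys_{\beta}}s_{n}s_{n-1}s_{n}$ with $\ell(ys_{\beta})=\ell(\xJ{ys_{\beta}})+\ell(\xJs{ys_{\beta}})+3$, and since $ys_{\beta}\ge\mcr{s_{\theta}}$, Lemma~\ref{lem:wnsn} forces $\xJs{ys_{\beta}}=w_{n}$. Hence $ys_{\beta}=\xJ{ys_{\beta}}\,\mcr{s_{\theta}}$ with $\ell(ys_{\beta})=\ell(\xJ{ys_{\beta}})+\ell(\mcr{s_{\theta}})$.

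Next comes the core step. Since $y<ys_{\beta}$ and $\ell(ys_{\beta})=\ell(y)+1$ (the edge $y\edge{\beta}ys_{\beta}$ is Bruhat), the Subword Property, applied to the reduced expression of $ys_{\beta}$ obtained by concatenating a reduced expression of $\xJ{ys_{\beta}}$ with a reduced expression $s_{j_{1}}s_{j_{2}}\cdots s_{j_{m}}$ of $\mcr{s_{\theta}}$ (here $m=\ell(\mcr{s_{\theta}})$), yields a reduced expression of $y$ by deleting exactly one letter. If that letter lies in the $\xJ{ys_{\beta}}$-block, then $y=w'\mcr{s_{\theta}}$ with $\ell(y)=\ell(w')+\ell(\mcr{s_{\theta}})$, where $w'$ is $\xJ{ys_{\beta}}$ with one letter removed, so $y\ge\mcr{s_{\theta}}$ by the Subword Property, contradicting $y\not\ge\mcr{s_{\theta}}$. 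Hence the deleted letter lies in the $\mcr{s_{\theta}}$-block, say at position $q$, so $y=\xJ{ys_{\beta}}v$ with $v=s_{j_{1}}\cdots\widehat{s_{j_{q}}}\cdots s_{j_{m}}$; then $s_{\beta}=y^{-1}(ys_{\beta})=v^{-1}\mcr{s_{\theta}}=(s_{j_{m}}\cdots s_{j_{q+1}})s_{j_{q}}(s_{j_{m}}\cdots s_{j_{q+1}})^{-1}$, so $\beta=s_{j_{m}}\cdots s_{j_{q+1}}\alpha_{j_{q}}$ and $\mcr{s_{\theta}}\beta=-s_{j_{1}}\cdots s_{j_{q-1}}\alpha_{j_{q}}\in\Delta^{-}$; since $s_{j_{1}}\cdots s_{j_{q}}$ and $s_{j_{q}}\cdots s_{j_{m}}$ are reduced (a prefix and a suffix of a reduced word), one gets $\beta\in\Delta^{+}$ and $\beta\in\Inv(\mcr{s_{\theta}})$, as desired. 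Note that, unlike in Lemma~\ref{lem:invc}, the deleted letter is allowed to sit anywhere in the $\mcr{s_{\theta}}$-block — the extreme case $q=m$ simply gives $\beta=\alpha_{n}\in\Inv(\mcr{s_{\theta}})$ — so no root has to be excluded and the step invoking membership in $\WJe$ that appears in the proof of Lemma~\ref{lem:invc} is unnecessary here.

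The only delicate point, and hence the main obstacle such as it is, is the bookkeeping on reduced expressions: one must ensure that every length decomposition used is additive, so that the relevant concatenations stay reduced and the position of the single deleted letter is unambiguous. This is exactly what Lemmas~\ref{lem:tiw} and \ref{lem:wnsn} (together with the explicit reduced form of $\mcr{s_{\theta}}$ in Lemma~\ref{lem:wk}) supply; granting those, the assertion follows by the same argument as for Lemma~\ref{lem:invc}.
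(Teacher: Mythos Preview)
Your proof is correct and follows exactly the approach the paper indicates: the paper itself does not give a full proof of this lemma but simply states that it ``can be shown by the same argument as for Lemma~\ref{lem:invc}'' using Lemma~\ref{lem:tiw}, and you have carried this out in detail with the appropriate substitutions (Lemma~\ref{lem:tiw} for Lemma~\ref{lem:tiy}, Lemma~\ref{lem:wnsn} for Lemma~\ref{lem:zksk}, $\mcr{s_{\theta}}=w_{n}s_{n}s_{n-1}s_{n}$ for $z_{k}s_{k}$). Your observation that no root needs to be excluded from $\Inv(\mcr{s_{\theta}})$ here, in contrast with the exclusion of $\alpha_{k}$ in Lemma~\ref{lem:invc}, is also correct and matches the stated conclusion.
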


The proof of the next lemma is similar to 
that of Lemma~\ref{lem:yJ}; 
remark that $\beta \ne \alpha_{n}$, $s_{n}\alpha_{n-1} (= \gq)$, 
$s_{n}s_{n-1}\alpha_{n}$. 
%
%
\begin{lem} \label{lem:xJ}
Let $y \in \WJx$ and $\beta \in \Inv(\mcr{ s_{\theta} })$ be such that 
$y \edge{\beta} ys_{\beta}$ is a Bruhat edge in $\QBG(W)$; 
note that $ys_{\beta} \in \WJx$ by Lemma~\ref{lem:B}.
Then, $\xJ{y}=\xJ{ys_{\beta}}$. 
\end{lem}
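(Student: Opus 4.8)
The plan is to mimic the proof of Lemma~\ref{lem:yJ} with the parabolic factorization $y = \xJ{y}\xJs{y}s_{n}s_{n-1}s_{n}$ from Lemma~\ref{lem:tiw} in place of the factorization $y = \yJ{y}\yJs{y}s_{k}$ from Lemma~\ref{lem:tiy}. First I would record the decompositions $y = \xJ{y}\xJs{y}s_{n}s_{n-1}s_{n}$ and $ys_{\beta} = \xJ{ys_{\beta}}\xJs{ys_{\beta}}s_{n}s_{n-1}s_{n}$ guaranteed by Lemma~\ref{lem:tiw} (valid since $ys_{\beta}\in\WJx$ by Lemma~\ref{lem:B}), and note that since $\beta\in\Inv(\mcr{s_{\theta}})$, by Lemma~\ref{lem:wk} we have $\mcr{s_\theta}=w_n s_n s_{n-1}s_n$ with $w_n\in(\WJs)^{\J\setminus\{n-2\}}$, so any $\beta\in\Inv(\mcr{s_\theta})$ can be written (using the reduced expression $\mcr{s_\theta}=s_{i_a}\cdots s_{i_1}$ read off from $w_n s_n s_{n-1}s_n$) as $\beta = s_{i_1}\cdots s_{i_{b-1}}\alpha_{i_b}$ for some $b$; in particular one checks $\beta\notin\{\alpha_n,\,s_n\alpha_{n-1},\,s_ns_{n-1}\alpha_n\}$ so that $s_\beta$ does not disturb the rightmost block $s_n s_{n-1}s_n$. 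The cleanest way to phrase this: $\beta = (s_n s_{n-1}s_n)\,w^{-1}\alpha_j$ for some $w\in\WJs$ and $j\in\J\setminus\{n-2\}$ — equivalently $s_\beta = (s_ns_{n-1}s_n)\,w s_j w^{-1}\,(s_ns_{n-1}s_n)$ — which is the analogue of the identity ``$\beta = s_k z\alpha_j$'' used in the proof of Lemma~\ref{lem:yJ}.

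Granting that description of $\beta$, the key computation is the weight argument exactly as in Lemma~\ref{lem:yJ}. One has
\begin{equation*}
ys_{\beta} = \xJ{y}\,\xJs{y}\,s_n s_{n-1}s_n\,s_{\beta} = \xJ{y}\,\xJs{y}\,w s_j w^{-1}\,s_n s_{n-1}s_n,
\end{equation*}
and since $\xJs{y}\,w s_j w^{-1}\in\WJs$ (as $\WJs$ is a group containing $\xJs{y}$, $w$, $s_j$), applying both sides to $\vpi_n$ and using that $s_n s_{n-1}s_n$ and everything in $\WJs$ stabilizes $\vpi_n$ up to the $\WJs$-orbit — more precisely that $(s_n s_{n-1}s_n)\vpi_n = \vpi_n - \gq$ is the only nontrivial contribution, so that $ys_\beta s_n s_{n-1}s_n \vpi_n \in \WJs\text{-orbit}$ — I would deduce
\begin{equation*}
\xJ{ys_{\beta}}\vpi_n = \xJ{ys_{\beta}}\,\xJs{ys_{\beta}}\vpi_n = (ys_{\beta})(s_n s_{n-1}s_n)\vpi_n = \xJ{y}\bigl(\underbrace{\xJs{y}\,w s_j w^{-1}}_{\in\WJs}\bigr)\vpi_n = \xJ{y}\vpi_n.
\end{equation*}
Since $\xJ{ys_{\beta}},\,\xJ{y}\in\WJ$ and $\WJ$ is a full set of orbit representatives for $W/\WJs = W\vpi_n$, the equality $\xJ{ys_{\beta}}\vpi_n = \xJ{y}\vpi_n$ forces $\xJ{ys_{\beta}} = \xJ{y}$, which is the assertion.

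The main obstacle I anticipate is the bookkeeping verifying that $\beta\in\Inv(\mcr{s_\theta})$ forces $\beta$ to lie ``to the right of'' the block $s_n s_{n-1}s_n$ in a suitable reduced word — i.e.\ that $s_\beta = (s_ns_{n-1}s_n)\,u\,(s_ns_{n-1}s_n)$ for some $u\in\WJs$, equivalently that $\beta\notin\{\alpha_n,\,s_n\alpha_{n-1},\,s_ns_{n-1}\alpha_n\}$ (which is exactly the triple excluded in Lemma~\ref{lem:EB2}, so the alignment with that lemma's statement is reassuring). This uses the explicit reduced expression $\mcr{s_\theta} = w_n s_n s_{n-1}s_n$ with $w_n\in\WJs$: the roots $\beta = s_{i_1}\cdots s_{i_{b-1}}\alpha_{i_b}$ obtained from positions $i_b$ lying in the $w_n$-part are precisely those of the form $(s_ns_{n-1}s_n)(\text{root of }\WJs)$, while positions in the $s_n s_{n-1}s_n$-part give the three excluded roots — and since $\beta$ is the label of a \emph{Bruhat} edge $y\edge{\beta}ys_\beta$ with $y\in\WJx$, one rules out those three by the length count (as already done inside the proof of Lemma~\ref{lem:EB2}). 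Once that structural fact is in hand, the orbit/weight argument is immediate, so I expect the proof to be short modulo this combinatorial normalization of $\beta$; I would simply write ``by the same argument as for Lemma~\ref{lem:yJ}'' after establishing the needed form of $\beta$.
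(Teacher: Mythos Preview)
Your proposal is correct and follows exactly the approach the paper intends: the paper simply states that the proof is ``similar to that of Lemma~\ref{lem:yJ}; remark that $\beta \ne \alpha_{n}$, $s_{n}\alpha_{n-1}\,(=\gq)$, $s_{n}s_{n-1}\alpha_{n}$,'' which is precisely the structural fact you isolate and justify before running the weight/orbit argument. Your observation that the Bruhat-edge hypothesis together with $y\in\WJx$ rules out those three roots (via the length count from the factorization $y=\xJ{y}\xJs{y}s_{n}s_{n-1}s_{n}$) is exactly the content of the paper's remark.
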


%
\begin{lem} \label{lem:gesgq}
Let $y \in \WJe$. 
If $y \not\ge s_{n}s_{n-1}s_{n}$, then 
$y = s_{p}s_{p+1} \cdots s_{n-1}s_{n}$ for some $1 \le p \le n$. 
\end{lem}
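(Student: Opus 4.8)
We work in type $B_n$ with $k=n$, so $\J = I\setminus\{n\}$ and $\WJs = W_{\{1,\dots,n-1\}}$ is of type $A_{n-1}$. The claim is that if $y\in\WJe$ satisfies $y\not\ge s_ns_{n-1}s_n$, then $y = s_ps_{p+1}\cdots s_{n-1}s_n$ for some $1\le p\le n$. The plan is to use the structural results already established: every $y\in\WJe$ has, by Lemma~\ref{lem:tiy}, a unique factorization $y = \yJ{y}\,\yJs{y}\,s_n$ with $\yJ{y}\in\WJ$, $\yJs{y}\in\WJs$, and lengths adding; moreover every element of $\WJ$ is fully commutative (cf.\ \cite[Proposition 11.1.1]{G}), so reduced words for elements of $\WJ$ are essentially unique up to commutations.

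**Main argument.** First I would observe that the elements $s_ps_{p+1}\cdots s_{n-1}s_n$ for $1\le p\le n$ are precisely the elements of $\WJ$ of the form (shortest representatives ending in $s_n$ with a ``straight'' chain). More precisely, I claim that an element $y\in\WJe$ satisfies $y\ge s_ns_{n-1}s_n$ unless $y$ is one of these; this is essentially the contrapositive to what we want. For the forward direction, recall from the minuscule setting that $\WJ$ is the quotient $W/\WJs$ and elements are indexed by the weights $w\vpi_n$, which in type $B_n$ are exactly $\pm\ve_1,\dots,\pm\ve_n$ (in the standard coordinates). The element $s_ps_{p+1}\cdots s_{n-1}s_n$ sends $\vpi_n = \ve_n$ (up to scaling) to $-\ve_p$; conversely, for each such target weight there is a unique minimal representative, and writing it out gives exactly $s_ps_{p+1}\cdots s_{n-1}s_n$. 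So the point is to identify, among all $y\in\WJe$, which ones do \emph{not} dominate $s_ns_{n-1}s_n = s_{s_n\alpha_{n-1}}$ composed appropriately — equivalently, whose image $y\vpi_n$ does not ``descend far enough.''

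**Key steps.** The cleanest route: (1) Use Proposition~\ref{prop:lw}, the restriction of Bruhat order to $\WJ$ equals left weak order, so $y\ge s_ns_{n-1}s_n$ in $\WJ$ iff there is a chain $s_ns_{n-1}s_n = y_0 \to y_1 \to \cdots \to y_p = y$ with each step left multiplication by a simple reflection raising length. (2) Suppose $y\not\ge s_ns_{n-1}s_n$; since $y\ne e$, any reduced expression of $y$ ends in $s_n$ (as $y\in\WJe$). Consider the penultimate letter: if it were $s_{n-1}$, then by full commutativity $y$ would have a reduced word ending $\cdots s_{n-1}s_n$, i.e.\ $y \ge s_{n-1}s_n$; pushing one more step, one checks directly that then $y\ge s_ns_{n-1}s_n$ too (because $s_ns_{n-1}s_n$ has the reduced word $s_ns_{n-1}s_n$, and one can realize it as a subword unless $y$ is too short). (3) So instead the letter before the final $s_n$ must be some $s_j$ with $j\le n-2$ or there is no such letter; iterating, one shows the reduced word of $y$ is forced to be a ``path'' $s_ps_{p+1}\cdots s_{n-1}s_n$: each letter $s_j$ sitting to the left of the chain must satisfy $j$ = (one less than the current leftmost index), since any gap or repetition either breaks reducedness/full-commutativity or produces the forbidden subword $s_ns_{n-1}s_n$. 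A clean way to organize (3) is by induction on $\ell(y)$: write $y = s_jy'$ with $\ell(y')=\ell(y)-1$ and $y'\in\WJe$; argue $y'\not\ge s_ns_{n-1}s_n$ as well (if $y'\ge s_ns_{n-1}s_n$ then $y=s_jy'\ge s_ns_{n-1}s_n$ unless $s_j$ ``overlaps'' — handle that case by hand using $j\ge 2$ and the explicit word $s_1s_2\cdots s_{n-1}s_n$ which is the unique one not dominating $s_ns_{n-1}s_n$ among length-$n$ elements), so by induction $y' = s_{p}s_{p+1}\cdots s_{n-1}s_n$, and then left-multiplying by $s_j$ and demanding $y\in\WJ$ forces $j = p-1$.

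**Main obstacle.** The delicate point is step (3)/the inductive step: controlling exactly when $y = s_jy'$ with $y'$ of the allowed straight-chain form still fails to dominate $s_ns_{n-1}s_n$. One must rule out $j$ not adjacent to the chain's top (which would either make the product non-reduced, or create a disconnected reduced word contradicting $y\in\WJ$ via the coset-representative characterization \eqref{eq:mcr}) and rule out $j$ equal to an interior index of the chain (non-reduced or violates full commutativity). I expect this to reduce to a short finite check using the Dynkin diagram of $B_n$ and the explicit form of $\mcr{s_\theta} = s_2s_3\cdots s_{n-1}s_1s_2\cdots s_{n-2}\,s_ns_{n-1}s_n$ from Lemma~\ref{lem:wk}, together with the Subword Property; once one knows $y\ge s_ns_{n-1}s_n$ is equivalent to $s_ns_{n-1}s_n$ appearing as a subword of some (hence every, by full commutativity) reduced word of $y$, the combinatorics becomes transparent.
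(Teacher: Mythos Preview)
Your inductive strategy in step~(3) is essentially correct and would prove the lemma, but the proposal as written is muddled in two places. First, step~(2) is wrong as stated: you claim the letter before the final $s_n$ must be $s_j$ with $j\le n-2$, yet $y=s_{n-1}s_n$ (which is the case $p=n-1$ of the conclusion) has penultimate letter $s_{n-1}$. Second, the hedge ``unless $s_j$ overlaps'' in step~(3) is unnecessary: since $y=s_jy'$ with $\ell(y)=\ell(y')+1$ we have $y>y'$ in Bruhat order, so $y'\ge s_ns_{n-1}s_n$ would give $y\ge s_ns_{n-1}s_n$ immediately. What actually makes the induction go through is the computation (which you only gesture at) that for $y'=s_p\cdots s_{n-1}s_n$, the only $j$ with $\langle y'\vpi_n,\alpha_j^\vee\rangle>0$ are $j=p-1$ and $j=n$; in the latter case $s_n s_p\cdots s_{n-1}s_n$ visibly contains $s_ns_{n-1}s_n$ as a subword (since $p<n$), contradicting $y\not\ge s_ns_{n-1}s_n$.

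The paper takes a different and somewhat cleaner route: it uses the factorization $y=\yJ{y}\,\yJs{y}\,s_n$ from Lemma~\ref{lem:tiy} directly. It first argues (as in Lemma~\ref{lem:tiw}) that $\yJs{y}$ must lie in $(\WJs)^{\J\setminus\{n-1\}}$, hence equals $s_p\cdots s_{n-1}$ for some $p$ (or $e$). It then shows $\yJ{y}=e$: if not, any reduced word of $\yJ{y}\in\WJe$ ends in $s_n$, so $y=\yJ{y}\,s_p\cdots s_{n-1}s_n$ has $s_ns_{n-1}s_n$ as a subword, giving $y\ge s_ns_{n-1}s_n$ by the Subword Property. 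This avoids induction and reuses the structural lemmas already in place; your approach is more elementary but requires the case analysis above to be made precise.
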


\begin{proof}
Recall from Lemma~\ref{lem:tiy} that 
$y = \yJ{y}\yJs{y}s_{n}$, with $\yJ{y} \in \WJ$ and $\yJ{y} \in \WJs$. 
Assume that $\yJs{y} = e$, and hence $y = \yJ{y}s_{n}$. 
Suppose, for a contradiction, that $\yJ{y} \in \WJe$. 
Since the rightmost simple reflection of any reduced expression of $\yJ{y}$ must be $s_{n}$, 
it follows that $\ell(y) < \ell(\yJ{y}) + \ell(s_{n})$, which is a contradiction. 
Thus we obtain $\yJ{y} = e$, and hence $y = s_{n}$. 

Assume that $\yJs{y} \ne e$. 
By exactly the same argument as for Lemma~\ref{lem:tiw}, we deduce that 
$\yJs{y} \in \WJs$ is the minimal coset representative for a coset in 
$\WJs/W_{\J \setminus \{n-1\}}$. Hence we have 
$\yJs{y} = s_{p}s_{p+1} \cdots s_{n-1}$ for some $1 \le p \le n-1$. 
Suppose, for a contradiction, that $\yJ{y} \in \WJe$. 
Since the rightmost simple reflection of 
any reduced expression of $\yJ{y}$ must be $s_{n}$, 
it follows from the Subword Property for the Bruhat order that 
$y = \yJ{y}s_{p}s_{p+1} \cdots s_{n-1}s_{n} \ge s_{n}s_{n-1}s_{n}$, 
which contradicts the assumption. 
Therefore, we obtain $\yJ{y} = e$, and hence 
$y = s_{p}s_{p+1} \cdots s_{n-1}s_{n}$. This proves the lemma. 
\end{proof}
%
%
\begin{lem} \label{lem:invx}
Let $y \in \WJ$ and $\beta \in \DJp$ be such that 
$y \edge{\beta} ys_{\beta}$ is a Bruhat edge in $\QBG(W)$. 
If $y \not\ge s_{n}s_{n-1}s_{n}$ and $ys_{\beta} \ge s_{n}s_{n-1}s_{n}$, 
then $\beta = s_{n}s_{n-1}\alpha_{n}$. 
\end{lem}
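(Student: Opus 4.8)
The plan is to combine the explicit description of the ``bad'' elements $y$ furnished by Lemma~\ref{lem:gesgq} with the fact (Proposition~\ref{prop:lw}) that on $\WJ$ the Bruhat order coincides with the left weak order, thereby reducing the statement to a short root computation in the rank-two subsystem $\langle \alpha_{n-1},\alpha_{n}\rangle$ of type $B_{2}$.

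First I would note that $y\ne e$: since $ys_{\beta}\ge s_{n}s_{n-1}s_{n}$ we have $\ell(ys_{\beta})\ge 3$, so $\ell(y)=\ell(ys_{\beta})-1\ge 2$. Thus $y\in\WJe$, and as $y\not\ge s_{n}s_{n-1}s_{n}$, Lemma~\ref{lem:gesgq} gives $y=s_{p}s_{p+1}\cdots s_{n-1}s_{n}$ for some $1\le p\le n$. A straightforward downward induction (using that $\vpi_{n}$ is $\WJs$-invariant and $s_{i}\alpha_{i+1}=\alpha_{i}+\alpha_{i+1}$ for all $1\le i\le n-1$) then yields $y\vpi_{n}=\vpi_{n}-(\alpha_{p}+\alpha_{p+1}+\cdots+\alpha_{n})$. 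Next, by Lemma~\ref{lem:B} we have $ys_{\beta}\in\WJ$, and since $y\edge{\beta}ys_{\beta}$ is a Bruhat edge, $ys_{\beta}$ covers $y$ in $\WJ$; by Proposition~\ref{prop:lw} this is a left weak cover, so $ys_{\beta}=s_{j}y$ for some $j\in I$ with $\ell(s_{j}y)=\ell(y)+1$. Because $\vpi_{n}$ is minuscule and $s_{j}y\in\WJ\setminus\{y\}$, Lemma~\ref{lem:WJ} forces $\pair{y\vpi_{n}}{\alpha_{j}^{\vee}}=1$ and $\beta=y^{-1}\alpha_{j}\in\DJp$. I would also record here the elementary consequence (again from Lemma~\ref{lem:WJ}, Proposition~\ref{prop:lw}, and minuscularity) that every left weak cover in $\WJ$ lowers the $\vpi_{n}$-weight by a single simple root, whence $u\le w$ in $\WJ$ implies $u\vpi_{n}-w\vpi_{n}\in\sum_{i\in I}\BZ_{\ge 0}\alpha_{i}$.

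Now I would run the finite case analysis: from the formula for $y\vpi_{n}$ and the Cartan matrix of $B_{n}$ (with $\pair{\alpha_{n-1}}{\alpha_{n}^{\vee}}=-2$, $\pair{\alpha_{n}}{\alpha_{n-1}^{\vee}}=-1$), a direct computation of $\pair{y\vpi_{n}}{\alpha_{j}^{\vee}}$ for each $j$ shows that this pairing equals $1$ \emph{only} for $j=p-1$ (when $p\ge 2$), for $j=n$ (when $p\le n-1$), and for $j=n-1$ (when $p=n$, which coincides with $j=p-1$). In the cases $j=p-1$ and ($p=n$, $j=n-1$) one gets $ys_{\beta}=s_{j}y=s_{p-1}s_{p}\cdots s_{n}$, respectively $s_{n-1}s_{n}$, i.e.\ again an element of the form $s_{q}s_{q+1}\cdots s_{n}$ with $1\le q\le n$; for such an element $(s_{n}s_{n-1}s_{n})\vpi_{n}-(s_{q}\cdots s_{n})\vpi_{n}=\alpha_{q}+\cdots+\alpha_{n-2}-\alpha_{n}$ has a strictly negative $\alpha_{n}$-coefficient, so by the weight criterion $ys_{\beta}\not\ge s_{n}s_{n-1}s_{n}$, contradicting the hypothesis. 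Therefore $j=n$ and $p\le n-1$, whence $\beta=y^{-1}\alpha_{n}=s_{n}s_{n-1}\cdots s_{p}\alpha_{n}$; since $s_{i}\alpha_{n}=\alpha_{n}$ for $i\le n-2$, $s_{n-1}\alpha_{n}=\alpha_{n-1}+\alpha_{n}$, and $s_{n}(\alpha_{n-1}+\alpha_{n})=\alpha_{n-1}+\alpha_{n}$ (as $\pair{\alpha_{n-1}+\alpha_{n}}{\alpha_{n}^{\vee}}=0$), this collapses to $\beta=\alpha_{n-1}+\alpha_{n}=s_{n}s_{n-1}\alpha_{n}$, as desired.

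The main obstacle is the bookkeeping in the case analysis, namely checking that among all left weak ascents $j$ of $y=s_{p}\cdots s_{n}$ inside $\WJ$, only $j=n$ (for $p\le n-1$) can produce an element $ys_{\beta}$ exceeding $s_{n}s_{n-1}s_{n}$; this is precisely where Lemma~\ref{lem:gesgq} (equivalently, the $\alpha_{n}$-coefficient/weight criterion) does the real work, so stating and applying that criterion cleanly is the key step. Everything else is a routine computation in the dihedral subgroup $\langle s_{n-1},s_{n}\rangle$, and I expect the write-up to be short once the explicit form of $y$ is in hand.
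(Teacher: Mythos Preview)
Your proof is correct and follows essentially the same strategy as the paper: use Lemma~\ref{lem:gesgq} to pin down $y=s_{p}\cdots s_{n}$, then Proposition~\ref{prop:lw} to write $ys_{\beta}=s_{j}y$ for some $j\in I$, and finally determine $j=n$ and compute $\beta$. The only genuine difference is how you conclude $j=n$. The paper does this in one line via the Subword Property: since $s_{j}s_{p}s_{p+1}\cdots s_{n-1}s_{n}$ is reduced and $s_{n}$ occurs exactly once in $s_{p}\cdots s_{n}$, the relation $ys_{\beta}\ge s_{n}s_{n-1}s_{n}$ forces the extra letter $s_{j}$ to supply a second $s_{n}$, whence $j=n$ and $s_{\beta}=y^{-1}s_{n}y=s_{n}s_{n-1}s_{n}s_{n-1}s_{n}$. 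Your route instead enumerates the left ascents $j$ of $y$ in $\WJ$ via the pairing $\pair{y\vpi_{n}}{\alpha_{j}^{\vee}}$ and rules out the alternatives using the root-cone criterion $u\le w\Rightarrow u\vpi_{n}-w\vpi_{n}\in\sum_{i}\BZ_{\ge 0}\alpha_{i}$. This is more computational but perfectly valid, and it has the mild advantage of being self-contained (no appeal to the Subword Property). The paper's argument is shorter; yours makes the case analysis explicit.
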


\begin{proof}
Notice that $y \ne e$. By Lemma~\ref{lem:gesgq}, 
$y = s_{p}s_{p+1} \cdots s_{n-1}s_{n}$ for some $1 \le p \le n$. 
Since $ys_{\beta} \ge y$ and $\ell(ys_{\beta})=\ell(y)+1$, 
it follows from Proposition~\ref{prop:lw} that 
there exists $j \in I$ such that 
$ys_{\beta} = s_{j}y = s_{j}s_{p}s_{p+1} \cdots s_{n-1}s_{n}$; 
note that this is a reduced expression of $ys_{\beta}$. 
Since $ys_{\beta} \ge s_{n}s_{n-1}s_{n}$, 
we deduce that $j=n$. Thus we obtain 
$s_{\beta} = s_{n}s_{n-1}s_{n}s_{n-1}s_{n}$, and hence 
$\beta = s_{n}s_{n-1}\alpha_{n}$. This proves the lemma. 
\end{proof}
%
%
\subsection{Quantum edges in the quantum Bruhat graph (2).}
\label{subsec:qe2}
%
%
\begin{lem} \label{lem:Q2}
Let $y \in \WJ$ and $\gamma \in \DJp$. 
We have a quantum edge $y \edge{\gamma} ys_{\gamma}$ in $\QBG(W)$ if and only if 
$y \ne e$ and $\gamma=\alpha_{n}$, or $y \ge s_{n}s_{n-1}s_{n}$ and 
$\gamma=s_{n}\alpha_{n-1} = 
\alpha_{n-1}+2\alpha_{n}=\gq$ (see \eqref{eq:gamq}).
\end{lem}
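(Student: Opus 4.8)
The plan is to prove both implications, in each splitting according to whether $\gamma$ is short or long. Throughout I will use Remark~\ref{rem:qe1}: a quantum edge $y\edge{\gamma}ys_{\gamma}$ has $\gamma$ a quantum root and satisfies $\ell(ys_{\gamma})=\ell(y)-\ell(s_{\gamma})$.

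\emph{The ``if'' direction.} Suppose $y\ne e$ and $\gamma=\alpha_{n}=e_{n}$ (which lies in $\DJp$). Since $y\in\WJe$, some reduced expression of $y$ ends in $s_{n}$, so $\ell(ys_{n})=\ell(y)-1=\ell(y)+1-2\pair{\rho}{\alpha_{n}^{\vee}}$; hence $y\edge{\alpha_{n}}ys_{n}$ is a quantum edge. Suppose instead $y\ge s_{n}s_{n-1}s_{n}$ and $\gamma=\gq=e_{n-1}+e_{n}\in\DJp$, with $s_{\gq}=s_{n}s_{n-1}s_{n}$. By Lemma~\ref{lem:tiw} write $y=\xJ{y}\,\xJs{y}\,s_{n}s_{n-1}s_{n}$ with $\ell(y)=\ell(\xJ{y})+\ell(\xJs{y})+3$; then $ys_{\gq}=\xJ{y}\,\xJs{y}$, and the parabolic factorization ($\xJ{y}\in\WJ$, $\xJs{y}\in\WJs$) gives $\ell(ys_{\gq})=\ell(\xJ{y})+\ell(\xJs{y})=\ell(y)-3$. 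As $\gq$ is long it is a quantum root by Lemma~\ref{lem:qr}, and $\ell(s_{\gq})=3$ yields $2\pair{\rho}{\gq^{\vee}}=4$; thus $\ell(ys_{\gq})=\ell(y)+1-2\pair{\rho}{\gq^{\vee}}$, and $y\edge{\gq}ys_{\gq}$ is a quantum edge.

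\emph{The ``only if'' direction.} Assume $y\edge{\gamma}ys_{\gamma}$ is a quantum edge with $\gamma\in\DJp$. From $\ell(ys_{\gamma})=\ell(y)-\ell(s_{\gamma})$ with $y=e$ one would get $\ell(s_{\gamma})=-\ell(s_{\gamma})$, impossible; so $y\ne e$. By Lemma~\ref{lem:qr}, $\gamma$ is either long, or a short root which is a $\BZ$-linear combination of short simple roots; since $\alpha_{n}$ is the only short simple root in type $B_{n}$ and the short positive roots are $e_{i}=\alpha_{i}+\cdots+\alpha_{n}$ ($1\le i\le n$), the latter forces $\gamma=e_{n}=\alpha_{n}$, the first alternative. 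In the long case the labels in $\DJp$ are $\gamma=e_{i}+e_{j}$, $1\le i<j\le n$. Concatenating a reduced expression of $ys_{\gamma}$ with one of $s_{\gamma}$ gives (the lengths adding to $\ell(y)$) a reduced expression of $y$; as every element of $\WJ$ is fully commutative (see \cite[Proposition~11.1.1\,(i)]{G}, which also covers the minuscule quotient in type $B_{n}$), $s_{\gamma}$ is fully commutative too. I will then show that $s_{e_{i}+e_{j}}$ is fully commutative exactly when $(i,j)=(n-1,n)$, i.e.\ exactly when $s_{e_{i}+e_{j}}=s_{n}s_{n-1}s_{n}=s_{\gq}$; hence $\gamma=\gq$. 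Finally $\ell(ys_{\gq})=\ell(y)-3=\ell(y)-\ell(s_{n}s_{n-1}s_{n})$ shows a reduced expression of $y$ ends in $s_{n}s_{n-1}s_{n}$, so the Subword Property for the Bruhat order gives $y\ge s_{n}s_{n-1}s_{n}$, the second alternative.

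\emph{The main obstacle.} Everything reduces to the full-commutativity dichotomy for $s_{e_{i}+e_{j}}$. For $(i,j)=(n-1,n)$ the word $s_{n}s_{n-1}s_{n}$ is the unique reduced expression (no braid move is available, since $s_{n-1}$ and $s_{n}$ obey an order-$4$ relation), so $s_{\gq}$ is fully commutative. For $(i,j)\ne(n-1,n)$, the plan is to produce an explicit reduced expression of $s_{e_{i}+e_{j}}$ — e.g.\ by conjugating $s_{n}s_{n-1}s_{n}$ by a shortest $w$ (a product of $s_{1},\dots,s_{n-1}$) with $we_{n-1}=e_{i}$, $we_{n}=e_{j}$ — and, after a few commutation moves, to make a braid $s_{a}s_{b}s_{a}$ with $a,b\le n-1$ and $|a-b|=1$ occur consecutively, witnessing that $s_{e_{i}+e_{j}}$ is not fully commutative; the intuition is that such a reflection ``reaches into'' the $A_{n-1}$-part of the Dynkin diagram, away from the short node. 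This explicit computation is the only real difficulty; the surrounding reductions via Lemma~\ref{lem:tiw}, Remark~\ref{rem:qe1} and the Subword Property are routine.
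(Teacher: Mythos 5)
Your ``if'' direction and the surrounding reductions in the ``only if'' direction are correct and match the paper in spirit: the observation that a reduced word for $s_{\gamma}$ occurs as a suffix of a reduced word for $y$ (from $\ell(y)=\ell(ys_{\gamma})+\ell(s_{\gamma})$), so that $s_{\gamma}$ inherits full commutativity from $y\in\WJ$, is a valid inference (Stembridge's ``no consecutive braid pattern'' characterization of full commutativity passes to any factor of a length-additive product). However, there is a genuine gap exactly where you flag ``the only real difficulty'': you never actually prove that $s_{e_i+e_j}$ is fully commutative if and only if $(i,j)=(n-1,n)$. That dichotomy is true --- for instance, for $(i,j)=(n-2,n)$ one reduced word is $s_{n-2}s_{n}s_{n-1}s_{n}s_{n-2}$, and two commutations give $s_{n}s_{n-2}s_{n-1}s_{n-2}s_{n}$, exposing the forbidden consecutive pattern $s_{n-2}s_{n-1}s_{n-2}$ --- but you only outline ``a plan'' for the general case, so the lemma is not established as written. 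The plan is also not entirely routine: one must be careful that the conjugate $w\,(s_{n}s_{n-1}s_{n})\,w^{-1}$ is a reduced concatenation, and one must handle separately the case $j=n$ (where $s_{n}s_{n-1}s_{n}$ already appears but $m(n-1,n)=4$ makes it harmless) from the cases $j<n$.

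The paper's proof takes a different and more economical route for precisely this step. Instead of the full-commutativity dichotomy, it uses only the weaker constraint that, by Remark~\ref{rem:qe1} together with the characterization \eqref{eq:mcr} of $\WJ$, the leftmost simple reflection of every reduced expression of $s_{\gamma}$ must be $s_{n}$ (equivalently, every reduced word for $y$ ends in $s_{n}$, and some reduced word for $y$ ends in a reduced word for $s_{\gamma}$). It then writes down one explicit reduced expression, $s_{\gamma}=(s_{j}\cdots s_{n-1})(s_{i}s_{i+1}\cdots s_{n-2}s_{n})s_{n-1}(s_{n}s_{n-2}\cdots s_{i+1}s_{i})(s_{n-1}\cdots s_{j})$ for $\gamma=\alpha_{i}+\cdots+\alpha_{j-1}+2(\alpha_{j}+\cdots+\alpha_{n})$, reads off the leftmost letter, and this immediately forces $j=n$ and then $i=n-1$, with no case analysis over $(i,j)$. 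Your approach would also work if the deferred computation were carried out, but as submitted the central step is missing, and the ``leftmost letter'' argument is the cleaner way to close it.
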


\begin{proof}
We first show the ``if'' part. It is easily shown that 
if $y \ne e$ and $\gamma=\alpha_{n}$, 
then $y \edge{\gamma} ys_{\gamma}$ is a quantum edge. 
Also, we deduce from Lemma~\ref{lem:tiw} that 
if $y \ge s_{n}s_{n-1}s_{n}$ and $\gamma=s_{n}\alpha_{n-1}$, then 
$y \edge{\gamma} ys_{\gamma}$ is a quantum edge. 
Thus we have shown the ``if'' part. 

We next show the ``only if'' part. 
Since $y \edge{\gamma} ys_{\gamma}$ is a quantum edge, 
$\gamma$ is a quantum root (see Remark~\ref{rem:qe1}). Because 
\begin{equation}
\begin{split}
\Delta^{+}_{\lo} = & \left\{ \alpha_{i} + \cdots + \alpha_{j-1} \mid 1 \leq i < j \leq n \right\} \\
& \sqcup \left\{ \alpha_{i} + \cdots + \alpha_{j-1} + 2(\alpha_{j} + \cdots + \alpha_{n}) \mid 1 \leq i < j \leq n \right\}, \\
\Delta^{+}_{\sh} = & \left\{ \alpha_{i} + \cdots + \alpha_{n} \mid 1 \leq i \leq n \right\}, 
\end{split}
\end{equation}
it follows from Lemma~\ref{lem:qr} (together with the assumption $\gamma \in \DJp$) that 
$\gamma = \alpha_{i}+ \cdots + \alpha_{j-1} + 2(\alpha_{j}+ \cdots + \alpha_{n})$ 
for some $1 \le i < j \le n$, or $\gamma = \alpha_{n}$. 
Assume that $\gamma = \alpha_{i}+ \cdots + \alpha_{j-1} + 2(\alpha_{j}+ \cdots + \alpha_{n})$ 
for some $1 \le i < j \le n$; we see by direct calculation that 
$\ell(s_{\gamma}) = 2\pair{\rho}{\gamma^{\vee}}-1 = 4n-2i-2j+1$ and 
\begin{equation*}
\gamma = \alpha_{i}+ \cdots + \alpha_{j-1} + 2(\alpha_{j}+ \cdots + \alpha_{n}) = 
(s_{j} \cdots s_{n-1})(s_{i}s_{i+1} \cdots s_{n-2}s_{n})\alpha_{n-1},  
\end{equation*}
which implies that 
\begin{equation*}
s_{\gamma} = (s_{j} \cdots s_{n-1})(s_{i}s_{i+1} \cdots s_{n-2}s_{n})s_{n-1}
(s_{n}s_{n-2} \cdots s_{i+1}s_{i})(s_{n-1} \cdots s_{j})
\end{equation*}
is a reduced expression of $s_{\gamma}$. Here, since $y \in \WJ$, and 
$y \edge{\gamma} ys_{\gamma}$ is a quantum edge, 
it follows from Remark~\ref{rem:qe1} and \eqref{eq:mcr} that 
the leftmost simple reflection of any reduced expression of 
$s_{\gamma}$ is $s_{n}$. Hence we have $j = n$ and $i = n-1$. 
Therefore, we deduce that $\gamma=\alpha_{n-1}+2\alpha_{n} = s_{n}\alpha_{n-1}$. 
Thus we have shown that $\gamma$ is either $\alpha_{n}$ or $s_{n}\alpha_{n-1}$. 

Let $y \in \WJ$ be such that 
$y \edge{\gamma} ys_{\gamma}$ is a quantum edge. 
We can easily verify that if $\gamma=\alpha_{n}$, then $y \ne e$. 
Assume that $\gamma=s_{n}\alpha_{n-1}$. 
Recall that $\ell(s_{\gamma}) = 2\pair{\rho}{\gamma^{\vee}}-1$ 
since $\gamma$ is a quantum root. 
We have $y = (ys_{\gamma})s_{\gamma}$, with 
$\ell(y) = \ell(ys_{\gamma}) + \ell(s_{\gamma})$. 
Hence, by the Subword Property for the Bruhat order, 
we deduce that $y \ge s_{\gamma} = s_{n}s_{n-1}s_{n}$. 
Thus we have shown the ``only if'' part. This proves the lemma. 
\end{proof}
%
%
\subsection{Sets of label-increasing directed paths (2).}
\label{subsec:li2}

Let $\lhd$ be a reflection order on $\Delta^{+}$ satisfying condition \eqref{eq:ro}; 
observe that $s_{n}\alpha_{n-1}$ and $\alpha_{n}$ are the second largest element 
and the largest element of $\Delta^{+}$ with respect to $\lhd$, respectively. 
Let $x \in \WJ = W^{I \setminus \{n\}}$, with $x \ne e$. 
Recall the notation $\bBG{x}$ and $\bQBG{x}$ from Section~\ref{subsec:QBG}; 
remark that $\ed(\bp) \in \WJe$ for all $\bp \in \bBG{x}$. 
For each $\bp \in \bBG{x}$, we define $\SE^{\SQ}_{\alpha_{n}}(\bp)$ 
to be the concatenation $\bp \edge{\alpha_{n}} \ed(\bp)s_{n}$ 
of the directed path $\bp$ with the quantum edge 
$\ed(\bp) \edge{\alpha_{n}} \ed(\bp)s_{n}$ (see Lemma~\ref{lem:Q2}). 
Then we deduce that $\SE^{\SQ}_{\alpha_{n}}(\bp) \in \bQBG{x}$. 

We define $\bA_{x}^{\lhd}$ to be the subset of $\bBG{x}$ 
consisting of all those $\bp \in \bBG{x}$ such that 
$\ed(\bp) \ge s_{n}s_{n-1}s_{n}$. 
Recall that $\gamma_{\SQ}=s_{n}\alpha_{n-1}$ (see \eqref{eq:gamq}). 
For each $\bp \in \bA_{x}^{\lhd}$, we define $\SE^{\SQ}_{\gamma_{\SQ}}(\bp)$ 
to be the concatenation $\bp \edge{\gamma_{\SQ}} \ed(\bp)s_{\gamma_{\SQ}}$ 
of the directed path $\bp$ with the quantum edge 
$\ed(\bp) \edge{\gamma_{\SQ}} \ed(\bp)s_{\gamma_{\SQ}}$ (see Lemma~\ref{lem:Q2}). 
We claim that $\SE^{\SQ}_{\gamma_{\SQ}}(\bp) \in \bQBG{x}$. Indeed, recall that 
$\gamma_{\SQ}=s_{n}\alpha_{n-1}$ and $\alpha_{n}$ are the second largest element 
and the largest element of $\Delta^{+}$ with respect to $\lhd$, respectively. 
Hence it suffices to show that 
if $\bp$ is of the form $\bp:x = y_{0} \edge{\gamma_{1}} y_{1} \edge{\gamma_{2}} \cdots 
 \edge{\gamma_{s}} y_{s}$, with $s \ge 1$, then the final label $\gamma_{s}$ is neither 
 $\gamma_{\SQ}=s_{n}\alpha_{n-1}$ nor $\alpha_{n}$. 
Since $y_{s-1} \in \WJe$, it is easily seen that $\gamma_{s} \ne \alpha_{n}$. 
Also, since $\bp \in \bA_{x}^{\lhd}$, we have $y_{s} = \ed(\bp) \ge s_{n}s_{n-1}s_{n}$. 
By Lemma~\ref{lem:tiw}, $y_{s} = \xJ{y_{s}}\xJs{y_{s}}s_{n}s_{n-1}s_{n}$ and 
$\ell(y_{s})= \ell(\xJ{y_{s}}) + \ell(\xJs{y_{s}}) + \ell(s_{n}s_{n-1}s_{n})$. 
Hence it follows that $\ell(y_{s}s_{\gq}) = \ell(y_{s}) - 3$, 
which implies that $y_{s}s_{\gq} \edge{\gq} y_{s}$ is not a Bruhat edge. 
Therefore, we deduce that $\gamma_{s} \ne \gamma_{\SQ},\,\alpha_{n}$, and hence 
$\SE^{\SQ}_{\gamma_{\SQ}}(\bp) \in \bQBG{x}$. 
Here we note that $\ed(\SE^{\SQ}_{\gamma_{\SQ}}(\bp)) = \xJ{y_{s}}\xJs{y_{s}}$ 
and $\ell(\xJ{y_{s}}\xJs{y_{s}}s_{n}) = \ell(\xJ{y_{s}}\xJs{y_{s}})+1$. 
Hence we have a Bruhat edge 
$\ed(\SE^{\SQ}_{\gamma_{\SQ}}(\bp)) \edge{\alpha_{n}} 
\ed(\SE^{\SQ}_{\gamma_{\SQ}}(\bp))s_{n}$. 
We define $\SE^{\SB}_{\alpha_{n}}(\SE^{\SQ}_{\gamma_{\SQ}}(\bp))$ 
to be the concatenation $\SE^{\SQ}_{\gamma_{\SQ}}(\bp) \edge{\alpha_{n}} 
\ed(\SE^{\SQ}_{\gamma_{\SQ}}(\bp))s_{n}$ of the directed path 
$\SE^{\SQ}_{\gamma_{\SQ}}(\bp)$ with the Bruhat edge 
$\ed(\SE^{\SQ}_{\gamma_{\SQ}}(\bp)) \edge{\alpha_{n}} 
\ed(\SE^{\SQ}_{\gamma_{\SQ}}(\bp))s_{n}$. 

We see that 
%
%
\begin{equation} \label{eq:bQBG2}
\begin{split}
\bQBG{x} & = \bBG{x} \sqcup 
\bigl\{\SE^{\SQ}_{\alpha_{n}}(\bp) \mid \bp \in \bBG{x} \bigr\} \\ 
& \qquad \sqcup 
\bigl\{\SE^{\SQ}_{\gamma_{\SQ}}(\bp) 
   \mid \bp \in \bA_{x}^{\lhd} \bigr\} \sqcup 
\bigl\{\SE^{\SB}_{\alpha_{n}}(\SE^{\SQ}_{\gamma_{\SQ}}(\bp))
   \mid \bp \in \bA_{x}^{\lhd} \bigr\}; 
\end{split}
\end{equation}
remark that $\wt(\bp)=0$ and $\wt(\SE^{\SQ}_{\alpha_{n}}(\bp)) = \alpha_{n}^{\vee}$
for all $\bp \in \bBG{x}$, and that 
$\wt(\SE^{\SQ}_{\gamma_{\SQ}}(\bp)) = \wt (\SE^{\SB}_{\alpha_{n}}(\SE^{\SQ}_{\gamma_{\SQ}}(\bp))) = 
\gamma_{\SQ}^{\vee} = \alpha_{n-1}^{\vee}+\alpha_{n}^{\vee}$ for all $\bp \in \bA_{x}^{\lhd}$. 
Therefore, it follows from \eqref{eq:minuscule} that for all $N \in \BZ_{\ge 1}$, 
%
%
\begin{equation} \label{eq:NOS4x}
\begin{split}
\gch V_{x}^{-}((N-1)\vpi_{n}) = \, & 
\be^{-x\vpi_{n}}
\sum_{ y \in \ed(\bBG{x}) } (-1)^{\ell(y)-\ell(x)} 
\gch V_{y}^{-}(N\vpi_{n}) \\[2mm]
& + 
\be^{-x\vpi_{n}}
\sum_{ y \in \ed(\bBG{x}) } (-1)^{\ell(y)-\ell(x)+1} 
\gch V_{\mcr{ys_{n}} t_{\alpha_{n}^{\vee}}}^{-}(N\vpi_{n}) \\[2mm]
& + 
\be^{-x\vpi_{n}}
\sum_{ y \in \ed(\bA_{x}^{\lhd}) } (-1)^{\ell(y)-\ell(x)+1} 
\gch V_{\mcr{ys_{\gamma_{\SQ}}} t_{\alpha_{n}^{\vee}}}^{-}(N\vpi_{n}) \\[2mm]
& + 
\be^{-x\vpi_{n}}
\sum_{ y \in \ed(\bA_{x}^{\lhd}) } (-1)^{\ell(y)-\ell(x)+2} 
\gch V_{\mcr{ys_{\gamma_{\SQ}s_{n}}} t_{\alpha_{n}^{\vee}}}^{-}(N\vpi_{n}). 
\end{split}
\end{equation}
Since $s_{\gamma_{\SQ}} = s_{n}s_{n-1}s_{n}$, 
it is easily seen that $\mcr{\ed(\bp)s_{n}} = \mcr{\ed(\bp)s_{\gamma_{\SQ}}s_{n}}$ 
for $\bp \in \bA_{x}^{\lhd} \subset \bBG{x}$. Hence we deduce that
%
%
\begin{equation} \label{eq:NOS4a}
\begin{split}
\gch V_{x}^{-}((N-1)\vpi_{n}) = \, & 
\be^{-x\vpi_{n}}
\sum_{ y \in \ed(\bBG{x}) } (-1)^{\ell(y)-\ell(x)} 
\gch V_{y}^{-}(N\vpi_{n}) \\[2mm]
& + 
\be^{-x\vpi_{n}}
\sum_{ y \in \ed(\bBG{x} \setminus \bA_{x}^{\lhd})} (-1)^{\ell(y)-\ell(x)+1} 
\gch V_{\mcr{ys_{n}} t_{\alpha_{n}^{\vee}}}^{-}(N\vpi_{n}) \\[2mm]
& + 
\be^{-x\vpi_{n}}
\sum_{ y \in \ed(\bA_{x}^{\lhd}) } (-1)^{\ell(y)-\ell(x)+1} 
\gch V_{\mcr{ys_{\gamma_{\SQ}}} t_{\alpha_{n}^{\vee}}}^{-}(N\vpi_{n}). 
\end{split}
\end{equation}
Here we remark that if $x \ge s_{n}s_{n-1}s_{n}$, then $\bBG{x} = \bA_{x}^{\lhd}$; 
in particular, if $x \ge \mcr{s_{\theta}}$ (see Lemma~\ref{lem:wk}), then 
$\bBG{x} = \bA_{x}^{\lhd}$. Thus we have proved the character identity \eqref{eq:main_1} 
in Theorem \ref{thm:main}\,(1) in type $B_{n}$; we will prove 
the assertion on cancellations (following \eqref{eq:main_1}) in type $B_{n}$ 
in the next subsection (see the comment preceding Proposition~\ref{prop:fin2a}). 
%
%
\subsection{Cancellations in equation \eqref{eq:NOS4a}.}
\label{subsec:can2}

Let $x \in \WJe$. We set 
\begin{equation} \label{eq:bHx}
\bH_{x}^{\lhd} : = 
\bigl\{ \SE^{\SQ}_{\alpha_{n}}(\bp) \mid \bp \in \bBG{x} \setminus \bA_{x}^{\lhd} \bigr\} \sqcup 
\bigl\{ \SE^{\SQ}_{\gq}(\bp) \mid \bp \in \bA_{x}^{\lhd} \bigr\},
\end{equation}
and then $\bHx{x}{v}:=\bigl\{ \bq \in \bH_{x}^{\lhd} \mid \mcr{\ed(\bq)} = v \bigr\}$ for $v \in \WJ$. 
By \eqref{eq:NOS4a}, 
%
%
\begin{equation} \label{eq:NOS4aa}
\begin{split}
\gch V_{x}^{-}((N-1)\vpi_{n}) & = 
\be^{-x\vpi_{n}}
\sum_{ y \in \ed(\bBG{x}) } (-1)^{\ell(y)-\ell(x)} 
\gch V_{y}^{-}(N\vpi_{n}) \\ 
& \hspace*{5mm} + 
\be^{-x\vpi_{n}} \sum_{v \in \WJ}
\underbrace{
\left( \sum_{ \bq \in \bHx{x}{v} } (-1)^{\ell(\ed(\bq))-\ell(x)} \right)}_{%
\text{$=c_{v,1}^{x}$; see \eqref{eq:NOSa}} }
\gch V_{vt_{\alpha_{n}^{\vee}}}^{-}(N\vpi_{n}). 
\end{split}
\end{equation}
The assertion on cancellations (following \eqref{eq:main_1}) 
in type $B_{n}$ follows from the next proposition. 
%
%
\begin{prop} \label{prop:fin2a}
Let $x \in \WJe$. If $x \ge \mcr{s_{\theta}}$, then 
$\# \bHx{x}{v} = 0$ or $1$ for each $v \in \WJ$. 
\end{prop}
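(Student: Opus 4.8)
The plan is to exploit the fact that the hypothesis $x \ge \mcr{s_{\theta}}$ makes $\bH_{x}^{\lhd}$ very small. As remarked after \eqref{eq:NOS4a}, under this hypothesis one has $\bBG{x} = \bA_{x}^{\lhd}$, so by \eqref{eq:bHx}, $\bH_{x}^{\lhd} = \{\SE^{\SQ}_{\gq}(\bp) \mid \bp \in \bBG{x}\}$. Since $\bp \mapsto \SE^{\SQ}_{\gq}(\bp)$ is injective and $\ed$ is injective on $\bBG{x}$ (Remark~\ref{rem:bBGy}\,(1)), it suffices to prove that for each $v \in \WJ$ there is at most one $\bp \in \bBG{x}$ with $\mcr{\ed(\bp)s_{\gq}} = v$; I would do this by showing that $v$ determines $\ed(\bp)$ uniquely.

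The key computation is to pin down $y := \ed(\bp)$. Since $y \in \WJ$ (Remark~\ref{rem:bBGy}\,(2)) and $y \ge \bgn(\bp) = x \ge \mcr{s_{\theta}}$, in particular $y \in \WJx$, so Lemma~\ref{lem:tiw} provides the length-additive factorization $y = \xJ{y}\,\xJs{y}\,s_{n}s_{n-1}s_{n}$ with $\xJ{y} \in \WJ$ and $\xJs{y} \in (\WJs)^{\J\setminus\{n-2\}}$. Moreover, since $y \ge \mcr{s_{\theta}}$, the equality clause of Lemma~\ref{lem:wnsn} forces $\xJs{y} = w_{n}$, so $y = \xJ{y}\,w_{n}\,s_{n}s_{n-1}s_{n} = \xJ{y}\,\mcr{s_{\theta}}$ by \eqref{eq:sthetab}. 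Consequently $\ed(\SE^{\SQ}_{\gq}(\bp)) = y\,s_{\gq} = y\,s_{n}s_{n-1}s_{n} = \xJ{y}\,w_{n}$, which is a length-additive product because $\xJ{y}\in\WJ$ and $w_{n}\in\WJs$; hence $\mcr{\ed(\SE^{\SQ}_{\gq}(\bp))} = \xJ{y}$.

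Putting these together, if $\mcr{\ed(\SE^{\SQ}_{\gq}(\bp))} = v$ then $\xJ{y} = v$, so $y = v\,\mcr{s_{\theta}}$ is determined by $v$; by the injectivity of $\ed$ on $\bBG{x}$ there is then at most one such $\bp$, and therefore $\#\bHx{x}{v} \le 1$. I expect the only delicate point to be the bookkeeping with reduced factorizations — in particular, making sure the equality case of Lemma~\ref{lem:wnsn} is the one that applies (which is exactly where the estimate $\ed(\bp) \ge x \ge \mcr{s_{\theta}}$ enters) and that $y\,s_{\gq} = \xJ{y}\,w_{n}$ is length-additive so that its minimal coset representative is indeed $\xJ{y}$; granting these, the conclusion is an immediate consequence of the uniqueness of label-increasing directed paths (Theorem~\ref{thm:LI}).
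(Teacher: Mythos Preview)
Your proposal is correct and follows essentially the same argument as the paper: reduce to $\bH_{x}^{\lhd} = \{\SE^{\SQ}_{\gq}(\bp) \mid \bp \in \bBG{x}\}$ via $\bBG{x} = \bA_{x}^{\lhd}$, use Lemmas~\ref{lem:tiw} and~\ref{lem:wnsn} to conclude that $\ed(\bp) = \xJ{\ed(\bp)}\,w_{n}\,s_{n}s_{n-1}s_{n}$ with $\xJ{\ed(\bp)} = \mcr{\ed(\bp)s_{\gq}} = v$, and then invoke the uniqueness of label-increasing paths. Your handling of the ``delicate points'' (the equality case of Lemma~\ref{lem:wnsn} and the length-additivity of $\xJ{y}\,w_{n}$) is exactly what the paper uses implicitly.
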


\begin{proof}
Recall from Lemma~\ref{lem:wk} that 
$\mcr{ s_{\theta} } = w_{n}s_{n}s_{n-1}s_{n} \in \WJ$, with $w_{n} \in \WJs$. 
Also, since $x \ge \mcr{s_{\theta}} \ge s_{n}s_{n-1}s_{n}$, 
we have $\bBG{x} = \bA_{x}^{\lhd}$, and hence $\bH_{x}^{\lhd}=
\bigl\{ \SE_{\gq}^{\SQ}(\bp) \mid \bp \in \bBG{x} = \bA_{x}^{\lhd} \bigr\}$, 
where $\gq = s_{n}\alpha_{n-1}$ (see \eqref{eq:gamq}). 
Now, let $v \in \WJ$ be such that $\# \bHx{x}{v} \ne 0$, 
and let $\bq \in \bHx{x}{v}$ be of the form:
\begin{equation*}
\bq : \underbrace{x = y_{0} \edge{\gamma_{1}} y_{1} \edge{\gamma_{2}} \cdots 
  \edge{\gamma_{s}} y_{s}}_{ = :\bp \in \bBG{x} = \bA_{x}^{\lhd} } 
  \underbrace{ \edge{\gq} y_{s+1} }_{\text{quantum edge}} = \ed(\bq); 
\end{equation*}
note that $y_{s} \in \WJx$ and $\mcr{\ed(\bq)} = \mcr{y_{s+1}}=v$. 
Since $y_{s} \ge x \ge \mcr{ s_{\theta} } = w_{n}s_{n}s_{n-1}s_{n}$, 
it follows from Lemmas~\ref{lem:tiw} and \ref{lem:wnsn} that 
$y_{s} = \xJ{y_{s}}w_{n}s_{n}s_{n-1}s_{n}$, with $\xJ{y_{s}} \in \WJ$. 
Since $\mcr{y_{s}s_{\gq}} = \mcr{y_{s+1}} = \mcr{\ed(\bq)} =  v$ by the assumption, 
we deduce that $\xJ{y_{s}} = v$, and hence $y_{s}=vw_{n}s_{n}s_{n-1}s_{n}$. 
Thus, $y_{s}=\ed(\bp)$ is determined uniquely by $v$. 
By the uniqueness of a label-increasing directed path 
from $x$ to $vw_{n}s_{n}s_{n-1}s_{n}$ (see Theorem~\ref{thm:LI}), 
we obtain $\# \bHx{x}{v} = 1$, as desired. This proves the proposition.
\end{proof}
%
%
\subsection{Proof of the character identity \eqref{eq:main_2} in type $B_{n}$.}
\label{subsec:fin2b}

Let $x \in \WJe$. We set
%
%
\begin{equation} \label{eq:bXx}
\bX_{x}^{\lhd} : = 
\bigl\{ \SE^{\SQ}_{\alpha_{n}}(\bp) \mid \bp \in \bBG{x} \bigr\} \sqcup 
\bigl\{ \SE^{\SQ}_{\gq}(\bp), \SE^{\SB}_{\alpha_{n}}(\SE^{\SQ}_{\gq}(\bp)) \mid \bp \in \bA_{x}^{\lhd} \bigr\} = 
\bQBG{x} \setminus \bBG{x}, 
\end{equation}
and then $\bXx{x}{v}:=\bigl\{ \bq \in \bX_{x}^{\lhd} \mid \mcr{\ed(\bq)} = v \bigr\}$ for $v \in \WJ$. 
By \eqref{eq:NOS4x}, 
%
%
\begin{equation} \label{eq:NOS4xa}
\begin{split}
\gch V_{x}^{-}((N-1)\vpi_{n}) & = 
\be^{-x\vpi_{n}}
\sum_{ y \in \ed(\bBG{x}) } (-1)^{\ell(y)-\ell(x)} 
\gch V_{y}^{-}(N\vpi_{n}) \\ 
& \hspace*{5mm} + 
\be^{-x\vpi_{n}} \sum_{v \in \WJ}
\underbrace{
\left( \sum_{ \bq \in \bXx{x}{v} } (-1)^{\ell(\ed(\bq))-\ell(x)} \right)}_{%
\text{$=c_{v,1}^{x}$; see \eqref{eq:NOSa}} }
\gch V_{vt_{\alpha_{n}^{\vee}}}^{-}(N\vpi_{n}). 
\end{split}
\end{equation}
The character identity \eqref{eq:main_2} in Theorem~\ref{thm:main}\,(2) in type $B_{n}$ follows from 
Lemma~\ref{lem:itvB} and Proposition~\ref{prop:fin2b} below.
%
%
\begin{lem} \label{lem:itvB}
Let $x \in \WJe$ and $v \in \WJ$. 
If $\# \bXx{x}{v} \ge 2$, then 
\begin{equation*}
c_{v,1}^{x} = \sum_{ \bq \in \bXx{x}{v} } (-1)^{\ell(\ed(\bq))-\ell(x)} = 0.
\end{equation*}
\end{lem}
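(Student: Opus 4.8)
The plan is to run the argument of Lemma~\ref{lem:itv} almost verbatim, with $\bXx{x}{v}$ in place of $\bGx{x}{v}$, carrying along the three types of paths that occur in the decomposition \eqref{eq:bXx} of $\bX_{x}^{\lhd}$. Suppose, for a contradiction, that the assertion fails, and choose $x$ maximal with respect to the Bruhat order in the set
\[
\bigl\{\, w \in \WJe \mid \#\bXx{w}{v} \ge 2 \ \text{and}\ c^{w}_{v,1}\ne 0 \ \text{for some}\ v\in\WJ \,\bigr\}.
\]
A direct inspection of $\bX_{\mcr{\lng}}^{\lhd}$ (it has at most three elements, and, using $s_{\gq}=s_{n}s_{n-1}s_{n}$ exactly as in the passage from \eqref{eq:NOS4x} to \eqref{eq:NOS4a}, their endpoints lie in pairwise distinct cosets, so $\#\bXx{\mcr{\lng}}{v}\le 1$ for all $v$) shows $x\ne\mcr{\lng}$. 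Hence there is $j\in I$ with $\pair{x\vpi_{n}}{\alpha_{j}^{\vee}}=1$, so that $s_{j}x\in\WJ$ and $s_{j}x>x$. Fix $v\in\WJ$ with $\#\bXx{x}{v}\ge 2$ and $c^{x}_{v,1}\ne 0$; by Lemma~\ref{lem:+1} we have $\pair{v\vpi_{n}}{\alpha_{j}^{\vee}}\ne 0$, so $s_{j}v\in\WJ$.

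Next I would split into the two cases $\pair{v\vpi_{n}}{\alpha_{j}^{\vee}}>0$ and $\pair{v\vpi_{n}}{\alpha_{j}^{\vee}}<0$, following Cases~1 and~2 in the proof of Lemma~\ref{lem:itv}. For $\bq\in\bXx{x}{v}$, let $\ha{\bq}$ be the (unique) label-increasing directed path in $\QBG(W)$ from $s_{j}x$ to $s_{j}\ed(\bq)$ (Theorem~\ref{thm:LI}). Using Lemma~\ref{lem:DL} together with \cite[Lemma~7.7\,(4)]{LNSSS1}, one checks, just as in the simply-laced case, that $\ell(\ha{\bq})=\ell(\bq)$ and $\wt(\ha{\bq})=\wt(\bq)$ (which is $\alpha_{n}^{\vee}$ or $\alpha_{n-1}^{\vee}+\alpha_{n}^{\vee}$ by the remarks following \eqref{eq:bQBG2}, hence nonzero), that every label of $\ha{\bq}$ lies in $\DJp$ (so $\ha{\bq}\in\bQBG{s_{j}x}\setminus\bBG{s_{j}x}$), and that $\mcr{\ed(\ha{\bq})}$ equals $s_{j}v$ in the first case and $v$ in the second. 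The assignment $\bq\mapsto\ha{\bq}$ is injective, since $\ed$ is injective on label-increasing paths (Remark~\ref{rem:bBGy}\,(1)) and $\ed(\ha{\bq})=s_{j}\ed(\bq)$. Consequently $\#\bXx{s_{j}x}{s_{j}v}\ge 2$ (resp.\ $\#\bXx{s_{j}x}{v}\ge 2$); by maximality of $x$ this forces $c^{s_{j}x}_{s_{j}v,1}=0$ (resp.\ $c^{s_{j}x}_{v,1}=0$), and then Lemma~\ref{lem:-1}, applied with $s_{j}x$ in place of $x$, yields $c^{x}_{v,1}=0$, contradicting $c^{x}_{v,1}\ne 0$.

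The genuinely new point relative to Lemma~\ref{lem:itv}, and what I expect to be the main obstacle, is verifying that the operation $\bq\mapsto\ha{\bq}$ respects the richer structure of $\bX_{x}^{\lhd}$: a path in $\bX_{x}^{\lhd}$ may terminate with a $\gq$-labeled quantum edge followed by an $\alpha_{n}$-labeled Bruhat edge, and one must check that after conjugation by $s_{j}$ these final one or two edges still sit at the end of the resulting label-increasing path, so that $\ha{\bq}$ is again of one of the prescribed shapes in \eqref{eq:bXx}. This reduces to the fact that, by condition \eqref{eq:ro}, $\gq$ and $\alpha_{n}$ are the two largest roots for $\lhd$ (so that their positions are forced), together with the case analysis of where $j$ acts nontrivially along $\bq$, the uniqueness of $\ha{\bq}$ then being pinned down by the length and weight identities of \cite[Lemma~7.7]{LNSSS1}, exactly as in Case~1 of the proof of Lemma~\ref{lem:itv}. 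I would carry out this verification once, uniformly for the three families of paths in \eqref{eq:bXx}, rather than repeating it in each case.
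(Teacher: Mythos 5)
Your overall strategy matches the paper's: transplant the argument of Lemma~\ref{lem:itv} with $\bXx{x}{v}$ in place of $\bGx{x}{v}$, using Lemmas~\ref{lem:+1}, \ref{lem:-1}, \ref{lem:DL}, and \cite[Lemma~7.7\,(4)]{LNSSS1}. (The paper phrases this as descending induction on $\ell(x)$ rather than a maximal counterexample, but these are the same argument.) However, the proposal contains a slip that makes it internally inconsistent in Case~2: you define $\ha{\bq}$ uniformly as ``the label-increasing directed path from $s_{j}x$ to $s_{j}\ed(\bq)$,'' but then assert $\mcr{\ed(\ha{\bq})}=v$ in the case $\pair{v\vpi_{n}}{\alpha_{j}^{\vee}}<0$. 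With your definition one would have $\ed(\ha{\bq})=s_{j}\ed(\bq)$, and since $s_{j}v<v$ when $\pair{v\vpi_{n}}{\alpha_{j}^{\vee}}<0$, this gives $\mcr{\ed(\ha{\bq})}=s_{j}v\ne v$ --- which would feed into the wrong coefficient in Lemma~\ref{lem:-1}\,(1). Following Case~2 of Lemma~\ref{lem:itv} faithfully, one must instead take $\ha{\bq}$ to be the label-increasing path from $s_{j}x$ to $\ed(\bq)$ (not $s_{j}\ed(\bq)$) in that case; then $\mcr{\ed(\ha{\bq})}=v$, the target is $\bXx{s_{j}x}{v}$, and injectivity holds because $\ed(\ha{\bq})=\ed(\bq)$ together with uniqueness of label-increasing paths still distinguishes images. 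Your justification of injectivity via ``$\ed(\ha{\bq})=s_{j}\ed(\bq)$'' therefore only applies in Case~1.

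A smaller point: you single out as ``the genuinely new obstacle'' the need to verify that, after conjugation, the final one or two edges of $\ha{\bq}$ still have the prescribed shapes from the decomposition \eqref{eq:bXx}. This is not actually required. Since $\bX_{s_{j}x}^{\lhd}=\bQBG{s_{j}x}\setminus\bBG{s_{j}x}$ by definition, membership of $\ha{\bq}$ follows as soon as you have shown that all labels of $\ha{\bq}$ lie in $\DJp$ (so $\ha{\bq}\in\bQBG{s_{j}x}$) and $\wt(\ha{\bq})=\wt(\bq)\ne 0$ (so $\ha{\bq}\notin\bBG{s_{j}x}$); the description in terms of $\SE^{\SQ}_{\alpha_{n}}$, $\SE^{\SQ}_{\gq}$, $\SE^{\SB}_{\alpha_{n}}\circ\SE^{\SQ}_{\gq}$ is then automatic from \eqref{eq:bQBG2}. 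So the verification you already sketched in the preceding sentence is the whole content, and the extra pass you anticipated is unnecessary. The only substantive change from the simply-laced case is that one must allow $\bq$ to have one of two shapes (\eqref{eq:qa} with $\gamma_{s+1}\in\{\alpha_n,\gq\}$, or \eqref{eq:qb}), which the paper handles by introducing the common length $t\in\{s+1,s+2\}$; the Lemma~\ref{lem:DL} case analysis then runs uniformly in $t$.
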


\begin{proof}
We prove the assertion by descending induction on $\ell(x)$; 
the proof is similar to that of Lemma~\ref{lem:itv}. 
If $x=\mcr{\lng}$, then we see that 
\begin{equation*}
\bX_{\mcr{\lng}}^{\lhd}=\bigl\{ 
\lng \edge{\alpha_{n}} \lng s_{n}, \ 
\lng \edge{\gq} \lng s_{\gq}, \ 
\lng \edge{\gq} \lng s_{\gq} \edge{\alpha_{n}} \lng s_{\gq}s_{n} \bigr\}. 
\end{equation*}
Hence we can show the assertion by direct calculation. 
Assume that $x < \mcr{\lng}$, and let $j \in I$ be such that $s_{j}x > x$, 
or equivalently, $x^{-1}\alpha_{j} \in \Delta^{+}$; 
note that $\pair{x\vpi_{n}}{\alpha_{j}^{\vee}} = 1 > 0$ 
since $\vpi_{n}$ is minuscule, and that $s_{j}x \in \WJ$. 
Let $v \in \WJ$ be such that $\# \bXx{x}{v} \ge 2$. 
If $\pair{v\vpi_{n}}{\alpha_{j}^{\vee}} = 0$, 
then $c_{v,1}^{x} = 0$ by Lemma \ref{lem:+1}. 
Hence we may assume that 
$\pair{v\vpi_{n}}{\alpha_{j}^{\vee}} \ne 0$; 
note that $s_{j}v \in \WJ$ in this case.

\paragraph{\bf Case 1.}
%
Assume that $\pair{v\vpi_{n}}{\alpha_{j}^{\vee}} > 0$. 
We define an injective map $\bXx{x}{v} \rightarrow \bXx{s_{j}x}{s_{j}v}$, 
$\bq \mapsto \ti{\bq}$, as follows: for $\bq \in \bXx{x}{v}$ with $y:=\ed(\bp)$, 
we define $\ti{\bq}$ to be the label-increasing 
(shortest) directed path from $s_{j}x$ to $s_{j}y$ in $\QBG(W)$ 
(see Theorem~\ref{thm:LI}). We claim that $\ti{\bq} \in \bXx{s_{j}x}{s_{j}v}$. 
Indeed, recall that $\bq$ is either of the following forms: 
%
%
\begin{equation} \label{eq:qa}
\bq:\underbrace{%
  x = y_{0} \edge{\gamma_{1}} y_{1} \edge{\gamma_{2}} \cdots 
  \edge{\gamma_{s}} y_{s}}_{=: \bp \in \bBG{x}}
  \underbrace{ \edge{\gamma_{s+1}} y_{s+1} }_{\text{quantum edge}} = \ed(\bq) =y,
\end{equation}
where $\gamma_{s+1}$ is $\alpha_{n}$ (resp., $\gq$) 
if $\bp \in \bBG{x} \setminus \bA_{x}^{\lhd}$ (resp., $\bp \in \bA_{x}^{\lhd}$), or 
%
%
\begin{equation} \label{eq:qb}
\bq:\underbrace{%
  x = y_{0} \edge{\gamma_{1}} y_{1} \edge{\gamma_{2}} \cdots 
  \edge{\gamma_{s}} y_{s}}_{=: \bp \in \bA_{x}^{\lhd} }
  \underbrace{ \edge{ \gamma_{s+1}=\gq } y_{s+1} }_{\text{quantum edge}} 
  \underbrace{ \edge{ \gamma_{s+2} = \alpha_{n} } y_{s+2} }_{\text{Bruhat edge}} 
  = \ed(\bq) = y;
\end{equation}
notice that $x^{-1}\alpha_{j} \in \Delta^{+}$ and 
$y^{-1}\alpha_{j} \in \Delta^{+}$. We set $t:=s+1$ (resp., $=s+2$) 
if $\bq$ is of the form \eqref{eq:qa} (resp., \eqref{eq:qb}). 
If $y_{u}^{-1}\alpha_{j} \in \Delta^{+}$ for all $1 \le u \le t-1$, 
then we see by Lemma~\ref{lem:DL}\,(2) that 
there exists a directed path $\bq'$ in $\QBG(W)$ 
from $s_{j}x$ to $s_{j}y$ of the following form: 
\begin{equation*}
\bq' : s_{j} x = s_{j}y_{0} \edge{\gamma_{1}} s_{j}y_{1} \edge{\gamma_{2}} \cdots 
  \edge{\gamma_{t}} s_{j}y_{t} = s_{j}y, 
\end{equation*}
with $\wt(\bq') = \wt (\bq) \ne 0$. Observe that 
$\bq' \in \bQBG{s_{j}x} \setminus \bBG{s_{j}x}$, and 
$\mcr{s_{j}y} = \mcr{ \ed(\bq') } = s_{j}v$. 
Hence we obtain $\bq' \in \bXx{s_{j}x}{s_{j}v}$. 
Moreover, by the uniqueness of a label-increasing 
directed path from $s_{j}x$ to $s_{j}y$, we deduce that $\ti{\bq} = \bq'$, 
and hence $\ti{\bq} \in \bXx{s_{j}x}{s_{j}v}$ in this case. 

Assume now that $y_{u}^{-1}\alpha_{j} \in \Delta^{-}$ for some $1 \le u \le t-1$; 
remark that $t \ge 2$ in this case, since $y_{0}^{-1}\alpha_{j} \in \Delta^{+}$ and 
$y_{t}^{-1}\alpha_{j} \in \Delta^{+}$. If we set
$a:=\min \bigl\{ 1 \le u \le t-1 \mid y_{u}^{-1}\alpha_{j} \in \Delta^{-} \bigr\}$, 
then we deduce from Lemma~\ref{lem:DL} that 
$\gamma_{a}=y_{a-1}^{-1}\alpha_{j}$, and 
that there exists a directed path $\bq''$ in $\QBG(W)$ from 
$s_{j}x$ to $y = \ed(\bq)$ of the following form: 
\begin{equation*}
\bq'': s_{j}x = s_{j}y_{0} \edge{\gamma_{1}} \cdots 
  \edge{\gamma_{a-1}} s_{j}y_{a-1}=y_{a} \edge{\gamma_{a+1}} \cdots
  \edge{\gamma_{t}} y_{t} = y; 
\end{equation*}
notice that $\bq'' \in \bQBG{s_{j}x}$. 
Here, since $x^{-1}\alpha_{j} \in \Delta^{+}$ and 
$y^{-1}\alpha_{j} \in \Delta^{+}$, it follows from 
\cite[Lemma~7.7\,(4)]{LNSSS1} that 
$\ell(\ti{\bq}) = \ell(s_{j}x \Rightarrow s_{j}y) = 
 \ell(x \Rightarrow y) = \ell(\bq) = t \ge 1$, and 
$\wt(\ti{\bq}) = \wt(s_{j}x \Rightarrow s_{j}y) = 
 \wt(x \Rightarrow y) = \wt(\bq) \ne 0$. 
Let us write $\ti{\bq}$ as:
\begin{equation*}
\ti{\bq} : s_{j}x = x_{0} \edge{\beta_{1}} x_{1} \edge{\beta_{2}} \cdots 
  \edge{\beta_{t}} x_{t} = s_{j}y,
\end{equation*}
where $\beta_{1} \lhd \beta_{2} \lhd \cdots \lhd \beta_{t}$. 
We will show that $\beta_{1} \in \DJp$.
Notice that $x_{0}^{-1}\alpha_{j} \in \Delta^{-}$ and 
$x_{t}^{-1}\alpha_{j} \in \Delta^{-}$. 
Suppose, for a contradiction, that 
$x_{u}^{-1}\alpha_{j} \in \Delta^{-}$ for all $1 \le u \le t-1$.
Then, we see by Lemma~\ref{lem:DL}\,(2) that 
there exists a directed path $\ti{\bq}'$ in $\QBG(W)$ 
from $x$ to $y$ of the following form: 
\begin{equation*}
\ti{\bq}' : x = s_{j}x_{0} \edge{\beta_{1}} s_{j}x_{1} \edge{\beta_{2}} \cdots 
  \edge{\beta_{t}} s_{j}x_{t} = y. 
\end{equation*}
By the uniqueness of a label-increasing directed path from $x$ to $y$, 
we deduce that $\ti{\bq}'=\bq$; in particular, $s_{j}x_{a} = y_{a}$. 
However, $\Delta^{+} \ni (s_{j}x_{a})^{-1}\alpha_{j} = y_{a}^{-1}\alpha_{j} \in \Delta^{-}$, 
which is a contradiction. Thus there exists $1 \le u \le t-1$ such that 
$x_{u}^{-1}\alpha_{j} \in \Delta^{+}$. 
If we set $b:=\max \bigl\{1 \le u \le t-1 \mid x_{u}^{-1}\alpha_{j} \in \Delta^{+} \bigr\}$, 
then we see by Lemma~\ref{lem:DL} that 
there exists a directed path $\ti{\bq}''$ in $\QBG(W)$ 
from $s_{j}x$ to $y$ of the following form: 
\begin{equation*}
\ti{\bq}'' : s_{j}x = x_{0} \edge{\beta_{1}}  \cdots 
  \edge{\beta_{b}} x_{b}=s_{j}x_{b+1} \edge{\beta_{b+2}} 
  \cdots \edge{\beta_{t}} s_{j}x_{t}  = y. 
\end{equation*}
By the uniqueness of a label-increasing directed path from $s_{j}x$ to $y$, 
we deduce that $\ti{\bq}'' = \bq''$. Hence $\beta_{1}$ is either $\gamma_{1}$ (if $a \ge 2$) 
or $\gamma_{2}$ (if $a=1$). Thus we obtain $\beta_{1} \in \DJp$, as desired. 
Since the reflection order $\lhd$ satisfies condition \eqref{eq:ro}, 
it follows that $\beta_{u} \in \DJp$ for all $1 \le u \le t$, 
which implies that $\ti{\bq} \in \bQBG{s_{j}x}$. 
Also, since $\wt(\ti{\bq}) \ne 0$ as seen above, 
we find that $\ti{\bq} \notin \bBG{s_{j}x}$, 
and hence $\ti{\bq} \in \bX_{s_{j}x}^{\lhd}$. It is easily seen that 
$\mcr{\ed(\ti{\bq})} = \mcr{s_{j}y} = s_{j}v$. Therefore, it follows that 
$\ti{\bq} \in \bXx{s_{j}x}{s_{j}v}$. 

The injectivity of the map $\bXx{x}{v} \rightarrow \bXx{s_{j}x}{s_{j}v}$, 
$\bq \mapsto \ti{\bq}$, can be shown by exactly the same argument as 
for the map $\bGx{x}{v} \rightarrow \bGx{s_{j}x}{s_{j}v}$ 
in Case 1 in the proof of Lemma~\ref{lem:itv}. Hence we obtain
$\# \bXx{s_{j}x}{s_{j}v} \ge \# \bXx{x}{v} \ge 2$. 
From our induction hypothesis and Lemma~\ref{lem:-1}\,(2), we conclude that 
$c^{x}_{v,1} = c^{s_{j}x}_{s_{j}v,1} = 0$, as desired. 

\paragraph{\bf Case 2.}
%
Assume that $\pair{v\vpi_{n}}{\alpha_{j}^\vee} < 0$. 
We define an injective map $\bXx{x}{v} \rightarrow \bXx{s_{j}x}{v}$, 
$\bq \mapsto \ti{\bq}$, as follows. 
Assume that $\bq \in \bXx{x}{v}$ is 
either of the forms \eqref{eq:qa} or \eqref{eq:qb}, 
and define $t \in \{s+1,\,s+2\}$ as in Case 1. 
Note that $x^{-1}\alpha_{j} \in \Delta^{+}$ and 
$y^{-1}\alpha_{j} \in \Delta^{-}$ in this case. If we set
$a:=\min \bigl\{ 1 \le u \le t \mid y_{u}^{-1}\alpha_{j} \in \Delta^{-} \bigr\}$, 
then it follows from Lemma~\ref{lem:DL} that 
there exists a directed path in $\QBG(W)$ from $s_{j}x$ to $y$ of the form:
\begin{equation*}
\ti{\bq} : 
  s_{j}x = s_{j}y_{0} \edge{\gamma_{1}} \cdots 
  \edge{\gamma_{a-1}} s_{j}y_{a-1}=y_{a} \edge{\gamma_{a+1}} \cdots
  \edge{\gamma_{s}} y_{s} \edge{\gamma_{s+1}} \cdots \edge{\gamma_{t}} y_{t} = y, 
\end{equation*}
with $\wt(\ti{\bq}) = \wt (\bq) \ne 0$. 
Observe that $\ti{\bq} \in \bQBG{s_{j}x} \setminus \bBG{s_{j}x}$, 
and $\mcr{ \ed(\ti{\bq}) } = \mcr{ y } = v$. 
Hence we have $\ti{\bq} \in \bXx{s_{j}x}{v}$. 
By the same argument as in Case 1 in the proof of Lemma~\ref{lem:itv},
we can show that the map $\bXx{x}{v} \rightarrow \bXx{s_{j}x}{v}$, 
$\bq \mapsto \ti{\bq}$, is injective. 
Therefore, we obtain 
$\# \bXx{s_{j}x}{v} \ge \# \bXx{x}{v} \ge 2$. 
From our induction hypothesis and 
Lemma \ref{lem:-1}\,(1), we conclude that $c^{x}_{v,1} = - c^{s_{j}x}_{v,1} = 0$, as desired. 

This completes the proof of the lemma. 
\end{proof}
%
%
\begin{prop} \label{prop:fin2b}
Let $x \in \WJe$. If $x \not\ge \mcr{s_{\theta}}$, 
then $\# \bXx{x}{v} \ne 1$ for any $v \in \WJ$. 
\end{prop}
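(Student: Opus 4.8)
The plan is to follow the pattern of the proof of Proposition~\ref{prop:fin1b}: since Proposition~\ref{prop:fin2b} asserts $\#\bXx{x}{v}\ne 1$, it suffices to prove that whenever $\bXx{x}{v}$ is non-empty it contains at least two elements. By Lemma~\ref{lem:bBGy} the quantity $\#\bXx{x}{v}$ does not depend on the reflection order $\lhd$ satisfying \eqref{eq:ro}, so I fix one which in addition satisfies the nested conditions $\beta\lhd\gamma$ for $\beta\in(\DJp)\setminus\Inv(\mcr{s_{\theta}})$, $\gamma\in\Inv(\mcr{s_{\theta}})$, and $\beta\lhd\gamma$ for $\beta\in\Inv(\mcr{s_{\theta}})\setminus\Inv(s_{n}s_{n-1}s_{n})$, $\gamma\in\Inv(s_{n}s_{n-1}s_{n})$; such an order exists since $\mcr{\lng}\ge\mcr{s_{\theta}}\ge s_{n}s_{n-1}s_{n}$ (use Proposition~\ref{prop:lw}, Lemma~\ref{lem:wk} and the nested-inversions discussion of Section~\ref{subsec:ro}), and with respect to it $\alpha_{n}$, $\gq$, $s_{n}s_{n-1}\alpha_{n}$ are, in this decreasing order, the three largest elements of $\Delta^{+}$. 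The key preliminary observation is an automatic pairing: for each $\bp\in\bA_{x}^{\lhd}$ the paths $\SE^{\SQ}_{\alpha_{n}}(\bp)$ and $\SE^{\SB}_{\alpha_{n}}(\SE^{\SQ}_{\gq}(\bp))$ both lie in $\bX_{x}^{\lhd}$ (see \eqref{eq:bXx}), are distinct, and have the same image $\mcr{\ed(\sdot)}$ because $s_{\gq}s_{n}=s_{n}s_{n-1}$ and $s_{n-1}\in\WJs$. Consequently, if $\#\bXx{x}{v}=1$ with unique element $\bq$, then $\bq$ has one of the two forms: (a)~$\bq=\SE^{\SQ}_{\alpha_{n}}(\bp)$ with $\bp\in\bBG{x}\setminus\bA_{x}^{\lhd}$; or (b)~$\bq=\SE^{\SQ}_{\gq}(\bp)$ with $\bp\in\bA_{x}^{\lhd}$. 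I will derive a contradiction in each case by exhibiting a second element of $\bXx{x}{v}$.

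In Case (a), $y:=\ed(\bp)\not\ge s_{n}s_{n-1}s_{n}$, so Lemma~\ref{lem:gesgq} gives $y=s_{p}s_{p+1}\cdots s_{n-1}s_{n}$ for some $1\le p\le n$; then $v=\mcr{ys_{n}}=e$, and since $x\le y$ with $x\in\WJe$ and $x\not\ge s_{n}s_{n-1}s_{n}$, Lemma~\ref{lem:gesgq} also gives $x=s_{p'}s_{p'+1}\cdots s_{n-1}s_{n}$ with $p\le p'\le n$. If $\bp$ is nontrivial ($p<p'$), then $\SE^{\SQ}_{\alpha_{n}}$ of the trivial path at $x$ lies in $\bXx{x}{e}$ and differs from $\bq$. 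If $\bp$ is trivial and $p'\ge 2$, then $\SE^{\SQ}_{\alpha_{n}}$ of the one-step Bruhat path $x\edge{\beta_{0}}s_{p'-1}x$, with $\beta_{0}=x^{-1}\alpha_{p'-1}=\alpha_{p'-1}+\cdots+\alpha_{n-1}+2\alpha_{n}\in\DJp$, lies in $\bXx{x}{e}$ and differs from $\bq$. The remaining possibility is $x=s_{1}s_{2}\cdots s_{n-1}s_{n}=\mcr{\ls s_{n}}$, which I will dispose of by a direct computation of $\bXx{x}{e}$.

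In Case (b), $y:=\ed(\bp)\ge s_{n}s_{n-1}s_{n}$ and $v=\mcr{ys_{\gq}}=\xJ{y}$ by Lemma~\ref{lem:tiw}; write $\bp\colon x=y_{0}\edge{\gamma_{1}}\cdots\edge{\gamma_{s}}y_{s}=y$, and recall from Section~\ref{subsec:li2} that $\gamma_{s}\notin\{\alpha_{n},\gq\}$. If $y\ge\mcr{s_{\theta}}$, then, as $x\not\ge\mcr{s_{\theta}}$ by hypothesis, I put $a:=\min\{u:y_{u}\ge\mcr{s_{\theta}}\}$; by Lemma~\ref{lem:invy} the label $\gamma_{a}$ lies in $\Inv(\mcr{s_{\theta}})$, hence by the nested order $\gamma_{a},\dots,\gamma_{s}\in\Inv(\mcr{s_{\theta}})$ and $y_{a},\dots,y_{s}\in\WJx$, so iterating Lemma~\ref{lem:xJ} yields $\xJ{y_{a}}=\cdots=\xJ{y_{s}}=v$; then either $a<s$, and $\SE^{\SQ}_{\gq}$ of the truncation of $\bp$ at $y_{a}$ gives the required second element, or $a=s$, and a length comparison via Lemma~\ref{lem:gesgq} (for $n\ge 3$) gives $y_{s-1}\ge s_{n}s_{n-1}s_{n}$, so that $\SE^{\SQ}_{\gq}$ of the truncation at $y_{s-1}$ works (again using Lemma~\ref{lem:xJ} and $\gamma_{s}\in\Inv(\mcr{s_{\theta}})$). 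If $y\not\ge\mcr{s_{\theta}}$, then $y\in\WJx$ and Lemma~\ref{lem:EB2} supplies a Bruhat edge $y\edge{\beta}ys_{\beta}$ with $\beta\in\Inv(\mcr{s_{\theta}})\setminus\Inv(s_{n}s_{n-1}s_{n})$ and, by Lemma~\ref{lem:xJ}, $\xJ{ys_{\beta}}=\xJ{y}=v$; if $\gamma_{s}\lhd\beta$ the concatenation $\bp\edge{\beta}ys_{\beta}$ is label-increasing, lies in $\bA_{x}^{\lhd}$, and $\SE^{\SQ}_{\gq}(\bp\edge{\beta}ys_{\beta})$ is the required second element, while the only obstruction $\gamma_{s}=s_{n}s_{n-1}\alpha_{n}$ is handled by backing up one step at $y_{s-1}$: either via Lemma~\ref{lem:xJ} when $y_{s-1}\ge s_{n}s_{n-1}s_{n}$, or, when $y_{s-1}\not\ge s_{n}s_{n-1}s_{n}$, by observing that then $v=e$ and that $\SE^{\SQ}_{\alpha_{n}}$ of the truncation at $y_{s-1}$ lies in $\bXx{x}{e}$.

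The step I expect to be the main obstacle is the bookkeeping at the final edge of $\bp$ in Case (b): one must simultaneously verify that $y_{s-1}\in\WJx$ (so that Lemmas~\ref{lem:EB2} and \ref{lem:xJ} are applicable), that the appended or substituted label is $\lhd$-larger than $\gamma_{s}$ — for which one uses the nested order together with $\gamma_{s}\notin\{\alpha_{n},\gq\}$ — and that $\xJ{\sdot}$, hence the coset $\mcr{\ed(\sdot)}$, stays equal to $v$, via Lemmas~\ref{lem:xJ} and \ref{lem:wnsn}; the residual configuration $\gamma_{s}=s_{n}s_{n-1}\alpha_{n}$, the boundary element $x=\mcr{\ls s_{n}}$, and the low rank $n=2$ each call for a separate short computation.
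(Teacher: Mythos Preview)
Your overall strategy matches the paper's: use the automatic pairing $\SE^{\SQ}_{\alpha_{n}}(\bp)\leftrightarrow\SE^{\SB}_{\alpha_{n}}(\SE^{\SQ}_{\gq}(\bp))$ for $\bp\in\bA_{x}^{\lhd}$ to reduce to the two shapes (a) and (b), then produce a second element by truncating or extending $\bp$. But there is a genuine gap in your subcase~(b2). You assert that when $y\not\ge\mcr{s_{\theta}}$ the \emph{only} obstruction to appending the $\beta$ of Lemma~\ref{lem:EB2} is $\gamma_{s}=s_{n}s_{n-1}\alpha_{n}$. This is false: Lemma~\ref{lem:EB2} places $\beta$ in the tier $\Inv(\mcr{s_{\theta}})\setminus\Inv(s_{n}s_{n-1}s_{n})$ of your nested order, and $\gamma_{s}$ may lie in that same tier with $\gamma_{s}\rhdeq\beta$. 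Concretely, in $B_{3}$ take $x=s_{3}s_{2}s_{3}$ and $\bp$ the single Bruhat step to $y=s_{1}s_{3}s_{2}s_{3}$; then $\gamma_{s}=\alpha_{1}+\alpha_{2}+2\alpha_{3}$, the only $\beta$ available is $\theta$, and the unique reduced expression $\mcr{s_{\theta}}=s_{2}s_{1}s_{3}s_{2}s_{3}$ compatible with your nesting forces $\theta\lhd\gamma_{s}$. So the extension fails even though $\gamma_{s}\ne s_{n}s_{n-1}\alpha_{n}$.

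The gap is repairable: your back-up move (truncate at $y_{s-1}$, then apply $\SE^{\SQ}_{\gq}$ if $y_{s-1}\ge s_{n}s_{n-1}s_{n}$, or $\SE^{\SQ}_{\alpha_{n}}$ otherwise) in fact works whenever $\gamma_{s}\in\Inv(\mcr{s_{\theta}})$, because Lemma~\ref{lem:xJ} only needs that hypothesis, and Lemma~\ref{lem:invx} still forces $\gamma_{s}=s_{n}s_{n-1}\alpha_{n}$ in the residual sub-case $y_{s-1}\not\ge s_{n}s_{n-1}s_{n}$. The paper avoids the issue altogether by branching Case~(b) not on $y\ge\mcr{s_{\theta}}$ versus $y\not\ge\mcr{s_{\theta}}$, but directly on whether $\gamma_{s}\in\Inv(\mcr{s_{\theta}})$: if so, back up; if not, extend via Lemma~\ref{lem:EB2}, with $\gamma_{s}\lhd\beta$ now automatic from the single nesting condition~\eqref{eq:ro3}. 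This organization needs neither your second nesting with $\Inv(s_{n}s_{n-1}s_{n})$ nor the split into (b1)/(b2). The paper treats your Case~(a) by the same $\gamma_{s}\in\Inv(\mcr{s_{\theta}})$ dichotomy, which also absorbs your deferred boundary computations ($x=\mcr{\ls s_{n}}$ and $n=2$) without special-casing.
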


\begin{proof}
It is easily verified by Lemma~\ref{lem:bBGy} and \eqref{eq:bXx} that 
$\# \bXx{x}{v}$ does not depend on the choice of 
a reflection order $\lhd$ satisfying \eqref{eq:ro}. 
In this proof, we take a reflection order $\lhd$ satisfying 
condition \eqref{eq:ro} and the additional condition that 
%
%
\begin{equation} \label{eq:ro3}
\beta \lhd \gamma \quad 
\text{for all $\beta \in (\DJp) \setminus \Inv (\mcr{s_{\theta}})$ 
and $\gamma \in \Inv (\mcr{ s_{\theta} })$}; 
\end{equation}
the existence of a reflection order satisfying these conditions 
follows from Proposition~\ref{prop:lw} and 
the fact that $\mcr{\lng} \ge \mcr{s_{\theta}}$ (see also Section~\ref{subsec:ro}). 

Let $v \in \WJ$ be such that $\# \bXx{x}{v} \ne 0$. 
We will show that $\# \bXx{x}{v} \ge 2$. 
Let $\bq \in \bXx{x}{v}$. If $\bq = \SE_{\alpha_{n}}^{\SQ}(\bp)$ 
for some $\bp \in \bA_{x}^{\lhd}$, then we deduce that 
$\bq' = \SE_{\alpha_{n}}^{\SB}(\SE_{\gq}^{\SQ}(\bp)) \in \bXx{x}{v}$, and hence 
$\# \bXx{x}{v} \ge 2$. 
Similarly, if $\bq = \SE_{\alpha_{n}}^{\SB}(\SE_{\gq}^{\SQ}(\bp))$ 
for some $\bp \in \bA_{x}^{\lhd}$, then 
we deduce that $\bq':=\SE_{\alpha_{n}}^{\SQ}(\bp) \in \bXx{x}{v}$, and hence 
$\# \bXx{x}{v} \ge 2$.

Next, assume that $\bq = \SE_{\gq}^{\SQ}(\bp)$ for some $\bp \in \bA_{x}^{\lhd}$, 
and write it as: 
\begin{equation*}
\bq:\underbrace{%
  x = y_{0} \edge{\gamma_{1}} y_{1} \edge{\gamma_{2}} \cdots 
  \edge{\gamma_{s}} y_{s}}_{ = \bp \in \bA_{x}^{\lhd} }
  \underbrace{\edge{\gq} y_{s+1}}_{\text{quantum edge}} = \ed(\bq);
\end{equation*}
note that $y_{s} \ge s_{n}s_{n-1}s_{n}$. 
If $s \ge 1$ and $\gamma_{s} \in \Inv (\mcr{s_{\theta}})$, then 
we define $\bp'$ to be 
\begin{equation*}
\bp' : \underbrace{%
  x = y_{0} \edge{\gamma_{1}} y_{1} \edge{\gamma_{2}} \cdots 
  \edge{\gamma_{s-1}} y_{s-1}}_{\in \bBG{x}}. 
\end{equation*}
If $y_{s-1} \ge s_{n}s_{n-1}s_{n}$, or equivalently, 
if $\bp' \in \bA_{x}^{\lhd}$, then we define $\bq':=\SE_{\gq}^{\SQ}(\bp')$, that is, 
\begin{equation*}
\bq' : \underbrace{%
  x = y_{0} \edge{\gamma_{1}} y_{1} \edge{\gamma_{2}} \cdots 
  \edge{\gamma_{s-1}} y_{s-1}}_{= \bp' \in \bA_{x}^{\lhd}} 
  \underbrace{ \edge{\gq} y_{s-1}s_{\gq} }_{\text{quantum edge}}. 
\end{equation*}
It is easily seen that $\bq' \in \bX_{x}^{\lhd}$. 
Moreover, we see from Lemma~\ref{lem:xJ} 
(applied to the Bruhat edge $y_{s-1} \edge{\gamma_{s}} y_{s}$) that 
$\mcr{ y_{s-1}s_{\gq} } = \mcr{ \xJ{y_{s-1}}\xJs{y_{s-1}} } = 
\xJ{y_{s-1}}=\xJ{y_{s}}= \mcr{ y_{s}s_{\gq} } = \mcr{ y_{s+1} } = \mcr{ \ed(\bq) }= v$, 
which implies that $\bq' \in \bXx{x}{v}$. Hence we obtain 
$\# \bXx{x}{v} \ge \# \bigl\{ \bq,\,\bq' \bigr\} = 2$. 
Assume that $y_{s-1} \not \ge s_{n}s_{n-1}s_{n}$, or equivalently, 
$\bp' \in \bBG{x} \setminus \bA_{x}^{\lhd}$. 
In this case, we deduce from Lemma~\ref{lem:invx} that 
$\gamma_{s}=s_{n} s_{n-1}\alpha_{n}$, 
which implies that $\mcr{y_{s-1}s_{n}} = \mcr{ y_{s} s_{\gq} } 
= \mcr{ y_{s+1} } = \mcr{ \ed(\bq) }= v$. 
Hence, if we define $\bq':=\SE_{\alpha_{n}}^{\SQ}(\bp')$, that is, 
\begin{equation*}
\bq' : \underbrace{%
  x = y_{0} \edge{\gamma_{1}} y_{1} \edge{\gamma_{2}} \cdots 
  \edge{\gamma_{s-1}} y_{s-1}}_{= \bp' \in \bBG{x} \setminus \bA_{x}^{\lhd} } 
  \underbrace{ \edge{\alpha_{n}} y_{s-1}s_{n} }_{\text{quantum edge}}, 
\end{equation*}
then $\bq' \in \bXx{x}{v}$, and hence 
$\# \bXx{x}{v} \ge \# \bigl\{ \bq,\,\bq' \bigr\} = 2$. 

Assume that $\gamma_{s} \not \in \Inv (\mcr{s_{\theta}})$. 
By \eqref{eq:ro3}, we see that 
$\gamma_{u} \notin \Inv (\mcr{s_{\theta}})$ for any $1 \le u \le s$. 
Since $x \not\ge \mcr{s_{\theta}}$ by the assumption, 
we deduce by Lemma~\ref{lem:invy} 
that $y_{s} \not\ge \mcr{s_{\theta}}$. 
Recall that $y_{s} \ge s_{n}s_{n-1}s_{n}$. 
Hence, by Lemma~\ref{lem:EB2}, 
there exists $\gamma \in \Inv(\mcr{s_{\theta}}) \setminus 
\bigl\{ \alpha_{n},\,\gq,\,s_{n}s_{n-1}\alpha_{n} \bigr\}$ such that 
$y_{s} \edge{\gamma} y_{s}s_{\gamma}$ is a Bruhat edge in $\QBG(W)$; 
remark that $\gamma_{s} \lhd \gamma$ by \eqref{eq:ro3}. 
Since $y_{s}s_{\gamma} > y_{s} \ge s_{n}s_{n-1}s_{n}$, 
we have a quantum edge 
$y_{s}s_{\gamma} \edge{\gq} y_{s}s_{\gamma}s_{\gq}$ by Lemma~\ref{lem:Q2}. 
Now we define $\bq'$ to be 
\begin{equation*}
\bq' : \underbrace{%
  x = y_{0} \edge{\gamma_{1}} y_{1} \edge{\gamma_{2}} \cdots 
  \edge{\gamma_{s-1}} y_{s-1} \edge{\gamma_{s}} y_{s} 
  \edge{\gamma} y_{s}s_{\gamma} }_{ \in \bA_{x}^{\lhd} } 
  \underbrace{\edge{\gq} y_{s}s_{\gamma}s_{\gq}}_{\text{quantum edge}}. 
\end{equation*}
It is easily verified by Lemma~\ref{lem:xJ} (applied to 
the Bruhat edge $y_{s} \edge{\gamma} y_{s}s_{\gamma}$)
that $\bq' \in \bXx{x}{v}$, 
and hence $\# \bXx{x}{v} \ge \# \bigl\{\bq,\,\bq'\bigr\} =2$. 

Finally, assume that $\bq = \SE_{\alpha_{n}}^{\SQ}(\bp)$ 
for some $\bp \in \bBG{x} \setminus \bA_{x}^{\lhd}$, 
and write $\bq$ as: 
\begin{equation*}
\bq:\underbrace{%
  x = y_{0} \edge{\gamma_{1}} y_{1} \edge{\gamma_{2}} \cdots 
  \edge{\gamma_{s}} y_{s}}_{= \bp \in \bBG{x} \setminus \bA_{x}^{\lhd}}
  \underbrace{\edge{\alpha_{n}} y_{s+1}}_{\text{quantum edge}} = \ed(\bq);
\end{equation*}
note that $y_{s} \not\ge s_{n}s_{n-1}s_{n}$, and hence 
$y_{s-1} \not\ge s_{n}s_{n-1}s_{n}$ (if $s \ge 1$). 
Remark that these elements are of the form in Lemma~\ref{lem:gesgq}. 
If $s \ge 1$ and $\gamma_{s} \in \Inv (\mcr{s_{\theta}})$, then 
we define $\bq'$ to be 
\begin{equation*}
\bq' : \underbrace{%
  x = y_{0} \edge{\gamma_{1}} y_{1} \edge{\gamma_{2}} \cdots 
  \edge{\gamma_{s-1}} y_{s-1}}_{= \bp' \in \bBG{x} \setminus \bA_{x}^{\lhd}} 
  \underbrace{ \edge{\alpha_{n}} y_{s-1}s_{n} }_{\text{quantum edge}}. 
\end{equation*}
Then it is easily seen by Lemma~\ref{lem:gesgq} 
that $\bq' \in \bXx{x}{v}$, and hence 
$\# \bXx{x}{v} \ge \# \bigl\{ \bq,\,\bq' \bigr\} = 2$.

Assume that $\gamma_{s} \not \in \Inv (\mcr{s_{\theta}})$. 
We deduce by \eqref{eq:ro3} that 
$\gamma_{u} \notin \Inv (\mcr{s_{\theta}})$ for any $1 \le u \le s$. 
Since $y_{s} \ne e$ and $y_{s} \not\ge s_{n}s_{n-1}s_{n}$, 
it follows from Lemma~\ref{lem:gesgq} that 
$y_{s} = s_{p}s_{p+1} \cdots s_{n-1}s_{n}$ for some $1 \le p \le n$. 
If $p < n$ (resp., $p=n$), then we set $\gamma := s_{n}s_{n-1}\alpha_{n}$
(resp., $\gamma := s_{n}\alpha_{n-1} = \gq$). 
In either case, $\gamma \in \Inv(\mcr{s_{\theta}})$, and 
we have a Bruhat edge $y_{s} \edge{\gamma} y_{s}s_{\gamma}$; 
note that $y_{s}s_{\gamma} \ge s_{n}s_{n-1}s_{n}$ (resp., 
$\not\ge s_{n}s_{n-1}s_{n}$) if $p < n$ (resp., $p=n$).
Now we define $\bq'$ to be 
\begin{equation*}
\bq' : \underbrace{%
  x = y_{0} \edge{\gamma_{1}} y_{1} \edge{\gamma_{2}} \cdots 
  \edge{\gamma_{s-1}} y_{s-1} \edge{\gamma_{s}} y_{s} 
  \edge{\gamma} y_{s}s_{\gamma} }_{ \in \bBG{x}} 
  \edge{\beta} y_{s}s_{\gamma}s_{\beta}, 
\end{equation*}
where $\beta:=\gq$ (resp., $\alpha_{n}$) if $p < n$ (resp., $p=n$). 
By \eqref{eq:ro3}, we have $\gamma_{s} \lhd \gamma \lhd \beta$. 
It is easily verified by Lemma~\ref{lem:gesgq} that $\bq' \in \bXx{x}{v}$, 
and hence $\# \bXx{x}{v} \ge \# \bigl\{\bq,\,\bq'\bigr\} = 2$. 
This proves the proposition. 
\end{proof}

This completes the proof of Theorem~\ref{thm:main} in type $B_{n}$. 

\appendix
%
%
\section{An Example in type $A_{6}$.}
\label{sec:ex}

In this appendix, we assume that $\Fg$ is of type $A_{6}$ and $k =3$ 
in Theorem~\ref{thm:introThm1} in the Introduction; 
we know that $G/\PrJ$ is the Grassmannian $\Gr(3,7)$, where 
$\J=I \setminus \{3\} = \bigl\{1,2,4,5,6\bigr\}$. 
Observe that $\mcr{\lng} = s_4 s_3 s_2 s_1 s_5 s_4 s_3 s_2 s_6 s_5 s_4 s_3$ 
is a reduced expression of $\mcr{\lng}$. 
We take a reflection order $\lhd$ on $\Delta^{+}$ 
satisfying condition \eqref{eq:ro} such that on $\DJp = \Inv(\mcr{ \lng })$: 
\begin{align*}
& \alpha_1 + \cdots + \alpha_6 \lhd 
  \alpha_1 + \cdots + \alpha_5 \lhd 
  \alpha_1 + \cdots + \alpha_4 \lhd 
  \alpha_1 + \alpha_2 + \alpha_3 \\
\lhd & \ 
  \alpha_2 + \cdots + \alpha_6 \lhd 
  \alpha_2 + \cdots + \alpha_5 \lhd 
  \alpha_2 + \alpha_3 + \alpha_4 \lhd 
  \alpha_2 + \alpha_3 \\
\lhd & \ 
  \alpha_3 + \cdots + \alpha_6 \lhd 
  \alpha_3 + \alpha_4 + \alpha_5 \lhd 
  \alpha_3 + \alpha_4 \lhd 
  \alpha_3.
\end{align*}
We set $x = s_1 s_4 s_3 s_2 s_6 s_5 s_4 s_3 \in \WJ$; 
remark that $x \geq s_1 s_2 s_6 s_5 s_4 s_3 = \mcr{s_{\theta}}$. 
Then we see that $\ed(\bBG{x}) = \bigl\{ x_{1},\,x_{2},\,x_{3},\,x_{4} \bigr\}$, where 
\begin{align*}
& x_1 := x = s_1 s_4 s_3 s_2 s_6 s_5 s_4 s_3, & & x_2 := s_2 s_1 s_4 s_3 s_2 s_6 s_5 s_4 s_3, \\
& x_3 := s_1 s_5 s_4 s_3 s_2 s_6 s_5 s_4 s_3, & & x_4:= s_2 s_1 s_5 s_4 s_3 s_2 s_6 s_5 s_4 s_3.
\end{align*}
From \eqref{eq:introThm_Chevalley1}, we see that
\begin{align*}
& [\CO_{\Bv{x}}] \star [\CO_{G/\PrJ}(- \vpi_3)] \\
& \hspace{10mm} = \be^{-x\vpi_3} \left(
  \sum_{ y \in \ed(\bBG{x}) } (-1)^{\ell(y) - \ell(x)}[\CO_{ \Bv{y} }] +
  \sum_{ y \in \ed(\bBG{x}) } (-1)^{\ell(y) - \ell(x)+1}[\CO_{ \Bv{ \mcr{ys_{3}} } }] Q_{3}\right) \\[2mm]
& \hspace{10mm} = \be^{- x\vpi_3} 
  \bigl( [\CO_{\Bv{x_1}}] - [\CO_{\Bv{x_2}}] - [\CO_{\Bv{x_3}}] + [\CO_{\Bv{x_4}}] \\
& \hspace{20mm} 
  - [\CO_{ \Bv{ \mcr{x_1 s_3} } }]Q_{3} 
  + [\CO_{ \Bv{ \mcr{x_2 s_3} } }]Q_{3} 
  + [\CO_{ \Bv{ \mcr{x_3 s_3} } }]Q_{3} 
  - [\CO_{ \Bv{ \mcr{x_4 s_3} } }]Q_{3} \bigr) \nonumber \\
& \hspace{10mm} = \be^{- x\vpi_3} 
  \bigl( [\CO_{\Bv{x_1}}] - [\CO_{\Bv{x_2}}] - [\CO_{\Bv{x_3}}] + [\CO_{\Bv{x_4}}] \\
& \hspace{20mm} 
  - [\CO_{\Bv{y_1}}]Q_{3} 
  + [\CO_{\Bv{y_2}}]Q_{3} 
  + [\CO_{\Bv{y_3}}]Q_{3} 
  - [\CO_{\Bv{y_4}}]Q_{3} \bigr), 
\end{align*}
where
\begin{equation*}
y_1 := s_4 s_3, \qquad y_2 := s_2 s_4 s_3, \qquad y_3 := s_5 s_4 s_3, \qquad y_4 := s_2 s_5 s_4 s_3.
\end{equation*}
Therefore, by \eqref{rem:introRem}, we deduce that 
\begin{align}
& [\CO_{\Bv{x}}] \star [\CO_{\Bv{s_3}}] = 
  [\CO_{\Bv{s_3}}] - \be^{-\vpi_3} [\CO_{\Bv{x}}] \star [\CO_{G/P^i}(-\vpi_3)] \nonumber \\
& \hspace{10mm} = 
  [\CO_{\Bv{s_3}}] - \be^{x\vpi_3 -\vpi_3} 
  \bigl( [\CO_{\Bv{x_1}}] - [\CO_{\Bv{x_2}}] - [\CO_{\Bv{x_3}}] + [\CO_{\Bv{x_4}}] \nonumber \\
& \hspace{20mm} 
  - [\CO_{\Bv{y_1}}]Q_{3} 
  + [\CO_{\Bv{y_2}}]Q_{3} 
  + [\CO_{\Bv{y_3}}]Q_{3} 
  - [\CO_{\Bv{y_4}}]Q_{3} \bigr). \label{eq:exa6}
\end{align}

Let us compare \eqref{eq:exa6} with 
the equation in \cite[Corollary 3.10]{BCMP}. 
Recall from \cite[Sect.~3.1]{BCMP} the definition of $w_{\mu}$ for a Young diagram $\mu$; 
we can verify that if we take 
$\mu := \raisebox{0.6em}{\ytableausetup{boxsize = 0.5em}\ydiagram{4,3,1}}$, 
then $w_\mu = x$. 
For simplicity of notation, we denote by $\mu$ the Schubert class $[\CO_{\Bv{w_\mu}}]$ 
for a Young diagram $\mu$. By \cite[Example~3.12]{BCMP}, we have
\begin{align*}
\ytableausetup{smalltableaux}
& [\CO_{\Bv{s_3}}] \star [\CO_{\Bv{w_\mu}}] \\
& \hspace{10mm} = [\CO_{\Bv{w_\mu}}] - 
 J_{w_\mu} \biggl( \left(\, 
 \raisebox{0.7em}{\ydiagram{4,3,1}} - 
 \raisebox{0.7em}{\ydiagram{4,3,2}}- 
 \raisebox{0.7em}{\ydiagram{4,4,1}} + 
 \raisebox{0.7em}{\ydiagram{4,4,2}}\, \right) \\
& \hspace{20mm} - Q_{3} 
  \left(\,
  \raisebox{-0.1em}{\ydiagram{2}} - 
  \raisebox{0.3em}{\ydiagram{2,1}} - 
  \raisebox{-0.1em}{\ydiagram{3}} + 
  \raisebox{0.3em}{\ydiagram{3,1}} \, \right) \biggr); 
\end{align*}
for the definition of $J_{w_\mu}$, see \cite[Sect.~3.4]{BCMP}. 
Note that the variable $q$ in \cite{BCMP} is identical to 
$Q_{3}$ in this paper. Since 
\begin{align*}
\ytableausetup{boxsize = 0.4em}
& w_{\raisebox{0.4em}{\ydiagram{4,3,1}}} = x_1, \qquad 
  w_{\raisebox{0.4em}{\ydiagram{4,4,2}}} = x_2, \qquad 
  w_{\raisebox{0.4em}{\ydiagram{4,4,1}}} = x_3, \qquad 
  w_{\raisebox{0.4em}{\ydiagram{4,4,2}}} = x_4, \\
& w_{\ydiagram{2}} = y_1, \qquad 
  w_{\raisebox{0.2em}{\ydiagram{2,1}}} = y_2, \qquad 
  w_{\ydiagram{3}} = y_3, \qquad 
  w_{\raisebox{0.2em}{\ydiagram{3,1}}} = y_4,
\end{align*}
it follows that
\begin{align}
& [\CO_{\Bv{s_3}}] \star [\CO_{\Bv{w_\mu}}] \nonumber \\
& \hspace{10mm} = 
  [\CO_{\Bv{s_3}}] - J_{w_\mu} \bigl( 
   [\CO_{\Bv{x_1}}] - [\CO_{\Bv{x_2}}] - 
   [\CO_{\Bv{x_3}}] + [\CO_{\Bv{x_4}}] \nonumber \\
& \hspace{20mm} 
  - [\CO_{\Bv{y_1}}]Q_{3} 
  + [\CO_{\Bv{y_2}}]Q_{3} 
  + [\CO_{\Bv{y_3}}]Q_{3} 
  - [\CO_{\Bv{y_4}}]Q_{3} \bigr) \label{eq:exa6a}. 
\end{align}
By \cite[Lemma~3.4]{BCMP}, we have $J_{w_\mu} [\CO_{\Bv{y}}] = 
\be^{w_\mu \vpi_3 - \vpi_3} [\CO_{\Bv{y}}] = 
\be^{x \vpi_3 - \vpi_3} [\CO_{\Bv{y}}]$ for $y \in \WJ$. 
Thus, equation \eqref{eq:exa6} agrees with equation \eqref{eq:exa6a}. 

%

\end{document}